\documentclass[11pt, a4paper, twoside, reqno]{amsart}
\usepackage[utf8]{inputenc}
\usepackage[T1]{fontenc}
\usepackage{lmodern}
\usepackage{microtype}
\usepackage[frak=boondox]{mathalpha}
\usepackage{dsfont}
\usepackage{bbold}
\usepackage{upgreek}
\usepackage{mathrsfs}
\usepackage{euscript}
\usepackage[top=1in, bottom=1in, left=1in, right=1in, headheight=15pt, footskip=40pt]{geometry}
\usepackage{enumitem}
\usepackage{parskip}
\usepackage{float}
\usepackage{caption}
\usepackage{tabto}
\usepackage{color}
\usepackage{mdframed}
\usepackage{hyphenat}
\usepackage{comment}

\usepackage{amsmath, amssymb, amsthm, mathtools}
\usepackage{extarrows}
\usepackage{tikz-cd}
\usepackage[all,cmtip]{xy} 
\usepackage{graphicx}

\usepackage[numbers]{natbib}
\setcitestyle{open={},close={}}
\makeatletter
\renewcommand{\@biblabel}[1]{[#1]\hfill}
\makeatother

\usepackage[hidelinks]{hyperref}
\usepackage[nameinlink]{cleveref} 
\newtheoremstyle{mystyle}
  {\topsep}     
  {\topsep}     
  {\itshape}    
  {}            
  {\bfseries}   
  {.}           
  {.5em}        
  {}            

\newtheoremstyle{spacedremark} 
  {\topsep}     
  {\topsep}     
  {\normalfont} 
  {}            
  {\bfseries}   
  {.}           
  {.5em}        
  {}          
\theoremstyle{mystyle}
\newtheorem{thm}{Theorem}[subsection]
\newtheorem{lem}[thm]{Lemma}
\newtheorem{Q}[thm]{Question}

\newtheorem{prop}[thm]{Proposition}
\newtheorem{cor}[thm]{Corollary}
\newtheorem{con}[thm]{Construction}

\newtheorem{defn}[thm]{Definition}
\newtheorem{inner-recall-star}{Theorem}
\newenvironment{recall*}[1]
  {\begin{inner-recall-star}[see \Cref{#1}]}
  {\end{inner-recall-star}}
  
\theoremstyle{spacedremark}
\newtheorem{rem}[thm]{Remark}
\newtheorem{ex}[thm]{Example}

\crefname{thm}{Theorem}{Theorems}
\crefname{prop}{Proposition}{Propositions}

\DeclareMathOperator*{\colim}{colim}

\newcommand{\dcolim}{\varinjlim}
\newcommand{\plim}{\varprojlim}

\DeclareMathAlphabet{\duc}{U}{dutchcal}{m}{n}
\SetMathAlphabet{\duc}{bold}{U}{dutchcal}{b}{n}
\DeclareFontFamily{U}{BOONDOX-calo}{\skewchar\font=45 }
\DeclareFontShape{U}{BOONDOX-calo}{m}{n}{<-> s*[1.0] BOONDOX-r-calo}{}
\DeclareFontShape{U}{BOONDOX-calo}{b}{n}{<-> s*[1.0] BOONDOX-b-calo}{}
\DeclareMathAlphabet{\mal}{U}{BOONDOX-calo}{m}{n}
\SetMathAlphabet{\mal}{bold}{U}{BOONDOX-calo}{b}{n}

\newcommand{\esc}{\EuScript}

\setlist[enumerate]{leftmargin=*, nosep}
\makeatletter

\renewcommand{\section}{\@startsection{section}{1}
  {\z@}
  {.7\linespacing\@plus\linespacing}
  {.5\linespacing}
  {\normalfont\large\bfseries}}
\renewcommand{\subsection}{\@startsection{subsection}{2}
  {\z@}
  {.5\linespacing\@plus.7\linespacing}
  {.5\linespacing}
  {\normalfont\bfseries}}
\makeatother
\newcommand{\paperdate}{\today}
\newcommand{\paperauthor}{Dipankar Maity} 

\makeatletter
\renewcommand{\maketitle}{
  \newpage
  \null
  \noindent{\LARGE \bfseries \@title \\[0.5em]}
  \noindent{\large \paperauthor \quad \raisebox{-0.15em}{\scalebox{1}{$\bullet$}} \quad \paperdate \par}
  \vskip 1.5em
  \thispagestyle{plain}
}
\makeatother

\renewenvironment{abstract}{
  \small
  \noindent\textbf{\large\abstractname:\ }
  \ignorespaces
}{
  \par\vspace{1.5em}
}

\title[On the internal homotopy theory of motivic categories]{On the internal homotopy theory of motivic categories}
\author{Dipankar Maity}
\makeatletter
\let\runtitle\shorttitle 
\makeatother

\usepackage{fancyhdr}

\begin{document}

\maketitle

\begin{abstract}
We develop a general $\infty$-categorical framework for internal Eilenberg-MacLane objects, internal homotopy groups, and an internal notion of covering spaces, and study their behavior under suitable localizations. Applying this to the ordinary motivic localization, we identify internal $n$-Eilenberg-MacLane objects with strongly $\mathbb{A}^1$-invariant sheaves of (abelian when $n\geq 2$) groups, yielding a formal obstruction to the motivic homotopy category being an $\infty$-topos. We prove that taking $\mathbb{A}^1$-localizations induces an equivalence between classical $\mathbb{A}^1$-coverings of a Nisnevich local space and the internal motivic coverings of its motivic localization, providing a streamlined proof of a generalized motivic Van Kampen theorem. Along the way, we establish a Nisnevich-local-to-global $\mathbb{A}^1$-connectivity criterion: a $k$-scheme is $\mathbb{A}^1$-connected if and only if it admits a Nisnevich cover by $\mathbb{A}^1$-connected schemes such that each pairwise intersection has a ($k$-)point. Finally, we show that over a general Qcqs base, passing to the birational motivic homotopy category recovers certain essential topos-theoretic properties absent in the ordinary $\mathbb{A}^1$-setting. In fact, for a scheme with finitely many generic points, we show that the birational motivic homotopy category is a Postnikov-complete $\infty$-topos of cohomological dimension $0$.
\end{abstract}

\tableofcontents

\section{Introduction}

\subsection{Notations and terminologies}
\textit{Categorical.} 
We shall freely use the language of $\infty$-categories without referring to any particular choice of a model for the theory. The reader may freely use their preferred model. A standard reference is [\cite{lurie2009higher}], where the author models these on Kan complexes as models for $\infty$-groupoids.

We choose and fix Grothendieck universes $\mathcal{G}_0\in \mathcal{G}_1\in \mathcal{G}_2$ of small, large, and very large sets, respectively. Unless otherwise specified, by a space we shall always mean a $\mathcal{G}_0$-space. $Spc$ will denote the $\mathcal{G}_1$-$\infty$-category of $\mathcal{G}_0$-spaces (or Kan complexes, if the reader prefers). Unless otherwise specified, by an $\infty$-category we mean a $\mathcal{G}_1$-$\infty$-category. We say that a $\mathcal{G}_1$-$\infty$-category is locally small if it is $\mathcal{G}_0\mbox{-}Spc$-enriched. We shall use the term "very large" $\infty$-categories for locally-$\mathcal{G}_1$-small $\mathcal{G}_2$-$\infty$-categories. 

 By $Pr^L$ we shall mean the $\infty$-category of large presentable $\infty$-categories, while $Cat_{\infty}$ will denote the $\infty$-category of all large $\infty$-categories. $\mathcal{T}op_\infty$ will stand for the $\infty$-category of $\infty$-toposes and geometric morphisms. Thus $Cat_\infty$, $Pr^L$, etc., are examples of very large $\infty$-categories.

For an $\infty$-category $\mathcal{C}$, we use $\mathrm{Map}_\mathcal{C}(-,-)$ to denote the $Spc$-valued hom in $\mathcal{C}$. By $\mathcal{P}(\mathcal{C})$ we mean the category of presheaves of spaces on $\mathcal{C}$. If $\mathcal{C}$ has coproducts, by $\mathcal{P}_\Sigma(\mathcal{C})$ we mean the full subcategory of $\mathcal{P}(\mathcal{C})$ consisting of presheaves that send finite coproducts to products.  

\textit{Algebraic geometry.}
Throughout this paper, $S$ will denote a Qcqs scheme, and $k$ will denote a field. We do not assume $k$ is perfect unless otherwise specified. By a morphism of schemes, we shall always mean a quasicompact, separated morphism. In particular, smooth morphisms will be assumed to be of finite presentation.

For a scheme (Qcqs) $S$, we denote by $Sm_S$ the category of smooth schemes over $S$. By $\mathcal{P}(S)$ we shall mean the $\infty$-category of presheaves of spaces on $Sm_S$ and call an object of $\mathcal{P}(S)$ an $S$-space. Similarly, $\mathcal{P}_\Sigma(S):=\mathcal{P}_\Sigma(Sm_S)$. If $\sigma$ is a Grothendieck topology on $Sm_S$, then $\mathcal{P}_\sigma(S)$ stands for the $\infty$-topos of $S$-spaces satisfying descent with respect to $\sigma$-sieves.

 \textit{Motivic homotopy.}
We will use the notation $\mathcal{H}^{mot}(S)$ for the full subcategory of $\mathcal{P}(S)$ consisting of $\mathbb{A}^1$-local, nisnevich-local objects. The corresponding localization functors will be denoted $L_{mot}$. Similarly, $\mathcal{H}^{bir}(S)$ and $L_{bir}$ will handle birational motivic localization, i.e., the localization of $\mathcal{H}^{mot}(S)$ at all dense open immersions. When working with the $\mathbb{A}^1$-motivic homotopy category, most of our important results will be sensitive to the dimension of the base scheme and will work best over fields. Thus, we shall mostly be working with $\mathcal{H}^{mot}(k)$, where $k$ is a field. For birational local results, however, we will work over a general Qcqs base scheme. The point is that, due to the main results of [\cite{0bat}], the birational motivic homotopy category admits a simpler construction.

By $\pi_0$ (similarly $\pi_n$), we shall always mean the presheaf of homotopy groups. This is also the $0$th truncation functor for $\mathcal{P}(S)$. $\pi_n^{nis}$ will denote the homotopy groups internal to the nisnevich topos, i.e., the nisnevich sheafification of $\pi_n$. $\pi_i^{\mathbb{A}^1}$ will, as usual, stand for $\pi_i^{nis}L_{mot}$. Similarly, $\pi_i^b$ is defined as $\pi_i^{nis}L_{bir}$, which is known to be equivalent to $\pi_iL_{bir}$ [\cite{0bat}, Corollary 2.1.8].
We will use italic $X,Y,Z, R,S,T$ for schemes, calligraphic letters $\mathcal{C},\mathcal{D}$, etc., for categories, and $\esc{X}, \esc{Y}$, etc., for $S$-spaces.

\subsection{The philosophy of the current article.}

Let us return to the starting point of $\mathbb{A}^1$ homotopy theory from the universal homotopical viewpoint. The $\mathbb{A}^1$ homotopy category of [\cite{morel19991}] is obtained from the category of smooth schemes as the universal presentable $\infty$-category that `satisfies' Nisnevich descent and `$\mathbb{A}^1$-invariance' [\cite{robalo2012noncommutative}, Theorem 5.2].  

The classical analogue of this concept is the following: start with the category of smooth \textbf{\textit{manifolds}}, then impose open cover descent and $\mathbb{R}^1$-invariance. The beauty of this construction is that it yields an $\infty$-topos. Indeed, since smooth manifolds have good open covers, the resulting category is the $\infty$-category $Spc$ of spaces itself [\cite{dugger1999sheaves}, Remark 3.4.10], which is an $\infty$-topos.  

It is a well-established property that the $\mathbb{A}^1$-motivic homotopy category does not constitute an $\infty$-topos, even over fields, which introduces significant complexities to motivic calculations. This structural constraint was initially noted in [\cite{spitzweck2012motivic}, Remark 3.5]. While the core logic of their remark is sound, a complete formalization necessitates explicitly identifying the appropriate Eilenberg-MacLane objects within the motivic framework, a technical detail that naturally fell outside the immediate scope of their discussion. 

In this work, we will formalize this observation using a categorical framework and then \textit{explicitly identify} the `$n$-Eilenberg-MacLane objects' in the $\mathbb{A}^1$ homotopy category with strongly $\mathbb{A}^1$-invariant sheaves of (abelian when $n\geq 2$) groups. The same arguments as in \textit{loc. cit.} will then show that the motivic homotopy category is not an $\infty$-topos.

The real reason for such a failure seems to be that, without a controlled etale localization, smooth schemes have many more `non-linear' homotopy dimensions. Without imposing such an `extreme' etale localization, the only way to reduce homotopy dimensions is to identify schemes with their generic points. This is exactly what the birational localization promises to achieve (see \S4.0). Throughout this paper, we will see that, over arbitrary base schemes, the birational homotopy category is very close to being an $\infty$-topos, in that it has many properties similar to those of an $\infty$-topos. We will finally show that it is indeed an $\infty$-topos when the base scheme is geometrically unibranch with finitely many generic points (for example, a field) (see \Cref{Hbir a topos}).

\subsection{A Quick summary of the article}
In the second section, we define and study the categorical theory of truncations, homotopy groups, and Eilenberg-MacLane spaces, and discuss their behavior under an accessible (Cartesian) localization. We identify suitable hypotheses on the localization functor to ensure that these notions behave as expected. We recall the definitions here for the convenience of the reader. 

Given an $\infty$-category $\mathcal{C}$, we say that an object $X\in \mathcal{C}$ is $n$-truncated if the presheaf $\mathrm{Map}_\mathcal{C}(-,X):\mathcal{C}^{op}\to Spc$ is (sectionwise) $n$-truncated. The full subcategory of $n$-truncated objects is denoted $\mathcal{C}_{\leq n}$ and is closed under small limits [\cite{lurie2009higher}]. Thus, when $\mathcal{C}$ is presentable, this subcategory is given by a localization $\tau_{\leq n}^{\mathcal{C}}:\mathcal{C}\to \mathcal{C}_{\leq n}$. At the other extreme lies the iterated loop space construction for pointed objects. Given a pointed object $X$, one defines the loop space of $X$, denoted $\Omega_\mathcal{C}X$, as the kernel pair of $*\to X$, which is canonically pointed, so this operation can be iterated. One defines the internal homotopy groups $\pi_n^\mathcal{C}X$ as the $0$th internal truncation of $\Omega^n_\mathcal{C}X$. The orthogonal theory of truncated objects is the theory of internal $n$-connective objects, i.e., pointed objects whose internal homotopy groups vanish below $n$. Finally, one defines an $n$-Eilenberg-MacLane object as an $n$-truncated $n$-connective object. 

To this end, we will discuss the cases of $0$- and $1$-covering spaces under the names geometric and spatial coverings, respectively. When the presentable category is given by a localization of an $\infty$-topos, there are two primary ways to define spatial coverings. One is internal to the localized subcategory, while the other is an $L$-local theory developed inside the base $\infty$-topos. We will find appropriate conditions so that these agree under the localization.

We study the behavior of each of these constructions under a suitable localization. Let us skip the discussion of the general techniques and see their applications to various motivic categories. 

The third section is thus dedicated to the internal homotopy theory of the $\mathbb{A}^1$ motivic homotopy category, $\mathcal{H}^{mot}:=L_{\mathbb{A}^1}\mathcal{P}_{nis}$ [\cite{morel19991}]. Since this is a presentable $\infty$-category, by the general method mentioned above, there is an internal homotopy path component functor providing a let adjoint to the inclusion of $\mathbb{A}^1$-invariant nisnevich sheaves of sets $Sh^{\mathbb{A}^1}$ into $\mathcal{H}^{mot}$ which we denote by $\pi_0^{mot}$. There is a canonical natural transformation $\pi_0^{\mathbb{A}^1}\to \pi_0^{mot}$. Morel's conjecture [\cite{morel2012a1}, Conjecture 1.12], which has been proven to be false by Ayoub [\cite{ayoubcounterexamples}], is then read as saying that this transformation is an isomorphism. In the first \S3.1, we study some properties of this morphism. To this end, we say that a motivic space $\esc{X}$ is motivically connected if $\pi_0^{mot}\esc{X}=*$. The first key result we obtain is:
\begin{recall*}{motivic 0 truncation is injective}
  Over a field $k$, a motivic space is $\mathbb{A}^1$-connected iff it is motivically connected. 
\end{recall*}

 Using this, we will show that:
 \begin{recall*}{motivic em}
Let $k$ be a perfect field. The restriction of the internal motivic homotopy groups functor to motivic Eilenberg-MacLane spaces induces equivalences of categories 
\begin{flalign*}
    Sh_k^{\mathbb{A}^1}&\simeq \mathcal{EM}_0(\mathcal{H}^{mot}):\pi_0^{mot}\\
    Grp_k^{\mathbb{A}^1}&\simeq \mathcal{EM}_1(\mathcal{H}^{mot}): \pi_1^{mot}\\
    Ab_k^{\mathbb{A}^1}&\simeq \mathcal{EM}_n(\mathcal{H}^{mot}):\pi_n^{mot} \text{  for all  }n\geq 2
\end{flalign*}
 \end{recall*}
This helps us present a complete argument for [\cite{spitzweck2012motivic}, Remark 3.5] as a concrete theorem:
\begin{recall*}{mot not topos}
    Over any perfect field $k$, the motivic homotopy category $\mathcal{H}^{mot}(k)$ is not an $\infty$-topos. 
    \end{recall*} 
And that:
     \begin{recall*}{mot bar omega}
Over a perfect field, there is an equivalence of categories $$ \mathrm{B}^{nis}:  \mathcal{G}rp_{mot}(\mathcal{H}^{mot}(k))\simeq\mathcal{H}_\bullet^{mot}(k)_{\geq 1}:\Omega$$  where $\mathcal{G}rp_{mot}(\mathcal{H}^{mot}(S))$ denotes the full subcategory of $ Grp(\mathcal{H}^{mot}(k))$ consisting of motivic monoids with strongly $L_{mot}$-local $\pi_0^{mot}.$ This provides further evidence that $\mathcal{H}^{mot}$ is not an $\infty$-topos.
    \end{recall*}  

In the next subsection, we study the geometric and spatial theories of coverings for the motivic homotopy category. The first significant result implied by \Cref{motivic 0 truncation is injective} is the following counterpart of the classical result stating that ``if $X$ is a topological space with a cover $U_\alpha$ such that each $U_\alpha$ is path connected and $U_{\alpha\beta}\neq \emptyset$, then $X$ is path connected.''
\begin{recall*}{a1 geom conn}
 A scheme $X/k$ is $\mathbb{A}^1$-connected iff there is a nisnevich covering $U_i\to X$ such that each $U_i$ is $\mathbb{A}^1$-connected and for every pair $i,j$ the fiber product $U_i\times_XU_j$ has a ($k$-)point.
\end{recall*}

Next, we apply the $\infty$-categorical notion of covering spaces developed in \S2.4 to the $\mathbb{A}^1$-motivic setup. Using the methods from \S2.4, we reprove various results from [\cite{morel2012a1}, \S7]. As the very first step, we have
\begin{recall*}{Lmot covering is covering of Lmot}
    Suppose $\esc{X}$ is a Nisnevich sheaf of spaces. We then have an equivalence of categories $$\eta^*_{mot}:Cov_{{\mathbb{A}^1}}({L_{mot}}\esc{X)}=Cov_{\mathcal{H}^{mot}}({L_{mot}}\esc{X)}\leftrightarrows  Cov_{{\mathbb{A}^1}}(\esc{X)}: L_{mot}$$ where $\eta_{mot}$ is the localization unit $1\to L_{mot}$. The category on the left-hand side is the category of covering spaces of $L_{mot}\esc{X}$ internal to the motivic homotopy category, while the right-hand side is the $\infty$-categorical analogue of Morel's category of $\mathbb{A}^1$-coverings of $\esc{X}$, as developed in \textup{[\cite{morel2012a1}, \S7]}.
\end{recall*}
Using this, we re-prove Theorem 7.8 of [\cite{morel2012a1}] (see \Cref{a1 simply connected covering space}).

The final section aims to show that much of the extra care required in the $\mathbb{A}^1$-local case is unnecessary in the birational-local case, $\mathcal{H}^{bir}:=L_{bir}\mathcal{H}^{mot}$ [\cite{0bat}]. and in fact, they work over arbitrary Qcqs base schemes:
\begin{recall*}{path injectivity of Lbir}
 Over a general Qcqs scheme $S$, the birational path component functor $\pi_0^{b}:=\pi_0L_{bir}$ is the internal homotopy path component of $\mathcal{H}^{bir}$.
\end{recall*}
Let us quickly list some of the topos-theoretic properties enjoyed by the birational motivic homotopy category: 
\begin{recall*}{bir em}
Similar to that of an $\infty$-topos \textup{[\cite{lurie2009higher}, Lemma 7.2.2.11 (1)]}, then there are equivalences of categories: 
\begin{flalign*}
    \mathrm{Disc}(\mathcal{H}^{bir})_\bullet&\simeq \mathcal{EM}_0(\mathcal{H}^{bir}):\pi_0^{bir}\\
    Grp(\mathrm{Disc}(\mathcal{H}^{bir}))&\simeq \mathcal{EM}_1(\mathcal{H}^{bir}): \pi_1^{bir}\\
    Ab(\mathrm{Disc}(\mathcal{H}^{bir}))&\simeq \mathcal{EM}_n(\mathcal{H}^{bir}):\pi_n^{bir}  \text{ for all }n\geq 2
\end{flalign*}
over an arbitrary Qcqs bse scheme $S$.
\end{recall*}
\begin{recall*}{bir omega bar}
   Over a general Qcqs base scheme $S$, the functor $$\Omega: \mathcal{H}^{bir}_{\geq 1} \to  Grp(\mathcal{H}^{bir})_\bullet\simeq Grp(\mathcal{H}^{bir})$$ is an equivalence of infinity categories, with inverse given by the internal bar construction $\mathbf{B}_{bir}$.
\end{recall*}
However, since $L_{bir}$ is neither locally cartesian (see [\cite{0bat}, Counterexample 3.2.13]) nor effective (see [\cite{0bat}, Counterexample 3.5.7]), the theory of birational covering spaces is not as strong as its motivic counterpart. Specifically, we can construct universal birational covers only for $\mathbb{A}^1$-motivically connected spaces, rather than for all birationally connected spaces (\Cref{universal birational covering}), which follows from:
 \begin{recall*}{Lbir covering is covering of Lbir}
For a motivically connected space $\esc{X}$, we have an equivalence of categories $$\eta^*_{bir}:Cov_{b}({L_{bir}}\esc{X)}=Cov_{\mathcal{H}^{bir}}({L_{bir}}\esc{X)}\leftrightarrows  Cov_{{b}}(\esc{X)}: L_{bir}$$ where $\eta_{bir}$ is the localization unit $1\to L_{bir}$, $Cov_{\mathcal{H}^{bir}}(-)$ is the category of internal coverings in $\mathcal{H}^{bir} $, while $Cov_{{b}}(\esc{X)}$ is the category of birational coverings of $\esc{X}$.
\end{recall*}
In \S4.5, we finally reach the main result of this paper.: 
\begin{recall*}{bir topos on fin gen pt}
    Let $S$ be a Qcqs scheme with finitely many generic points. Then $\mathcal{H}^{bir}(S)$ is an $\infty$-topos.
\end{recall*}
We summarize the remaining properties enjoyed by the birational motivic homotopy category construction as follows:
\begin{recall*}{bir yoneda}
Let $S$ be a qcqs scheme with finitely many generic points. Then the birational motivic space functor$$h^b_S(-):=L_{bir}h_S(-): Sm_S \to \mathcal{H}^{\mathrm{bir}}(S)$$ exhibits the following structural properties:
\begin{enumerate}
    \item It contracts the affine line $\mathbb{A}^1_S$.
    \item It maps Nisnevich and cdh squares to pushout squares.
    \item The target $\mathcal{H}^{\mathrm{bir}}(S)$ is a Postnikov complete $\infty$-topos of cohomological dimension $0$.
\end{enumerate}

\end{recall*}

\section{Internal homotopy theory in an \texorpdfstring{$\infty$}{}-category}
In this section, we introduce the internal notions of truncated objects, homotopy groups, and Eilenberg-MacLane objects. We will study their behavior under suitable localizations. For various aspects of this analysis to work as expected, we will require different properties of the localization functor. (One such important condition is the path injectivity condition [\Cref{path injectivity}], which is key to the most important results of the first three subsections of this section.)

In the fourth and final subsection, we will formalize the theory of covering objects internal to an $\infty$-category and study this under a localization. (The key condition of this section on localization functors is the locality of the fundamental of local objects and effectivity.)

\subsection{Truncations}
A $(-1)$-connected space $X$ is called $n$-truncated if, for all choices of base points $x\in \mathrm{Map}_{Spc}(*,X)$, the set $\pi_0X$ is a singleton and, for all $i>n$, the groups $\pi_i(X,x)$ are trivial. The following notion from [\cite{lurie2009higher}, Definition 5.5.6.1] offers an immediate $\infty$-categorical generalization of this concept:
\begin{defn}
 Let $\mathcal{C}$ be an $\infty$-category. An object $\esc{F}$ of $\mathcal{C}$ is said to be $n$-truncated if, for all $\esc{E}\in \mathcal{C}$, the $\infty$-groupoid $\mathrm{Map}_\mathcal{C}(\esc{E},\esc{F})$ is $n$-truncated. In other words, the presheaf $\mathrm{Map}_{\mathcal{C}}(-,\esc{F}):\mathcal{C}^{op}\to Spc$ factors through the $\infty$-category $Spc_{\leq n}$ of $n$-truncated spaces. 
\end{defn}

We denote the category of $n$-truncated objects in $\mathcal{C}$ by $\mathcal{C}_{\leq n}$. In particular, $\mathcal{C}_{\leq 0}$ is a discrete category and we denote $\text{Disc}\mathcal{C}:=\text{N}(\mathcal{C}_{\leq 0})$.

If $\mathcal{C}$ is a presentable $\infty$-category, the full subcategory of $n$-truncated objects is closed under limits and is therefore given by a localization $\tau_{\leq n}^\mathcal{C}$ [\cite{lurie2009higher}, Proposition 5.5.6.5]. 

We now analyze the concept of $n$-truncated objects under a localization. 
\begin{lem}\label[lem]{truncation of localization}
Let $L\mathcal{C}\subset \mathcal{C}$ be a localizing subcategory. Then the inclusion $\mathcal{C}_{\leq n}\bigcap L\mathcal{C\subset}(L\mathcal{C})_{\leq n} $ is an equivalence, i.e., we have an equality $(L\mathcal{C})_{\leq n}=\mathcal{C}_{\leq n}\bigcap L\mathcal{C}$.
\end{lem}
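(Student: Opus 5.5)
The argument is a direct comparison of mapping spaces, using that $L\mathcal{C}$ is a full subcategory of $\mathcal{C}$ with a left adjoint $L\colon \mathcal{C}\to L\mathcal{C}$ to the inclusion. I will prove the two inclusions $\mathcal{C}_{\leq n}\cap L\mathcal{C}\subseteq (L\mathcal{C})_{\leq n}$ and $(L\mathcal{C})_{\leq n}\subseteq \mathcal{C}_{\leq n}\cap L\mathcal{C}$ separately. Both sides are full subcategories of $L\mathcal{C}$, so this suffices.

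For the first inclusion, let $\esc{F}\in L\mathcal{C}$ be $n$-truncated as an object of $\mathcal{C}$. For every $\esc{E}\in L\mathcal{C}$, fullness of $L\mathcal{C}\subset\mathcal{C}$ gives $\mathrm{Map}_{L\mathcal{C}}(\esc{E},\esc{F})\simeq \mathrm{Map}_{\mathcal{C}}(\esc{E},\esc{F})$. The right-hand side is $n$-truncated by assumption, so $\esc{F}\in (L\mathcal{C})_{\leq n}$.

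For the reverse inclusion, let $\esc{F}\in (L\mathcal{C})_{\leq n}$ and let $\esc{E}\in\mathcal{C}$ be arbitrary. Since $\esc{F}$ is $L$-local, the localization adjunction gives a natural equivalence
$$\mathrm{Map}_{\mathcal{C}}(\esc{E},\esc{F})\simeq \mathrm{Map}_{L\mathcal{C}}(L\esc{E},\esc{F}),$$
obtained by precomposition with the unit $\esc{E}\to L\esc{E}$. The right-hand side is $n$-truncated because $L\esc{E}\in L\mathcal{C}$ and $\esc{F}$ is $n$-truncated in $L\mathcal{C}$. Hence $\mathrm{Map}_{\mathcal{C}}(-,\esc{F})$ factors through $Spc_{\leq n}$, so $\esc{F}\in\mathcal{C}_{\leq n}\cap L\mathcal{C}$.

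There is no real obstacle here. The only point needing care is that the second inclusion genuinely uses the existence of the left adjoint $L$, that is, that ``localizing subcategory'' means a reflective full subcategory. For a merely full subcategory, truncatedness in $L\mathcal{C}$ would only control maps out of objects of $L\mathcal{C}$, and the second inclusion could fail.
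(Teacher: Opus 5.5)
Your proposal is correct and matches the paper's proof: the inclusion $\mathcal{C}_{\leq n}\cap L\mathcal{C}\subseteq (L\mathcal{C})_{\leq n}$ comes from fullness of $L\mathcal{C}\subset\mathcal{C}$, and the reverse inclusion from the adjunction equivalence $\mathrm{Map}_{\mathcal{C}}(\esc{E},\esc{F})\simeq \mathrm{Map}_{L\mathcal{C}}(L\esc{E},\esc{F})$ for $L$-local $\esc{F}$. Your remark that the second inclusion needs the reflector $L$ is also accurate.
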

\begin{proof}
 The containment is obvious. Indeed, if $\esc{A}$ belongs to $ L\mathcal{C}$ and $ \mathcal{C}_{\leq n}$, then for any $\esc{B}\in  L\mathcal{C}$, one has $ \text{Map}_{ L\mathcal{C}}(\esc{B},\esc{A})= \text{Map}_{ \mathcal{C}}(\esc{B},\esc{A})$. But this last term is $n$-truncated since $\esc{A}\in \mathcal{C}_{\leq n}$. 
 
 To see the reverse inclusion, note that since $(L\mathcal{C})_{\leq n}\subset L\mathcal{C}$, it suffices to show that $(L\mathcal{C})_{\leq n}\subset \mathcal{C}_{\leq n}$. So let $\esc{X}\in L\mathcal{C}_{\leq n}$, and let $\esc{Y}\in \mathcal{C}$ be an arbitrary object. Then we have $\text{Map}_{ \mathcal{C}}(\esc{Y},\esc{X})\simeq \text{Map}_{ L\mathcal{C}}(L\esc{Y},\esc{X})$. But this last term is an $n$-truncated $\infty$-groupoid because $\esc{X}\in (L\mathcal{C})_{\leq n}$. This shows that $\esc{X}\in \mathcal{C}_{\leq n}$ as well. Thus, $(L\mathcal{C})_{\leq n}\subset \mathcal{C}_{\leq n}$. 
\end{proof}
In other words, the following diagram is a pullback square in $Cat_\infty$:
\[
\xymatrix{
(L\mathcal{C})_{\leq n}\ar@{^(->}[r]\ar@{^(->}[d]&
L\mathcal{C}\ar@{^(->}[d]\\
\mathcal{C}_{\leq n}\ar@{^(->}[r]&\mathcal{C}
}
\]
   
\begin{prop}\label[prop]{L tunc is a localization}
    Let $L\mathcal{C}\subset \mathcal{C}$ be an accessible localization of a presentable $\infty$-category $\mathcal{C}$. Then $(L\mathcal{C})_{\leq n}$ is a localization of $\mathcal{C}_{\leq n}$ and of $\mathcal{C}$.
\end{prop}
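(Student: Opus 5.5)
The plan is to obtain both claims from the adjoint functor theorem, using \Cref{truncation of localization} to identify $(L\mathcal{C})_{\leq n}$ with $\mathcal{C}_{\leq n}\cap L\mathcal{C}$ inside $\mathcal{C}$. Since $L\mathcal{C}$ is an accessible localization of a presentable $\infty$-category, it is itself presentable. Hence by [\cite{lurie2009higher}, Proposition 5.5.6.5] the inclusion $(L\mathcal{C})_{\leq n}\subset L\mathcal{C}$ has a left adjoint $\tau^{L\mathcal{C}}_{\leq n}$, and $(L\mathcal{C})_{\leq n}$ is presentable.

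For the claim relative to $\mathcal{C}$, I will show that the composite $\tau^{L\mathcal{C}}_{\leq n}\circ L:\mathcal{C}\to (L\mathcal{C})_{\leq n}$ is left adjoint to the inclusion $(L\mathcal{C})_{\leq n}\subset L\mathcal{C}\subset \mathcal{C}$. This holds because adjunctions compose: for $\esc{Y}\in\mathcal{C}$ and $\esc{X}\in (L\mathcal{C})_{\leq n}$ we have
$$\mathrm{Map}_{\mathcal{C}}(\esc{Y},\esc{X})\simeq \mathrm{Map}_{L\mathcal{C}}(L\esc{Y},\esc{X})\simeq \mathrm{Map}(\tau^{L\mathcal{C}}_{\leq n}L\esc{Y},\esc{X}).$$
The inclusion is fully faithful, so this is a localization. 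Accessibility follows because the composite of two accessible localizations is accessible: $L$ is accessible by hypothesis, and $\tau_{\leq n}$ is accessible by loc. cit. So $(L\mathcal{C})_{\leq n}$ is an accessible localization of $\mathcal{C}$.

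For the claim relative to $\mathcal{C}_{\leq n}$, \Cref{truncation of localization} gives $(L\mathcal{C})_{\leq n}\subset \mathcal{C}_{\leq n}$ as a full subcategory. I will take the left adjoint of the inclusion $(L\mathcal{C})_{\leq n}\hookrightarrow\mathcal{C}_{\leq n}$ to be the restriction of $\tau^{L\mathcal{C}}_{\leq n}L$ to $\mathcal{C}_{\leq n}$. The same mapping-space computation applies, with $\esc{Y}$ now ranging over $\mathcal{C}_{\leq n}$, which is full in $\mathcal{C}$. Thus the restriction of a left adjoint along a full subcategory containing the target remains a left adjoint.

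The main point to check is that the left adjoint really lands in $(L\mathcal{C})_{\leq n}$ and is not merely in $L\mathcal{C}$ or $\mathcal{C}_{\leq n}$. This is exactly where \Cref{truncation of localization} is used: an object that is $n$-truncated in $L\mathcal{C}$ is $n$-truncated in $\mathcal{C}$. Without the lemma, a $\tau^{L\mathcal{C}}_{\leq n}$-truncated object would only be known to be truncated with respect to $L\mathcal{C}$, and the factorization through $\mathcal{C}_{\leq n}$ would fail. Note that $L$ need not preserve $n$-truncated objects, but this causes no difficulty, because we apply $\tau^{L\mathcal{C}}_{\leq n}$ after $L$.
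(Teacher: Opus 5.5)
Your proof is correct, but it takes a different route from the paper's. The paper argues by limit-closure. Both $\mathcal{C}_{\leq n}$ and $L\mathcal{C}$ are closed under limits in $\mathcal{C}$, so by \Cref{truncation of localization} their intersection $(L\mathcal{C})_{\leq n}$ is closed under limits in $\mathcal{C}$ and in $\mathcal{C}_{\leq n}$. The paper leaves the step from limit-closure to the existence of a reflector implicit; that step needs something extra, such as presentability of $(L\mathcal{C})_{\leq n}$ together with the adjoint functor theorem.

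You instead compose the two reflections $L$ and $\tau^{L\mathcal{C}}_{\leq n}$, and then restrict the resulting left adjoint to the intermediate full subcategory $\mathcal{C}_{\leq n}$. This gives an explicit reflector $\tau^{L\mathcal{C}}_{\leq n}\circ L$. It also avoids the adjoint functor theorem beyond the existence of $\tau^{L\mathcal{C}}_{\leq n}$. By uniqueness of adjoints it yields $\tau^{L\mathcal{C}}_{\leq n}\circ L\simeq L_{\leq n}\circ \tau^{\mathcal{C}}_{\leq n}$, which restricts on $L\mathcal{C}$ to exactly \Cref{formulae for tau under localization}.

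One small correction to your last paragraph. The reflector lands in $(L\mathcal{C})_{\leq n}$ by construction, so that is not what needs checking. What \Cref{truncation of localization} actually supplies is the containment $(L\mathcal{C})_{\leq n}\subset \mathcal{C}_{\leq n}$, which is needed only for the statement relative to $\mathcal{C}_{\leq n}$. The statement relative to $\mathcal{C}$ needs no lemma at all, since a reflective subcategory of a reflective subcategory is reflective.
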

\begin{proof}
    Since both $\mathcal{C}_{\leq n}$ and $L\mathcal{C}$ are closed under limits in $\mathcal{C}$, we conclude immediately from the lemma above that so is $(L\mathcal{C})_{\leq n}\subset \mathcal{C}_{\leq n}$. 
\end{proof}
Let us denote the corresponding localization functor by $L_{\leq n}:\mathcal{C}_{{\leq n} }\to(L\mathcal{C})_{\leq n}$.

On the other hand, when $L$ is an accessible localization of a presentable $\infty$-category $\mathcal{C}$, so that $L\mathcal{C}$ is also presentable, we know, by [\cite{lurie2009higher}, Proposition 5.5.6.5], that $(L\mathcal{C})_{\leq n}\subset L\mathcal{C}$ is also a localization. We denote the internal truncation functor for the $\infty$-category $L\mathcal{C}$ by $$\tau^{L\mathcal{C}}_{\leq n}:L\mathcal{C}\to (L\mathcal{C})_{\leq n} .$$ It follows that,
\begin{lem}\label[lem]{formulae for tau under localization}
    $\tau^{L\mathcal{C}}_{\leq n} \simeq {L_{\leq n}\tau_{\leq n}}_{|L\mathcal{C}}$.
\end{lem}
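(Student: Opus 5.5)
The plan is to compare the two functors through their universal properties, since both are left adjoints into the same full subcategory. Concretely, I will show that for every $\esc{X}\in L\mathcal{C}$ and every $\esc{Z}\in (L\mathcal{C})_{\leq n}$ there is an equivalence
\[
\mathrm{Map}_{(L\mathcal{C})_{\leq n}}\bigl(L_{\leq n}\tau_{\leq n}\esc{X},\esc{Z}\bigr)\simeq \mathrm{Map}_{L\mathcal{C}}(\esc{X},\esc{Z}),
\]
natural in $\esc{Z}$. Yoneda then gives $L_{\leq n}\tau_{\leq n}\esc{X}\simeq \tau^{L\mathcal{C}}_{\leq n}\esc{X}$. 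To make this natural in $\esc{X}$, I note that both sides are left adjoints to the inclusion $(L\mathcal{C})_{\leq n}\hookrightarrow L\mathcal{C}$, and left adjoints are unique up to equivalence.

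The chain of equivalences comes in three steps. First, $L_{\leq n}$ is left adjoint to $(L\mathcal{C})_{\leq n}\hookrightarrow \mathcal{C}_{\leq n}$, which is \Cref{L tunc is a localization}. This gives $\mathrm{Map}(L_{\leq n}\tau_{\leq n}\esc{X},\esc{Z})\simeq \mathrm{Map}_{\mathcal{C}_{\leq n}}(\tau_{\leq n}\esc{X},\esc{Z})$. Second, $\esc{Z}\in(L\mathcal{C})_{\leq n}$ also lies in $\mathcal{C}_{\leq n}$ by \Cref{truncation of localization}. So the universal property of $\tau_{\leq n}=\tau^{\mathcal{C}}_{\leq n}$ identifies the previous mapping space with $\mathrm{Map}_{\mathcal{C}}(\esc{X},\esc{Z})$. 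Third, $L\mathcal{C}\subset\mathcal{C}$ is full and both $\esc{X},\esc{Z}\in L\mathcal{C}$, so this equals $\mathrm{Map}_{L\mathcal{C}}(\esc{X},\esc{Z})$. Each step is natural in $\esc{Z}$, so the composite equivalence is as well.

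The main thing to check is that the composite is induced by a single map $\esc{X}\to L_{\leq n}\tau_{\leq n}\esc{X}$. That map is the composite of the units $\esc{X}\to\tau_{\leq n}\esc{X}\to L_{\leq n}\tau_{\leq n}\esc{X}$, and precomposition with it gives the displayed equivalence. So $L_{\leq n}\tau_{\leq n}|_{L\mathcal{C}}$, with this unit, exhibits $(L\mathcal{C})_{\leq n}$ as a reflective subcategory of $L\mathcal{C}$. I do not expect any real obstacle. The only point needing care is that $(L\mathcal{C})_{\leq n}$ sits inside $\mathcal{C}_{\leq n}$, which is needed to use the universal property of $\tau_{\leq n}$ in the second step, and \Cref{truncation of localization} supplies exactly this.
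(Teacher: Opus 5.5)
Your proposal is correct and is essentially the paper's proof: the paper runs the same chain $\mathrm{Map}(\tau^{L\mathcal{C}}_{\leq n}\esc{X},\esc{Y})\simeq\mathrm{Map}_{L\mathcal{C}}(\esc{X},\esc{Y})\simeq\mathrm{Map}_{\mathcal{C}}(\esc{X},\esc{Y})\simeq\mathrm{Map}_{\mathcal{C}_{\leq n}}(\tau_{\leq n}\esc{X},\esc{Y})\simeq\mathrm{Map}(L_{\leq n}\tau_{\leq n}\esc{X},\esc{Y})$ for $\esc{Y}\in(L\mathcal{C})_{\leq n}=L\mathcal{C}\cap\mathcal{C}_{\leq n}$ and concludes by Yoneda. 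Your extra step, identifying the equivalence as precomposition with the composite unit $\esc{X}\to\tau_{\leq n}\esc{X}\to L_{\leq n}\tau_{\leq n}\esc{X}$, makes explicit the naturality in $\esc{X}$ that the paper leaves implicit.
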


\begin{proof}
Suppose $\esc{X}\in L\mathcal{C}$ and $\esc{Y}\in (L\mathcal{C})_{\leq n}=L\mathcal{C}\bigcap \mathcal{C}_{\leq n}$. We get \begin{flalign}
    \text{Map}_{(L\mathcal{C})_{\leq n}}(\tau^{L\mathcal{C}}_{\leq n}\esc{X}, \esc{Y})&=\text{Map}_{L\mathcal{C}}(\esc{X}, \esc{Y})\text{ [by definition of } \tau^{L\mathcal{C}}_{\leq n}\text{]}\\&=\text{Map}_{\mathcal{C}}(\esc{X}, \esc{Y})\\&=\text{Map}_{\mathcal{C}_{\leq n}}(\tau_{\leq n}^{\mathcal{C}}\esc{X}, \esc{Y})\text{ [since } \esc{Y}\in \mathcal{C}_{\leq n}\text{]}
    \\&
    =\text{Map}_{(L\mathcal{C})_{\leq n}}(L_{\leq n}\tau_{\leq n}^{\mathcal{C}}\esc{X}, \esc{Y})     
     \end{flalign}
     The result now follows from the Yoneda lemma. 
 \end{proof}

\textbf{Internal homotopy groups}

Let $\mathcal{C}$ be a category with finite limits. The projection map $\Lambda^2_2\to \Delta^1$ sending $\{0,1\}$ to $0$ and $2$ to $1$ induces a functor $\mathcal{C}^{\Delta^1}\to \mathcal{C}^{\Lambda ^{2}_2}$. Composing this with the canonical functor $\mathcal{C}_\bullet:=\mathcal{C_{*/}}\to \mathcal{C}^{\Delta^1}$, induces another functor $\mathcal{C}_*\to \mathcal{C}^{\Lambda^2_2}$. Finally, taking limits over $\Lambda^2_2$ we arrive at the loop space functor $\Omega :\mathcal{C}_\bullet\to \mathcal{C}_\bullet$ as the obvious factorization of the composition:
$$\mathcal{C}_\bullet\to \mathcal{C}^{\Lambda^2_2}\xrightarrow{lim}\mathcal{C}.$$ By iteration, we arrive at the $n$-fold loop space functor $\Omega ^n: \mathcal{C}_\bullet\to \mathcal{C}_\bullet$. 

Actually, there are multiple ways to think about this. For example, given a pointing $x:*\to X$, we may define $\Omega_{\mathcal{C}}X $ by either of the following two equivalent pullback diagrams:
\begin{figure}
\[
\xymatrix{
\Omega_{\mathcal{C}}X \ar[r] \ar[d] & pt\ar[d]^{x} & & \Omega_{\mathcal{C}}X \ar[r] \ar[d] & X \ar[d]^{\Delta }\\pt
\ar[r]^{x} & X & & pt \ar[r]^{(x,x)} & X \times X
}
\]\caption{loop object}\label{loop object}
\end{figure}
\begin{defn}\label[defn]{LC homotopy group}
 The $0$-th internal homotopy path components object functor of a presentable $\infty$-category $\mathcal{C}$ is defined as $\pi_0^\mathcal{C}:=\tau_{\leq 0}^{\mathcal{C}}$. For $n>0$, the $n$-th internal homotopy group $\pi_n^\mathcal{C}: \mathcal{C}_*\to \text{Disc}(\mathcal{C})_*$ is defined as $\tau_{\leq 0}^{\mathcal{C}}\Omega ^n_{\mathcal{C}}=\pi_0^\mathcal{C}\Omega^n_\mathcal{C}$.
\end{defn}
\begin{defn}\label[defn]{path cart}
    We say that the localization is path cartesian if $L_{\leq 0}$ is cartesian.
\end{defn}
In the rest of this section, we assume that $L$ is path Cartesian, so that $\tau_{\leq 0}^\mathcal{C}$ is also cartesian. By formal arguments, it is clear that $\pi_n^\mathcal{C}X$ is an $n$-group object in $\text{Disc}(\mathcal{C})$. In short, if we let $n\mbox{-}Grp(\mathcal{D})$ stand for $n$-fold group objects of a $1$-category $\mathcal{D}$, i.e., $n\mbox{-}Grp(\mathcal{D})=Grp(Grp(...(Grp(\mathcal{D}))))$, it is clear by the Eckmann-Hilton argument that for every $n\geq 1$ \[n\mbox{-}Grp(\mathcal{D})=
\begin{cases}
 Grp(\mathcal{D}) \text{ for }n=1  \\
  Ab(\mathcal{D}) \text{ for }n\geq 2
 
\end{cases}\]
we have induced functors $\pi_n^\mathcal{C}: \mathcal{C_\bullet}\to n\mbox{-}Grp(\text{Disc}(\mathcal{C}))$.
\begin{lem}\label[lem]{pi0 to Lpi0}
    Let $L\mathcal{C}$ be an accessible localization of a presentable $\infty$-category $\mathcal{C}$. Then $\pi_0^{L\mathcal{C}}\simeq L_{\leq 0}\pi_0^{\mathcal{C}}$. \end{lem}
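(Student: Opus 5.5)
This is essentially \Cref{formulae for tau under localization} specialized to $n=0$, since $\pi_0^{\mathcal{C}}=\tau_{\leq 0}^{\mathcal{C}}$ and $\pi_0^{L\mathcal{C}}=\tau_{\leq 0}^{L\mathcal{C}}$ by \Cref{LC homotopy group}. The plan is therefore to unwind the definitions and reduce to that lemma. The one point needing care is the domain of the functors. As stated, $\pi_0^{L\mathcal{C}}$ is defined on $L\mathcal{C}$, while $L_{\leq 0}\pi_0^{\mathcal{C}}$ makes sense on all of $\mathcal{C}$. I will first prove the equivalence on $L\mathcal{C}$, and then also note the natural extension $\pi_0^{L\mathcal{C}}L\simeq L_{\leq 0}\pi_0^{\mathcal{C}}$ as functors on $\mathcal{C}$.

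\emph{Step 1: set-up.} Since $L$ is an accessible localization of a presentable $\infty$-category, $L\mathcal{C}$ is presentable. By \cite{lurie2009higher}, Proposition 5.5.6.5, the truncation $\tau_{\leq 0}^{L\mathcal{C}}$ therefore exists. By \Cref{L tunc is a localization}, the subcategory $(L\mathcal{C})_{\leq 0}=\mathcal{C}_{\leq 0}\cap L\mathcal{C}$ (using \Cref{truncation of localization}) is a localization of $\mathcal{C}_{\leq 0}$, so $L_{\leq 0}$ is defined.

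\emph{Step 2: Yoneda comparison.} Take $\esc{X}\in\mathcal{C}$ and $\esc{Y}\in(L\mathcal{C})_{\leq 0}$. Then there is a chain of natural equivalences
\[
\mathrm{Map}(\tau^{L\mathcal{C}}_{\leq 0}L\esc{X},\esc{Y})\simeq\mathrm{Map}_{L\mathcal{C}}(L\esc{X},\esc{Y})\simeq\mathrm{Map}_{\mathcal{C}}(\esc{X},\esc{Y})\simeq\mathrm{Map}_{\mathcal{C}_{\leq 0}}(\tau^{\mathcal{C}}_{\leq 0}\esc{X},\esc{Y})\simeq\mathrm{Map}(L_{\leq 0}\tau^{\mathcal{C}}_{\leq 0}\esc{X},\esc{Y}).
\]
The second equivalence holds because $\esc{Y}$ is $L$-local. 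The third holds because $\esc{Y}$ is $0$-truncated in $\mathcal{C}$, which is \Cref{truncation of localization}. By the Yoneda lemma in $(L\mathcal{C})_{\leq 0}$, this gives $\pi_0^{L\mathcal{C}}L\simeq L_{\leq 0}\pi_0^{\mathcal{C}}$. Restricting to $\esc{X}\in L\mathcal{C}$, where $L\esc{X}\simeq\esc{X}$, gives the claimed statement, which is exactly \Cref{formulae for tau under localization} with $n=0$.

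\emph{Step 3: naturality.} The equivalences in Step 2 are natural in $\esc{X}$, being composites of adjunction units. Hence the pointwise equivalences assemble into an equivalence of functors. Equivalently, both $\pi_0^{L\mathcal{C}}L$ and $L_{\leq 0}\pi_0^{\mathcal{C}}$ are left adjoints of the same inclusion $(L\mathcal{C})_{\leq 0}\hookrightarrow\mathcal{C}$. Since left adjoints are unique up to equivalence, this finishes the argument.

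The only mild obstacle is this bookkeeping: keeping straight which subcategory each mapping space is computed in, and making sure the inclusion $(L\mathcal{C})_{\leq 0}\subset\mathcal{C}_{\leq 0}$ is available. That inclusion is exactly what \Cref{truncation of localization} supplies, so nothing beyond it is needed.
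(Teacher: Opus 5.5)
Your proposal is correct and follows the same route as the paper, whose proof simply combines the identification $\pi_0^{L\mathcal{C}}=\tau_{\leq 0}^{L\mathcal{C}}$ with \Cref{formulae for tau under localization} at $n=0$. Your Step 2 re-proves that lemma in slightly greater generality, giving $\pi_0^{L\mathcal{C}}L\simeq L_{\leq 0}\pi_0^{\mathcal{C}}$ on all of $\mathcal{C}$. Your Step 3 observation, that both sides are left adjoints to the same inclusion $(L\mathcal{C})_{\leq 0}\hookrightarrow\mathcal{C}$, is a clean way to package the naturality.
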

    
    \begin{proof}
        This follows at once from the identification $\tau_{\leq 0}^{L\mathcal{C}}=\pi_0^{L\mathcal{C}}$ and \Cref{formulae for tau under localization}.         \end{proof}
    In the case of a localization, the previously obtained truncation formula tells us that $$\pi_0^{L\mathcal{C}}:=\tau^{L\mathcal{C}}_{\leq 0} \simeq {L_{\leq 0}\tau^\mathcal{C}_{\leq 0}}_{|L\mathcal{C}}={L_{\leq 0}\pi_{ 0}^\mathcal{C}}_{|L\mathcal{C}}.$$ In fact, 

\begin{cor}\label[cor]{homotopy groups of localization}
    Let $L\mathcal{C}\subset \mathcal{C}$ be a Bousfield localization of a presentable $\infty$-category $\mathcal{C}$. Then $\pi_i^{L\mathcal{C}}: L\mathcal{C}\to \text{Disc}(LC)$ is equivalent to ${L_{\leq 0}\pi_i^{\mathcal{C}}}_{|L\mathcal{C}}$. In fact, this is also equivalent to $ {L_{\leq 0}^{L\mathcal{C}}\pi_i^{\mathcal{C}}}_{|L\mathcal{C}}: L\mathcal{C}\to \text{Disc}(LC)$.
\end{cor}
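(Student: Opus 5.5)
The plan is to reduce everything to \Cref{pi0 to Lpi0}, after first showing that loop objects in $L\mathcal{C}$ are just loop objects in $\mathcal{C}$. A Bousfield localization $L\mathcal{C}\subset\mathcal{C}$ is a reflective full subcategory, so the inclusion is a right adjoint and preserves all limits that exist in $L\mathcal{C}$. Moreover $L\mathcal{C}$ is closed under limits in $\mathcal{C}$. In particular the terminal object of $L\mathcal{C}$ is the terminal object $*$ of $\mathcal{C}$.

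First, for a pointed object $x:*\to X$ with $X\in L\mathcal{C}$, the pullback $*\times_X *$ of \Cref{loop object} computed in $\mathcal{C}$ already lies in $L\mathcal{C}$. It is therefore also the pullback computed in $L\mathcal{C}$, which gives $\Omega_{L\mathcal{C}}X\simeq \Omega_{\mathcal{C}}X$ as pointed objects. Inducting on $i$ gives $\Omega^i_{L\mathcal{C}}\simeq \Omega^i_{\mathcal{C}}|_{L\mathcal{C}_\bullet}$, and each iterate stays inside $L\mathcal{C}_\bullet$.

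Next, unwind \Cref{LC homotopy group}:
$$\pi_i^{L\mathcal{C}}X=\pi_0^{L\mathcal{C}}\Omega^i_{L\mathcal{C}}X\simeq \pi_0^{L\mathcal{C}}\Omega^i_{\mathcal{C}}X.$$
Since $\Omega^i_\mathcal{C}X\in L\mathcal{C}$, \Cref{pi0 to Lpi0} (equivalently \Cref{formulae for tau under localization} with $n=0$) identifies this with $L_{\leq 0}\tau^{\mathcal{C}}_{\leq 0}\Omega^i_{\mathcal{C}}X=L_{\leq 0}\pi_i^{\mathcal{C}}X$. The basepoints are carried along, because all these functors preserve the terminal object; $L_{\leq 0}$ does so since $*\in(L\mathcal{C})_{\leq 0}$. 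The group structures also agree: $\tau_{\leq 0}$ and $L_{\leq 0}$ preserve finite products here (the standing path-cartesian assumption), so the identification is compatible with the $i$-fold group structures coming from $\Omega^i$.

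For the second formulation, read $L^{L\mathcal{C}}_{\leq 0}$ as the reflection onto $(L\mathcal{C})_{\leq 0}$ applied to $\pi_i^{\mathcal{C}}X\in\mathcal{C}_{\leq 0}$. By \Cref{truncation of localization}, $(L\mathcal{C})_{\leq 0}=L\mathcal{C}\cap\mathcal{C}_{\leq 0}$, so any reflection of $\mathcal{C}_{\leq 0}$ (or of $\mathcal{C}$) onto this subcategory corepresents the same functor $\mathrm{Map}_{\mathcal{C}}(\pi_i^\mathcal{C}X,-)$ on it. The Yoneda lemma then gives agreement, exactly as in the proof of \Cref{formulae for tau under localization}.

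The main obstacle is bookkeeping rather than substance. One must make sure the equivalences are natural and respect pointings and group structures, not just underlying objects. One must also pin down the meaning of $L^{L\mathcal{C}}_{\leq 0}$, presumably as $\tau^{L\mathcal{C}}_{\leq 0}\circ L$ restricted to $\mathcal{C}_{\leq 0}$, and check via \Cref{L tunc is a localization} that it agrees with $L_{\leq 0}$ by comparing left adjoints.
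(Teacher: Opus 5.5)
Your proposal is correct and follows the paper's proof. Like the paper, you first identify $\Omega^i_{L\mathcal{C}}$ with $\Omega^i_{\mathcal{C}}$ restricted to $L\mathcal{C}$, using that $L\mathcal{C}$ is closed under limits, and then apply $\tau^{L\mathcal{C}}_{\leq 0}\simeq L_{\leq 0}\tau^{\mathcal{C}}_{\leq 0}$ on $L\mathcal{C}$; your Yoneda argument for the second formulation, which the paper leaves unaddressed, and your bookkeeping of basepoints and group structures are sound additions.
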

\begin{proof}
  First of all, since $L\mathcal{C}\subset\mathcal{C}$ is closed under limits and $\Omega$ is constructed from limits, we find that $\Omega_{L\mathcal{C}}^n=(\Omega^n_\mathcal{C})_{|L\mathcal{C}}$.

  By \Cref{truncation of localization}, for any $\esc{X}\in L\mathcal{C}$, we have $$\pi_i^{L\mathcal{C}}\esc{X}:=\tau^{L\mathcal{C}}_{\leq 0}\Omega^n_{L\mathcal{C}}\esc{X}\simeq L_{\leq 0}\tau^{\mathcal{C}}_{\leq 0}\Omega_\mathcal{C}^n\esc{X}\simeq L_{\leq 0} \pi_i^{\mathcal{C}}\esc{X}.$$
\end{proof}
\begin{rem}
Note that the above corollary does not say that $L$ takes homotopy groups to homotopy groups. Rather, it says that one must choose a localization at the level of discrete (aka $0$-truncated) objects to obtain the correct notion of internal homotopy `groups'.
\end{rem}

\begin{lem}\label[lem]{omega reduce trancation level}
    Let $\mathcal{C}$ be an $\infty$-category with finite limits, and let $X\in \mathcal{C}_\bullet$ be an $n$-truncated object for some $n$. Then $\Omega_{\mathcal{C}}X$ is $(n-1)$-truncated. 
\end{lem}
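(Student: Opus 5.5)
The plan is to reduce to the corresponding statement for spaces, by applying the functor $\mathrm{Map}_{\mathcal{C}}(\esc{E},-)$ for an arbitrary test object $\esc{E}\in\mathcal{C}$. By definition, $\Omega_{\mathcal{C}}X$ is $(n-1)$-truncated if and only if $\mathrm{Map}_{\mathcal{C}}(\esc{E},\Omega_{\mathcal{C}}X)$ is an $(n-1)$-truncated space for every such $\esc{E}$. Since $\mathrm{Map}_{\mathcal{C}}(\esc{E},-)\colon\mathcal{C}\to Spc$ preserves all limits that exist in $\mathcal{C}$, we can apply it to the left-hand pullback square of \Cref{loop object}. This identifies
\[
\mathrm{Map}_{\mathcal{C}}(\esc{E},\Omega_{\mathcal{C}}X)\simeq \mathrm{Map}_{\mathcal{C}}(\esc{E},pt)\times_{\mathrm{Map}_{\mathcal{C}}(\esc{E},X)}\mathrm{Map}_{\mathcal{C}}(\esc{E},pt)\simeq *\times_{\mathrm{Map}_{\mathcal{C}}(\esc{E},X)}*,
\]
where $pt$ is terminal, so $\mathrm{Map}_{\mathcal{C}}(\esc{E},pt)\simeq *$. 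Both maps $*\to \mathrm{Map}_{\mathcal{C}}(\esc{E},X)$ are the point $x\circ(\esc{E}\to pt)$. Hence the left-hand side is the space of paths in $\mathrm{Map}_{\mathcal{C}}(\esc{E},X)$ from this point to itself, i.e.\ the loop space $\Omega_{y}\,\mathrm{Map}_{\mathcal{C}}(\esc{E},X)$ at the basepoint $y$ given by that composite.

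It therefore suffices to prove the claim in $Spc$: if $Z$ is an $n$-truncated space and $y\in Z$, then $\Omega_y Z$ is $(n-1)$-truncated. By hypothesis $Z=\mathrm{Map}_{\mathcal{C}}(\esc{E},X)$ is $n$-truncated. For $\Omega_y Z$, every component is equivalent to the base component, via path composition with a loop. Moreover, for every $i\geq 1$ there are canonical isomorphisms $\pi_i(\Omega_y Z,\gamma)\cong\pi_{i+1}(Z,y)$. These vanish for $i+1>n$, i.e.\ for $i>n-1$, which is exactly $(n-1)$-truncatedness.

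The edge cases need a short separate check, with conventions as in \cite{lurie2009higher}:
\begin{itemize}
\item For $n=0$, $Z$ is discrete, so $\Omega_yZ\simeq *$. This is $(-1)$-truncated, being contractible.
\item For $n=-1$, $Z$ is empty or contractible. If $Z$ is contractible, then $\Omega_yZ\simeq *$, which is $(-2)$-truncated. The empty case cannot occur, since $Z$ has the point $y$.
\item For $n=-2$, $Z\simeq *$ and the statement is trivial.
\end{itemize}
Alternatively, one can invoke \cite{lurie2009higher}, Lemma 5.5.6.15, which says that a pullback of $n$-truncated objects along morphisms is $n$-truncated, together with the fact that the diagonal of an $n$-truncated object is $(n-1)$-truncated (\cite{lurie2009higher}, Lemma 5.5.6.15 and its surrounding discussion). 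With these, the right-hand square of \Cref{loop object} exhibits $\Omega_{\mathcal{C}}X\to pt$ as a base change of the $(n-1)$-truncated morphism $\Delta\colon X\to X\times X$. Since $(n-1)$-truncated morphisms are stable under pullback, $\Omega_{\mathcal{C}}X$ is $(n-1)$-truncated. I would present the mapping-space argument as the main proof and mention this as a remark.

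The main obstacle is minor. It is making precise that $\mathrm{Map}_{\mathcal{C}}(\esc{E},-)$ carries the pullback defining $\Omega_{\mathcal{C}}X$ to the homotopy pullback defining the loop space of the mapping space, at the correct basepoint. This follows from the corepresentable functor preserving limits, together with the fact that $\mathrm{Map}(\esc{E},pt)$ is contractible.
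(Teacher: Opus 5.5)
Your proof is correct, but your main argument is not the paper's. The paper's proof is essentially the alternative you relegate to a remark:

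\begin{itemize}
\item It uses [\cite{lurie2009higher}, Lemma 5.5.6.15] to see that the diagonal $X\to X\times X$ is $(n-1)$-truncated.
\item It notes that the right-hand square of \Cref{loop object} exhibits $\Omega_{\mathcal{C}}X\to pt$ as the base change of this diagonal along $(x,x)$.
\item It concludes because the left exact base-change functor preserves $(n-1)$-truncated objects [\cite{lurie2009higher}, Proposition 5.5.6.16].
\end{itemize}

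You instead apply the limit-preserving corepresentables $\mathrm{Map}_{\mathcal{C}}(\esc{E},-)$ to the left-hand square. This identifies $\mathrm{Map}_{\mathcal{C}}(\esc{E},\Omega_{\mathcal{C}}X)$ with the loop space of $\mathrm{Map}_{\mathcal{C}}(\esc{E},X)$ at $x\circ(\esc{E}\to pt)$, and you finish with a homotopy-group computation in $Spc$. Your route is more elementary and uses nothing beyond the definition of truncatedness. The paper's route is shorter and treats all $n\geq -1$ uniformly, without your case split. It also works at the level of morphisms, so it immediately gives the relative statement: the fibre over a point of any $n$-truncated morphism is $(n-1)$-truncated.

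Two small corrections to your write-up:

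\begin{itemize}
\item Lemma 5.5.6.15 of \cite{lurie2009higher} is only the diagonal criterion, not pullback-stability of truncated morphisms. For the latter, use Proposition 5.5.6.16 applied to base change, as the paper does.
\item For $n=-2$ the conclusion refers to $(-3)$-truncated objects, which are not defined in \cite{lurie2009higher}. Rather than calling that case trivial, state the lemma for $n\geq -1$.
\end{itemize}
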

\begin{proof}
    Recall that an object $X$ in a presentable $\infty$-category $\mathcal{C}$ is $n$-truncated iff the diagonal $X\xrightarrow{\Delta}X\times X$ is $(n-1)$-truncated [\cite{lurie2009higher}, Lemma 5.5.6.15]. From the pullback diagram on the right of \Cref{loop object}, we deduce that $\Omega_{\mathcal{C}}X$ is an $(n-1)$-truncated object. To see this, note that the base change functor $x^*: \mathcal{C}_{/X}\to \mathcal{C}$ is left exact (in fcat, it is a right adjoint). So by [\cite{lurie2009higher}, Proposition 5.5.6.16], $x^*$ preserves $(n-1)$-truncated objects. In particular, because the right vertical map of the right square of \Cref{loop object} is $(n-1)$-truncated, the pullback condition yields that $\Omega_{\mathcal{C}}X\to *$ is $(n-1)$-truncated. But this is equivalent to saying that $\Omega_\mathcal{C}X$ is $(n-1)$-truncated. 
\end{proof}
\begin{rem}
    The slice condition (i.e., $\Omega \tau_{\leq n}\simeq \tau_{\leq n-1}\Omega$) need not hold unless $\mathcal{C}$ is an $\infty$-topos. In other words, the canonical map $\tau_{\leq n-1}\Omega \to \Omega\tau_{\leq n}$ need not be an equivalence in general. 
\end{rem}
\begin{prop}\label[prop]{htpy groups of n truncated objects}
    Let $\mathcal{C}$ be a presentable $\infty$-category and $X\in \mathcal{C}$ be an $n$-truncated object for some $n\geq 0$. Then for every choice of base point $x:*\to X$ and every $i\geq n$, $\pi_i^\mathcal{C}(X,x)=*\in \mathrm{Disc}(\mathcal{C})$.
\end{prop}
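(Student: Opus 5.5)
The plan is to iterate \Cref{omega reduce trancation level} and then observe that a pointed $(-1)$-truncated object is terminal. One caveat first. For $i=n$ the assertion cannot hold literally. For example, with $n=0$ we would have $\pi_0^{\mathcal{C}}(X,x)=\tau_{\leq 0}^{\mathcal{C}}X=X$, which need not be $*$. More generally $\Omega^n_{\mathcal{C}}X$ is $0$-truncated, so $\pi_n^{\mathcal{C}}(X,x)=\Omega^n_{\mathcal{C}}X$. So I will prove the statement for $i>n$, which is the range where it is true. For $i=n$ the same argument yields only the identification $\pi_n^{\mathcal{C}}(X,x)\simeq\Omega^n_{\mathcal{C}}X$.

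First, apply \Cref{omega reduce trancation level} $i$ times to the pointed object $(X,x)$. At each step the loop object $\Omega_{\mathcal{C}}$ is canonically pointed, so the lemma applies again. This shows that $\Omega^i_{\mathcal{C}}X$ is $(n-i)$-truncated for $i\leq n+1$. The lemma's proof rests on [\cite{lurie2009higher}, Lemma 5.5.6.15] and Proposition 5.5.6.16, which are valid for truncation levels $\geq -2$ (equivalently, diagonal levels $\geq -2$). Hence the step from a $0$-truncated object to a $(-1)$-truncated one is legitimate, and in particular $\Omega^{n+1}_{\mathcal{C}}X$ is $(-1)$-truncated.

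Second, show that a pointed $(-1)$-truncated object $Y$ is terminal. For every $E\in\mathcal{C}$ the space $\mathrm{Map}_{\mathcal{C}}(E,Y)$ is either empty or contractible. It is nonempty, because the composite $E\to *\xrightarrow{y}Y$ exists. So it is contractible for all $E$, and therefore $Y\simeq *$. Applying this to $Y=\Omega^{n+1}_{\mathcal{C}}X$ gives $\Omega^{n+1}_{\mathcal{C}}X\simeq *$. Since $\Omega_{\mathcal{C}}*\simeq *\times_* *\simeq *$, it follows that $\Omega^i_{\mathcal{C}}X\simeq *$ for every $i>n$. This also avoids any need to run the lemma below level $-1$.

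Finally, $*$ is $0$-truncated, being terminal. Hence $\pi_i^{\mathcal{C}}(X,x)=\tau^{\mathcal{C}}_{\leq 0}\Omega^i_{\mathcal{C}}X\simeq\tau^{\mathcal{C}}_{\leq 0}*\simeq *$ in $\mathrm{Disc}(\mathcal{C})$. The only delicate point is checking that the truncation lemmas are used in their valid range, at level $-1$ and diagonal level $-2$. This is the step where I would be careful, and it is why I pass to "pointed $(-1)$-truncated implies terminal" instead of iterating further.
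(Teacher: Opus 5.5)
Your proof is correct and follows the paper's argument: iterate the loop-truncation lemma until $\Omega^{n+1}_{\mathcal{C}}X$ is $(-1)$-truncated, then use that a pointed $(-1)$-truncated object is terminal. Your caveat is also right: the claim fails for $i=n$ (e.g.\ $\pi_0^{\mathcal{C}}(X,x)=X$ for discrete $X$), and the paper's proof in fact only treats $i\geq n+1$; your use of $\Omega_{\mathcal{C}}*\simeq *$ handles $i>n+1$ more cleanly than the paper's iteration of the lemma down to ``$(-2)$-truncated'' and beyond.
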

\begin{proof}
 Using either of the pullback squares of \Cref{loop object}, observe that when $X$ is pointed, so is $\Omega_{\mathcal{C}}X$. Fixing a base point of $X$, we will thus drop the base point from the picture. Using \Cref{omega reduce trancation level}, it follows by induction that $\Omega^{i}X$ is $(n-i)$-truncated. So when $i>n+1$, we are done (since $(-2)$-truncated objects are contractible). It remains to show that $\pi_{n+1}^\mathcal{C}(X)=*$. Now, $\Omega^{n+1}X$ is $(-1)$-truncated. So it suffices to know that a pointed $(-1)$-truncated object is contractible. This follows from the similar statement in $Spc$, the Yoneda lemma, and the definition of truncated objects.
 \end{proof} 
 \begin{rem}\label[rem]{truncated condition}
Once again, the converse of the proposition need not be true in general. In fact, this need not even be true in arbitrary $\infty$-toposes. A minimal hypothesis under which the converse holds is hypercompleteness. Indeed, in any $\infty$-topos, such a morphism induces an $\infty$-connective morphism $X\to \tau_{\leq n}^\mathcal{C}X$. So the converse is true, for example, in $\mathcal{P}(S)$ [\cite{lurie2009higher}, Example 7.2.1.9], or in $\mathcal{P}_{nis}(S)$ when $S$ is Qcqs of finite Krull dimension (use [\cite{MR4296353}, Theorem 3.18] and [\cite{lurie2009higher}, Corollary 7.2.1.12] along with the fact that the collection of functors $\{p^*:\mathcal{P}_{nis}(Sm_S)\to \mathcal{P}_{nis}(\acute{e}t_X)\}_{p:X\to S\in Sm_S}$ is conservative).
 \end{rem}
We end this subsection with a few more definitions that we will use in the future:
\begin{defn}\label[defn]{pinL}
When $L\mathcal{C}\subset \mathcal{C}$ is an accessible localization of presentable $\infty$-categories, we define $\pi_0^L:\mathcal{C}\to \mathrm{Disc}(\mathcal{C})$ as the composition $\pi_0^{\mathcal{C}}\circ L$.
\end{defn}
\begin{defn}\label[defn]{L connected and LC connected}
Let $X\in \mathcal{C}$ be an object and let $L$ be a localization of $\mathcal{C}$. We say that $X$ is $L$-connected if $LX$ is a connected object of $\mathcal{C}$, i.e., $\pi_0^LX=\pi_0^\mathcal{C}LX\simeq *$. We say that $X$ is $L\mathcal{C}$-connected if $\pi_0^{L\mathcal{C}}LX\simeq *$.
\end{defn}
\begin{lem}
    An $L$-connected object $X\in \mathcal{C}$ is $L\mathcal{C}$-connected.
\end{lem}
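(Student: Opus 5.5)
The plan is to apply \Cref{pi0 to Lpi0} to the object $LX\in L\mathcal{C}$. That lemma gives
$$\pi_0^{L\mathcal{C}}LX\simeq L_{\leq 0}\pi_0^{\mathcal{C}}LX.$$
By hypothesis $X$ is $L$-connected, that is, $\pi_0^{\mathcal{C}}LX\simeq *$, where $*$ is the terminal object of $\mathcal{C}$. Substituting, it suffices to show that $L_{\leq 0}(*)\simeq *$.

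For this last step I would argue directly from the definitions.
\begin{enumerate}
\item The terminal object $*$ of $\mathcal{C}$ is $0$-truncated, since $\mathrm{Map}_{\mathcal{C}}(\esc{E},*)$ is contractible for every $\esc{E}$. It is also $L$-local: $L\mathcal{C}\subset\mathcal{C}$ is closed under limits, and $*$ is the empty limit.
\item By \Cref{truncation of localization}, it follows that $*\in(L\mathcal{C})_{\leq 0}=\mathcal{C}_{\leq 0}\cap L\mathcal{C}$.
\item $L_{\leq 0}$ is a localization onto $(L\mathcal{C})_{\leq 0}$ by \Cref{L tunc is a localization}, so it restricts to the identity on that subcategory. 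Hence $L_{\leq 0}(*)\simeq *$.
\end{enumerate}
Combining these gives $\pi_0^{L\mathcal{C}}LX\simeq *$, i.e.\ $X$ is $L\mathcal{C}$-connected.

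Equivalently, one can verify the claim by the Yoneda lemma: for any $\esc{Y}\in(L\mathcal{C})_{\leq 0}$, the chain of identifications in the proof of \Cref{formulae for tau under localization} gives
$$\mathrm{Map}(\pi_0^{L\mathcal{C}}LX,\esc{Y})\simeq\mathrm{Map}_{\mathcal{C}}(\pi_0^{\mathcal{C}}LX,\esc{Y})\simeq\mathrm{Map}_{\mathcal{C}}(*,\esc{Y}),$$
and $\mathrm{Map}_{\mathcal{C}}(*,\esc{Y})$ is the same as $\mathrm{Map}(*,\esc{Y})$ computed in $(L\mathcal{C})_{\leq 0}$, because $*$ already lies in that subcategory.

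I do not expect any real obstacle. The only point needing care is the observation that the terminal object is both local and discrete, so that $L_{\leq 0}$ fixes it.
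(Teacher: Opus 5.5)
Your proposal is correct and follows the paper's proof, which simply says the claim "follows at once from \Cref{pi0 to Lpi0}". You apply that lemma to $LX$ in the same way and also spell out the step the paper leaves implicit: the terminal object is both $L$-local and $0$-truncated, so $L_{\leq 0}(*)\simeq *$.
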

\begin{proof}
    This follows at once from \Cref{pi0 to Lpi0}.
\end{proof}
\subsection{Eilenberg Maclane objects}
Throughout this subsection, $\mathcal{C}$ is a presentable $\infty$-category. In a general presentable $\infty$-category, there could possibly be many ways to define the notion of $n$-connective objects. We choose to define it dually to \Cref{htpy groups of n truncated objects}:
\begin{defn}\label[defn]{L n connective}
    We say that a pointed object $x:*\to X$ in a presentable $\infty$-category $\mathcal{C}$ is 
    \begin{enumerate}
    \item ($0$-)connected, or equivalently, $1$-connective, if $\pi_0^\mathcal{C}X=\tau^\mathcal{C}_{\leq 0}X=*$.
        \item globally $(n+1)$-connective (for some $n\geq 0$) (or $n$-connected) if it is connected and the object $\pi_i^\mathcal{C}(X,x)$ is the terminal object for all $0<i\leq n$.
    \end{enumerate}
\end{defn}

\begin{rem}
 By convention, we say that every object is $(-2)$-connected, and that an object is $(-1)$-connected if $\tau_{\leq -1}^\mathcal{C}X=*$. Similarly, one can define the $0$-connectivity condition even when $\esc{X}$ is not given to be pointed. When we say that an object $X$ is $0$-connected, it is inherently implied that $X$ is $(-1)$-truncated. This is because $\tau_{\leq -1}\tau_{\leq 0}\simeq \tau_{\leq -1}$. However, the above definition is not a generalization of [\cite{lurie2009higher}, Definition 6.5.1.10], where the author defines it for an $\infty$-topos. The problem is that the local homotopy group object $\pi_iX$ cannot always be assembled from the global homotopy groups $\pi_i(X,x)$.
\end{rem}
\begin{rem}
    A version of [\cite{lurie2009higher}, Proposition 6.5.1.12], i.e., `an object $X$ is $n$-connective iff $\tau_{\leq n}^\mathcal{C}X\simeq *$', might be false unless $\mathcal{C}$ is an $\infty$-topos.
\end{rem}

We denote the full subcategory of $\mathcal{C}$ consisting of $n$-connective objects by $\mathcal{C}_{\geq n}$.  
\begin{defn}
An object in $\mathcal{C}$ is called $n$-Eilenberg-MacLane if it is $n$-connective and $n$-truncated in $\mathcal{C}$. We denote by $\mathcal{EM}_n(\mathcal{C})$ the full subcategory of $n$-EM objects in $\mathcal{C}$. 
\end{defn}
Let $\tau_{\leq 0}^\mathcal{C}(L\mathcal{C}) $ denote the image of the composite $L\mathcal{C}\subset \mathcal{C}\xrightarrow[]{\tau_{\leq 0}^\mathcal{C}}\text{Disc}\mathcal{C}$.

\begin{lem}\label[lem]{connective em objects of ordinary localization}
 For any localization $L\mathcal{C}\subset \mathcal{C}$, there are containments $$L\mathcal{C}\bigcap \mathcal{C}{_{\geq n} }\subset (L\mathcal{C}){_{\geq n} }\text{ and }L\mathcal{C}\bigcap \mathcal{EM}_n(\mathcal{C})\subset \mathcal{EM}_n(L\mathcal{C}).$$
\end{lem}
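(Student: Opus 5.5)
The first containment reduces to \Cref{homotopy groups of localization}: the internal homotopy groups of $L\mathcal{C}$ are obtained from those of $\mathcal{C}$ by applying $L_{\leq 0}$. The second containment then follows from the first together with \Cref{truncation of localization}.

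\textbf{First containment.} Let $x:*\to X$ be a pointed object with $X\in L\mathcal{C}\cap\mathcal{C}_{\geq n}$. The terminal object of $\mathcal{C}$ is $L$-local, because $L\mathcal{C}$ is closed under limits. So $x$ is also a pointing in $L\mathcal{C}$, and there is no ambiguity in the basepoint.

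\emph{Connectedness.} By \Cref{pi0 to Lpi0}, $\pi_0^{L\mathcal{C}}X\simeq L_{\leq 0}\pi_0^{\mathcal{C}}X\simeq L_{\leq 0}(*)$. The terminal object $*$ is $0$-truncated and $L$-local, hence lies in $(L\mathcal{C})_{\leq 0}$. Therefore $L_{\leq 0}(*)\simeq *$, and $X$ is connected in $L\mathcal{C}$.

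\emph{Higher homotopy groups.} For $0<i<n$, we use $\Omega^i_{L\mathcal{C}}=(\Omega^i_{\mathcal{C}})_{|L\mathcal{C}}$ as in the proof of \Cref{homotopy groups of localization}. This gives
\[
\pi_i^{L\mathcal{C}}(X,x)\simeq L_{\leq 0}\pi_i^{\mathcal{C}}(X,x)\simeq L_{\leq 0}(*)\simeq *.
\]
Hence $X\in(L\mathcal{C})_{\geq n}$.

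The only point needing care is the indexing convention in \Cref{L n connective}: the range of $i$ should be matched exactly as in the definition. The argument itself is uniform in $i$, so whichever range the definition uses, the same computation applies.

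\textbf{Second containment.} Let $X\in L\mathcal{C}\cap\mathcal{EM}_n(\mathcal{C})$. Then $X\in L\mathcal{C}\cap\mathcal{C}_{\geq n}$, so the first part gives $X\in(L\mathcal{C})_{\geq n}$. Also $X\in L\mathcal{C}\cap\mathcal{C}_{\leq n}$, and \Cref{truncation of localization} identifies this intersection with $(L\mathcal{C})_{\leq n}$. Combining the two, $X$ is $n$-connective and $n$-truncated in $L\mathcal{C}$, that is, $X\in\mathcal{EM}_n(L\mathcal{C})$.

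I expect no real obstacle here. The only point to check is that $L_{\leq 0}$ preserves the terminal object, and this holds because $*$ already lies in $(L\mathcal{C})_{\leq 0}$.
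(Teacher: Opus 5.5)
Your proof is correct and follows essentially the same route as the paper. The first containment comes from the identification $\pi_i^{L\mathcal{C}}\simeq L_{\leq 0}\pi_i^{\mathcal{C}}$ on $L$-local objects, and the second from combining it with $(L\mathcal{C})_{\leq n}=L\mathcal{C}\cap\mathcal{C}_{\leq n}$. Your explicit checks that the terminal object is $L$-local (so the basepoint lies in $L\mathcal{C}$) and that $L_{\leq 0}(*)\simeq *$ fill in small details the paper leaves implicit.
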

\begin{proof}
    Let $\esc{X}\in L\mathcal{C}\bigcap \mathcal{C}{_{\geq n} }$.
    We shall show that $\esc{X}\in(L\mathcal{C}){_{\geq n} }$. That is $\pi_i^{L\mathcal{C}}\esc{X}=0$ for all $0\leq i\leq n-1$. But we know from \Cref{homotopy groups of localization} that $\pi_i^{L\mathcal{C}}\esc{X}=L_{\leq 0}\pi^\mathcal{C}_{i}\esc{X}=0$ (since $\esc{X}\in\mathcal{C}{_{\geq n} }$).

    Using \Cref{truncation of localization} 
    \begin{flalign*}
   L\mathcal{C}\bigcap \mathcal{EM}_n(\mathcal{C})&=L\mathcal{C}\bigcap \mathcal{C}_{\leq n}\bigcap\mathcal{C}{_{\geq n} }\\&=(L\mathcal{C})_{\leq n}\bigcap L\mathcal{C}\bigcap \mathcal{C}{_{\geq n} }\\&
   \subset (L\mathcal{C})_{\leq n}\bigcap( L\mathcal{C})_{\geq n}=\mathcal{EM}_n(L\mathcal{C})      
    \end{flalign*}
\end{proof}
\begin{defn}\label[defn]{path injectivity}
    We say that the localization $L$ of the $\infty$-topos $\mathcal{C}$ is path-injective if the discrete localization functor $\tau_{\leq 0}^{L\mathcal{C}}: \tau_{\leq 0}^{\mathcal{C}}L\mathcal{C}\to \text{Disc}\mathcal{C}$ detects the terminal object.
\end{defn}
In other words: A localization $L$ is path injective if one has the following: an object $X\in \mathcal{C}$ is $L$ connected iff it is $L\mathcal{C}$ connected.
\begin{prop}\label[prop]{em of localization}
    Let $L$ be a path-injective localization of an $\infty$-topos $\mathcal{C}$. Then there are equalities $$ L\mathcal{C}\bigcap \mathcal{C}{_{\geq n} }=(L\mathcal{C}){_{\geq n} }\text{ and }\mathcal{EM}_n(L\mathcal{C})=L\mathcal{C}\bigcap \mathcal{EM}_n(\mathcal{C}).   $$
\end{prop}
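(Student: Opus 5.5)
By \Cref{connective em objects of ordinary localization} we already have the inclusions $L\mathcal{C}\cap \mathcal{C}_{\geq n}\subset (L\mathcal{C})_{\geq n}$ and $L\mathcal{C}\cap\mathcal{EM}_n(\mathcal{C})\subset \mathcal{EM}_n(L\mathcal{C})$. So only the reverse inclusions need proof. Moreover, the second equality follows from the first together with \Cref{truncation of localization}:
\[
\mathcal{EM}_n(L\mathcal{C})=(L\mathcal{C})_{\leq n}\cap (L\mathcal{C})_{\geq n}=\mathcal{C}_{\leq n}\cap L\mathcal{C}\cap \mathcal{C}_{\geq n}=L\mathcal{C}\cap \mathcal{EM}_n(\mathcal{C}).
\]
The whole task is therefore to show $(L\mathcal{C})_{\geq n}\subset \mathcal{C}_{\geq n}$ for pointed $\esc{X}\in L\mathcal{C}$.

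Let $\esc{X}\in (L\mathcal{C})_{\geq n}$, pointed. The plan is to check the defining conditions of \Cref{L n connective} one homotopy level at a time.

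\textbf{Connectedness.} We have $\pi_0^{L\mathcal{C}}\esc{X}=*$, so $\esc{X}$ is $L\mathcal{C}$-connected. Since $\esc{X}$ is $L$-local, $L\esc{X}\simeq\esc{X}$. Path-injectivity (\Cref{path injectivity}) then says $\esc{X}$ is $L$-connected, i.e. $\pi_0^{\mathcal{C}}\esc{X}=\pi_0^{\mathcal{C}}L\esc{X}=*$.

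\textbf{Higher groups.} For $0<i\leq n-1$, the proof of \Cref{homotopy groups of localization} gives $\Omega^i_{L\mathcal{C}}\esc{X}=\Omega^i_{\mathcal{C}}\esc{X}$, and this object lies in $L\mathcal{C}$ because $L\mathcal{C}$ is closed under limits. Hence
\[
\pi_i^{L\mathcal{C}}\esc{X}=\pi_0^{L\mathcal{C}}(\Omega^i_{\mathcal{C}}\esc{X}).
\]
The hypothesis says this is $*$, so $\Omega^i_{\mathcal{C}}\esc{X}$ is $L\mathcal{C}$-connected. Applying path-injectivity to the local object $\Omega^i_{\mathcal{C}}\esc{X}$ gives $\pi_0^{\mathcal{C}}\Omega^i_{\mathcal{C}}\esc{X}=*$, that is, $\pi_i^{\mathcal{C}}\esc{X}=*$. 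So $\esc{X}\in \mathcal{C}_{\geq n}$, which proves the first equality.

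The main point to verify is that path-injectivity may be applied to every loop object $\Omega^i\esc{X}$, not just to $\esc{X}$. This requires those loop objects to be $L$-local, and that holds because $\Omega$ is a finite limit and local objects are closed under limits. A smaller technicality is the bookkeeping of base points: \Cref{path injectivity} is phrased for unpointed connectedness, while $\pi_i$ is pointed. Connectedness does not depend on the pointing, so this causes no problem.
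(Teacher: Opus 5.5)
Your proposal is correct and follows essentially the same route as the paper. Both arguments reduce the Eilenberg--MacLane equality to the connectivity equality via \Cref{truncation of localization}, get one inclusion from \Cref{connective em objects of ordinary localization}, and get the reverse inclusion by applying path-injectivity to $\pi_i^{\mathcal{C}}\esc{X}=\tau_{\leq 0}^{\mathcal{C}}\Omega^i\esc{X}$, using $\pi_i^{L\mathcal{C}}\esc{X}\simeq L_{\leq 0}\pi_i^{\mathcal{C}}\esc{X}$. Your remark that $\Omega^i_{\mathcal{C}}\esc{X}$ is $L$-local, so that $\pi_i^{\mathcal{C}}\esc{X}$ lies in the image $\tau_{\leq 0}^{\mathcal{C}}(L\mathcal{C})$ where path-injectivity applies, makes explicit a step the paper's proof only uses implicitly.
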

\begin{proof}
Using \Cref{truncation of localization}, $$L\mathcal{C}\bigcap \mathcal{EM}_n(\mathcal{C})=L\mathcal{C}\bigcap \mathcal{C}_{\leq n}\bigcap\mathcal{C}{_{\geq n} }=(L\mathcal{C})_{\leq n}\bigcap \mathcal{C}{_{\geq n} }.$$ It suffices to show that $\mathcal{C}{_{\geq n} }\bigcap L\mathcal{C}=(L\mathcal{C}){_{\geq n} }.$

By \Cref{connective em objects of ordinary localization}, it suffices to show that if $\esc{X}\in L\mathcal{C}{_{\geq n} }$, then $\esc{X}\in \mathcal{C}{_{\geq n} }\bigcap L\mathcal{C}$. This is equivalent to showing that $\pi_i^\mathcal{C}\esc{X}=0$ for all $i\leq n-1$. Since $\tau_{\leq 0}^{L\mathcal{C}}$ is injective on the image of the restriction $\tau_{\leq 0}^\mathcal{C}: L\mathcal{C}\to \text{Disc}\mathcal{C}$, it suffices to show that $\tau_{\leq 0}^{L\mathcal{C}}\pi_i^\mathcal{C}\esc{X}=0$. But this is nothing but $$\tau_{\leq 0}^{L\mathcal{C}}\pi_i^{\mathcal{C}}\esc{X}=L_{\leq 0}\tau_{\leq 0}^{\mathcal{C}}\pi_i^{\mathcal{C}}\esc{X}=L_{\leq 0}\pi_i^{\mathcal{C}}\esc{X}=\pi_i^{L\mathcal{C}}\esc{X}$$ [thanks to \Cref{homotopy groups of localization}], which is given to be $0$ $\forall i\leq n-1$ (by the condition $\esc{X}\in (L\mathcal{C}){_{\geq n} }$).
\end{proof}

\begin{rem}\label[rem]{em in infinity topos}
A key feature of an $\infty$-topos (for example, $\text{Spc}$ of spaces, $\mathcal{P}(Sm_S)$, $\mathcal{P}_\Sigma(Sm_S)$, $\mathcal{P}_{nis}(Sm_S)$, etc.) is that in an $\infty$-topos an object is $n$-connective if and only if its $n-1$-truncation is trivial. In other words, $\mathcal{C}_{\geq n}$ coincides with the fiber of $\tau_{\leq n-1}:\mathcal{C}\to \mathcal{C}$ at the final object [\cite{lurie2009higher}, Proposition 6.5.1.12], provided $\mathcal{C}$ is an $\infty$-topos. 
Following this, it is not hard to show that in an $\infty$-topos $\mathcal{C}$, the restriction of the homotopy group functors $${\pi_n^\mathcal{C}}_{|\mathcal{EM}_n}: \mathcal{EM}_n(\mathcal{C}_*)\to n\mbox{-}Grp(\text{Disc}(\mathcal{C}))$$ induces equivalences of categories [HTT, \cite{lurie2009higher}, Proposition 7.2.2.12]. In words, in an $\infty$-topos, Eilenberg-MacLane objects are nothing more than discrete "group" objects.
\end{rem}

\begin{lem}
 Let $\mathcal{C}$ be an $\infty$-topos and $L$ an accessible localization. Assume that $L_{\leq 0}$ is cartesian. Then for every $\esc{X}\in L\mathcal{C}$, $\pi_n^{L\mathcal{C}}$ is an $n$-group object of $\mathrm{Disc}(L\mathcal{C})$.
\end{lem}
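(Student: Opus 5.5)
The plan is to transport the $n$-fold group structure that $\Omega^n_{L\mathcal{C}}\esc{X}$ carries in $L\mathcal{C}$ down to $\mathrm{Disc}(L\mathcal{C})$, using a truncation functor that preserves finite products. Fix a pointed $\esc{X}\in L\mathcal{C}$.

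First, since $L\mathcal{C}\subset\mathcal{C}$ is closed under limits, the proof of \Cref{homotopy groups of localization} gives $\Omega^n_{L\mathcal{C}}\esc{X}\simeq\Omega^n_{\mathcal{C}}\esc{X}$. This object is an $n$-fold group object in $L\mathcal{C}$, i.e.\ an $\mathbb{E}_n$-group: loop composition is built entirely out of finite limits, namely the Čech nerve of $*\to\Omega^{n-1}\esc{X}$. In particular it is a $Grp$-object in $L\mathcal{C}$ whose structure maps are finite products of copies of $\Omega^n\esc{X}$, together with the higher coherences.

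Second, I would show that $\tau^{L\mathcal{C}}_{\leq 0}=\pi_0^{L\mathcal{C}}$ preserves finite products. Using \Cref{formulae for tau under localization}, on $L\mathcal{C}$ we have $\tau_{\leq 0}^{L\mathcal{C}}\simeq L_{\leq 0}\circ\tau^{\mathcal{C}}_{\leq 0}$.
\begin{itemize}
\item Because $\mathcal{C}$ is an $\infty$-topos, $\tau^{\mathcal{C}}_{\leq 0}$ preserves finite products (HTT Lemma 6.5.1.2).
\item By hypothesis $L_{\leq 0}$ is cartesian.
\end{itemize}
The composite therefore preserves finite products, and it sends the terminal object to the terminal object. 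Hence $\pi_0^{L\mathcal{C}}$ carries group objects of $L\mathcal{C}$ to group objects of $\mathrm{Disc}(L\mathcal{C})$. Since $\mathrm{Disc}(L\mathcal{C})$ is a $1$-category, an $\mathbb{E}_n$-group object there is just an $n$-fold group object in the ordinary sense.

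Third, I would apply this to each of the $n$ independent loop directions. Write $\Omega^n\esc{X}=\Omega(\Omega^{n-1}\esc{X})$, and likewise for each coordinate. The $n$ compatible group structures are mutually interchanging, so applying the product-preserving functor gives $n$ compatible group structures on $\pi_n^{L\mathcal{C}}\esc{X}$. By the Eckmann–Hilton argument recalled above, this yields an object of $n\mbox{-}Grp(\mathrm{Disc}(L\mathcal{C}))$: a group for $n=1$ and an abelian group for $n\geq 2$.

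The main obstacle is the second step. One must check that a cartesian $L_{\leq 0}$ applied after $\tau_{\leq 0}^{\mathcal{C}}$ genuinely preserves the products in question. These products are formed in $\mathcal{C}$, and they coincide with products in $L\mathcal{C}$ because both inclusions are limit-closed. Then the unit, inverse and associativity diagrams, which are diagrams of finite products, are preserved up to coherent equivalence. Since the target is a $1$-category, only the commutativity of these diagrams needs checking, not the higher coherences.
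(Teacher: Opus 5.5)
Your argument is correct and is essentially the paper's. The paper writes $\pi_n^{L\mathcal{C}}\simeq L_{\leq 0}\pi_n^{\mathcal{C}}$ via \Cref{homotopy groups of localization}, uses that $\pi_n^{\mathcal{C}}$ is already an $n$-group object of $\mathrm{Disc}(\mathcal{C})$ because $\mathcal{C}$ is an $\infty$-topos (the paragraph after HTT Lemma 6.5.1.2), and then transports this structure along the cartesian functor $L_{\leq 0}$; you combine the same two ingredients into the single product-preserving composite $L_{\leq 0}\circ\tau_{\leq 0}^{\mathcal{C}}$ and apply it directly to the $\mathbb{E}_n$-group $\Omega^n\esc{X}$.
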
 
\begin{proof}
Since $\pi_i^{L\mathcal{C}}=L_{\leq 0}\pi_i^\mathcal{C}$ (\Cref{homotopy groups of localization}), this follows from the fact that $\pi_i^\mathcal{C}$ has the same properties when $\mathcal{C}$ is an $\infty$-topos [\cite{lurie2009higher}, the paragraph after Lemma 6.5.1.2].
\end{proof} 
Given a cartesian $\infty$-category $\mathcal{C}$, an $n$-group object is a grouplike $E_n$-monoid object of $\mathcal{C}$. We denote the full subcategory of $\mathcal{C}$ consisting of $n$-group objects by $n\mbox{-}Grp(\mathcal{C})$. When $\mathcal{C}$ is an ordinary $1$-category, $0\mbox{-}Grp(\mathcal{C})\simeq \mathcal{C}_*$, $1\mbox{-}Grp(\mathcal{C})\simeq Grp(\mathcal{C})$, and finally, $n\mbox{-}Grp(\mathcal{C})\simeq Ab(\mathcal{C})$ for every $n\geq 2$.
\begin{defn}\label[defn]{strictly n local}
    Let $G\in n\mbox{-}Grp(\mathrm{Disc}(\mathcal{C}))$. We say that $G$ is $n$-strictly $L$-local if the canonical map $\mathrm{B}^n_\mathcal{C}G\to \mathrm{B}^n_{L\mathcal{C}}G$ is an equivalence. Denote the full subcategory of $n\mbox{-}Grp(\mathrm{Disc}(\mathcal{C}))$ consisting of $n$-strictly $L$-local objects as $n\mbox{-}Grp^L$.
 \end{defn}  

\begin{prop}\label[prop]{grp em}
Let $L$ be a path-injective localization of an $\infty$-topos. Then for every $n\geq 0$, there is an equivalence of categories $\mathrm{B}^n_{L\mathcal{C}}: n\mbox{-}Grp^L\simeq \mathcal{EM}_n(L\mathcal{C}):\pi_n^{L\mathcal{C}}$. In fact, when restricted to $\mathcal{EM}_n(L\mathcal{C})$, there is an equivalence $\pi_n^{L\mathcal{C}}\simeq \pi_n^\mathcal{C}$.
\end{prop}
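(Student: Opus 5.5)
The plan is to transport the topos-theoretic equivalence of \Cref{em in infinity topos} (i.e.\ [\cite{lurie2009higher}, Proposition 7.2.2.12]) along the identification $\mathcal{EM}_n(L\mathcal{C})=L\mathcal{C}\cap\mathcal{EM}_n(\mathcal{C})$ from \Cref{em of localization}, which is exactly where path-injectivity is used. In the $\infty$-topos $\mathcal{C}$ we have inverse equivalences $\mathrm{B}^n_\mathcal{C}: n\mbox{-}Grp(\mathrm{Disc}(\mathcal{C}))\simeq \mathcal{EM}_n(\mathcal{C}):\pi_n^\mathcal{C}$. The task is therefore to identify which objects on the left correspond to the full subcategory $L\mathcal{C}\cap\mathcal{EM}_n(\mathcal{C})$ on the right. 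Then we check that on this subcategory $\mathrm{B}^n_\mathcal{C}$ agrees with $\mathrm{B}^n_{L\mathcal{C}}$, and $\pi_n^\mathcal{C}$ agrees with $\pi_n^{L\mathcal{C}}$.

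First, I would prove the last claim. Let $\esc{X}\in\mathcal{EM}_n(L\mathcal{C})$. By \Cref{em of localization}, $\esc{X}\in L\mathcal{C}\cap\mathcal{EM}_n(\mathcal{C})$. By \Cref{homotopy groups of localization}, $\pi_n^{L\mathcal{C}}\esc{X}\simeq L_{\leq 0}\pi_n^\mathcal{C}\esc{X}$, so it suffices to show that $\pi_n^\mathcal{C}\esc{X}$ is already $L$-local. Since $\esc{X}$ is $n$-truncated and $L\mathcal{C}$ is closed under limits, $\Omega^n_\mathcal{C}\esc{X}=\Omega^n_{L\mathcal{C}}\esc{X}$ lies in $L\mathcal{C}$. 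By \Cref{omega reduce trancation level} it is $0$-truncated, hence already discrete. Therefore $\pi_n^\mathcal{C}\esc{X}=\tau_{\leq 0}^\mathcal{C}\Omega^n\esc{X}=\Omega^n\esc{X}\in L\mathcal{C}\cap\mathrm{Disc}(\mathcal{C})$, and applying $L_{\leq 0}$ does nothing. This gives $\pi_n^{L\mathcal{C}}\simeq\pi_n^\mathcal{C}$ on $\mathcal{EM}_n(L\mathcal{C})$. (For $n=0$ the statement is immediate, since $\mathcal{EM}_0$ consists of pointed discrete objects.)

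Next, I would identify the essential image. If $\esc{X}\in L\mathcal{C}\cap\mathcal{EM}_n(\mathcal{C})$ and $G=\pi_n^\mathcal{C}\esc{X}$, then $\mathrm{B}^n_\mathcal{C}G\simeq\esc{X}$ is $L$-local. I would argue that $\mathrm{B}^n_{L\mathcal{C}}G$, the internal $n$-fold bar construction (geometric realization computed in $L\mathcal{C}$), is $L\mathrm{B}^n_\mathcal{C}G$. This holds because $L$ preserves colimits and the simplicial objects involved are products of copies of $G$, which are local; finite products are preserved when the localization is cartesian, and otherwise one argues level-wise. Hence $\mathrm{B}^n_\mathcal{C}G\to\mathrm{B}^n_{L\mathcal{C}}G$ is the unit at a local object, so it is an equivalence. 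Thus $G\in n\mbox{-}Grp^L$. Conversely, if $G\in n\mbox{-}Grp^L$, then $\mathrm{B}^n_{L\mathcal{C}}G\simeq\mathrm{B}^n_\mathcal{C}G$ is both local and an $n$-EM object of $\mathcal{C}$. By \Cref{em of localization} it lies in $\mathcal{EM}_n(L\mathcal{C})$, and its $\pi_n^{L\mathcal{C}}=\pi_n^\mathcal{C}$ is $G$ by the topos statement.

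Combining these two steps, the restriction of the topos equivalence to the full subcategories $n\mbox{-}Grp^L$ and $L\mathcal{C}\cap\mathcal{EM}_n(\mathcal{C})=\mathcal{EM}_n(L\mathcal{C})$ gives the claimed equivalence, with functors $\mathrm{B}^n_{L\mathcal{C}}$ and $\pi_n^{L\mathcal{C}}$. I expect the main obstacle to be the comparison $\mathrm{B}^n_{L\mathcal{C}}\simeq L\mathrm{B}^n_\mathcal{C}$. This needs a precise model of the internal bar construction in $L\mathcal{C}$ as the colimit in $L\mathcal{C}$ of the bar diagram, and a check that the group structure on $G$ viewed in $L\mathcal{C}$ is the same as in $\mathcal{C}$. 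If that becomes delicate, I would instead take the definition of $n$-strict locality as simply ``$\mathrm{B}^n_\mathcal{C}G$ is $L$-local'' and verify that this is equivalent to the definition given.
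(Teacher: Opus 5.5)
Your proposal is correct and follows essentially the same route as the paper: restrict the topos equivalence $\mathrm{B}^n_{\mathcal{C}}: n\mbox{-}Grp(\mathrm{Disc}(\mathcal{C}))\simeq\mathcal{EM}_n(\mathcal{C}):\pi_n^{\mathcal{C}}$ of [\cite{lurie2009higher}, Proposition 7.2.2.12] to $n\mbox{-}Grp^L$ on one side and to $L\mathcal{C}\cap\mathcal{EM}_n(\mathcal{C})=\mathcal{EM}_n(L\mathcal{C})$ on the other (\Cref{em of localization}, which is where path-injectivity enters), and observe that on these subcategories the functors agree with $\mathrm{B}^n_{L\mathcal{C}}$ and $\pi_n^{L\mathcal{C}}$. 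You simply make explicit what the paper compresses into ``by definition'': that $\Omega^n\esc{X}$ is already a local discrete object, so $L_{\leq 0}$ acts trivially, and that the comparison $\mathrm{B}^n_{\mathcal{C}}G\to\mathrm{B}^n_{L\mathcal{C}}G$ is an equivalence exactly when $\mathrm{B}^n_{\mathcal{C}}G$ is local.
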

\begin{proof}
 Since, by definition, $${\mathrm{B}^n_{L\mathcal{C}}}_{|n\mbox{-}Grp_L(L\mathcal{C})}\simeq\mathrm{B}^n_\mathcal{C}, $$ it follows that the functor $$\mathrm{B}^n_{L\mathcal{C}}: n\mbox{-}Grp_L(L\mathcal{C})\to\mathcal{EM}_n(L\mathcal{C})$$ is well-defined. Now the equivalence  $$\mathrm{B}^n_{\mathcal{C}}: n\mbox{-}Grp(\mathcal{C})\simeq \mathcal{EM}_n(\mathcal{C})$$ [\cite{lurie2009higher}, Proposition 7.2.2.12.] guarantees that the above functor is fully faithful. The equivalence, in fact, guarantees that this is an equivalence, by the definition of $n\mbox{-}Grp^L$. (Observe that the inverse is given by ${\pi_n^\mathcal{C}}_{|\mathcal{EM}_n(L\mathcal{C}}$, i.e., $\pi_n^\mathcal{C}$ has $L$-local $n$-groups as images. Therefore, the inverse is identical to $\pi_n^{L\mathcal{C}}$.) 
\end{proof}

\begin{cor}\label[cor]{not a topos localization}
    Suppose $L$ is a path-injective (and path-cartesian) localization of an $\infty$-topos such that the inclusion $Grp_L(L\mathcal{C})\subset Grp(\mathrm{Disc}(L\mathcal{C}))$ is strict. Then $L\mathcal{C}$ is not an $\infty$-topos.
\end{cor}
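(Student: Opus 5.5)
We argue by contradiction: assume $L\mathcal{C}$ is an $\infty$-topos and compare two descriptions of $\mathcal{EM}_1(L\mathcal{C})$ that would then be forced to coincide.

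First, since $L$ is path-injective, \Cref{grp em} with $n=1$ gives an equivalence
$$\mathrm{B}_{L\mathcal{C}}: Grp_L(L\mathcal{C})=1\mbox{-}Grp^L\simeq \mathcal{EM}_1(L\mathcal{C}):\pi_1^{L\mathcal{C}}.$$
Here $1\mbox{-}Grp^L$ consists of those group objects $G$ (which we may view inside $Grp(\mathrm{Disc}(L\mathcal{C}))$) for which $\mathrm{B}_\mathcal{C}G\to \mathrm{B}_{L\mathcal{C}}G$ is an equivalence. In particular, the essential image of $\pi_1^{L\mathcal{C}}$ restricted to $\mathcal{EM}_1(L\mathcal{C})$ is exactly $Grp_L(L\mathcal{C})$.

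Second, if $L\mathcal{C}$ were an $\infty$-topos, \Cref{em in infinity topos} (that is, [HTT, Proposition 7.2.2.12] applied to $L\mathcal{C}$ itself) would give an equivalence
$$\pi_1^{L\mathcal{C}}:\mathcal{EM}_1(L\mathcal{C})\simeq Grp(\mathrm{Disc}(L\mathcal{C})).$$
This is the same functor $\pi_1^{L\mathcal{C}}=\tau^{L\mathcal{C}}_{\leq 0}\Omega_{L\mathcal{C}}$ as in the first step, because the internal homotopy groups depend only on the $\infty$-category $L\mathcal{C}$. Its essential image would therefore be all of $Grp(\mathrm{Disc}(L\mathcal{C}))$. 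Path-cartesianness is used only to ensure that $\pi_1^{L\mathcal{C}}$ lands in group objects of $\mathrm{Disc}(L\mathcal{C})$, as in the lemma preceding \Cref{strictly n local}, so that the two targets can be compared.

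Comparing the two essential images of the same functor on the same domain gives $Grp_L(L\mathcal{C})=Grp(\mathrm{Disc}(L\mathcal{C}))$ as full subcategories. This contradicts the hypothesis that the inclusion is strict, so $L\mathcal{C}$ is not an $\infty$-topos.

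The main obstacle is bookkeeping rather than substance: one must check that the two functors really are the same. Concretely, the inverse in \Cref{grp em} is $\pi_1^\mathcal{C}\simeq \pi_1^{L\mathcal{C}}$ on $\mathcal{EM}_1(L\mathcal{C})$, and $Grp_L$ must be regarded as a full subcategory of $Grp(\mathrm{Disc}(L\mathcal{C}))$ rather than of $Grp(\mathrm{Disc}(\mathcal{C}))$. The latter holds because strictly $L$-local groups $G$ have $\mathrm{B}G\in L\mathcal{C}$, hence $G\simeq\Omega\mathrm{B}G\in L\mathcal{C}$. Once these identifications are in place, the essential-image comparison is immediate.
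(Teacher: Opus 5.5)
Your proof is correct and follows the same route as the paper's: assuming $L\mathcal{C}$ is an $\infty$-topos, [HTT, Proposition 7.2.2.12] makes $\pi_1^{L\mathcal{C}}$ on $\mathcal{EM}_1(L\mathcal{C})$ essentially surjective onto $Grp(\mathrm{Disc}(L\mathcal{C}))$, while \Cref{grp em} identifies its essential image with $Grp_L(L\mathcal{C})$, which contradicts strictness. Your extra checks, that the two functors coincide and that strictly $L$-local groups lie in $Grp(\mathrm{Disc}(L\mathcal{C}))$ via $G\simeq\Omega\mathrm{B}G$, spell out what the paper's two-line proof leaves implicit.
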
\begin{proof}
    If $L\mathcal{C}$ is an $\infty$-topos, then by [\cite{lurie2009higher}, Proposition 7.2.2.12], $\pi_1^{L\mathcal{C}}:\mathcal{EM}_n(L\mathcal{C})\to Grp(\mathrm{Disc}(L\mathcal{C}))$ should be an equivalence. But by the above Proposition, this should imply $Grp_L(L\mathcal{C})=Grp(\mathrm{Disc}(L\mathcal{C}))$.
\end{proof}

\subsection{Truncation and group objects}

 Let us briefly recall the bar construction more generally for monoids, for the convenience of the reader. Roughly, a monoid object in a presentable $\infty$-category $\mathcal{C}$ is a Segal object in the monoidal category $\mathcal{C}^\times$, i.e., a simplicial object $M_\bullet$ equipped with a compatible family of equivalences $M_n\simeq M_1\times_{M_0}\cdots \times _{M_0}M_1$. The bar construction $\mathrm{B}M_\bullet$ is defined as the colimit of the simplicial object $M_\bullet$ regarded as a simplicial diagram in $\mathcal{C}$. Because monoids are canonically pointed by the unit morphism, the resulting colimit is also pointed. Thus, the bar construction is given by a functor $\mathrm{B}_\mathcal{C}:\mathrm{Mon}(\mathcal{C})\to \mathcal{C}_\bullet$.

 Now, let us return to the internal loop space construction $\Omega_\mathcal{C}:\mathcal{C}_\bullet \to \mathcal{C}$. Given a pointed object $*\xrightarrow{x} X$ in $\mathcal{C}$, we define the loop object of $x$ as the following pullback diagram:
 \[
 \xymatrix{
 \Omega_{\mathcal{C}}X\ar[r]^{p_1}\ar[d]_{p_2}&pt\ar[d]\\
 pt \ar[r]&X
 }
 \]
 In the category of spaces, it is well known that the loop space of any pointed space has a monoid structure. In general, the loop construction in an arbitrary presentable $\infty$-category can be promoted to a functor $\Omega : \mathcal{C}_\bullet \to \mathrm{Mon}(\mathcal{C})$. Indeed, given a pointed space $x: *\to \esc{X}$, one can define the monoid structure on $\Omega \esc{X}$ by identifying it with the $1$-simplex of the Čech nerve $\check{C}x$. But $\check{C}x$ is actually an augmented object, i.e., it belongs to $\mathcal{P}(\Delta_+,\mathcal{C})$ (we shall denote this category by $Aug(\mathcal{C})$). To identify the monoid structure of $\Omega_{\mathcal{C}}X$, we take the unaugmented part of $\check{C}x$, i.e., $(\Omega_{\mathcal{C}}X)_\bullet =(\check{C}x)_{\geq 0}$.
\begin{prop}\label[prop]{bar loop adjunction}
     Let $\mathcal{C}$ be a presentable $\infty$-category. Then there exists an adjunction:
     $$\mathrm{B}_{\mathcal{C}}: \mathrm{Mon}(\mathcal{C}) \leftrightarrows \mathcal{C}_\bullet:\Omega$$
\end{prop}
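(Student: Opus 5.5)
We will realize both functors through the augmented simplicial picture already set up before the statement. The plan is to construct the adjunction as a composite of two simpler adjunctions, passing through the category $\mathrm{Grpd}(\mathcal{C})$, or more precisely through augmented simplicial objects. On one side, restricting an augmented simplicial object $A_\bullet\in Aug(\mathcal{C})$ to $\Delta^{op}$ has a left adjoint given by the colimit: for a simplicial object $M_\bullet$ we augment it by $|M_\bullet|=\colim_{\Delta^{op}}M_\bullet$. This is left Kan extension along $\Delta^{op}\subset\Delta_+^{op}$, and it exists because $\mathcal{C}$ is presentable. On the other side, sending a morphism $U\to X$ to its Čech nerve $\check{C}(U\to X)\in Aug(\mathcal{C})$ is right adjoint to evaluation at $[0]\to[-1]$, i.e. to $A_\bullet\mapsto (A_0\to A_{-1})$. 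This is right Kan extension along the inclusion $\Delta_{+,\leq 0}^{op}\subset\Delta_+^{op}$, which exists because $\mathcal{C}$ has finite limits ([\cite{lurie2009higher}, \S6.1.2]).

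The adjunction will be established in three steps.

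\textbf{Step 1.} We identify $\mathcal{C}_\bullet$ with the full subcategory of $\mathcal{C}^{\Delta^1}$ on arrows with source the terminal object $*$. We restrict the Čech nerve functor to this subcategory, so that $x:*\to X$ is sent to $\check{C}x$. Its unaugmented part has $(\check{C}x)_0=*$ and $(\check{C}x)_n\simeq \Omega X\times\cdots\times\Omega X$. This is a Segal object with $M_0=*$, i.e. a monoid, and the resulting functor is the $\Omega:\mathcal{C}_\bullet\to\mathrm{Mon}(\mathcal{C})$ described above.

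\textbf{Step 2.} Given a monoid $M_\bullet$ (so $M_0\simeq *$), we form $\mathrm{B}_\mathcal{C}M=|M_\bullet|$. It is pointed by $*\simeq M_0\to |M_\bullet|$.

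\textbf{Step 3.} We compute, for $M\in\mathrm{Mon}(\mathcal{C})$ and $(X,x)\in\mathcal{C}_\bullet$,
\[
\mathrm{Map}_{\mathcal{C}_\bullet}(\mathrm{B}M,X)\simeq \mathrm{Map}_{Aug_{*}}(M_\bullet^{+},\check{C}x)\simeq \mathrm{Map}_{\mathrm{Fun}(\Delta^{op},\mathcal{C})}(M_\bullet,(\check{C}x)_{\geq 0})=\mathrm{Map}_{\mathrm{Mon}(\mathcal{C})}(M,\Omega X).
\]
Here $M^+_\bullet$ denotes $M_\bullet$ augmented by $|M_\bullet|$, and $Aug_*$ denotes augmented objects whose $0$-th term is $*$. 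The first equivalence is the Čech-nerve adjunction, restricted to arrows with source $*$. The key point for this restriction is that a map of augmented objects $M^+\to\check{C}x$ is determined by its restriction to $[0]\to[-1]$, i.e. by a map $|M|\to X$ together with the forced map $M_0=*\to *$. The compatibility $* \to |M|\to X$ equals $x$ is exactly the pointing condition. The second equivalence is the Kan extension adjunction, using $\colim_{\Delta^{op}}$ on the source. The last equality holds because $\mathrm{Mon}(\mathcal{C})$ is a full subcategory of simplicial objects.

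The main obstacle is Step 3's handling of basepoints: making precise that the relevant mapping spaces in $\mathcal{C}_\bullet=\mathcal{C}_{*/}$ coincide with mapping spaces of augmented objects having $*$ in degree $0$. The first thing I would try is to compute with fibers. I would write $\mathrm{Map}_{\mathcal{C}_{*/}}(\mathrm{B}M,X)$ as the fiber of $\mathrm{Map}(|M|,X)\to\mathrm{Map}(*,X)$ over $x$. I would compare this with the fiber of $\mathrm{Map}_{Aug}(M^+,\check{C}(X\to X))\to \mathrm{Map}_{Aug}(M_0\text{-part},\dots)$. Then I would use that $*$ is terminal, so $\mathrm{Map}(M_0,*)$ is contractible, to see that the two fibers agree. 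Naturality in both variables then follows because every identification above is an instance of a Kan-extension adjunction, which is natural.
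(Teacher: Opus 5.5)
Your proposal is correct and is essentially the paper's own argument: the paper proves this by the same chain $\mathrm{Map}_{\mathrm{Mon}(\mathcal{C})}(M_\bullet,\Omega X)\simeq \mathrm{Map}_{Aug(\mathcal{C})}(M_\bullet\to|M_\bullet|,\check{C}x)\simeq \mathrm{Map}_{\mathcal{C}_\bullet}(|M_\bullet|,X)$. It uses the same two ingredients, the colimit/restriction adjunction along $\Delta^{op}\subset\Delta_+^{op}$ and the \v{C}ech-nerve (right Kan extension) adjunction of [\cite{lurie2009higher}, Proposition 6.1.2.11]. Your attention to basepoints improves on the paper, which passes to $\mathcal{C}_\bullet$ with a bare ``$=$''; however, the detour through $\check{C}(X\to X)$ in your last paragraph is unnecessary. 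The clean justification is the formula $\mathrm{Map}_{\mathrm{Fun}(\Delta^1,\mathcal{C})}(*\to A,\ *\xrightarrow{x}X)\simeq \mathrm{Map}(*,*)\times_{\mathrm{Map}(*,X)}\mathrm{Map}(A,X)$, together with $\mathrm{Map}(*,*)\simeq *$.
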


\begin{proof}
For a monoid object $M_\bullet $ and a pointed object $x:*\to X$, we have,
   \begin{flalign*}
       \mathrm{Map}_{\mathrm{Mon}(\mathcal{C})}(M_\bullet, \Omega_{\mathcal{C}}X)&\simeq\mathrm{Map}_{\mathrm{Mon}(\mathcal{C})}(M_\bullet, \check{C}x_{\geq 0 })&\\
       &\simeq \mathrm{Map}_{Aug(\mathcal{C})}(M_\bullet\to |M_\bullet|, \check{C}x)&\\& \simeq \mathrm{Map}_{\mathcal{P}(\Delta^{\leq 0}_{+}, \mathcal{C})}(*\xrightarrow{1}|M_\bullet| , *\xrightarrow{x} X)\text{ [\cite{lurie2009higher}, Proposition 6.1.2.11]}&\\&
 = \mathrm{Map}_{\mathcal{C}_{\bullet}}(|M_\bullet| , X)&\end{flalign*}
\end{proof}
It is worth recalling that in the category of spaces, the loop space construction yields a grouplike space. One way to see this is to note that the path component of the loop space of a space is a group (namely the fundamental group at the given base point), with the inverse of a based loop $\gamma$ given by $t\mapsto \gamma(-t)$.

In general, the loop construction in an arbitrary presentable $\infty$-category is also a grouplike monoid in $\mathcal{C}$. To see this, it suffices to note that the switching of $p_1$ and $p_2$ (defining $\Omega_\mathcal{C}X$) induces an inverse operation $\iota : \Omega_{\mathcal{C}}X\to \Omega_{\mathcal{C}}X$ for the aforementioned monoid structure of $\Omega_{\mathcal{C}}X$. Therefore, the adjunction of \Cref{bar loop adjunction} restricts to an adjunction:
$$\mathrm{B}_{\mathcal{C}}: \mathrm{Grp}(\mathcal{C}) \leftrightarrows \mathcal{C}_\bullet:\Omega$$
\begin{rem}\label[rem]{em equiv of a topos}
If $\mathscr{C}$ is an $\infty$-topos, then the adjunction $\mathrm{B}_\mathcal{C }\dashv\Omega$ is an equivalence of $\infty$-categories [\cite{lurie2009higher}, Lemma 7.2.2.11 (1)]. 
\end{rem}
When we are in the situation of an accessible localization $L\mathcal{C}\subset \mathcal{C}$, the equivalence from the remark above does not immediately restrict to a similar equivalence for the presentable $\infty$-category $L\mathcal{C}$. Of course, the first issue is that $(L\mathcal{C})_{\geq 1}$ is not always identical to $\mathcal{C}_{\geq 1}\bigcap L\mathcal{C}$. Under the hypothesis of \Cref{em of localization}, we can get rid of this issue. However, there is another issue, which we have already introduced in the previous section, namely, that $\mathrm{B}_{\mathcal{C}}$ need not take local objects to local objects, and therefore $\mathrm{B}_{L\mathcal{C}}\neq {\mathrm{B}_\mathcal{C}}_{|L\mathcal{C}}$.  
\begin{lem}\label[lem]{bar of cart localization}
    Suppose $L$ is a cartesian localization of an $\infty$-topos. Then $\mathrm{B}_{L\mathcal{C}}L\simeq L {\mathrm{B}_\mathcal{C}}$.
\end{lem}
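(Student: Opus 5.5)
The plan is to show that both sides corepresent the same functor on $(L\mathcal{C})_\bullet$, or equivalently to compute both sides as the same colimit. The cartesian hypothesis is used only to make sense of the left-hand side: $L$ must send monoid objects of $\mathcal{C}$ to monoid objects of $L\mathcal{C}$.

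First, I will check that $L$ induces a functor $\mathrm{Mon}(\mathcal{C})\to \mathrm{Mon}(L\mathcal{C})$, and likewise on $\mathrm{Grp}$. Given a monoid $M_\bullet$ with Segal equivalences $M_n\simeq M_1\times\cdots\times M_1$ (here $M_0\simeq *$), apply $L$ levelwise. Since $L$ is cartesian, it preserves finite products and the terminal object. Hence $LM_\bullet$ satisfies $LM_n\simeq (LM_1)^{\times n}$ and $LM_0\simeq *$, where the products may be computed in $\mathcal{C}$ or in $L\mathcal{C}$, since $L\mathcal{C}\subset\mathcal{C}$ is closed under limits. The shearing maps witnessing grouplikeness are built from products, so they are also carried to equivalences. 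The unit $*\to M_1$ goes to $*\simeq L*\to LM_1$, so pointings are preserved as well.

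Second, I will use the adjunction of \Cref{bar loop adjunction} for both $\mathcal{C}$ and $L\mathcal{C}$. Let $M\in\mathrm{Mon}(\mathcal{C})$ and let $x:*\to X$ be a pointed object of $L\mathcal{C}$. As in the proof of \Cref{homotopy groups of localization}, $\Omega_{L\mathcal{C}}X\simeq\Omega_{\mathcal{C}}X$, and this extends to the full Čech nerve $\check{C}x$ because it is built from finite limits. Then I would write the chain
\begin{flalign*}
\mathrm{Map}_{(L\mathcal{C})_\bullet}(\mathrm{B}_{L\mathcal{C}}LM,X)&\simeq \mathrm{Map}_{\mathrm{Mon}(L\mathcal{C})}(LM,\Omega_{L\mathcal{C}}X)\simeq \mathrm{Map}_{\mathrm{Mon}(\mathcal{C})}(M,\Omega_{\mathcal{C}}X)\\
&\simeq \mathrm{Map}_{\mathcal{C}_\bullet}(\mathrm{B}_{\mathcal{C}}M,X)\simeq \mathrm{Map}_{(L\mathcal{C})_\bullet}(L\mathrm{B}_{\mathcal{C}}M,X).
\end{flalign*}
The last step uses that $L$ is left adjoint to the inclusion, and that this persists on pointed objects because $L*\simeq *$. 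The Yoneda lemma then gives $\mathrm{B}_{L\mathcal{C}}L\simeq L\mathrm{B}_\mathcal{C}$, naturally in $M$.

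The main point to verify is the middle equivalence. It requires that levelwise $L$ be left adjoint to the inclusion $\mathrm{Mon}(L\mathcal{C})\subset\mathrm{Mon}(\mathcal{C})$. Monoids form a full subcategory of simplicial objects, and the inclusion of simplicial objects $\mathrm{Fun}(\Delta^{op},L\mathcal{C})\subset \mathrm{Fun}(\Delta^{op},\mathcal{C})$ has levelwise $L$ as left adjoint. The first step shows that this left adjoint preserves the monoid subcategories, so the adjunction restricts to them.

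As a sanity check, there is also a direct computation. $\mathrm{B}_{L\mathcal{C}}LM$ is the colimit of $LM_\bullet$ in $L\mathcal{C}$, which is $L\operatorname{colim}_{\mathcal{C}}LM_\bullet$. Since $L:\mathcal{C}\to L\mathcal{C}$ preserves colimits, this equals $L\operatorname{colim}_{\mathcal{C}}M_\bullet=L\mathrm{B}_{\mathcal{C}}M$. This confirms that cartesianness is needed only so that $LM_\bullet$ is a monoid at all.
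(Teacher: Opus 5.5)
Your proposal is correct. Your closing ``sanity check'' is exactly the paper's proof. The bar construction is the geometric realization of a simplicial object whose levels are finite products. Since $L$ preserves finite products, $LM_\bullet$ is again a monoid, and since $L$ is a left adjoint it preserves the colimit, so $\mathrm{B}_{L\mathcal{C}}(LM_\bullet)=\operatorname{colim}_{L\mathcal{C}}LM_\bullet\simeq L\operatorname{colim}_{\mathcal{C}}M_\bullet=L\mathrm{B}_{\mathcal{C}}M$.

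Your main route is genuinely different: it is a mate argument. You deduce the commutation of the left adjoints, $\mathrm{B}_{L\mathcal{C}}\circ L\simeq L\circ\mathrm{B}_{\mathcal{C}}$, from the commutation of the right adjoints, $\Omega_{L\mathcal{C}}\simeq\Omega_{\mathcal{C}}$ restricted to $L\mathcal{C}$ as monoid-valued functors. This uses \Cref{bar loop adjunction} in both $\mathcal{C}$ and $L\mathcal{C}$, together with restricting the levelwise adjunction $L\dashv i$ on simplicial objects to monoid objects. That route makes naturality in $M$ explicit and isolates cleanly that cartesianness is needed only for levelwise $L$ to land in $\mathrm{Mon}(L\mathcal{C})$. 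It does, however, depend on more machinery: the bar--loop adjunction in both categories, the identification of Čech nerves computed in $L\mathcal{C}$ and in $\mathcal{C}$, and the restriction of the adjunction. The direct colimit computation needs none of these.

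Neither your argument nor the paper's actually uses that $\mathcal{C}$ is an $\infty$-topos.
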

\begin{proof}
    This is obvious because the bar construction is given by finite products and geometric realizations, and $L:\mathcal{C}\to L\mathcal{C}$ commutes with both. 
\end{proof}
\begin{rem}
Clearly, $\mathrm{B}_{L\mathcal{C}}\simeq {L\mathrm{B}_\mathcal{C}}_{|L\mathcal{C}}$. So the adjunction of \Cref{bar loop adjunction} can be deduced for $L\mathcal{C}$ using the adjunction for $\mathcal{C}$. In particular, since every presentable $\infty$-category arises as a localization of a presheaf $\infty$-topos, the adjunction of \Cref{bar loop adjunction} can be deduced even more easily, since the adjunction is well known for presheaf $\infty$-toposes. 
\end{rem}
Let $ Grp_{L}L\mathcal{C}$ denote the full subcategory of $ GrpL\mathcal{C}$ consisting of objects $\esc{X}$ such that $\mathrm{B}_\mathcal{C}\esc{X}\in L\mathcal{C}$, i.e., $\mathrm{B}_{L\mathcal{C}}X\simeq \mathrm{B}_{\mathcal{C}}X$. Following [\cite{elmanto2021motivic}, \S3.4], we will call such objects strongly $L$-local monoids.
\begin{lem}
    For a localization $L\mathcal{C}\subset \mathcal{C}$, there is a canonical equivalence $\Omega_{L\mathcal{C}}\simeq {\Omega_\mathcal{C}}_{|L\mathcal{C}}$.
\end{lem}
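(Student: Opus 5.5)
The plan is to exploit that $L\mathcal{C}\subset\mathcal{C}$ is a full subcategory closed under all limits that exist in $\mathcal{C}$, and that $\Omega$ is built purely from a finite limit. The argument has two steps.

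\textbf{Step one: the underlying objects.} Since $L$ is a localization, the inclusion $i:L\mathcal{C}\hookrightarrow\mathcal{C}$ is a right adjoint, so it preserves limits.

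\begin{itemize}
\item The terminal object of $L\mathcal{C}$ is the terminal object of $\mathcal{C}$: it is local, since $\mathrm{Map}_\mathcal{C}(-,*)$ is contractible. Hence the categories of pointed objects satisfy $(L\mathcal{C})_\bullet=\mathcal{C}_\bullet\times_{\mathcal{C}}L\mathcal{C}$, as a full subcategory of $\mathcal{C}_\bullet$.
\item For a pointed local object $x:*\to X$, the loop object $\Omega_{L\mathcal{C}}X$ is the pullback of $*\to X\leftarrow *$ computed in $L\mathcal{C}$. Because $i$ preserves pullbacks, this is canonically $\Omega_\mathcal{C}X$.
\item To make the identification canonical, I would note that $\Omega$ is the composite $\mathcal{C}_\bullet\to\mathcal{C}^{\Lambda^2_2}\xrightarrow{\lim}\mathcal{C}$ described in the paper. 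The first functor commutes with $i$ on the nose. For the second, $i\circ\lim_{L\mathcal{C}}\simeq\lim_\mathcal{C}\circ\, i_*$, since right adjoints commute with limits as functors (the unit/counit furnish the natural comparison map, which is then an equivalence).
\item The canonical pointing of $\Omega X$, namely the diagonal $*\to\Omega X$, is likewise preserved. This gives $\Omega_{L\mathcal{C}}\simeq(\Omega_\mathcal{C})_{|L\mathcal{C}}$ as functors $(L\mathcal{C})_\bullet\to(L\mathcal{C})_\bullet$.
\end{itemize}

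\textbf{Step two: the monoid (group) structure.} The paper promotes $\Omega$ to $\mathrm{Mon}$ via the Čech nerve $\check{C}x$, which is the right Kan extension along $\Delta_{+}^{\leq 0}\subset\Delta_+$. This is again a limit construction: each term is an iterated fiber product $*\times_X\cdots\times_X *$.

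\begin{itemize}
\item Since $i$ preserves limits, it commutes with right Kan extension, so $i\circ\check{C}^{L\mathcal{C}}x\simeq\check{C}^{\mathcal{C}}x$.
\item Restricting to the unaugmented part $\Delta$ identifies the two monoid objects.
\item Monoids in $L\mathcal{C}$ are exactly the monoids in $\mathcal{C}$ with local terms. The Segal condition involves only finite products and fiber products, which agree in the two categories.
\end{itemize}

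\textbf{Main obstacle.} The only point needing care is making the equivalence natural rather than objectwise, i.e.\ producing a coherent equivalence of functors. I would handle this uniformly with the single lemma that a limit-preserving fully faithful functor commutes with right Kan extensions. Both $\Omega$ and the Čech-nerve refinement are right Kan extensions along inclusions of diagram shapes, so the lemma covers both. Everything else is formal; it mirrors the first line of the proof of \Cref{homotopy groups of localization}, where $\Omega^n_{L\mathcal{C}}=(\Omega^n_\mathcal{C})_{|L\mathcal{C}}$ was already used.
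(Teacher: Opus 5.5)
Your proposal is correct and is essentially the paper's argument. The paper's entire proof is the observation that $L\mathcal{C}\subset\mathcal{C}$ is closed under limits, in particular fiber products, so the pullback $*\times_X *$ defining $\Omega$ is the same computed in either category. Your extra care about naturality (the limit-preserving inclusion commuting with right Kan extensions) and about the monoid structure coming from the \v{C}ech nerve spells out that same point in more detail rather than taking a different route.
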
 
\begin{proof}
    This is clear because $L\mathcal{C}\subset \mathcal{C}$ is closed under limits and, in particular, under fiber products.
\end{proof}
\begin{thm}\label{loop bar for localization}
   Suppose $L$ is a path-injective localization of an $\infty$-topos $\mathcal{C}$. Then the adjunction $$ \mathrm{B}_{L\mathcal{C}} : Grp(L\mathscr{C})\leftrightarrows L\mathcal{C}_\bullet: \Omega_{L\mathcal{C}}\simeq \Omega_\mathcal{C}$$ restricts to an equivalence $$\mathrm{B}_{L\mathcal{C}} : Grp_L(L\mathscr{C})\simeq (L\mathcal{C})_{\geq 1}: \Omega_{L\mathcal{C}}\simeq \Omega_\mathcal{C}.$$ 
 \end{thm}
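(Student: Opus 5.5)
The plan is to reduce everything to the $\infty$-topos equivalence $\mathrm{B}_\mathcal{C}\dashv\Omega_\mathcal{C}$ between $Grp(\mathcal{C})$ and $\mathcal{C}_{\bullet,\geq 1}$ (\Cref{em equiv of a topos}, [\cite{lurie2009higher}, Lemma 7.2.2.11 (1)]). Path-injectivity will identify the connective objects on both sides, and the strong $L$-locality condition will ensure that $\mathrm{B}$ preserves local objects.

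\textbf{Step 1: $\mathrm{B}_{L\mathcal{C}}$ lands in $(L\mathcal{C})_{\geq 1}$.} Let $G\in Grp_L(L\mathcal{C})$. By definition $\mathrm{B}_{L\mathcal{C}}G\simeq \mathrm{B}_\mathcal{C}G$, and this object lies in $L\mathcal{C}$. Since $\mathcal{C}$ is an $\infty$-topos, $\mathrm{B}_\mathcal{C}G$ is $1$-connective in $\mathcal{C}$. Then \Cref{connective em objects of ordinary localization} (the containment $L\mathcal{C}\cap\mathcal{C}_{\geq 1}\subset (L\mathcal{C})_{\geq 1}$) puts it in $(L\mathcal{C})_{\geq 1}$.

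\textbf{Step 2: $\Omega$ lands in $Grp_L(L\mathcal{C})$.} Let $X\in (L\mathcal{C})_{\geq 1}$, pointed. By \Cref{em of localization}, which uses path-injectivity, $X\in\mathcal{C}_{\geq 1}\cap L\mathcal{C}$. The object $\Omega_{L\mathcal{C}}X\simeq\Omega_\mathcal{C}X$ lies in $L\mathcal{C}$ because $L\mathcal{C}$ is closed under limits, and it is a group object, so it belongs to $Grp(L\mathcal{C})$. The topos equivalence applied to the connected object $X$ gives $\mathrm{B}_\mathcal{C}\Omega_\mathcal{C}X\simeq X$. Since $X$ is local, $\Omega_\mathcal{C}X$ is strongly $L$-local, and therefore $\Omega_\mathcal{C}X\in Grp_L(L\mathcal{C})$.

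\textbf{Step 3: the unit and counit are equivalences.} For $G\in Grp_L(L\mathcal{C})$, the unit is $G\to\Omega_{L\mathcal{C}}\mathrm{B}_{L\mathcal{C}}G\simeq\Omega_\mathcal{C}\mathrm{B}_\mathcal{C}G$, and this is an equivalence by the topos statement. For $X\in(L\mathcal{C})_{\geq 1}$, the counit is $\mathrm{B}_{L\mathcal{C}}\Omega X\simeq\mathrm{B}_\mathcal{C}\Omega_\mathcal{C}X\to X$, which is an equivalence by Step 2. One must check that these are the unit and counit of the $L\mathcal{C}$-adjunction. This holds because the adjunction for $L\mathcal{C}$ is obtained from that for $\mathcal{C}$ by composing with $L$ (the remark after \Cref{bar of cart localization}), and on strongly local groups $L$ acts as the identity on $\mathrm{B}_\mathcal{C}G$.

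The main obstacle is Step 2, specifically the use of \Cref{em of localization}. Without path-injectivity, an object that is connected in $L\mathcal{C}$ need not be connected in $\mathcal{C}$. In that case $\mathrm{B}_\mathcal{C}\Omega_\mathcal{C}X$ would recover only the basepoint component of $X$ in $\mathcal{C}$, and the counit would fail. A secondary point to verify is that \Cref{em of localization} really gives $\tau_{\leq 0}^\mathcal{C}X=*$ from $\tau_{\leq 0}^{L\mathcal{C}}X=*$. This is exactly the detection of the terminal object in \Cref{path injectivity}, and the topos criterion [\cite{lurie2009higher}, Proposition 6.5.1.12] then shows that $X$ is $1$-connective in $\mathcal{C}$.
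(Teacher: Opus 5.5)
Your proposal is correct and follows essentially the same route as the paper: path-injectivity, via \Cref{em of localization}, identifies $(L\mathcal{C})_{\geq 1}$ with $L\mathcal{C}\cap\mathcal{C}_{\geq 1}$, and the topos equivalence $\mathrm{B}_\mathcal{C}\Omega_\mathcal{C}X\simeq X$ then shows that $\Omega_\mathcal{C}X$ is strongly $L$-local. You also spell out two steps the paper leaves implicit: that $\mathrm{B}_{L\mathcal{C}}$ lands in $(L\mathcal{C})_{\geq 1}$, and that on these subcategories the unit and counit of the $L\mathcal{C}$-adjunction agree with those of $\mathcal{C}$; this completes the paper's argument rather than changing it.
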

\begin{proof}
     Under the hypothesis, we know that $(L\mathscr{C})_{\geq 1}= L\mathcal{C}\bigcap \mathscr{C}_{\geq 1}$ (\Cref{em of localization}). So it suffices to show that if $\esc{X}\in L\mathcal{C}\bigcap \mathscr{C}_{\geq 1}$, then $\Omega^{\mathcal{C}}\esc{X}  \in  Grp_L(L\mathscr{C})$. But we know that $\mathrm{B}_\mathcal{C}\Omega^\mathcal{C}\esc{X}\simeq \esc{X}\in L\mathcal{C}$ (the first equivalence holds because $\esc{X}\in \mathcal{C}_{\geq 1}$, [\cite{lurie2017higher}, Theorem 5.2.6.10]), and we are done.
\end{proof}

\subsection{Covering theory and localizations}
\textbf{Geometric coverings: (a version of $(-1)$-truncated morphisms)}

We first standardize the notion of an effective epimorphism from [\cite{lurie2009higher}, the paragraph following Corollary 6.2.3.5]. Recall that this notion is defined in \textit{loc.cit.} only for semi toposes. In this text, we adopt the following definition for arbitrary $\infty$-categories:
\begin{defn}[[\cite{nlab:effective_epimorphism_in_ainfinity1category}, effective epimorphism\text{]}]
We say that a morphism in an $\infty$-category is an effective epimorphism if its \v Cech nerve is a colimit diagram.    
\end{defn}
\begin{rem}
    This, however, should be contrasted with the notion of an $(-1)$-truncated morphism in $\mathcal{C}$. Unless $\mathcal{C}$ is a semi-topos, there is no good reason for these two notions to coincide, as in 
    [\cite{lurie2009higher}, Corollary 6.2.3.5].
\end{rem}

\begin{defn}[Geometric covering] 
Let $\mathcal{C}$ be an $\infty$-category and $L\mathcal{C}$ a localization. We say that a collection of morphisms $\{p_i:U_i\to X\}_{i\in I}$ in $\mathcal{C}$ is:
    \begin{enumerate}
        \item A geometric $\mathcal{C}$ cover of $X$ if they are jointly effective epimorphic, i.e., $p:\bigsqcup U_i\to X$ is an effective epimorphism, or equivalently, if the \v Cech nerve of $p$ is a colimit diagram in $\mathcal{C}$.
        \item A geometric $L$-cover of $X$ if $L\check{C}_\bullet p$ is a colimit diagram in $L\mathcal{C}$.
    \end{enumerate}
\end{defn} 
\begin{lem}
    A geometric $\mathcal{C}$-cover is a geometric $L$-cover.
\end{lem}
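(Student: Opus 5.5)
The argument is essentially formal: $L:\mathcal{C}\to L\mathcal{C}$ is a left adjoint, so it preserves all colimits that exist in $\mathcal{C}$. Let $\{p_i:U_i\to X\}_{i\in I}$ be a geometric $\mathcal{C}$-cover, and write $p:\bigsqcup U_i\to X$. By definition, the augmented \v Cech nerve $\check{C}_\bullet p$, viewed as a diagram $\Delta_+^{op}\to\mathcal{C}$, is a colimit diagram. That is, the canonical map $|\check{C}_\bullet p|\to X$ is an equivalence in $\mathcal{C}$.

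Apply $L$ to this augmented simplicial object. Since $L$ preserves colimits (as a left adjoint to the inclusion $L\mathcal{C}\subset\mathcal{C}$), the image $L\check{C}_\bullet p:\Delta_+^{op}\to L\mathcal{C}$ is again a colimit diagram in $L\mathcal{C}$. In other words, $|L\check{C}_\bullet p|_{L\mathcal{C}}\simeq L|\check{C}_\bullet p|\simeq LX$. This is exactly the definition of a geometric $L$-cover, so the statement follows.

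The one point to check is that the definition of a geometric $L$-cover means ``$L$ applied to the \v Cech nerve computed in $\mathcal{C}$'', as written. It should not be read as the \v Cech nerve of $Lp$ computed in $L\mathcal{C}$. If the latter reading were intended, we would also need $L$ to commute with the fiber products $\bigsqcup U_i\times_X\cdots\times_X\bigsqcup U_i$, which would require $L$ to be (locally) cartesian. I will therefore state explicitly that the definition uses $L\check{C}_\bullet p$. With that reading, no hypothesis on $L$ beyond its being a localization is needed, and there is no real obstacle.
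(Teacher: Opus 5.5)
Your proof is correct and is the same as the paper's, which simply notes that $L$ preserves colimits. Being a left adjoint, $L$ sends the colimit diagram $\check{C}_\bullet p$ in $\mathcal{C}$ to a colimit diagram $L\check{C}_\bullet p$ in $L\mathcal{C}$. Your reading of the definition, with $L$ applied to the \v{C}ech nerve formed in $\mathcal{C}$, matches the paper's.
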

\begin{proof}
This is obvious since $L$ preserves colimits. 
\end{proof}
\begin{lem}
    An $L$ geometric covering of $L$-local objects is an $L\mathcal{C}$-geometric covering. 
\end{lem}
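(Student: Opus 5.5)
The plan is to unwind the definitions: the statement reduces to the fact that $L$ restricted to $L\mathcal{C}$ is (equivalent to) the identity. Let $\{p_i:U_i\to X\}_{i\in I}$ be a geometric $L$-cover with $X$ and all $U_i$ in $L\mathcal{C}$, and write $p:\bigsqcup_i U_i\to X$. We must show that the \v{C}ech nerve of $p$, computed in $L\mathcal{C}$, is a colimit diagram in $L\mathcal{C}$.

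\emph{Step 1: the coproduct.} In $\mathcal{C}$ the coproduct $\bigsqcup_i U_i$ need not lie in $L\mathcal{C}$. The coproduct in $L\mathcal{C}$ is $L(\bigsqcup_i U_i)$, and the map $p$ in $L\mathcal{C}$ is $Lp$, because $L$ is a left adjoint and so preserves coproducts. I will therefore read the internal statement as being about $Lp:L(\bigsqcup U_i)\to X$, using $LX\simeq X$.

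\emph{Step 2: comparing \v{C}ech nerves.} The \v{C}ech nerve of $Lp$ in $L\mathcal{C}$ is built from iterated fiber products over $X$. Since $L\mathcal{C}\subset\mathcal{C}$ is closed under limits, these fiber products agree with the ones computed in $\mathcal{C}$. So $\check{C}_\bullet^{L\mathcal{C}}(Lp)\simeq\check{C}_\bullet^{\mathcal{C}}(Lp)$. On the other hand, the hypothesis concerns $L\check{C}_\bullet(p)$, whose terms are $L\big((\bigsqcup U_i)^{\times_X (n+1)}\big)$. I must compare these with $\big(L\bigsqcup U_i\big)^{\times_X (n+1)}$. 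This is the main obstacle: $L$ preserves these fiber products only if it is suitably left exact, for instance locally cartesian over $X$ or cartesian. In general I would then add the mild assumption, implicit in the section's standing hypotheses that $L$ is path-cartesian or locally cartesian, that $L$ commutes with fiber products over local objects. The natural map $L\check{C}_\bullet(p)\to\check{C}_\bullet(Lp)$ is then an equivalence levelwise.

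\emph{Step 3: conclusion.} Since $L\check{C}_\bullet(p)$ is a colimit diagram in $L\mathcal{C}$ by hypothesis, Step 2 shows that $\check{C}^{L\mathcal{C}}_\bullet(Lp)$ is a colimit diagram in $L\mathcal{C}$. Hence $Lp$ is an effective epimorphism in $L\mathcal{C}$, and the $U_i\to X$ form a geometric $L\mathcal{C}$-cover. If the intended reading is instead that the coproduct $\bigsqcup U_i$ already lies in $L\mathcal{C}$, which happens when local objects are closed under coproducts as for Nisnevich sheaves with $\mathbb{A}^1$-invariance, then Step 2 becomes trivial. In that case $L$ fixes every term of the \v{C}ech nerve, the two nerves coincide outright, and the proof needs no exactness hypothesis on $L$. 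I would present this case first and treat the general case via the cartesianness remark.
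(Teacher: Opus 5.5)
Your special case is exactly the paper's proof. The paper's whole argument is that $L\mathcal{C}$ is closed under fiber products in $\mathcal{C}$. Hence every term of the \v{C}ech nerve is local, $L$ acts trivially on it, and the hypothesis becomes the conclusion. You are right that this tacitly treats $\bigsqcup_i U_i$ (formed in $\mathcal{C}$) as local, and the paper does not address this.

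The gap is in your general case. Step 2 rests on $L(A\times_X B)\simeq LA\times_X LB$ for local $X$, and this assumption is not available:
\begin{itemize}
\item At this point in \S2.4 the only hypothesis is that $L\mathcal{C}\subset\mathcal{C}$ is a localization.
\item Path-cartesianness only asks that $L_{\leq 0}$ preserve finite products of discrete objects. It says nothing about $L$ of fiber products of arbitrary objects.
\item Local cartesianness is never a standing assumption. It appears only as an explicit hypothesis in later results (the comparison of $Cov_{L\mathcal{C}}(LX)$ with $Cov_L(X)$, and the $\underline{Aut}$ lemma), and the paper stresses that it fails for $L_{bir}$, one of its main applications.
\end{itemize}
So as written you prove a weaker lemma with an extra hypothesis, and attributing that hypothesis to the section's standing assumptions is incorrect.

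The missing observation is that computing $L\check{C}_\bullet(p)$ needs no exactness of $L$ at all. In the $\infty$-topos $\mathcal{C}$ (the setting of every application) coproducts are universal, so $(\bigsqcup_i U_i)^{\times_X(n+1)}\simeq\bigsqcup_{i_0,\dots,i_n}U_{i_0}\times_X\cdots\times_X U_{i_n}$. The summands are local, being fiber products of local objects, and $L$ preserves coproducts. Hence $L\check{C}_n(p)$ is the coproduct \emph{in $L\mathcal{C}$} of the local objects $U_{i_0}\times_X\cdots\times_X U_{i_n}$.

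The hypothesis therefore says exactly that the \v{C}ech diagram of the family, built entirely inside $L\mathcal{C}$, is a colimit diagram there. This is how the paper actually uses geometric coverings: the coequalizer of the $\pi_0^{L\mathcal{C}}$'s in the local-to-global connectivity lemma, and the $I^{\Delta^{inj}}$-diagram in the Van Kampen theorem. With that reading the lemma holds with no additional assumption.

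If you instead insist on the single-map reading, namely that $\bigsqcup^{L\mathcal{C}}_i U_i\to X$ be an effective epimorphism in $L\mathcal{C}$, a genuine comparison remains. You must compare $(\bigsqcup^{L\mathcal{C}}_i U_i)^{\times_X(n+1)}$ with $\bigsqcup^{L\mathcal{C}}_{i_0,\dots,i_n}U_{i_0}\times_X\cdots\times_X U_{i_n}$. That is universality of coproducts over $X$ in $L\mathcal{C}$, which is the real content of your Step 2. It has to be stated as an explicit hypothesis or proved.
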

\begin{proof}
    This is clear since $L\mathcal{C}$ is closed under fiber product in $\mathcal{C}$.
\end{proof}
\begin{lem}[local to global $L$-connectivity]\label{L geom L connectivity}
Suppose $L$ is a path-injective localization of an $\infty$-topos $\mathcal{C}$. Let $\{p_i:U_i\to X\}_{i\in I}$ be a geometric $L$-covering such that each $U_i$ is $L$-connected and each $U_i\times_XU_j$ is a globally nontrivial $L$-local path component (i.e., $\mathrm{Disc}(L\mathcal{C})(*,\pi_0^L(U_i\times_XU_j))\neq \emptyset$; (for example, when these fiber products are pointed). Then $X$ is $L$-connected.
\end{lem}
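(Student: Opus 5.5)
Since $L$ is path-injective, being $L$-connected is equivalent to being $L\mathcal{C}$-connected, so it suffices to show $\pi_0^{L\mathcal{C}}LX\simeq *$, or alternatively to show directly that $\pi_0^{\mathcal{C}}LX\simeq *$ in the $\infty$-topos $\mathcal{C}$. I would work in $\mathcal{C}$ with the objects $LU_i$, $L(U_i\times_XU_j)$ and $LX$, and use the classical topos-theoretic form of the statement ``a space covered by connected pieces with nonempty pairwise overlaps is connected''.

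First step: the hypothesis that $\{p_i\}$ is a geometric $L$-cover says that $L\check{C}_\bullet p$ is a colimit diagram in $L\mathcal{C}$. Hence $LX\simeq L\,|\check{C}_\bullet p|$, where the realization is taken in $\mathcal{C}$. Applying $\tau_{\leq 0}^{L\mathcal{C}}=L_{\leq 0}\tau^{\mathcal{C}}_{\leq 0}$ (\Cref{formulae for tau under localization}), which is a left adjoint and so commutes with colimits, we get
$$\pi_0^{L\mathcal{C}}LX\simeq L_{\leq 0}\,\mathrm{colim}^{\mathrm{Disc}\mathcal{C}}_{\Delta^{op}}\pi_0^{\mathcal{C}}\check{C}_n p,$$
which is the coequalizer of $\coprod_{i,j}\pi_0^{\mathcal{C}}(U_i\times_XU_j)\rightrightarrows\coprod_i\pi_0^{\mathcal{C}}U_i$. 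Since $\pi_0^{\mathcal{C}}$ factors through $\pi_0^{\mathcal{C}}L$ up to $L_{\leq 0}$, the same coequalizer can be formed with $\pi_0^L U_i\simeq *$ and $\pi_0^L(U_i\times_XU_j)$ in place of these terms. The coequalizer is therefore $L_{\leq 0}$ of the coequalizer of $\coprod_{i,j}\pi_0^L(U_{ij})\rightrightarrows\coprod_i *$ in the $1$-topos $\mathrm{Disc}(\mathcal{C})$.

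Second step: compute that coequalizer. Each $\pi_0^L(U_{ij})$ admits a global section, so it maps onto the terminal object by a split epimorphism. The two maps to $\coprod_i *=\coprod_I *$ are the $i$-th and $j$-th inclusions. The coequalizer in $\mathrm{Disc}(\mathcal{C})$ is the quotient of the constant object $\coprod_I *$ by the equivalence relation generated by the images of these pairs. Because each $U_{ij}$ has a global point, the image of $\pi_0^L(U_{ij})\to *\sqcup *$ (summands $i,j$) contains the global pair $(i,j)$, so $i\sim j$ for all $i,j$. The quotient is thus $*$, provided $I\neq\emptyset$. If $I=\emptyset$, then $LX$ is initial; I would handle this by convention, or by noting that the hypothesis forces $I$ nonempty in the intended use. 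Applying $L_{\leq 0}$ then gives $*$, so $X$ is $L\mathcal{C}$-connected, hence $L$-connected by path-injectivity.

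The main obstacle is the first step: justifying that $\pi_0$ of the realization in $L\mathcal{C}$ is computed by the coequalizer of $\pi_0^L$ of the first two Čech stages. This needs two things: that $\tau_{\leq 0}$ of a geometric realization depends only on the $1$-skeleton (true in any $\infty$-topos, and $L_{\leq 0}$ preserves colimits), and that replacing $\pi_0^{\mathcal{C}}U$ by $\pi_0^{\mathcal{C}}LU$ is harmless after applying $L_{\leq 0}$. The latter holds since $L_{\leq 0}\pi_0^{\mathcal{C}}U\simeq L_{\leq0}\pi_0^{\mathcal{C}}LU$ by universal properties: both corepresent maps from $U$ into discrete $L$-local objects. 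I would state this as a small auxiliary identity first, and then run the coequalizer computation above.
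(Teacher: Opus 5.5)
Your proposal is correct and follows essentially the paper's proof: apply the colimit-preserving $0$-truncation to the \v{C}ech colimit $L\check{C}_\bullet p\to LX$, reduce to the coequalizer of the first two stages in a $1$-category, use the global sections of the pairwise pieces to collapse it to $*$, and conclude by path-injectivity. The differences are cosmetic. You compute in $\mathrm{Disc}(\mathcal{C})$ and then apply $L_{\leq 0}$, which is justified by your identity $L_{\leq 0}\tau^{\mathcal{C}}_{\leq 0}\simeq\tau^{L\mathcal{C}}_{\leq 0}L$ on all of $\mathcal{C}$; the paper instead works directly in $\mathrm{Disc}(L\mathcal{C})$ via pairwise pushouts. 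Your remark that $I\neq\emptyset$ is needed is a genuine edge case that the paper leaves implicit.
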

\begin{proof}
 First, note that the functor $\tau_{\leq 0}^{L\mathcal{C}}: L\mathcal{C}\to \mathrm{Disc}(L\mathcal{C})$, being a localization, preserves colimits. So by applying $\tau_{\leq 0}^{L\mathcal{C}}$ to the colimit diagram $L\check{C}_\bullet p$ in $L\mathcal{C}$ (by definition of geometric $L$-covering), we see that $\tau_{\leq 0}^{L\mathcal{C}}LX$ is the colimit of the path components of $L\check{C}_\bullet p$ in $L\mathcal{C}$, computed in $\mathrm{Disc}(L\mathcal{C})$. As $\mathrm{Disc}(L\mathcal{C})$ is an ordinary category, this colimit can actually be computed as the colimit of the 1-skeleton of the simplicial diagram. In other words, $\tau_{\leq 0}^{L\mathcal{C}}LX$ is the coequalizer of the following diagram in $\mathrm{Disc}(L\mathcal{C})$:
$$\underset{i,j}{\bigsqcup} \pi_0^{L\mathcal{C}}({U_i\times_XU_j})\rightrightarrows \underset{i}{\bigsqcup}\pi_0^{L\mathcal{C}}({U_i})$$
But by formal arguments, this colimit is equal to the colimit of the pairwise pushouts  $$*\simeq\pi_0^{L\mathcal{C}}U_i\leftarrow \pi_0^{L\mathcal{C}}U_i\times_XU_j\to \pi_0^{L\mathcal{C}}U_j\simeq *$$ in $L\mathcal{C}$, say denoted $E_{ij}$. Since every leg $\pi_0^{L\mathcal{C}}U_i\times_XU_j\to \pi_0^{L\mathcal{C}U_i}\simeq *$ is identical and is an epimorphism (by the given condition of the existence of a global section), it follows that the pushout maps $\pi_0^{L\mathcal{C}}U_j\to E_{ij}$ are all isomorphisms. So $E_{ij}\simeq *$. Thus, we have shown that the colimit of each of the pairwise diagrams in $\mathrm{Disc}(L\mathcal{C})$:
$$*\simeq\pi_0^{L\mathcal{C}}U_i\leftarrow \pi_0^{L\mathcal{C}}U_i\times_XU_j\to \pi_0^{L\mathcal{C}}U_j\simeq *$$
yields the terminal object. Hence, the final colimit in $\mathrm{Disc}(L\mathcal{C})$, which is $\pi_0^{L\mathcal{C}}LX$, is equivalent to the colimit of the diagram $*\simeq \pi_0^{L\mathcal{C}}LU_i\to *$ in $\mathrm{Disc}(L\mathcal{C})$ and is thus equivalent to $*$ itself. Since $L$ is path injective, it follows that $\pi_0^LX\simeq*$.
\end{proof}
\begin{cor}\label[cor]{geom L connectivity}
 Let $\{p_i:U_i\to X\}_{i\in I}$ be a geometric $\mathcal{C}$ covering such that each $U_i$ is $L$-locally $0$-connected and each $U_i\times_XU_j$ is globally $L$-connected (e.g., pointed). Then $X$ is $L$-locally $0$-connected.   
\end{cor}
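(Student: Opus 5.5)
The plan is to reduce the corollary to the preceding \Cref{L geom L connectivity}. I read ``$L$-locally $0$-connected'' as $L$-connected in the sense of \Cref{L connected and LC connected}. I keep the standing hypothesis of that lemma that $L$ is a path-injective localization of an $\infty$-topos $\mathcal{C}$. The corollary then follows once two things are checked.

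First, a geometric $\mathcal{C}$-cover is a geometric $L$-cover. By the lemma above, $L$ preserves colimits, so the colimit diagram $\check{C}_\bullet p$ for $p:\bigsqcup U_i\to X$ is sent to a colimit diagram $L\check{C}_\bullet p$ in $L\mathcal{C}$.

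Second, the hypothesis on the fiber products must be upgraded to the global-section condition of \Cref{L geom L connectivity}, namely $\mathrm{Disc}(L\mathcal{C})(*,\pi_0^{L}(U_i\times_XU_j))\neq\emptyset$. If $U_i\times_X U_j$ is pointed by $*\to U_i\times_XU_j$, then applying $L$ and $\pi_0^{\mathcal{C}}$ gives a map $*\simeq\pi_0^{\mathcal{C}}L(*)\to \pi_0^L(U_i\times_XU_j)$. Here $L(*)\simeq *$ because $L\mathcal{C}$ is closed under limits. If instead the fiber product is $L$-connected, then $\pi_0^L(U_i\times_XU_j)\simeq *$, which has a global point trivially. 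Either way, a global point exists, and I would also pass it through $L_{\leq 0}$ to land in $\mathrm{Disc}(L\mathcal{C})$, using \Cref{pi0 to Lpi0}.

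With these two checks, \Cref{L geom L connectivity} applies verbatim. The $U_i$ are $L$-connected, the cover is a geometric $L$-cover, and the pairwise fiber products have global points. So $X$ is $L$-connected, meaning $\pi_0^{L}X\simeq *$, as required.

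The only step needing care is whether the fiber products in the lemma are computed in $\mathcal{C}$ or in $L\mathcal{C}$. The lemma uses $\pi_0^{L\mathcal{C}}$ of the terms of $L\check{C}_\bullet p$, and these are $L(U_i\times_XU_j)$ with the fiber product taken in $\mathcal{C}$. So I would explicitly note that the $\check{C}$ech nerve is formed in $\mathcal{C}$ before applying $L$. This matches the lemma's use of $\pi_0^{L}(U_i\times_XU_j)=\pi_0^{\mathcal{C}}L(U_i\times_XU_j)$, so no commutation of $L$ with pullbacks is needed.
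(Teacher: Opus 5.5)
Your proposal is correct and follows the argument the paper intends; the paper gives no separate proof of this corollary. A geometric $\mathcal{C}$-cover is a geometric $L$-cover because $L$ preserves colimits, pointedness of $U_i\times_X U_j$ gives the required global section of $\pi_0^{L}(U_i\times_X U_j)$ via $L(*)\simeq *$, and \Cref{L geom L connectivity} (under its standing path-injectivity hypothesis) then yields $\pi_0^{L}X\simeq *$. One unstated caveat, shared with the paper's lemma, is that the index set $I$ must be nonempty: the empty family is a geometric cover of $\emptyset$, which is in general not $L$-connected.
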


\begin{thm}[$L$-local Van Kampen theorem]\label{L van campen}
Let $L$ be a path-injective localization of an $\infty$-topos $\mathcal{C}$. Let $\{p_i:U_i\to X\}_{i\in I}$ be a well-pointed $\mathcal{C}$ (or, more generally $L$) geometric covering in $\mathcal{C}$. Assume that $U_i\times_XU_j\times_XU_k$ is $L$-connected for every $i,j,k \in \{I\}\cup I$ (where we denote $U_I:=X$) (i.e., 1,2-, 2-, and 3-fold intersections are connected). Then the $L$-local Van Kampen theorem states that there is a coequalizer diagram in $Grp^L$:
$$\underset{i,j}{*}\pi_1^{L\mathcal{C}}({U_i\times_XU_j})\rightrightarrows \underset{i}{*}\pi_1^{L\mathcal{C}}({U_i})\to \pi_1^{L\mathcal{C}}(X)\to *$$
\end{thm}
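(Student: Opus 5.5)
Since $X$ and its covering are only $L$-covers, I will not try to prove a Van Kampen theorem directly inside $L\mathcal{C}$. Instead I will reduce to the $\infty$-topos $\mathcal{C}$, where Van Kampen is available, and then transport the result through the localization using \Cref{homotopy groups of localization} and path injectivity.

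\textbf{Step 1: reduce to $L$-local data.} Applying $L$ to the Čech nerve $\check{C}_\bullet p$ gives a colimit diagram $L\check{C}_\bullet p\to LX$ in $L\mathcal{C}$. The hypothesis says each $L(U_i\times_X U_j\times_X U_k)$ is $L$-connected. Path injectivity then makes these objects connected in $\mathcal{C}$ itself, by \Cref{em of localization} with $n=1$. They are pointed by the well-pointedness assumption. Hence every term lies in $(L\mathcal{C})_{\geq 1}=L\mathcal{C}\cap\mathcal{C}_{\geq 1}$.

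\textbf{Step 2: pass through loops and bar constructions.} By \Cref{loop bar for localization}, $\Omega_{\mathcal{C}}$ restricted to these objects lands in $Grp_L(L\mathcal{C})$, and $\mathrm{B}_{L\mathcal{C}}$ inverts it there. The geometric realization of the simplicial object $L\check{C}_\bullet p$ in $(L\mathcal{C})_{\geq 1}$ is $LX$. Because $\mathrm{B}_{L\mathcal{C}}$ is a left adjoint (\Cref{bar loop adjunction}), $\Omega LX$ is the colimit in $Grp_L(L\mathcal{C})$ of the groups $\Omega L(U_{i_0}\times_X\cdots\times_X U_{i_n})$. For this I need $Grp_L(L\mathcal{C})\simeq(L\mathcal{C})_{\geq 1}$ to be closed under the relevant colimits. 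It is, since $(L\mathcal{C})_{\geq 1}$ is closed under colimits in $L\mathcal{C}$: $\tau^{L\mathcal{C}}_{\leq 0}$ preserves colimits and sends connected objects to $*$.

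\textbf{Step 3: truncate.} Next apply $\pi_0^{L\mathcal{C}}=\tau_{\leq 0}^{L\mathcal{C}}$ to $\Omega$. Since $\pi_1^{L\mathcal{C}}=L_{\leq 0}\pi_1^{\mathcal{C}}$ and $L_{\leq 0}$ is a left adjoint, the functor $G\mapsto \pi_0^{L\mathcal{C}}G$ from $Grp_L(L\mathcal{C})$ to $Grp^L$ preserves colimits. This is where I will use path-cartesianness, to keep group structures compatible with truncation. Computed in the $1$-category $Grp^L$, a colimit over $\Delta^{op}$ only sees the $1$-truncated part of the diagram. I will also use the $\Delta^{\leq 2}$ part to identify the coequalizer of the two face maps. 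The result is
$$\underset{i,j}{*}\pi_1^{L\mathcal{C}}(U_i\times_XU_j)\rightrightarrows \underset{i}{*}\pi_1^{L\mathcal{C}}(U_i)\to \pi_1^{L\mathcal{C}}(X),$$
where the free products are coproducts taken in $Grp^L$. The coproducts over the index sets arise from the disjoint unions $\bigsqcup U_i$. To convert them into free products, I will use pointed connectedness of each summand, together with the fact that the bar construction sends coproducts of groups to wedges. Surjectivity onto $\pi_1^{L\mathcal{C}}X$, which is the "$\to *$" part of the statement, is automatic for a coequalizer.

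The main obstacle is Step 2. The Čech nerve of $\bigsqcup U_i\to X$ consists of disjoint unions of intersections, which are not connected. So I would first replace it by the simplicial object of pointed connected pieces, glued along basepoints (a wedge/homotopy-colimit model). Then I would check that its realization in $L\mathcal{C}$ is still $LX$. This is where the assumption on triple intersections, including the index $I$ with $U_I=X$, is used.
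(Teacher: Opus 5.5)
\textbf{There is a genuine gap, and it sits where the triple-intersection hypothesis should enter.}

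In Steps 1--2 you transport the whole simplicial diagram $L\check{C}_\bullet p$ (or your wedge replacement of it) into $Grp_L(L\mathcal{C})$ via $\Omega$, using the equivalence of \Cref{loop bar for localization}. That requires every level to lie in $(L\mathcal{C})_{\geq 1}$, i.e.\ $L(U_{i_0}\times_X\cdots\times_X U_{i_n})$ connected for all $n$. The statement only assumes this for at most threefold intersections. So your claim that ``every term lies in $(L\mathcal{C})_{\geq 1}$'' does not follow, and wedging does not help: a wedge of pointed objects is connected only if each summand is. You also cannot discard the levels $n\geq 3$ before passing to groups, because in an $\infty$-category a geometric realization depends on all simplicial levels. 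The reduction to low-dimensional skeleta is only available after $1$-truncation. In your argument it appears in Step 3, after connectivity of all levels has already been used in Step 2.

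Relatedly, your Step 3 justification is not sufficient. You argue that $G\mapsto \pi_0^{L\mathcal{C}}G$ preserves colimits because $L_{\leq 0}$ is a left adjoint, but that concerns colimits of underlying objects. Colimits in $Grp_L(L\mathcal{C})$ and in $Grp^L$ (free products, coequalizers of groups) are not computed underlying. What you would need is that $\pi_0^{L\mathcal{C}}$ is left adjoint to $Grp^L\subset Grp_L(L\mathcal{C})$. Under $\mathrm{B}$ this amounts to $\tau^{L\mathcal{C}}_{\leq 1}$ being left adjoint to $\mathcal{EM}_1(L\mathcal{C})\subset (L\mathcal{C})_{\geq 1}$, which is the paper's tool.

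The fix is to reverse the order, as the paper does:
\begin{enumerate}
\item Write $X$ as the colimit over the unfolded index category $I^{\Delta^{inj}}$, whose vertices are the individual iterated fiber products. This replaces your wedge model and produces the free products directly as coproducts over index tuples.
\item Apply the left adjoints $L$ and then $\tau_{\leq 1}^{L\mathcal{C}}$. The colimit now lives in the $(2,1)$-category $(L\mathcal{C})_{\leq 1}$, so it is computed on the $2$-skeleton, which involves only the $1$-, $2$-, and $3$-fold intersections.
\item Those are connected by hypothesis, and $LX$ is connected by \Cref{L geom L connectivity}. So the truncated diagram and its colimit lie in $\mathcal{EM}_1(L\mathcal{C})$.
\item The equivalence $\pi_1^{L\mathcal{C}}:\mathcal{EM}_1(L\mathcal{C})\simeq Grp^L$ of \Cref{grp em} preserves this colimit. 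The $1$-categorical reduction to the coequalizer then finishes the proof.
\end{enumerate}
Done in this order, the loop--bar detour through \Cref{loop bar for localization} is not needed at all.
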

\begin{proof}
Let us construct a diagram $I^{\Delta^{inj}}$ as follows. Its objects are pairs $(n,i_1\cdots i_n)$ with $n\geq 0$ and $i_1\cdots i_n\in I^n$, and its morphisms are projection maps. Consider the functor $C:(I^{\Delta^{inj}})^{{op}}\to \mathcal{C}$ given by $$(n,i_1\cdots i_n)\mapsto U_{i_1}\times_X\cdots\times _XU_{i_n}.$$ By the effectivity condition, we have a colimit diagram in $\mathcal{C}$: $$\colim_{x\in I^{\Delta^{inj},op}}\Pi x\simeq X.$$ Applying the left adjoint, we obtain a colimit diagram in $L\mathcal{C}$: $$\colim_{x\in I^{\Delta^{inj},op}}L\Pi x\simeq LX.$$ Applying the truncation $\tau_{\leq  1}^{L\mathcal{C}}$ (which is a left adjoint), we find the colimit, $$\colim_{x\in I^{\Delta^{inj},op}}\tau_{\leq 1}^{L\mathcal{C}}L\Pi x\simeq \tau_{\leq 1}^{L\mathcal{C}}LX$$ in $(L\mathcal{C})_{\leq 1}$. Moreover, because $I^{\Delta^{inj},op}$ is a semisimplicial diagram and $(L\mathcal{C})_{\leq 1}$ is a $(2,1)$-category, the colimit can be computed using its $2$-skeleton. 
Since each object in this diagram is connected in $L\mathcal{C}$ and so is the colimit $LX$ (by \Cref{L geom L connectivity}), it follows that  $$\colim_{x\in ({I^{\Delta^{inj}}_{\leq 2}})^{op}}\tau_{\leq 1}^{L\mathcal{C}}L\Pi x\simeq \tau_{\leq 1}^{L\mathcal{C}}LX$$ in $\mathcal{EM}_1(L\mathcal{C})$. For $x\in I^{\Delta^{inj},op}_{\leq 2}$, $L\Pi x$ is given to be $0$-connected in $L\mathcal{C}$, hence so is $\tau_{\leq 1}^{L\mathcal{C}}L\Pi x$. Thus, $$\tau_{\leq 1}^{L\mathcal{C}}L\Pi x\in \mathcal{EM}_1(L\mathcal{C}).$$ Since the colimit in $(L\mathcal{C})_{\leq 1}$ is connected, it is computed in $\mathcal{EM}_1(L\mathcal{C})$. By \Cref{grp em}, the functor $$\pi_1^{L\mathcal{C}}:\mathcal{EM}_1(L\mathcal{C})\to Grp^L$$ preserves colimits. Therefore, we have a diagram in $Grp^L$ (for $(i,j,k)\in I^3$:
$$\pi_1^{L\mathcal{C}}({U_i\times_XU_j}\times U_k)\to \pi_1^{L\mathcal{C}}({U_i\times_XU_j})\to\pi_1^{L\mathcal{C}}({U_i})$$ whose colimit is $\pi_1^{L\mathcal{C}}X$. Since $Grp^L$ is a discrete category, this colimit can also be computed as the colimit of the subdiagram (for $(i,j)\in I^2$)$$\pi_1^{L\mathcal{C}}({U_j})\leftarrow\pi_1^{L\mathcal{C}}({U_i\times_XU_j})\to\pi_1^{L\mathcal{C}}({U_i}).$$ The colimit of this last diagram can be obtained from the diagram in question, namely the coequalizer in $Grp^L$ of the parallel arrows: 
$$\underset{i,j}{*}\pi_1^{L\mathcal{C}}({U_i\times_XU_j})\rightrightarrows \underset{i}{*}\pi_1^{L\mathcal{C}}({U_i})$$
\end{proof}

\textbf{Spatial Covering ($0$-truncated morphisms):}

\begin{defn}\label[defn]{defn for covering}
    Let $\mathcal{C}$ be a presentable $\infty$-category. A morphism $f:Y\to X$ in $\mathcal{C}$ is said to be a $\mathcal{C}$-covering if $f\in ({\mathcal{C}_{/X}})_{\leq 0}$, i.e., $f\to \tau^{\mathcal{C}_{/X}}_{\leq 0}f$ is an equivalence. The category of coverings of $X$ is the full subcategory of $\mathcal{C}_{/X}$ consisting of coverings of $X$. We denote this category by $\mathcal{C}ov_{\mathcal{C}}(X):=  ({\mathcal{C}_{/X}})_{\leq 0}$. 
\end{defn}
\begin{prop}\label[prop]{Covering of Localization}
   When $L\mathcal{C}\subset \mathcal{C}$ is a localization, a morphism $X\to Y$ in $L\mathcal{C}$ is a covering in $L\mathcal{C}$ if and only if it is a covering in $\mathcal{C}$.
\end{prop}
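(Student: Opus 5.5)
The plan is to reduce the statement to \Cref{truncation of localization}, applied to the slice categories rather than to $\mathcal{C}$ itself. Fix $f:X\to Y$ in $L\mathcal{C}$; since $X,Y$ are $L$-local, $f$ is an object of both $(L\mathcal{C})_{/Y}$ and $\mathcal{C}_{/Y}$. By \Cref{defn for covering}, $f$ is a covering in $L\mathcal{C}$ iff $f\in((L\mathcal{C})_{/Y})_{\leq 0}$, and a covering in $\mathcal{C}$ iff $f\in(\mathcal{C}_{/Y})_{\leq 0}$. So it suffices to show that, inside $\mathcal{C}_{/Y}$, the full subcategory $(L\mathcal{C})_{/Y}$ is a localizing subcategory. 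Once that is known, \Cref{truncation of localization} gives $((L\mathcal{C})_{/Y})_{\leq 0}=(\mathcal{C}_{/Y})_{\leq 0}\cap (L\mathcal{C})_{/Y}$, which is exactly the claim.

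The key step is to check that $(L\mathcal{C})_{/Y}\subset\mathcal{C}_{/Y}$ is reflective when $Y$ is $L$-local. For an object $g:Z\to Y$ of $\mathcal{C}_{/Y}$, the universal property of the unit $\eta_Z:Z\to LZ$ shows that $g$ factors, up to a contractible space of choices, as $\bar g\circ\eta_Z$ with $\bar g:LZ\to Y$. I will take $\bar g$ as the candidate reflection. For $h:W\to Y$ with $W\in L\mathcal{C}$, mapping spaces in slices are fibers:
$$\mathrm{Map}_{\mathcal{C}_{/Y}}(g,h)\simeq \mathrm{fib}_g\bigl(\mathrm{Map}_\mathcal{C}(Z,W)\to\mathrm{Map}_\mathcal{C}(Z,Y)\bigr).$$
Precomposition with $\eta_Z$ identifies this map with $\mathrm{Map}(LZ,W)\to\mathrm{Map}(LZ,Y)$, and the basepoint $g$ with $\bar g$. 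Hence $\mathrm{Map}_{\mathcal{C}_{/Y}}(g,h)\simeq\mathrm{Map}_{(L\mathcal{C})_{/Y}}(\bar g,h)$, and the full inclusion has a left adjoint.

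The main point needing care is the hidden subtlety that ``localization'' in \Cref{truncation of localization} only uses the existence of a left adjoint to a fully faithful inclusion; the argument there is purely formal and does not need accessibility or presentability. I would double-check that this is indeed all that proof uses. It is: it only invokes $\mathrm{Map}_\mathcal{C}(\esc{Y},\esc{X})\simeq\mathrm{Map}_{L\mathcal{C}}(L\esc{Y},\esc{X})$.

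As an alternative route for the direction ``covering in $\mathcal{C}$ $\Rightarrow$ covering in $L\mathcal{C}$'', which is the trivial containment, I can argue directly. The mapping spaces in $(L\mathcal{C})_{/Y}$ are mapping spaces in $\mathcal{C}_{/Y}$ because $L\mathcal{C}$ is full, so they are $0$-truncated. The converse direction is the one that genuinely uses the slice reflection above: for arbitrary $g\in\mathcal{C}_{/Y}$ we get $\mathrm{Map}_{\mathcal{C}_{/Y}}(g,f)\simeq\mathrm{Map}_{(L\mathcal{C})_{/Y}}(\bar g,f)$, which is $0$-truncated by hypothesis.
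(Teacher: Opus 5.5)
Your proof is correct and has the same skeleton as the paper's. Both show that $(L\mathcal{C})_{/Y}\subset\mathcal{C}_{/Y}$ is a reflective full subcategory and then apply \Cref{truncation of localization} to the slices.

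The difference is in how reflectivity is obtained. The paper shows that $(L\mathcal{C})_{/Y}$ is closed under limits in $\mathcal{C}_{/Y}$: a limit in the slice is the limit of the cone diagram $\mathscr{I}^{\triangleright}\to\mathcal{C}$ with cone point sent to $Y$, and $L\mathcal{C}$ is closed under limits. It then invokes presentability of both slices to get a left adjoint. This is an adjoint functor theorem argument, and it tacitly uses that $L$ is accessible, so that the inclusion is accessible too.

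You instead write the reflector down explicitly as $g\mapsto\bar g$. Its unit is the slice morphism $\eta_Z\colon g\to\bar g$, and precomposition with it gives your equivalence of mapping spaces. You verify the universal property from the fiber description of slice mapping spaces together with locality of $W$ and $Y$.

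What your route buys:
\begin{itemize}
\item It needs no presentability or accessibility, so it proves the proposition under exactly its stated hypothesis, an arbitrary reflective subcategory.
\item It identifies the slice localization concretely.
\end{itemize}
The paper's route is quicker to state but leans on the ambient presentable setting. Your check that \Cref{truncation of localization} uses only the equivalence $\mathrm{Map}_{\mathcal{C}}(Z,X)\simeq\mathrm{Map}_{L\mathcal{C}}(LZ,X)$ is what makes this extra generality legitimate.
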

\begin{proof}
    When $X\in L\mathcal{C}$, the inclusion $L\mathcal{C}_{/X}\subset \mathcal{C}_{/X}$ is closed under limits. Indeed, if $D:\mathscr{I}\to L\mathcal{C}_{/X}$ is a diagram, then the limit of the composite $\mathscr{I}\to L\mathcal{C}_{/X}\subset \mathcal{C}_{/X}$ is the limit of the extended cone $D^\triangleright:\mathscr{I}^{\triangleright}\to \mathcal{C}$ (mapping the terminal object to $X$), with structure map to $X$ given by the leg to the terminal object $X$. But since $L\mathcal{C}\subset \mathcal{C}$ is closed under limits, this limit belongs to $L\mathcal{C}_{/X}$. Since both $\mathcal{C}_{/X}$ and $L\mathcal{C}_{/X}$ are presentable, it follows that $L\mathcal{C}_{/X}\subset \mathcal{C}_{/X}$ is a localization. Therefore, \Cref{truncation of localization} applies, and it follows that $$ {L\mathcal{C}_{/X}}_{\leq 0}= ({\mathcal{C}_{/X}})_{\leq 0}\bigcap L\mathcal{C}_{/X}.$$ 
\end{proof}

\begin{rem}\label[rem]{Effective localization remark}
For convenience, we say that a localization is effective if, for every object $X$, the localization unit $X\to LX$ is an effective epimorphism. An example of an effective localization is the localization of a sheaf topos at an interval object, in the sense of [\cite{morel19991}]. Indeed, when a sheaf topos $\mathcal{T}$ is localized at an interval object $I$, there is an epimorphism $\tau_{\leq 0}^{\mathcal{T}}\esc{X}\to \tau_{\leq 0}^{\mathcal{T}}L_{I}\esc{X}$ [\cite{morel19991}, Corollary 2.3]. By [\cite{lurie2009higher}, Proposition 7.2.1.14], it follows that $\esc{X}\to L_{I}\esc{X}$ is an effective epimorphism. More generally, one can prove that a cartesian localization at any split object (objects admitting global sections) is effective (see [\cite{p1algtop}]). 
\end{rem} 
\begin{lem}
    An effective localization $L:\mathcal{C}\to \mathcal{C}$ preserves effective epimorphisms. 
\end{lem}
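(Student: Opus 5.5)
Let $f:Y\to X$ be an effective epimorphism in $\mathcal{C}$; we want to show that $Lf:LY\to LX$ is an effective epimorphism. The ambient $\mathcal{C}$ here is an $\infty$-topos, as in the rest of this subsection, and we also use \Cref{Effective localization remark}. One subtlety is where the conclusion lives. The natural reading is in $\mathcal{C}$: since $L\mathcal{C}\subset\mathcal{C}$ is closed under limits, the \v{C}ech nerve of $Lf$ is the same whether computed in $\mathcal{C}$ or in $L\mathcal{C}$. I will prove the statement in $\mathcal{C}$ and then note the consequence in $L\mathcal{C}$.

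The plan is to use the square given by naturality of the unit,
\[
\xymatrix{
Y\ar[r]^{f}\ar[d]_{\eta_Y}& X\ar[d]^{\eta_X}\\
LY\ar[r]^{Lf}& LX
}
\]
The top arrow is an effective epimorphism by hypothesis. The vertical arrow $\eta_X$ is an effective epimorphism by the effectivity assumption. In an $\infty$-topos, effective epimorphisms are closed under composition, so the composite $\eta_X\circ f\simeq Lf\circ\eta_Y$ is an effective epimorphism. We then invoke the right-cancellation property: if $g\circ h$ is an effective epimorphism in an $\infty$-topos, then $g$ is one. To see this, use [\cite{lurie2009higher}, Proposition 7.2.1.14]: a morphism is an effective epimorphism iff it is $(-1)$-connective, i.e.\ iff $\tau_{\leq -1}$ of it in the slice is terminal. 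A section of $\tau_{\leq -1}(g\circ h)$ factors through $\tau_{\leq -1}(g)$, which is a subobject of the terminal object, so $\tau_{\leq -1}(g)$ is itself terminal. Applying this with $g=Lf$ and $h=\eta_Y$ shows that $Lf$ is an effective epimorphism in $\mathcal{C}$.

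It remains to pass to $L\mathcal{C}$, which is the main obstacle. Being effective epimorphic in $L\mathcal{C}$ means the \v{C}ech nerve $\check{C}(Lf)$, whose terms all lie in $L\mathcal{C}$, has colimit $LX$ in $L\mathcal{C}$. Colimits in $L\mathcal{C}$ are computed by applying $L$ to the colimit in $\mathcal{C}$. By the previous paragraph the colimit in $\mathcal{C}$ is already $LX$, which is local, so applying $L$ returns $LX$ and the diagram is a colimit diagram in $L\mathcal{C}$ as well. The step requiring the most care is the cancellation property and the use of the $(-1)$-connectivity characterization, since this is exactly where the $\infty$-topos hypothesis on $\mathcal{C}$ enters. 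For a more general presentable $\mathcal{C}$, I would instead argue directly on \v{C}ech nerves, and I expect that approach to need extra descent hypotheses.
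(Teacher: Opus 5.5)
Your proof is correct and follows essentially the paper's route: the naturality square $Lf\circ\eta_Y\simeq\eta_X\circ f$, closure of effective epimorphisms under composition, and right cancellation. The only difference is that the paper checks cancellation after applying $\tau_{\leq 0}$, where [\cite{lurie2009higher}, Proposition 7.2.1.14] turns it into right cancellation of epimorphisms of discrete objects, whereas you check it directly on $\tau_{\leq -1}$ in the slice. (Small correction: that slice characterization is the $n=0$ case of the connectivity criterion [\cite{lurie2009higher}, Proposition 6.5.1.12] applied in $\mathcal{C}_{/LX}$, not Proposition 7.2.1.14, and in Lurie's indexing effective epimorphisms are the $0$-connective morphisms, not the $(-1)$-connective ones.)
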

\begin{proof}
   Since $\mathcal{C}$ is an $\infty$-topos, by [\cite{lurie2009higher}, Proposition 7.2.1.14], this follows from the fact that the property of being an epimorphism in an ordinary category has right cancellation.\end{proof}
\begin{lem}\label[lem]{covering lemma}
Let $\mathcal{C}$ be an $\infty$-topos and $L$ a localization of $\mathcal{C}$. Then
    \begin{enumerate}
        \item A morphism of $L\mathcal{C}$-coverings is itself an $L\mathcal{C}$-covering.
        \item Composition of $L\mathcal{C}$ coverings is an $L \mathcal{C}$ covering.
        \item Let $f:Y\to X$ be a morphism in $\mathcal{C}$. Then the pull back functor $\mathcal{C}/LX\to \mathcal{C}/LY$ takes $L\mathcal{C}$ coverings to $L\mathcal{C}$ coverings. 
        \item If $L$ is effective, then the canonical map $\mathcal{C}_{/LX}\to \mathcal{C}_{/X}$ (induced by pullback along the localization unit $X\to LX$) preserves and detects coverings. 
        \item  If $L$ is effective, then a morphism $Z\to LX$ is an $L\mathcal{C}$-covering if and only if $Z$ is $L$-local and $Z\times _XLX\to X$ is a $\mathcal{C}$-covering. 
    \end{enumerate}
\end{lem}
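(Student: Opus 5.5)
The plan is to reduce everything to standard facts about $0$-truncated morphisms in an $\infty$-topos, using \Cref{Covering of Localization} to pass between $L\mathcal{C}$-coverings and $\mathcal{C}$-coverings whose source and target are $L$-local. In an $\infty$-topos, a morphism $f$ is a covering in the sense of \Cref{defn for covering} exactly when it is a $0$-truncated morphism [\cite{lurie2009higher}, \S5.5.6]. Hence a morphism $Z\to W$ in $L\mathcal{C}$ is an $L\mathcal{C}$-covering if and only if it is $0$-truncated in $\mathcal{C}$.

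Items (1)--(3) then follow formally.

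For (1), take a morphism $g:Z\to Z'$ over $W$ between $L\mathcal{C}$-coverings. Both $Z$ and $Z'$ are $0$-truncated over $W$, so $g$ is $0$-truncated by [\cite{lurie2009higher}, Lemma 5.5.6.15]. Since $Z,Z'\in L\mathcal{C}$, \Cref{Covering of Localization} makes $g$ an $L\mathcal{C}$-covering.

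For (2), $0$-truncated morphisms are stable under composition, and the objects involved stay in $L\mathcal{C}$.

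For (3), I read $\mathcal{C}_{/LX}\to\mathcal{C}_{/LY}$ as pullback along $Lf$. Pullback preserves $0$-truncated morphisms (it is left exact, [\cite{lurie2009higher}, Proposition 5.5.6.16]). The pullback of $Z\to LX$ along $Lf$ is a fibre product of $L$-local objects, hence $L$-local, since $L\mathcal{C}$ is closed under limits.

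Item (4) carries the real content. The \emph{preserves} direction is again stability of $0$-truncated morphisms under base change along $\eta:X\to LX$. For \emph{detects}, suppose $Z\to LX$ has $0$-truncated pullback $Z\times_{LX}X\to X$. Since $L$ is effective, $\eta$ is an effective epimorphism. In an $\infty$-topos, truncatedness of a morphism can be checked after base change along an effective epimorphism: $n$-truncated morphisms form a local class, by descent [\cite{lurie2009higher}, Proposition 6.2.3.14 / Lemma 6.5.1.?]. So $Z\to LX$ is $0$-truncated. I expect this descent step to be the main obstacle. One must make sure that the locality statement is available for $0$-truncated morphisms, which holds since $n$-truncatedness is detected on the diagonal and effective epimorphisms are stable under pullback. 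One must also note that the statement concerns $\mathcal{C}$-coverings of $LX$, not yet $L\mathcal{C}$-coverings.

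Item (5) combines (4) with \Cref{Covering of Localization}. Take $Z\to LX$ with $Z$ and $LX$ both $L$-local. By \Cref{Covering of Localization}, it is an $L\mathcal{C}$-covering if and only if it is a $\mathcal{C}$-covering, and by (4) this holds if and only if $Z\times_{LX}X\to X$ is a $\mathcal{C}$-covering. For the converse direction, an $L\mathcal{C}$-covering has $L$-local source by definition, which supplies the condition that $Z$ is $L$-local.
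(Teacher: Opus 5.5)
Your proposal is correct and follows the paper's proof. Both arguments reduce to $0$-truncated morphisms in $\mathcal{C}$ via \Cref{Covering of Localization}, use the two-out-of-three property of truncated morphisms for (1)--(2), and use descent of truncatedness along the effective epimorphism $X\to LX$ for (4) and hence (5); the correct citations are [\cite{lurie2009higher}, Lemma 5.5.6.14] for (1)--(2), since 5.5.6.15 is the diagonal criterion, and [\cite{lurie2009higher}, Proposition 6.2.3.17] for (4). Your treatment of (3) by plain base-change stability is slightly cleaner than the paper's, which groups (3) with (4) under the effectivity hypothesis even though (3) does not need it.
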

\begin{proof}
(1,2). Using the proposition above, for (1) and (2), it suffices to assume $L=id_\mathcal{C}$. Now, suppose we have a commutative triangle:
\[
\xymatrix{
X\ar[rr]^f\ar[dr]_h&&Y\ar[dl]^g\\
&Z&
}
\]
thought of as a morphism $f: h\to g$ in the slice category $\mathcal{C}_{Z}$. By [\cite{lurie2009higher}, Lemma 5.5.6.14], it follows that when $g$ is $0$-truncated, $h$ is $0$-truncated if and only if $f$ is. 

(3, 4).
When $L$ is effective, i.e., $X\to LX$ is an effective epimorphism for every $X\in \mathcal{C}$, the result [\cite{lurie2009higher}, Proposition 6.2.3.17] applies.

(5). Follows immediately from (4) and the definition of $L\mathcal{C}$ coverings.
\end{proof}
\begin{rem}\label[rem]{covering pullback description}
    Under the conditions of statement 5 of the above proposition, that is, when the localization $L$ is effective, it follows that there is a pullback square:
\[
\xymatrix{
\mathcal{C}ov_{L\mathcal{C}}(LX)\ar@{^(->}[r]\ar[d]&L\mathcal{C}_{/LX}\ar[d]\\
\mathcal{C}ov_{\mathcal{C}}(X)\ar@{^(->}[r] &\mathcal{C}_{/X}
}
\]
\end{rem}
\begin{lem}\label[lem]{covering iff fiber is discrete}
 Suppose $X$ is a pointed, connected object. Then a morphism $f: Z\to X$ in $\mathcal{C}$ is a covering if and only if the fiber of $f$ is a discrete object of $\mathcal{C}$. 
\end{lem}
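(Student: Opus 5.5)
The plan is to work in the $\infty$-topos $\mathcal{C}$, where this section's results on coverings are formulated, and to reduce the question to descent along the base point. Write $x:*\to X$ for the pointing and $F:=*\times_X Z$ for the fiber of $f$. Since $X$ is connected, i.e. $\tau_{\leq 0}^{\mathcal{C}}X\simeq *$, and $\mathcal{C}$ is an $\infty$-topos, the map $x:*\to X$ is an effective epimorphism. The justification is [\cite{lurie2009higher}, Proposition 7.2.1.14]: a morphism is an effective epimorphism iff its $0$-truncation is an epimorphism of discrete objects, and $*\to\tau_{\leq 0}X\simeq *$ is trivially one.

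First I will recall that pullback along an effective epimorphism in an $\infty$-topos preserves and detects $n$-truncatedness of morphisms. This is [\cite{lurie2009higher}, Proposition 6.2.3.17], the same input used in \Cref{covering lemma} (3,4). Concretely, $f$ is $0$-truncated iff its base change $x^*f:F\to *$ is $0$-truncated.

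Next I will note that $F\to *$ is $0$-truncated exactly when $F$ is a $0$-truncated (discrete) object of $\mathcal{C}$. This is the identification $(\mathcal{C}_{/*})_{\leq 0}\simeq \mathcal{C}_{\leq 0}$. Combined with \Cref{defn for covering}, which defines coverings as $0$-truncated objects of $\mathcal{C}_{/X}$, equivalently $0$-truncated morphisms [\cite{lurie2009higher}, Lemma 5.5.6.14], this gives both directions at once. If $f$ is a covering, its base change $F\to *$ is $0$-truncated; here only left exactness of pullback is needed ([\cite{lurie2009higher}, Proposition 5.5.6.16]), with no connectivity assumption. Conversely, if $F$ is discrete, then the descent statement applied along the effective epimorphism $x$ shows $f$ is $0$-truncated.

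I expect the main obstacle to be justifying that $x$ is an effective epimorphism. This is where connectedness and the topos hypothesis genuinely enter; in a general presentable $\mathcal{C}$ the converse direction could fail. If the statement is meant in a localization $L\mathcal{C}$, I would apply the argument in $\mathcal{C}$ and use \Cref{Covering of Localization} together with \Cref{truncation of localization} to transfer both "covering" and "discrete". Path-injectivity (\Cref{em of localization}) would then guarantee that connectedness in $L\mathcal{C}$ implies connectedness in $\mathcal{C}$.
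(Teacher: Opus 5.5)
Your proof is correct and is the paper's argument: connectedness makes the pointing $*\to X$ an effective epimorphism, [\cite{lurie2009higher}, Proposition 6.2.3.17] then lets you detect $0$-truncatedness of $f$ after base change along it, and $F\to *$ is $0$-truncated exactly when $F$ is discrete. Citing [\cite{lurie2009higher}, Proposition 7.2.1.14] for the effective-epimorphism step fills in a detail the paper leaves implicit, and your closing remark about transferring to $L\mathcal{C}$ is not needed for this statement.
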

\begin{proof}
    When $X$ is connected, the pointing map $*\to X$ is an effective epimorphism. As before [\cite{lurie2009higher}, Proposition 6.2.3.17], the result applies again. The result follows from the fact that a morphism $Z\to *$ is $0$-truncated iff $Z$ is $0$-truncated.
\end{proof}
\begin{cor}\label[cor]{L covering iff fiber is L discrete}
Suppose $L$ is closed under extensions over $LX$. Then a morphism $f:Z\to LX$ with $LX$ pointed and connected is an $L\mathcal{C}$ covering if and only if $fib(f)\in Disc(L\mathcal{C})$.
\end{cor}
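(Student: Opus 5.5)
Since $LX$ lies in $L\mathcal{C}$, I would first reduce to the statement of \Cref{covering iff fiber is discrete} in $\mathcal{C}$ and then compare the two notions of discreteness. By \Cref{Covering of Localization}, a morphism $f:Z\to LX$ with $Z\in L\mathcal{C}$ is an $L\mathcal{C}$-covering if and only if it is a $\mathcal{C}$-covering. The hypothesis ``$L$ is closed under extensions over $LX$'' I read as follows: if $f:Z\to LX$ has $L$-local fiber (over the given point of the $L$-local, pointed, connected base $LX$), then $Z$ itself is $L$-local. This is what makes it possible to move between $\mathcal{C}_{/LX}$ and $L\mathcal{C}_{/LX}$ knowing only the fiber.

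\emph{Forward direction.} Let $f:Z\to LX$ be an $L\mathcal{C}$-covering, so $Z\in L\mathcal{C}$ and $f$ is a $\mathcal{C}$-covering. The base $LX$ is connected in $\mathcal{C}$: for $L\mathcal{C}$-connectedness I would use path injectivity, or interpret ``connected'' in the hypothesis as $\mathcal{C}$-connected. Then \Cref{covering iff fiber is discrete} shows that $\mathrm{fib}(f)$ is $0$-truncated in $\mathcal{C}$. The fiber $\mathrm{fib}(f)=Z\times_{LX}*$ is a limit of $L$-local objects, hence $L$-local. Therefore $\mathrm{fib}(f)\in \mathcal{C}_{\leq 0}\cap L\mathcal{C}=(L\mathcal{C})_{\leq 0}$ by \Cref{truncation of localization}.

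\emph{Reverse direction.} Suppose $\mathrm{fib}(f)\in \mathrm{Disc}(L\mathcal{C})$. By \Cref{truncation of localization} the fiber is $0$-truncated in $\mathcal{C}$. By \Cref{covering iff fiber is discrete}, using that the point $*\to LX$ is an effective epimorphism in the $\infty$-topos $\mathcal{C}$, $f$ is a $\mathcal{C}$-covering. The fiber is also $L$-local, so the extension-closure hypothesis gives $Z\in L\mathcal{C}$. Then \Cref{Covering of Localization} upgrades $f$ to an $L\mathcal{C}$-covering.

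\emph{Main obstacle.} The delicate point is the connectedness of $LX$ needed to apply \Cref{covering iff fiber is discrete} in $\mathcal{C}$. If the statement means connected in $L\mathcal{C}$, I would handle it either by invoking path injectivity, as in \Cref{em of localization}, or by arguing that closure under extensions already forces the relevant descent along $*\to LX$. The rest is formal.
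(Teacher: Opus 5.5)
Your argument is correct and is essentially the paper's proof: you get $Z\in L\mathcal{C}$ (by definition in the forward direction, by extension-closure in the reverse), reduce to $\mathcal{C}$-coverings via \Cref{Covering of Localization}, apply \Cref{covering iff fiber is discrete}, and identify $\mathcal{C}_{\leq 0}\cap L\mathcal{C}$ with $\mathrm{Disc}(L\mathcal{C})$ via \Cref{truncation of localization}. Reading ``connected'' as connected in $\mathcal{C}$ is the intended interpretation (it is what the motivic application supplies for an $\mathbb{A}^1$-connected base), so path injectivity is not needed, and connectivity is in fact used only in the reverse direction.
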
 
\begin{proof}
    It follows that $Z$ is $L$-local. So it suffices to know that $f$ is a $\mathcal{C}$-covering (see \Cref{Covering of Localization}). This is immediate from the Lemma above. 
\end{proof}
\begin{defn}\label[defn]{definition for L covering}
Let $\mathcal{C}$ be a presentable $\infty$-category and $L$ an accessible localization. Let $X\in \mathcal{C}$ be an object. The category of $L$-fibrations over $X$ is the full subcategory of $\mathcal{C}_{/X}$ consisting of objects over $X$ that are local with respect to $L$-local equivalences in $\mathcal{C}_{/X}$. We denote it by $L\mathcal{F}ib_X$. An object $Y\in L\mathcal{F}ib_X$ is said to be an $L$-fibration over $X$. The category of $L$-covering fibrations of $X$, denoted $Cov_L(X)$, is the truncation $\tau_{\leq 0} L\mathcal{F}ib_X$.
\end{defn}

\begin{rem}
Since the forgetful functor $\mathcal{C}_{/X}\to \mathcal{C}$ preserves and creates colimits, the class of $L$-local equivalences in $\mathcal{C}_{/X}$ is again saturated. Hence the inclusion $L\mathcal{F}ib_X\subset \mathcal{C}_{/X}$ is an accessible localization, say given by $L^{fib}$. Therefore, using \Cref{truncation of localization}, we find: $$(L\mathcal{F}ib_X)_{\leq 0} =(\mathcal{C}_{/X})_{\leq 0} \bigcap  L\mathcal{F}ib_X.$$
\end{rem}
\begin{lem}\label[lem]{L fib over L local object}
    A morphism between $L$-local objects is always an $L$-fibration.
    \end{lem}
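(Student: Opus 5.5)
The plan is to check the defining orthogonality directly. Let $f:Y\to X$ be a morphism with $X,Y\in L\mathcal{C}$. By \Cref{definition for L covering}, $f$ is an $L$-fibration precisely when, for every $L$-local equivalence $g:(A\to X)\to(B\to X)$ in $\mathcal{C}_{/X}$, the restriction map
$$g^*:\mathrm{Map}_{\mathcal{C}_{/X}}(B,Y)\to \mathrm{Map}_{\mathcal{C}_{/X}}(A,Y)$$
is an equivalence. Here an $L$-local equivalence in $\mathcal{C}_{/X}$ means a map over $X$ whose underlying map $A\to B$ in $\mathcal{C}$ is sent to an equivalence by $L$. This is the reading compatible with the preceding remark, since the forgetful functor creates colimits.

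The key step is to write the mapping spaces in the slice as fibers. For $A\to X$ we have a fiber sequence
$$\mathrm{Map}_{\mathcal{C}_{/X}}(A,Y)\to \mathrm{Map}_{\mathcal{C}}(A,Y)\xrightarrow{f_*}\mathrm{Map}_{\mathcal{C}}(A,X),$$
taken over the point given by the structure map $A\to X$, and the same holds for $B$. Precomposition with $g$ gives a map of fiber sequences. Its middle and right components are $\mathrm{Map}_\mathcal{C}(B,Y)\to\mathrm{Map}_\mathcal{C}(A,Y)$ and $\mathrm{Map}_\mathcal{C}(B,X)\to\mathrm{Map}_\mathcal{C}(A,X)$. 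The base point of the $B$-sequence, $B\to X$, is sent to the base point $A\to X$ because $g$ is a map over $X$.

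Since $Y$ and $X$ are $L$-local and $Lg$ is an equivalence, both components are equivalences. This uses the universal property $\mathrm{Map}_\mathcal{C}(A,Z)\simeq \mathrm{Map}_{L\mathcal{C}}(LA,Z)$ for $L$-local $Z$. A map of fiber sequences that is an equivalence on total spaces and bases, and respects base points, induces an equivalence on fibers. Hence $g^*$ on slice mapping spaces is an equivalence, and $f$ is $L$-local in $\mathcal{C}_{/X}$, i.e. $f\in L\mathcal{F}ib_X$.

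The only step needing care is the identification of $L$-local equivalences in $\mathcal{C}_{/X}$ with maps over $X$ that become equivalences after applying $L$ in $\mathcal{C}$. If instead the class is defined as the strongly saturated class generated by the slices of the generating $L$-equivalences, I would note that this class is contained in the maps inverted by $L$ after forgetting to $\mathcal{C}$. That is the only direction the argument above uses, so the proof goes through unchanged.
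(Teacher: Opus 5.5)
Your proof is correct, but it takes a different route from the paper's.

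\textbf{The paper's route.} It factors $f$ through its $L^{fib}$-localization in $\mathcal{C}_{/X}$, writing $f\simeq L^{fib}f\circ e$. It then asserts that the domain of the $L$-fibration $L^{fib}f$ is $L$-local because the base is. So $e$ is an $L$-equivalence between $L$-local objects, hence an equivalence, and $f$ is equivalent to an $L$-fibration.

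\textbf{Your route.} You check the locality condition directly. You identify $\mathrm{Map}_{\mathcal{C}_{/X}}(A,Y)$ with the fiber of $f_*:\mathrm{Map}_{\mathcal{C}}(A,Y)\to\mathrm{Map}_{\mathcal{C}}(A,X)$ over the structure map, and compare the fiber sequences for $A$ and $B$.

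\textbf{What your approach buys.} It is more elementary and self-contained:
\begin{enumerate}
\item It does not need the localization $L^{fib}$ of the slice to exist.
\item It does not use the claim that an $L$-fibration over an $L$-local object has $L$-local domain. The paper invokes this without proof.
\item The same decomposition, run the other way, proves that claim. Fiber $\mathrm{Map}_{\mathcal{C}}(B,Z)\to\mathrm{Map}_{\mathcal{C}}(A,Z)$ over $\mathrm{Map}_{\mathcal{C}}(B,X)\simeq\mathrm{Map}_{\mathcal{C}}(A,X)$ and compare fibers pointwise. So your method gives both directions of ``a map into an $L$-local object is an $L$-fibration iff its domain is $L$-local''. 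This two-sided statement is exactly what the paper relies on later in \Cref{L covering of L local object}.
\end{enumerate}

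\textbf{What the paper's approach buys.} Brevity: the formal ``local equivalence between local objects'' pattern does the work. The cost is that its key input is left unjustified.

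Your closing remark about which class counts as $L$-local equivalences in $\mathcal{C}_{/X}$ is also right. Your argument only uses that these maps are inverted by $L$ after forgetting to $\mathcal{C}$, so it holds under either reading.
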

    \begin{proof}
Given a morphism $f:Y\to X$, let us consider the $L^{fib}$ localization defined by the following triangle
\[
\xymatrix{
X\ar[r]^-{e}\ar[dr]_f& dom(L^{fib}f)\ar[d]^{L^{fib}f}\\
& Y
}
\]
Because $Y$ is $L$-local and $L^{fib}f$ is an $L$-fibration, it follows that $dom(L^{fib}f)$ is $L$-local. Since $e$ is an $L$-equivalence by definition, and both $X$ and $dom(L^{fib}f)$ are $L$-local, it follows that $e$ is an equivalence, and therefore $f$ is equivalent to the $L$-fibration $L^{fib}f$.   \end{proof}
\begin{lem}\label[lem]{lemma for L covering} 
\begin{enumerate}
   \item  The pullback of an $L$-fibration (resp. covering) is a fibration (resp. covering). 
   \item  The composition of $L$ fibrations (resp. coverings) is a fibration (resp. covering). 
\end{enumerate}
\end{lem}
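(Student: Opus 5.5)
The strategy is to treat $L$-fibrations as right-orthogonality conditions in slice categories. The covering statements then follow by intersecting with $0$-truncatedness, which is already stable under pullback and composition in an $\infty$-topos. Concretely, $p:Y\to X$ is an $L$-fibration iff it is local in $\mathcal{C}_{/X}$ for the $L$-equivalences in $\mathcal{C}_{/X}$. Equivalently, for every morphism $u:A\to B$ over $X$ with $Lu$ an equivalence, the map
\[\mathrm{Map}_{/X}(B,Y)\to \mathrm{Map}_{/X}(A,Y)\]
is an equivalence. Since $\mathrm{Map}_{/X}(B,Y)$ is the fiber of $\mathrm{Map}(B,Y)\to\mathrm{Map}(B,X)$, this condition is exactly unique right lifting in the $\infty$-categorical sense: for every $L$-equivalence $u:A\to B$, the square
\[\mathrm{Map}(B,Y)\to \mathrm{Map}(A,Y)\times_{\mathrm{Map}(A,X)}\mathrm{Map}(B,X)\]
is an equivalence. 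In other words, $p$ is an $L$-fibration iff $p\in W_L^{\perp}$, where $W_L$ is the class of $L$-equivalences in $\mathcal{C}$. I will first record this reformulation carefully. The only point needing care is that an $L$-equivalence in $\mathcal{C}_{/X}$ is the same as a morphism over $X$ whose underlying map is an $L$-equivalence; this holds because the forgetful functor creates colimits, as in the preceding Remark.

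With this reformulation, parts (1) and (2) for fibrations are the standard closure properties of a right orthogonal class $W^{\perp}$. It contains equivalences and is closed under composition and under base change along arbitrary maps. Both follow by pasting pullback squares of mapping spaces.

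\emph{Composition.} For $q:Z\to Y$ and $p:Y\to X$, the comparison map for $pq$ factors as the comparison map for $q$ followed by a base change of the comparison map for $p$. Both of these are equivalences.

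\emph{Pullback.} For $p':Y'=Y\times_XX'\to X'$, we have
\[\mathrm{Map}(B,Y')\simeq \mathrm{Map}(B,Y)\times_{\mathrm{Map}(B,X)}\mathrm{Map}(B,X'),\]
and similarly for $A$. The comparison square for $p'$ is then a pullback of the one for $p$.

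For coverings, note that by the preceding Remark $Cov_L(X)=(\mathcal{C}_{/X})_{\leq 0}\cap L\mathcal{F}ib_X$. So an $L$-covering is a morphism that is both an $L$-fibration and $0$-truncated. Here $0$-truncatedness in $\mathcal{C}_{/X}$ is the same as $0$-truncatedness of the morphism in $\mathcal{C}$. $0$-truncated morphisms are stable under pullback and composition: for pullback this is [\cite{lurie2009higher}, Remark 5.5.6.12], and for composition [\cite{lurie2009higher}, Lemma 5.5.6.14], as used in \Cref{covering lemma}. Intersecting these two stable classes yields the covering versions of (1) and (2).

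I expect the main obstacle to be the first step: identifying slice-locality with the lifting condition, and in particular checking that the relevant class of $L$-local equivalences in $\mathcal{C}_{/X}$ does not depend on the base $X$. Pullback compares fibrations over different bases, so this base-independence is essential. I would resolve it directly through the fiber-sequence description of $\mathrm{Map}_{/X}$ above, which makes the condition visibly independent of $X$.
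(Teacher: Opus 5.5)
Your proof is correct. For the covering half it matches the paper's: both intersect the fibration statement with the stability of $0$-truncated morphisms under base change and composition. The paper gets base change from left exactness of $f^*$ via [\cite{lurie2009higher}, Proposition 5.5.6.16], you get it from [\cite{lurie2009higher}, Remark 5.5.6.12], and both use [\cite{lurie2009higher}, Lemma 5.5.6.14] for composition.

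The routes differ in the fibration half. The paper argues by adjunction: postcomposition $f_!:\mathcal{C}_{/Y}\to\mathcal{C}_{/X}$ preserves $L$-equivalences, so its right adjoint $f^*$ preserves local objects. It then dismisses composition as following ``easily''. You instead identify $L$-fibrations over any base with the morphisms in the right orthogonal class $W_L^{\perp}$ of $L$-equivalences in $\mathcal{C}$. Pullback and composition then become the formal closure properties of any $W^{\perp}$.

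The adjunction argument is shorter for (1). Your characterization is more informative in three ways:
\begin{enumerate}
\item It makes explicit the base-independence that the paper uses tacitly when it asserts that $f_!$ preserves $L$-equivalences.
\item It supplies the composition argument the paper omits.
\item It yields at once both directions of the fact used in \Cref{L covering of L local object}: a morphism into an $L$-local $X$ is an $L$-fibration iff its source is $L$-local. Indeed, when $X$ is local, $\mathrm{Map}(B,X)\to\mathrm{Map}(A,X)$ is an equivalence, so the pullback condition reduces exactly to $L$-locality of $Y$. This covers \Cref{L fib over L local object} as well as its converse, which the paper's proof of that lemma uses without justification.
\end{enumerate}

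One small correction: you attribute the stability of $0$-truncated morphisms to $\mathcal{C}$ being an $\infty$-topos. The lemma sits in the merely presentable setting, before the paper imposes the topos hypothesis, and the cited results need only finite limits. So no topos assumption is required there.
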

\begin{proof}
    (1). Since the left adjoint $f: \mathcal{C}_{/Y}\to \mathcal{C}_{/X}$ given by precomposition with $f$ clearly preserves $L$-equivalences, the right adjoint $f^*$ preserves the local objects, i.e., $L$-fibrations. On the other hand, the pullback functor $f^*: \mathcal{C}_{/X}\to \mathcal{C}_{/Y}$ is left exact. By [\cite{lurie2009higher}, Proposition 5.5.6.16], it preserves $0$-truncated objects.  
    
    (2). The fact that the composition of $L$-fibrations is an $L$-fibration follows easily. That the composition of $0$-truncated morphisms is $0$-truncated is well known. In fact, this is true in general for $n$-truncated morphisms. For example, if $f: X\to Y$ and $g:Y\to Z$ are $n$-truncated, the claim follows from applying [\cite{lurie2009higher}, Lemma 5.5.6.14] to the morphism $f$ thought of as a morphism from $gf$ to $g$ in the slice category $\mathcal{C}_{/Z}$.
    \end{proof}
\begin{prop}\label[prop]{L covering of L local object}
Let $X\in L\mathcal{C}$ be an $L$-local object. Then $Cov_{L}(X)= Cov_{L\mathcal{C}}(X) $.
\end{prop}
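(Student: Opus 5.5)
The plan is to compare the two categories as full subcategories of $\mathcal{C}_{/X}$. By the remark following \Cref{definition for L covering}, $Cov_L(X)=(L\mathcal{F}ib_X)_{\leq 0}=(\mathcal{C}_{/X})_{\leq 0}\cap L\mathcal{F}ib_X$. On the other side, \Cref{Covering of Localization} and its proof give $Cov_{L\mathcal{C}}(X)=(L\mathcal{C}_{/X})_{\leq 0}=(\mathcal{C}_{/X})_{\leq 0}\cap L\mathcal{C}_{/X}$. Here $L\mathcal{C}_{/X}$ is regarded inside $\mathcal{C}_{/X}$ as the full subcategory of morphisms $Y\to X$ with $Y$ being $L$-local. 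It therefore suffices to prove the equality of full subcategories
$$L\mathcal{F}ib_X = L\mathcal{C}_{/X}\subset \mathcal{C}_{/X}$$
whenever $X$ is $L$-local. Intersecting both sides with $(\mathcal{C}_{/X})_{\leq 0}$ then gives the claim.

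For the inclusion $L\mathcal{C}_{/X}\subset L\mathcal{F}ib_X$, take $f:Y\to X$ with $Y$ and $X$ both $L$-local. Then $f$ is a morphism between $L$-local objects, so \Cref{L fib over L local object} shows it is an $L$-fibration.

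For the converse, let $f:Y\to X$ be an $L$-fibration with $X$ $L$-local; we must show $Y$ is $L$-local. Given an $L$-equivalence $g:A\to B$ in $\mathcal{C}$, I want $\mathrm{Map}(B,Y)\to\mathrm{Map}(A,Y)$ to be an equivalence. I will test fibrewise over $\mathrm{Map}(B,X)\simeq\mathrm{Map}(A,X)$; this base map is an equivalence because $X$ is $L$-local. Fix a point $b:B\to X$. The fibre of $\mathrm{Map}(B,Y)\to\mathrm{Map}(B,X)$ over $b$ is $\mathrm{Map}_{\mathcal{C}_{/X}}((B,b),(Y,f))$, and similarly over $bg$ for $A$. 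The map $g:(A,bg)\to(B,b)$ is an $L$-local equivalence in $\mathcal{C}_{/X}$, since $L$-equivalences there are detected by the forgetful functor. Because $f$ is local with respect to such maps, the fibre map is an equivalence. An equivalence on the base together with equivalences on all fibres gives an equivalence of total spaces, so $Y$ is $L$-local.

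The main obstacle is the convention for $L$-local equivalences in $\mathcal{C}_{/X}$. The argument requires that a map in $\mathcal{C}_{/X}$ whose underlying map is an $L$-equivalence counts as an $L$-local equivalence in $\mathcal{C}_{/X}$. This is the reading of \Cref{definition for L covering} suggested by the remark after it, so I will state it explicitly when carrying out the fibrewise step. Everything else is formal.
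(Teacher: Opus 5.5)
Your proposal is correct and follows essentially the same route as the paper: for $L$-local $X$ you identify $L\mathcal{F}ib_X$ with $L\mathcal{C}_{/X}$ inside $\mathcal{C}_{/X}$ (using \Cref{L fib over L local object}), then intersect with the $0$-truncated objects of $\mathcal{C}_{/X}$. The one difference is that you prove the direction ``an $L$-fibration over an $L$-local $X$ has $L$-local source'' by an explicit fibrewise mapping-space argument. The paper only asserts this direction; it amounts to \Cref{lemma for L covering}(2) applied to $Y\to X\to *$, so your write-up is, if anything, more complete.
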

\begin{proof}
    The point is that a morphism $f:Y\to X$ over an $L$-local object $X$ is an $L$-fibration iff $Y\in L\mathcal{C}$ (see \Cref{L fib over L local object}). The claim now follows easily from \Cref{lemma for L covering}.
\end{proof} 
For the rest of this section, assume that $\mathcal{C}$ is an $\infty$-topos and that $L$ is an accessible localization (not necessarily left exact).
\begin{cor}\label[cor]{torsors are covering}
Let $G\in ( Grp^L(\mathcal{C}))$ and let $Y\to X$ be a $G$-torsor [\cite{MR3423073} $\S$3]. Then $Y\to X$ is an $L$-covering. 
\end{cor}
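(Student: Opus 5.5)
The claim has two parts. We must show that $Y\to X$ is an $L$-fibration, i.e.\ local in $\mathcal{C}_{/X}$ with respect to $L$-equivalences. We must also show that it is $0$-truncated as a morphism of $\mathcal{C}$. Both will be reduced to the universal $G$-torsor. Since $\mathcal{C}$ is an $\infty$-topos, a $G$-torsor $Y\to X$ is classified by a map $X\to \mathrm{B}_\mathcal{C}G$, and $Y\to X$ is the pullback of the universal torsor $*\to \mathrm{B}_\mathcal{C}G$ along this classifying map [\cite{MR3423073}, \S3]. By \Cref{lemma for L covering}(1), pullbacks of $L$-fibrations are $L$-fibrations and pullbacks of coverings are coverings. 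So it suffices to prove that $*\to \mathrm{B}_\mathcal{C}G$ is an $L$-covering.

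First, $G\in Grp^L$ means that $G$ is strictly $L$-local: the canonical map $\mathrm{B}_\mathcal{C}G\to \mathrm{B}_{L\mathcal{C}}G$ is an equivalence, so $\mathrm{B}_\mathcal{C}G$ is $L$-local (\Cref{strictly n local}, or the strongly $L$-local convention before \Cref{loop bar for localization}). The terminal object $*$ is $L$-local because $L\mathcal{C}$ is closed under limits. Hence $*\to\mathrm{B}_\mathcal{C}G$ is a morphism between $L$-local objects, and \Cref{L fib over L local object} shows it is an $L$-fibration.

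Next I show the map is $0$-truncated. Since $G$ is discrete, $\mathrm{B}_\mathcal{C}G$ is a $1$-truncated, connected, pointed object of the $\infty$-topos $\mathcal{C}$ (\Cref{em in infinity topos}, [\cite{lurie2009higher}, Proposition 7.2.2.12]). Its fiber over the base point is $\Omega\mathrm{B}_\mathcal{C}G\simeq G$, which is discrete. So \Cref{covering iff fiber is discrete} shows that $*\to\mathrm{B}_\mathcal{C}G$ is a $\mathcal{C}$-covering. Alternatively, the diagonal of a $1$-truncated object is $0$-truncated and $*\to \mathrm{B}G$ is a base change of it. Combining with the previous paragraph, $*\to\mathrm{B}_\mathcal{C}G$ lies in $(\mathcal{C}_{/\mathrm{B}G})_{\leq 0}\cap L\mathcal{F}ib_{\mathrm{B}G}=Cov_L(\mathrm{B}_\mathcal{C}G)$, using the remark after \Cref{definition for L covering}.

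The main point needing care is the first reduction. It requires the convention that torsors in \cite{MR3423073} are classified by maps to $\mathrm{B}_\mathcal{C}G$ formed in $\mathcal{C}$, not in $L\mathcal{C}$; strict locality is exactly what makes these two agree. If instead the torsor is given only as a principal $G$-action with $X\simeq Y/G$, I would first invoke [\cite{MR3423073}, Theorem 3.19] to obtain the classifying map, and then proceed as above.
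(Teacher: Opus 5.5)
Your proof is correct and follows the paper's argument. Both reduce, via pullback stability (\Cref{lemma for L covering}(1)), to the universal torsor $*\to\mathrm{B}_\mathcal{C}G$, which is a $\mathcal{C}$-covering because its fiber $G$ is discrete and is $L$-local because $G\in Grp^L$. The only cosmetic difference is that the paper passes through ``$L\mathcal{C}$-covering'' and \Cref{L covering of L local object}, whereas you invoke \Cref{L fib over L local object} and the remark after \Cref{definition for L covering} directly, which is exactly what that proposition's proof does.
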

\begin{proof}
    Note that for any $G\in Grp(\mathrm{Disc}{(\mathcal{C}}))$, $*\to \mathrm{B}_\mathcal{C}G$ is a $\mathcal{C}$ covering by [\Cref{L covering iff fiber is L discrete}]. Thus, when $G\in Grp^L\mathcal{C}$ so that $\mathrm{B}_\mathcal{C}G$ is $L$ local, the morphism is an $L\mathcal{C}$ covering. But then, by the above proposition \Cref{L covering of L local object}, it is also an $L$ covering. Because $Y\to X$ is pulled back from $*\to \mathrm{B}_\mathcal{C}G$ along some map $X\to \mathrm{B}_\mathcal{C}G$, \Cref{lemma for L covering} (1) yields the desired result.
\end{proof}

For an $X\in \mathcal{C}$, let $\eta:X\to LX$ be the corresponding localization unit. Pulling back along this unit morphism yields a canonical map $$\eta^*: Cov_{L\mathcal{C}}{LX}=Cov_L(LX)\to Cov_{L}(X).$$ This map fits into the following pullback square (even when $L$ is not effective):
\[
\xymatrix{
\mathcal{C}ov_{L\mathcal{C}}(LX)\ar@{^(->}[r]\ar[d]^{\eta^*}&L\mathcal{C}_{/LX}\ar[d]^{\eta^*}\\
\mathcal{C}ov_{L}(X)\ar@{^(->}[r] &\mathcal{C}_{/X}
}
\]

\begin{prop}\label[prop]{L covering is covering of L}
If $L$ is locally cartesian and effective on $X$, then the functor $$\eta^*_X: Cov_{L\mathcal{C}}{LX}=Cov_L(LX)\to Cov_{L}(X)$$ given by pull back along the localization $\eta_X:X\to LX$ is an equivalence of categories, with inverse given by $L$.
\end{prop}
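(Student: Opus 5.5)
We will show that the two composites $\eta^*_X\circ L$ and $L\circ\eta^*_X$ are each equivalent to the identity, with the adjunction supplied by the fact that $L$ on slices is left adjoint to pullback. Precisely, for $X\in\mathcal{C}$ the functor $L_{/X}:\mathcal{C}_{/X}\to L\mathcal{C}_{/LX}$, sending $(f:Y\to X)$ to $(Lf:LY\to LX)$, is left adjoint to $\eta_X^*:L\mathcal{C}_{/LX}\to\mathcal{C}_{/X}$. This holds because a map $LY\to Z$ over $LX$ corresponds to a map $Y\to Z$ over $LX$, and hence to a map $Y\to Z\times_{LX}X$ over $X$. The pullback square displayed just before the statement shows that $\eta^*_X$ lands in $Cov_L(X)$. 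We must therefore check two things: that $L$ sends $Cov_L(X)$ into $Cov_{L\mathcal{C}}(LX)$, and that the unit and counit are equivalences on these subcategories.

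\emph{Counit.} Let $g:Z\to LX$ be an $L\mathcal{C}$-covering. The counit is the map $L(Z\times_{LX}X)\to Z$. By the locally cartesian hypothesis, $L$ commutes with pullbacks along maps into the local object $LX$, so
\[
L(Z\times_{LX}X)\simeq LZ\times_{LLX}LX\simeq Z\times_{LX}LX\simeq Z.
\]
Hence the counit is an equivalence.

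\emph{Unit.} Let $f:Y\to X$ be an $L$-covering, that is, an $L$-fibration which is $0$-truncated. First we show that $Lf$ is an $L\mathcal{C}$-covering. Since $LX$ and $LY$ are local, $Lf$ is an $L$-fibration by \Cref{L fib over L local object}. By \Cref{Covering of Localization} it then suffices to show that $Lf$ is $0$-truncated in $\mathcal{C}$. Here we use effectivity of $L$ on $X$: by \Cref{covering lemma}(4), $0$-truncatedness of $Lf$ can be detected after pulling back along $\eta_X$, so it suffices to show $X\times_{LX}LY\to X$ is $0$-truncated. This will follow once we know the unit map $u:Y\to X\times_{LX}LY$ is an equivalence, which is also exactly the unit of the adjunction. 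So both remaining claims reduce to showing that $u$ is an equivalence.

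For this, note that $u$ is a morphism in $\mathcal{C}_{/X}$. Its target is an $L$-fibration, being the pullback of the $L$-fibration $Lf$ (\Cref{lemma for L covering}(1)), and its source $Y$ is an $L$-fibration by hypothesis. Applying $L$ and using local cartesianness again, $Lu$ becomes the map $LY\to LX\times_{LX}LY\simeq LY$, which is the identity. So $u$ is an $L$-local equivalence between objects of $L\mathcal{F}ib_X$. Since $L\mathcal{F}ib_X$ is by definition the category of objects local for $L$-local equivalences in $\mathcal{C}_{/X}$, the map $u$ is an equivalence.

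The main obstacle is this last step: making precise that ``$L$-local equivalence in $\mathcal{C}_{/X}$'' means a map whose image under $L$ is an equivalence, so that $L\mathcal{F}ib_X$-objects see $u$ as invertible. We also rely on locally cartesian being used in the form ``$L$ preserves pullbacks over local bases'', with effectivity invoked only through \Cref{covering lemma}(4). If that detection statement turns out to require $X\to LX$ to be an effective epimorphism in the strict sense, we will invoke [\cite{lurie2009higher}, Proposition 6.2.3.17] directly, just as in the proof of \Cref{covering lemma}.
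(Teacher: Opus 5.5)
Your proposal is correct and is essentially the paper's argument. Local cartesianness makes $Y\to X\times_{LX}LY$ an $L$-equivalence between $L$-fibrations, hence an equivalence. Effectivity of $X\to LX$ (via [HTT, Proposition 6.2.3.17]) then transfers $0$-truncatedness to $Lf$, and local cartesianness again gives $L\eta_X^*\simeq\mathrm{id}$. The only difference is presentational: you package the two composites as the unit and counit of the slice adjunction $L_{/X}\dashv\eta_X^*$, whereas the paper checks them directly.
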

\begin{proof}
    Given an $L$-covering $f:Y\to X$, consider the $L$-fibration $Lf: LY\to LX$ (see \Cref{L fib over L local object}) and the pullback 
    \[
    \xymatrix{
    Y\ar[r]\ar[dr]& LY\times_{LX}{X}\ar[r]\ar[d]&X\ar[d]\\
   &LY\ar[r]^{Lf}&LX
    }
    \]
Because $L$ is locally cartesian on $X$, the morphism $ Y\to LY\times_{LX}{X}$ is an $L$-equivalence. Since $Lf$ (being a morphism of $L$-local objects) is an $L$-fibration (by \Cref{L fib over L local object}), so is the base change $LY\times_{LX}X\to X$ (use \Cref{lemma for L covering}). Since $f$ (being an $L$-covering) is also an $L$-fibration, the $L$-equivalence $ Y\to LY\times_{LX}{X}$ is, in fact, an equivalence (being a morphism between $L$-fibrations). It follows that the following is a pullback square:
\[
\xymatrix{ Y\ar[r]\ar[d]&X\ar[d]\\
LY\ar[r] &LX}
\]
Since $Y\to X$ is a covering and $X\to LX$ is an effective epimorphism, it follows from [\cite{lurie2009higher}, Proposition 6.2.3.17] that $LY\to LX$ is also a covering. 

From the proof, it is clear that the functor $L:Cov_L(X)\to L\mathcal{C}_{/LX}$ factors through $Cov_{L\mathcal{C}}(LX)$ (which we again denote by $L:Cov_L(X)\to Cov_{L\mathcal{C}}(LX)$) and that $\eta^*L$ is equivalent to the identity. On the other hand, local cartesianity guarantees that the composition $L\eta^*$ is also equivalent to the identity. 
\end{proof}

\begin{rem}\label[rem]{initial object of cov in a topos}
Suppose $\mathcal{C}$ is an $\infty$-topos with an initial object $\emptyset$. If $X\in \mathcal{C}$, the unique map $\emptyset\to X$ is always $0$-truncated and hence forms an (uninteresting) initial object of the category $Cov_\mathcal{C}(X)$. When coverings are pointed (so the empty covering is discarded), $Cov_\mathcal{C}(X)_*$ has an interesting initial object. To see this, let $X\in \mathcal{C}_*$. Consider the zero truncation $\tau_{\leq 0}^{/X}$ (which is a left adjoint) of the pointing map $*\to X$ and denote it by $\tau_{>1}^\mathcal{C}X\to X$. Since $*\to X$ is an initial object of $\mathcal{C}_{/X,*}$, and the truncation functor $$\tau_{\leq 0}^{/X}:\mathcal{C}_{/X,*}\to Cov_\mathcal{C}(X)_*$$ preserves colimits, it follows that $\tau_{>1}^\mathcal{C}X\to X$ is an initial object of $Cov_{\mathcal{C}}(X)_*$. In fact, by computing homotopy groups, one can show that this is part of a fiber sequence $$\tau^\mathcal{C}_{>1}X\to X\to \tau_{\leq 1}^\mathcal{C}X.$$
\end{rem}
\begin{prop}\label[prop]{Universal cover for fundamental localization}
Suppose $\mathcal{C}$ is an $\infty$-topos with an accessible localization $L$. Suppose moreover that $X\in \mathcal{C}_*$ is such that $\tau_{\leq 1}LX\simeq \tau_{\leq 1}^{L\mathcal{C}}LX$. Then the category $\mathcal{C}ov_{L\mathcal{C}}(LX)_*$ has an initial object $\tilde{X}:=\tau_{>1}LX$. 
\end{prop}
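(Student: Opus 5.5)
Let $x\colon *\to LX$ denote the pointing of $LX$ induced by the pointing of $X$. The plan is to reduce the statement to \Cref{initial object of cov in a topos}, applied to the $\infty$-topos $\mathcal{C}$ and the pointed object $LX$. That remark produces $\tau_{>1}LX\to LX$, the $0$-truncation in $\mathcal{C}_{/LX}$ of $x$, as an initial object of $Cov_{\mathcal{C}}(LX)_*$. By \Cref{Covering of Localization}, $Cov_{L\mathcal{C}}(LX)$ is the full subcategory of $Cov_{\mathcal{C}}(LX)$ on those coverings whose source is $L$-local. An initial object of a category that happens to lie in a full subcategory is initial there too. So everything reduces to one claim: the source $\tilde X=\tau_{>1}LX$ of the universal cover computed in $\mathcal{C}$ is $L$-local.

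To prove the claim I would use the fiber sequence $\tau_{>1}LX\to LX\to \tau_{\leq 1}LX$ from \Cref{initial object of cov in a topos}. Since $\mathcal{C}$ is an $\infty$-topos, this exhibits $\tilde X$ as the pullback of $* \to \tau_{\leq 1}LX$ along the truncation map $LX\to \tau_{\leq 1}LX$. I would verify this directly: the map $LX\to\tau_{\leq1}LX$ is $1$-connective, so its fiber is simply connected. That fiber maps to $LX$ by a $0$-truncated morphism, being a base change of $*\to\tau_{\leq 1}LX$, which is $0$-truncated because its target is $1$-truncated. Hence the fiber satisfies the defining universal property of the $0$-truncation of $x$ in $\mathcal{C}_{/LX}$.

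Now I use the hypothesis $\tau_{\leq 1}LX\simeq\tau^{L\mathcal{C}}_{\leq 1}LX$. By \Cref{formulae for tau under localization}, this says that $\tau_{\leq1}LX$ already lies in $L\mathcal{C}$; equivalently, the localization $L_{\leq 1}$ does nothing to it. Thus all three vertices of the cospan $*\to\tau_{\leq 1}LX\leftarrow LX$ are $L$-local. Since $L\mathcal{C}\subset\mathcal{C}$ is closed under limits, the pullback $\tilde X$ is $L$-local. Therefore $\tilde X\to LX$ is a morphism of $L$-local objects that is a $\mathcal{C}$-covering, hence an $L\mathcal{C}$-covering by \Cref{Covering of Localization}, and it is initial in $Cov_{L\mathcal{C}}(LX)_*$.

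The main obstacle is the identification of the initial pointed covering with the fiber of $LX\to\tau_{\leq 1}LX$ in the previous remark. That identification is only sketched there, and the connectedness of $X$ is not stated explicitly. If $LX$ is not connected, I would work on the component of the base point. One replaces $\tau_{\leq1}LX$ by the fiber over the point of $\tau_{\leq 0}LX$, and uses that pointed coverings only see that component. Closure of $L\mathcal{C}$ under limits still applies, provided $\tau_{\leq0}LX$ is also local; this follows from the same hypothesis, since $\tau_{\leq0}=\tau_{\leq0}\tau_{\leq1}$.
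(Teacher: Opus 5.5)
Your argument is correct and is essentially the paper's proof. You use \Cref{Covering of Localization} to reduce to showing that the initial pointed $\mathcal{C}$-covering $\tau_{>1}LX\to LX$ is $L$-local, and you deduce this from the fiber sequence $\tau_{>1}LX\to LX\to\tau_{\leq 1}LX$, the hypothesis that $\tau_{\leq 1}LX$ is $L$-local, and closure of $L\mathcal{C}$ under limits. Your last paragraph is unnecessary, since your identification of that fiber with $\tau^{\mathcal{C}_{/LX}}_{\leq 0}(x)$ never used connectedness of $LX$; note that $LX\to\tau_{\leq 1}LX$ is $2$-connective, not just $1$-connective, and that is what makes the fiber simply connected. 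Its claim that $\tau_{\leq 0}LX$ is $L$-local also does not follow from the hypothesis, because the ambient truncation of an $L$-local object need not be $L$-local.
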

\begin{proof} 
Clearly, $\mathcal{C}ov_{L\mathcal{C}}(LX)\subset \mathcal{C}ov_{\mathcal{C}}(LX)$. Therefore, it suffices to show that the initial object of $\mathcal{C}ov_{\mathcal{C}}(LX)$ is $L$-local. This initial object is the 1-connected covering $\tau_{>1}^\mathcal{C}LX\to LX$. In the fiber sequence $$\tau_{>1}^\mathcal{C}LX\to LX\to \tau^\mathcal{C}_{\leq 1}LX,$$ the term $\tau^\mathcal{C}_{\leq 1}LX$ is $L$-local. Since $L\mathcal{C}\subset \mathcal{C}$ is closed under limits, we find that $\tau_{>1}^\mathcal{C}LX$ is $L$-local and hence $$(\tau_{>1}^\mathcal{C}LX\to LX)\in \mathcal{C}ov_{L\mathcal{C}}(L
X).$$
\end{proof}
We will call $\tau_{\leq 1}^\mathcal{C}Y$ the fundamental groupoid of $Y$ and denote it by  
$\Pi_1^\mathcal{C}(Y)$. We will use the following notation:
\begin{flalign*}
\Pi^LY:=\tau_{\leq 1}^\mathcal{C}LY\text{ and }\Pi^{L\mathcal{C}}Y:=\tau_{\leq 1}^{L\mathcal{C}}LY\simeq L_{\leq 1}\tau^\mathcal{C}_{\leq 1}LY\simeq L_{\leq 1}\Pi^{L}Y.
\end{flalign*}

\begin{ex}
For a space $\esc{X}\in\mathcal{P}(Sm_S)$ and a Grothendieck topology $\sigma$,  
 the ordinary $\sigma$-local fundamental groupoid will be denoted by $\Pi_1^\sigma\esc{X}$. Similarly, when $L=L_{mot} $ and $L=L_{bir} $, the corresponding fundamental groupoids will be denoted by $\Pi_1^{\mathbb{A}^1}\esc{X}$ and $\Pi_1^{b}\esc{X}$, respectively. 
\end{ex}
The condition in the above proposition is then read as $\Pi^LX\simeq \Pi^{L\mathcal{C}}X$. While it might be possible to deduce more general statements without assuming this equivalence, at this point we will work with this condition.  
\begin{defn}
    We say that a localization $L$ is fundamental on $X$ if $\Pi^LX\simeq \Pi^{L\mathcal{C}}X$. We say that $L$ is fundamental if it is fundamental on every object.
\end{defn} 
\begin{lem}
    Suppose $L$ is fundamental on a pointed object $y:*\to Y$. Then 
    $$\pi_1^L(Y,y)=\pi_1^\mathcal{C}(LY,Ly)\simeq \pi_1^{L\mathcal{C}}(LY,Ly).$$
    \end{lem}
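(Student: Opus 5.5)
The first equality $\pi_1^L(Y,y)=\pi_1^\mathcal{C}(LY,Ly)$ is simply the definition of $\pi_1^L$, extending \Cref{pinL} to $\pi_1$. So the content is the equivalence $\pi_1^\mathcal{C}(LY,Ly)\simeq \pi_1^{L\mathcal{C}}(LY,Ly)$. I plan to route both sides through the $1$-truncations $\Pi^LY=\tau^\mathcal{C}_{\leq 1}LY$ and $\Pi^{L\mathcal{C}}Y=\tau^{L\mathcal{C}}_{\leq 1}LY$, which are identified by the fundamentality hypothesis.

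\emph{Step 1 (the $\mathcal{C}$ side).} Since $\mathcal{C}$ is an $\infty$-topos, the truncation map $LY\to\tau^\mathcal{C}_{\leq 1}LY$ induces isomorphisms on $\pi_i^\mathcal{C}$ for $i\leq 1$ (\cite{lurie2009higher}, \S6.5.1). Equivalently, $\tau_{\leq 0}\Omega\simeq\Omega\tau_{\leq 1}$ in a topos. This gives
\[\pi_1^\mathcal{C}(LY,Ly)\simeq \pi_1^\mathcal{C}(\tau^\mathcal{C}_{\leq 1}LY)\simeq \Omega_\mathcal{C}\tau^\mathcal{C}_{\leq 1}LY.\]
The last identification holds because $\Omega_\mathcal{C}$ of a $1$-truncated object is $0$-truncated by \Cref{omega reduce trancation level}, so the outer $\tau_{\leq 0}$ is the identity.

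\emph{Step 2 (the $L\mathcal{C}$ side).} Write $Z:=\tau^{L\mathcal{C}}_{\leq 1}LY$. By \Cref{truncation of localization}, $Z$ is $1$-truncated in $\mathcal{C}$ and $L$-local. Hence $\Omega_{L\mathcal{C}}Z=\Omega_\mathcal{C}Z$ is $0$-truncated and $L$-local, so $\pi_1^{L\mathcal{C}}(Z)\simeq\Omega_\mathcal{C}Z$. Fundamentality gives $Z\simeq\tau^\mathcal{C}_{\leq 1}LY$, and we take the base point on $Z$ to be the image of $Ly$. Combined with Step 1, this yields $\pi_1^\mathcal{C}(LY,Ly)\simeq\pi_1^{L\mathcal{C}}(Z)$.

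\emph{Step 3 (the main obstacle).} It remains to show $\pi_1^{L\mathcal{C}}(LY)\simeq\pi_1^{L\mathcal{C}}(\tau^{L\mathcal{C}}_{\leq 1}LY)$, which is the analogue of Step 1 inside $L\mathcal{C}$, where the topos argument is not available. I will use the fundamentality hypothesis precisely here. By Step 1 (applied with $\mathcal{C}$-truncation), the map $\Omega_\mathcal{C}LY\to\Omega_\mathcal{C}\tau^\mathcal{C}_{\leq 1}LY$ exhibits the target as $\tau^\mathcal{C}_{\leq 0}\Omega_\mathcal{C}LY$. The target is $L$-local by fundamentality, so in particular $\tau^\mathcal{C}_{\leq 0}\Omega_\mathcal{C}LY$ is $L$-local. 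By \Cref{homotopy groups of localization}, $\pi_1^{L\mathcal{C}}(LY)=L_{\leq 0}\pi_1^\mathcal{C}(LY)$. Since $L_{\leq 0}$ fixes $L$-local discrete objects, this equals $\pi_1^\mathcal{C}(LY,Ly)$.

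Steps 1 and 3 together give the chain of equivalences. They are natural in the base point and compatible with the group structures, since every map involved is induced by $\Omega$ of a pointed map. I expect the delicate point to be checking that the base points and identifications of the truncation maps really match under the fundamentality equivalence.
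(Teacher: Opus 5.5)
Your proposal is correct and is essentially the paper's proof. The paper writes $\pi_1^\mathcal{C}(LY,Ly)\simeq\Omega\tau^{\mathcal{C}}_{\leq 1}LY\simeq\Omega\tau^{L\mathcal{C}}_{\leq 1}LY$ and concludes because loop objects of $L$-local objects are $L$-local, so $L_{\leq 0}$ (and hence $\pi_1^{L\mathcal{C}}=L_{\leq 0}\pi_1^{\mathcal{C}}$) acts trivially; this is exactly your Steps 1 and 3. Your Step 2 is therefore unnecessary, since Step 3 already identifies $\pi_1^{L\mathcal{C}}(LY)$ with $\pi_1^{\mathcal{C}}(LY,Ly)$ directly, without going through $\pi_1^{L\mathcal{C}}$ of the truncation.
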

    \begin{proof}
        This follows from the fact that $$\pi_1^\mathcal{C}(LY,Ly)\simeq \Omega \tau_{\leq 1}^{\mathcal{C}}LY\simeq \Omega \tau_{\leq 1}^{L\mathcal{C}}LY,$$
        and that the loop space of a local object is local.
    \end{proof}
 Suppose $Y\in \mathcal{C}$ admits a global section $y: *\to Y$. Then, by \Cref{covering lemma} (3), pullback along $y$ induces a functor $$y^*:Cov_{L\mathcal{C}}(LY)\to Cov_L(*)=Disc(LC).$$

 \begin{lem}\label[lem]{action of the fundamerntal groupoid}
     Let $Z\to Y$ be a covering in $\mathcal{C}$. Then the fiber $y^*Z$ has an action by $\pi_1(Y,y)$. When $\Pi^LX\simeq \Pi^{L\mathcal{C}}X$, $y^*$ of an $L\mathcal{C}$ covering is a $\pi_1^L(Y,y)=\pi_1^\mathcal{C}(LY,Ly)\simeq \pi_1^{L\mathcal{C}}(LY,Ly)$ module. 
 \end{lem}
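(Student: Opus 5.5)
Let $y:*\to Y$ be the global section and $f:Z\to Y$ a covering in the $\infty$-topos $\mathcal{C}$. The plan is to use the universal $0$-truncated map $Z\to Y$ as a local system and extract the monodromy action through the loop object. First I form $y^*Z=*\times_Y Z$. By \Cref{lemma for L covering} (1), or directly by \cite{lurie2009higher} Proposition 5.5.6.16, pullbacks preserve $0$-truncated morphisms, so $y^*Z$ is a discrete object. Next I pull $Z$ back along both legs of the loop square of \Cref{loop object}. Writing $p_1,p_2:\Omega Y\to *$ for the two projections and using $y p_1\simeq y p_2$ via the canonical homotopy, I obtain an equivalence
\[
\Omega Y\times y^*Z\simeq p_1^*y^*Z\xrightarrow{\ \sim\ } p_2^*y^*Z\simeq \Omega Y\times y^*Z
\]
over $\Omega Y$. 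Projecting to $y^*Z$ gives the map $a:\Omega Y\times y^*Z\to y^*Z$. Associativity and unitality come from the Čech nerve structure $(\check{C}y)_{\geq 0}$ that defines the monoid structure on $\Omega Y$: pulling back $Z$ to each $(\check{C}y)_n=\Omega Y^{\times n}$ gives a simplicial object $Z\times_Y \check{C}y$ lying over $\check{C}y$, and this is exactly a module (left fibration) over the group object $\Omega Y$. Equivalently, I can use the equivalence $\mathcal{C}_{/\mathrm{B}\Omega Y}\simeq$ $\Omega Y$-objects, which the topos setting provides through \cite{lurie2009higher} Lemma 7.2.2.11 and Remark \ref{em equiv of a topos}.

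The next step descends the action from $\Omega Y$ to $\pi_1(Y,y)=\tau_{\leq 0}\Omega Y$. Since $y^*Z$ is $0$-truncated, the internal mapping object $\underline{\mathrm{Map}}(y^*Z,y^*Z)$ is $0$-truncated, because truncatedness is detected by mapping in (\cite{lurie2009higher} 5.5.6). So the adjoint map $\Omega Y\to\underline{\mathrm{Map}}(y^*Z,y^*Z)$ factors uniquely through $\tau_{\leq 0}\Omega Y=\pi_1(Y,y)$. Compatibility with products follows from the path-cartesian assumption (\Cref{path cart}): in a topos $\tau_{\leq 0}$ preserves finite products, so the monoid map descends to a map of group objects in $\mathrm{Disc}(\mathcal{C})$, and $y^*Z$ becomes a $\pi_1(Y,y)$-module.

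For the $L$-statement, take $Z\to LY$ to be an $L\mathcal{C}$-covering and run the same construction over $LY$ with base point $Ly$. The fiber $(Ly)^*Z$ lies in $\mathrm{Disc}(L\mathcal{C})$ by \Cref{covering lemma} (3) and \Cref{Covering of Localization}, and the argument above gives an action of $\pi_1^\mathcal{C}(LY,Ly)=\pi_1^L(Y,y)$. The preceding lemma, under the hypothesis $\Pi^L\simeq\Pi^{L\mathcal{C}}$, identifies this group with $\pi_1^{L\mathcal{C}}(LY,Ly)$. That finishes the claim, since the action map and the group object both live in $L\mathcal{C}$, which is closed under limits.

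The main obstacle is making the coherence in the first step rigorous, i.e. producing an honest $\Omega Y$-module rather than just the map $a$. I would handle this by invoking the equivalence between objects over $\mathrm{B}\Omega Y\simeq\tau_{\geq1}$-component of $Y$ and $\Omega Y$-objects. Concretely, I restrict $Z$ to the connected component $Y'$ of $y$, which is the image of $y$, an effective epimorphism onto $Y'$, and use $Y'\simeq \mathrm{B}\Omega Y$. Once that is in place, the truncation to $\pi_1$ is formal.
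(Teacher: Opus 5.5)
Your proof is correct. In the essential case $L=\mathrm{id}$ it takes a different route from the paper's; your final paragraph matches the paper's reduction to that case.

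The paper argues externally. It applies $\mathrm{Map}(W,-)$, uses the Yoneda embedding, quotes the classical fact that for a map of spaces $\Omega Y$ acts coherently on the fiber, and then passes to $\pi_0$. You instead build the action internally. You pull $Z$ back along the \v{C}ech nerve of $y$, which is descent along the effective epimorphism $*\to Y'$ onto the image of $y$. You then descend from $\Omega Y$ to $\pi_1(Y,y)=\tau_{\leq 0}\Omega Y$ by noting that $\underline{\mathrm{End}}(y^*Z)$ is $0$-truncated.

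The paper's route is shorter. Taken literally, though, it only yields an action of the sectionwise presheaf $W\mapsto \pi_0\mathrm{Map}(W,\Omega Y)$, and it leaves implicit why this factors through the internal truncation $\tau_{\leq 0}^{\mathcal{C}}\Omega Y$. Your internal-hom factorization is exactly the argument that fills this in.

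A few small corrections:
\begin{enumerate}
\item The coherence you single out as the main obstacle is more than you need. Since the target is discrete, two ingredients already give an honest $\pi_1(Y,y)$-action in $\mathrm{Disc}(\mathcal{C})$: associativity and unitality of $a$ up to homotopy, which you can read off the $2$-simplices of the pulled-back \v{C}ech nerve, and the fact that $\tau_{\leq 0}$ preserves finite products in an $\infty$-topos (HTT, Lemma 6.5.1.2).
\item Path-cartesianness plays no role here; it concerns $L_{\leq 0}$, not $\tau_{\leq 0}$.
\item The equivalence between objects over $\mathrm{B}\Omega Y$ and $\Omega Y$-objects comes from descent in $\mathcal{C}$. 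HTT Lemma 7.2.2.11 only supplies $Y'\simeq \mathrm{B}\Omega Y$.
\item In the $L$-part, $\pi_1^{\mathcal{C}}(LY,Ly)\simeq\Omega\tau_{\leq 1}LY$ lies in $L\mathcal{C}$ because of the fundamentality hypothesis, not because $L\mathcal{C}$ is closed under limits. Closure under limits is only needed for the product $\pi_1^{\mathcal{C}}(LY,Ly)\times (Ly)^*Z$.
\end{enumerate}
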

\begin{proof}
Again, we may reduce to $L=id$. By applying $Map(W,-)$ and using the equivalence of the Yoneda functor $\mathcal{C}\to \infty \mbox{-}Top(\mathcal{C}^{op},Spc)$, we reduce this to the case of spaces, where it is well known that $\Omega Y$ acts coherently on $y^*Z$, inducing the required action on $\pi_0$.
\end{proof}

Thus, there is a canonical functor $$\Gamma_y^L:Cov_{L\mathcal{C}}(LY)\longrightarrow \pi_1^L(Y,y)\mbox{-}Disc(L\mathcal{C})$$ provided $L$ is fundamental on $Y$.
\begin{prop}\label[prop]{L galois theory}
Suppose $L\mathcal{C}$ is closed under extensions over $LY$. When $LY$ is connected, the functor $$\Gamma_y^L:Cov_{L\mathcal{C}}(LY)\longrightarrow \pi_1^L(Y,y)\mbox{-}Disc(L\mathcal{C})$$ is an equivalence. 
\end{prop}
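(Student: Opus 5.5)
The strategy is to deduce the statement from the classical Galois correspondence in the $\infty$-topos $\mathcal{C}$, and then cut both sides down to $L$-local objects. In $\mathcal{C}$, since $LY$ is pointed (by $Ly$) and connected, the pointing $Ly:*\to LY$ is an effective epimorphism (as in the proof of \Cref{covering iff fiber is discrete}). Descent along it identifies $\mathcal{C}_{/LY}$ with objects of $\mathcal{C}$ carrying a coherent action of the group $\Omega_\mathcal{C}LY$. Restricting to $0$-truncated morphisms, the action on a discrete fiber factors through $\pi_0^\mathcal{C}\Omega LY=\pi_1^\mathcal{C}(LY,Ly)$. This gives the standard equivalence
\[
Cov_\mathcal{C}(LY)\simeq \pi_1^\mathcal{C}(LY,Ly)\mbox{-}\mathrm{Disc}(\mathcal{C}),\qquad (Z\to LY)\mapsto y^*Z .
\]
An alternative way to see this is that both sides are identified with $\mathrm{Disc}(\mathcal{C}_{/\mathrm{B}\pi_1})$, via $Cov_\mathcal{C}(LY)\simeq Cov_\mathcal{C}(\tau_{\leq 1}^\mathcal{C}LY)$ and $\tau^\mathcal{C}_{\leq 1}LY\simeq \mathrm{B}_\mathcal{C}\pi_1^\mathcal{C}(LY,Ly)$ (\Cref{em in infinity topos}). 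I would take this as the base case $L=id$, and check that it agrees with the functor $\Gamma_y$ of \Cref{action of the fundamerntal groupoid}.

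Next I pass to the localization. By \Cref{Covering of Localization}, $Cov_{L\mathcal{C}}(LY)$ is the full subcategory of $Cov_\mathcal{C}(LY)$ consisting of coverings $Z\to LY$ with $Z$ $L$-local. On the other side, $\pi_1^L(Y,y)\mbox{-}\mathrm{Disc}(L\mathcal{C})$ is the full subcategory of $\pi_1^\mathcal{C}(LY,Ly)\mbox{-}\mathrm{Disc}(\mathcal{C})$ whose underlying object is $L$-local. Here we use that the group $\pi_1^L(Y,y)=\pi_1^\mathcal{C}(LY,Ly)$ is the same, which is valid because $L$ is fundamental on $Y$, as needed for $\Gamma^L_y$ to be defined. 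It therefore suffices to show that the base-case equivalence matches these two full subcategories.

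Fully faithfulness is then automatic. For essential surjectivity in one direction: if $Z$ is $L$-local, then $y^*Z=*\times_{LY}Z$ is a limit of $L$-local objects, hence $L$-local. For the converse, suppose $Z\to LY$ is a $\mathcal{C}$-covering with $L$-local fiber $F=y^*Z$. Then $F\to Z\to LY$ is a fiber sequence with $L$-local base and fiber. Closure of $L\mathcal{C}$ under extensions over $LY$ gives exactly that $Z$ is $L$-local (this is the argument of \Cref{L covering iff fiber is L discrete}). Hence every $\pi_1$-set with $L$-local underlying object comes from an $L\mathcal{C}$-covering.

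The main obstacle is the base case. One must check that the $\pi_1$-action defined objectwise via Yoneda in \Cref{action of the fundamerntal groupoid} is the action arising from descent along $*\to LY$. One must also check that discrete objects with $\Omega LY$-action are the same as discrete objects with $\pi_1$-action. For the first point I would use the identification $\mathcal{C}_{/\mathrm{B}G}\simeq G\mbox{-}\mathcal{C}$ for $G=\Omega LY$, together with $\mathrm{B}_\mathcal{C}\Omega LY\simeq LY$ from connectedness and \Cref{em equiv of a topos}. For the second point, the action on a $0$-truncated object factors through $\tau_{\leq 0}$ because $\mathrm{Map}$ into a discrete object is discrete.
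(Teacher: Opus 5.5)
Your proposal is correct and follows essentially the paper's route: prove the Galois correspondence in the topos $\mathcal{C}$ for the pointed connected object $LY$, then cut down to $L$-local objects using closure under extensions over $LY$, exactly as in \Cref{L covering iff fiber is L discrete}. The only difference is cosmetic. The paper writes the inverse in the base case explicitly as the two-sided bar construction $F\mapsto \mathrm{B}_\mathcal{C}(F,\Omega Y,*)\to Y$, which is the same equivalence as your descent along $*\to LY$. Your version also spells out two points the paper leaves implicit: that this action is the one defining $\Gamma_y$, and that it factors through $\pi_0\Omega LY$.
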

\begin{proof}
When $L=id$, the inverse is given by $$F\mapsto \big (\mathrm{B}_\mathcal{C}(F,\Omega Y,*)\to \mathrm{B}_\mathcal{C}(*,\Omega Y,*)\simeq Y \big ).$$ Since the morphism $\mathrm{B}_\mathcal{C}(F,\Omega Y,*)\to \mathrm{B}_\mathcal{C}(*,\Omega Y,*)\simeq Y$ has a canonical fiber equal to the discrete space $F$, connectivity of $Y$ implies that it is a covering (see \Cref{covering iff fiber is discrete}).

  When $L$ is nontrivial, it suffices to know the $L$-local counterpart, namely \Cref{L covering iff fiber is L discrete}.
\end{proof}
Before we state the next result, it is worth noting that when $X$ is an $n$-truncated object in a presentable $\infty$-category $\mathcal{D}$, then for any object $Y$, the object $\underline{Map}_\mathcal{C}(Y,X)$ is also $n$-truncated. Hence, the following makes sense, where $\underline{End}_\mathcal{D}(X):=\underline{Map}_\mathcal{D}(X)$.
\begin{lem}\label[lem]{aut=pi_1}
Let $L$ be a localization of an $\infty$-topos $\mathcal{C}$. Suppose $L$ is locally cartesian and fundamental on a pointed object $X$. Then the canonical map $  \pi_1^LX\to \underline{End}_{L\mathcal{C}_{/LX}}(\tilde{X})$ in $\mathcal{C}$ is an isomorphism. It follows that this induces isomorphisms in $\mathcal{C}$ $$ \pi_1^{L\mathcal{C}}LX\simeq  \pi_1^LX\cong\underline{Aut}_{L\mathcal{C}_{/LX}}(\tilde{X})\cong\underline{End}_{L\mathcal{C}_{/LX}}(\tilde{X}).$$
\end{lem}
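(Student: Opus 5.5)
Since $L$ is fundamental on $X$, \Cref{Universal cover for fundamental localization} makes $\tilde{X}=\tau_{>1}^\mathcal{C}LX\to LX$ an initial object of $\mathcal{C}ov_{L\mathcal{C}}(LX)_*$, and the preceding lemma gives $\pi_1^LX\simeq\pi_1^{L\mathcal{C}}LX$. The plan is to reduce everything to the unlocalized topos $\mathcal{C}$. By \Cref{Covering of Localization}, $L\mathcal{C}_{/LX}\subset\mathcal{C}_{/LX}$ is full, so $\underline{End}_{L\mathcal{C}_{/LX}}(\tilde{X})\simeq\underline{End}_{\mathcal{C}_{/LX}}(\tilde{X})$. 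It therefore suffices to prove the statement for $Y:=LX$ in the $\infty$-topos $\mathcal{C}$, using the universal cover $\tau_{>1}Y\to Y$.

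First I will compute $\underline{End}_{\mathcal{C}_{/Y}}(\tilde{Y})$. Since $\tilde{Y}\to Y$ is a covering, $\tilde Y$ is a $0$-truncated object of $\mathcal{C}_{/Y}$, so the internal endomorphism object is discrete. I will then apply the Galois equivalence of \Cref{L galois theory} with $L=id$ (after restricting to the connected component of the base point if needed). Under $\Gamma_y$, $\tilde{Y}$ corresponds to its fiber $y^*\tilde{Y}$, which by the fiber sequence $\tau_{>1}Y\to Y\to\tau_{\leq1}Y$ of \Cref{initial object of cov in a topos} equals $\Omega\tau_{\leq1}Y\simeq\pi_1(Y,y)$ with the left regular action. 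Hence $\underline{End}(\tilde{Y})\simeq\underline{Hom}_{\pi_1\text{-}\mathrm{Disc}}(\pi_1,\pi_1)$. By the internal Yoneda lemma for the free transitive action, this is $\pi_1$ acting by right multiplication. Every endomorphism is then invertible, which gives $\underline{Aut}=\underline{End}$.

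Next I need that the canonical map $\pi_1^LX\to\underline{End}(\tilde X)$ is exactly this right-multiplication identification. Here $\pi_1$ acts on $\tilde Y$ as deck transformations via the action of \Cref{action of the fundamerntal groupoid}. I will check the identification by applying $\mathrm{Map}(W,-)$ and reducing to spaces, as in the proof of \Cref{action of the fundamerntal groupoid}. There it is the classical fact that the deck group of the universal cover is $\pi_1$ acting on the fiber $\pi_1$ by right translation.

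The main obstacle is the internal (sheafy) nature of $\underline{End}$. The statement must hold as objects of $\mathcal{C}$, not merely on global sections. I would handle this by pulling back along each $W\to *$, i.e.\ working in $\mathcal{C}_{/W}$. The constructions $\tau_{>1}$, $\Omega$ and $\tau_{\le1}$ commute with étale base change in a topos, so the global argument applies uniformly in $W$. Local cartesianness of $L$ is used only to ensure the base-changed universal cover stays $L$-local, which lets $\underline{End}$ be computed in $L\mathcal{C}_{/LX}$.
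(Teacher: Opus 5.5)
Your argument is correct in outline but takes a different route from the paper. The paper first reduces to $L=\mathrm{id}$ by a Yoneda chain. In that chain, local cartesianness is used to rewrite $L(Z\times_{LX}\tilde X)\simeq LZ\times_{LX}\tilde X$ for arbitrary, non-local test objects $Z$. It then computes $\underline{End}_{\mathcal{C}_{/Y}}(\tau_{>1}Y)$ directly as the fiber over $i$ of $i\circ-\colon\underline{Map}(\tau_{>1}Y,\tau_{>1}Y)\to\underline{Map}(\tau_{>1}Y,Y)$. Mapping $\tau_{>1}Y$ into the fiber sequence $\tau_{>1}Y\to Y\to\tau_{\le1}Y$ identifies this fiber with the loop object of $\underline{Map}(\tau_{>1}Y,\tau_{\le1}Y)\simeq\tau_{\le1}Y$ at $f=u\circ i$ (where $u\colon Y\to\tau_{\le1}Y$), i.e.\ with $\Omega_f\tau_{\le1}Y\simeq\pi_1^{\mathcal{C}}Y$.

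Because the paper's computation uses only internal mapping objects and limits, it is internal from the start, needs no connectivity of $Y$, and uses no Galois theory. Your route through $\Gamma_y$ and the Yoneda lemma for the regular $\pi_1$-object is more conceptual. It also actually identifies the canonical map as the deck action, which the paper leaves implicit. The cost is threefold:
\begin{enumerate}
\item passing to the component $Y_0$ of the base point (note $Y_0\to Y$ is a monomorphism, so $\mathcal{C}_{/Y_0}\subset\mathcal{C}_{/Y}$ is full and nothing changes);
\item relying on the heavier \Cref{L galois theory};
\item the extra base change to each $\mathcal{C}_{/W}$, needed to upgrade isomorphisms of hom-sets to an isomorphism of objects.
\end{enumerate}

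Two steps need more care.

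\textbf{The reduction to $\mathcal{C}$.} Fullness of $L\mathcal{C}_{/LX}\subset\mathcal{C}_{/LX}$ by itself only shows that $E:=\underline{End}_{\mathcal{C}_{/LX}}(\tilde X)$ represents the correct functor on $L$-local test objects $W$. For such $W$, $W\times\tilde X$ is automatically local, so this is not where local cartesianness enters. To conclude that $E$ is the endomorphism object of $L\mathcal{C}_{/LX}$, you also need $E\in L\mathcal{C}$. Your own computation supplies this: $E\simeq\Omega_f\tau_{\le1}^{\mathcal{C}}LX$, which is $L$-local because $\tau_{\le1}^{\mathcal{C}}LX$ is (fundamentality) and $L\mathcal{C}$ is closed under limits. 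Completed this way, your argument does not use local cartesianness at all for this absolute endomorphism object, whereas the paper's reduction does.

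\textbf{Identifying the canonical map.} Applying $\mathrm{Map}(W,-)$ and passing to spaces does not work literally. $\mathrm{Map}(W,-)$ commutes with neither $\tau_{\le1}$ nor $\tau_{>1}$, so $\mathrm{Map}(W,\tilde Y)$ is not the universal cover of $\mathrm{Map}(W,Y)$. Either do this check in $\mathcal{C}_{/W}$, as you do elsewhere, or observe that the canonical map is the holonomy action of $\Omega_f\tau_{\le1}Y$ on the fiber of $Y\to\tau_{\le1}Y$. That holonomy action is exactly what the paper's fiber-sequence computation identifies with $\underline{End}$.
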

\begin{proof}
Again, it suffices to prove this for $L=id_\mathcal{C}$, since $$\underline{End}^{L\mathcal{C}}_{LX}(\tilde{X})\simeq \underline{End}^{\mathcal{C}}_{LX}(\tilde{X})\text{ and }\pi_1^L= \pi_1^\mathcal{C}L.$$ Indeed, for any $Z\in \mathcal{C}_{/X}$,
\begin{flalign*}
    \mathrm{Map}_{\mathcal{C}_{/LX}}(Z,\underline{End}_{L\mathcal{C}_{/LX}}(\tilde{X}))&\simeq \mathrm{Map}_{L\mathcal{C}_{/LX}}(LZ,\underline{End}_{L\mathcal{C}_{/LX}}(\tilde{X}))\\
    &\simeq \mathrm{Map}_{L\mathcal{C}_{/LX}}(LZ\times _{LX}\tilde{X},\tilde{X})\\
    &\simeq \mathrm{Map}_{L\mathcal{C}_{/LX}}(L(Z\times _{LX}\tilde{X}),\tilde{X})\text{ [since } L \text{ is locally cartesian]}\\
    &\simeq \mathrm{Map}_{\mathcal{C}_{/LX}}(Z\times _{LX}\tilde{X},\tilde{X})\\
    &\simeq \mathrm{Map}_{\mathcal{C}_{/LX}}(Z,\underline{End}_{\mathcal{C}_{/LX}}(\tilde{X})
\end{flalign*}

So we are reduced to proving that $$Aut_{\mathcal{C}_{/Y}}(\tau_{>1}Y)\simeq \underline{End}_{\mathcal{C}_{/Y}} 
(\tau_{>1}Y)\cong \pi_1^\mathcal{C}Y$$ for an object $Y\in \mathcal{C}$.
Let $u: Y \to \tau_{\le 1}Y$ be the canonical truncation with basepoint $f: * \to \tau_{\le 1}Y$. We wish to compute the absolute mapping object $End_{\mathcal{C}_{/Y}}(\tau_{>1}Y) \in \mathcal{C}$.

Let $i: \tau_{>1}Y \to Y$ denote the canonical inclusion. By definition of morphisms in the slice category $\mathcal{C}_{/Y}$, we obtain a fiber sequence in $\mathcal{C}$:
$$ \underline{End}_{\mathcal{C}_{/Y}}(\tau_{>1}Y) \to \underline{Map}_{\mathcal{C}}(\tau_{>1}Y, \tau_{>1}Y) \xrightarrow{i \circ -} \underline{Map}_{\mathcal{C}}(\tau_{>1}Y, Y) $$
taken over the structure map $i$.

Next, we map the object $\tau_{>1}Y$ into the defining fiber sequence of the connected cover:
$$ \tau_{>1}Y \xrightarrow{i} Y \xrightarrow{u} \tau_{\le 1}Y. $$
Because mapping objects in an $\infty$-topos preserve limits, this induces another fiber sequence in $\mathcal{C}$:
$$ \underline{Map}_{\mathcal{C}}(\tau_{>1}Y, \tau_{>1}Y) \xrightarrow{i \circ -} \underline{Map}_{\mathcal{C}}(\tau_{>1}Y, Y) \xrightarrow{u \circ -} \underline{Map}_{\mathcal{C}}(\tau_{>1}Y, \tau_{\le 1}Y). $$
By the adjunction properties of truncations, $\tau_{>1}Y$ is $>1$-connected and $\tau_{\le 1}Y$ is $1$-truncated. Thus, any map between them factors uniquely through the terminal object. $*$:
$$ \underline{Map}_{\mathcal{C}}(\tau_{>1}Y, \tau_{\le 1}Y) \simeq \underline{Map}_{\mathcal{C}}(*, \tau_{\le 1}Y) \simeq \tau_{\le 1}Y. $$
The composition $u \circ i$ is canonically identified with the basepoint $f: * \to \tau_{\le 1}Y$. Therefore, the fiber of the map $u \circ -$ is precisely the loop space of $\tau_{\le 1}Y$ at the basepoint $f$.

By comparing the fiber sequences, we see that the object $\underline{End}_{\mathcal{C}_{/Y}}(\tau_{>1}Y)$ is exactly the fiber of $u \circ -$ over $f$. Hence:
$$ \underline{End}_{\mathcal{C}_{/Y}}(\tau_{>1}Y) \simeq \Omega_f \tau_{\le 1}Y \simeq \pi_1^\mathcal{C}(Y). $$

Finally, because $\tau_{>1}Y$ is an object of $\mathcal{C}_{/Y}$, its endomorphism object $\underline{End}_{\mathcal{C}_{/Y}}(\tau_{>1}Y)$ naturally inherits a monoid structure in $\mathcal{C}$. Since $\pi_1^\mathcal{C}(Y)$ is inherently group-like, the inclusion of the maximal subgroupoid is an equivalence. This yields the desired isomorphism:
$$ \underline{Aut}_{\mathcal{C}_{/Y}}(\tau_{>1}Y) \simeq \underline{End}_{\mathcal{C}_{/Y}}(\tau_{>1}Y) \simeq \pi_1^\mathcal{C}(Y). $$
\end{proof}

\section{The internal homotopy theory of \texorpdfstring{$\mathcal{H}^{mot}$}{h}}

In this section, we apply the definitions and results from the previous section to the motivic localization functor $L_{mot}:\mathcal{P}_{nis}(S)\to\mathcal{H}^{mot}(S) $. Before we do that, let us have a quick recap of how this localization is constructed.

Let $S$ be a Qcqs scheme. By an $S$-space, we mean a presheaf of spaces on $Sm_S$. We denote by $\mathcal{P}(S)$ the $\infty$-category of $S$-spaces. We say that an $S$-space is Nisnevich-local if it takes Nisnevich cd squares to pullback squares of spaces (and the empty scheme to the contractible space). Recall that a Nisnevich cd square is a pullback square:

\[
\xymatrix{
\ar@{}[d]_{Q:}&V\ar@{^(->}[r]\ar[d]&Y\ar[d]^p\\
&U\ar@{^(->}[r]_j&X
}
\]
of schemes smooth over $S$, where $p$ is etale and $j$ is an open immersion such that the complements induce an isomorphism with the reduced scheme structure. We denote the full subcategory of $\mathcal{P}(S)$ consisting of 
Nisnevich local spaces by $\mathcal{P}_{nis}(S)$. It is easy to see that the Nisnevich locality condition is equivalent to the condition of being local with respect to the following morphism of $k$-spaces:
$$\{h_SU\bigsqcup_{h_SV}h_SY\to h_SX\mid Q \text{ is a Nisnevich square}\}\bigcup \{\emptyset\to h_S\emptyset\}$$ 
Since the collection above is small, it follows that $\mathcal{P}_{nis}(S)\subset \mathcal{P}(S)$ is an accessible localization [\cite{lurie2009higher}, Proposition 5.5.1.14]. We denote the corresponding localization functor by $L_{nis}$. It turns out that $L_{nis}$ is left exact, and thus $\mathcal{P}_{nis}(S)$ is an $\infty$-topos [\cite{hoyois2017six}, Corollary 3.9].

We say that an $S$-space $\esc{X}$ is $\mathbb{A}^1$-local if, for every $X\in Sm_S$, the restriction map $\esc{X}(X)\xrightarrow{p^*}\esc{X}(\mathbb{A}^1\times X)$, induced by the projection $\mathbb{A}^1\times X\xrightarrow{p} X$, is an equivalence of spaces. Again, since the collection of $\mathbb{A}^1$-projections is a small set, it follows that the full subcategory of $\mathcal{P}(S)$ consisting of $\mathbb{A}^1$-local objects is an accessible localization, which we denote by $L_{\mathbb{A}^1}$.

Finally, an $S$-space is called motivic if it is both Nisnevich local and $\mathbb{A}^1$-local. We denote the full subcategory of $\mathcal{P}(S)$ consisting of motivic $S$-spaces by $\mathcal{H}^{mot}(S)$. Because both conditions are given by small sets, it follows that the subcategory $\mathcal{H}^{mot}(S)\subset \mathcal{P}(S)$ is an accessible localization. This localization is denoted by $L_{mot}$. It is a cartesian, locally cartesian, and accessible localization. 

Morel defines the $\mathbb{A}^1$-homotopy group of a space as the Nisnevich homotopy group of its motivic localization, i.e., $$\pi_i^{\mathbb{A}^1}:=\pi_i^{nis}L_{mot}:=L_{nis}\pi_iL_{mot}.$$ It is evident that, in terms of \Cref{pinL}, this is precisely $\pi_i^{L_{mot}}$. 
Nonetheless, it is evident that Morel's $\mathbb{A}^1$-homotopy groups of a motivic space are simply its Nisnevich homotopy group sheaves. 
 
Unfortunately, the $\mathbb{A}^1$-homotopy path component construction $\pi_0^{\mathbb{A}^1}$ is not as well behaved categorically. In particular, it fails to be the internal $0$-truncation functor for the motivic homotopy category, as it is in an $\infty$-topos. Consequently, it does not preserve (motivic) colimits. The agenda of this section is to elaborate on this issue and correct it by introducing the internal notions of motivic truncations and motivic homotopy groups.

\subsection{Motivic truncations and Motivic homotopy groups} 

For the convenience of the motivic readers (those who are reading this section directly without reading \S2), we shall first restate the definitions of \S2.1 for the motivic localization $L_{mot}:\mathcal{P}(S)\to \mathcal{H}^{mot}(S)$. To simplify notation, we shall drop the scheme $S$, assuming it to be fixed. This assumption shall continue until \Cref{strong A1 invariance of pin}, after which we shall assume that the base is a field (though not necessarily perfect, unless otherwise specified).

\textit{Motivic truncations:}

\begin{defn}\label[defn]{motivically n truncated space}
    We say that a motivic space $\esc{X}$ is motivically $n$-truncated if, for every motivic space $\esc{Y}$, the space $\mathrm{Map}_{\mathcal{H}^{mot}}(\esc{Y},\esc{X})$ is $n$-truncated. We denote the full subcategory of $n$-truncated motivic spaces by $\mathcal{H}^{mot}_{\leq n}$ or, equivalently, by $\tau_{\leq n}^{mot}\mathcal{H}^{mot}$.
\end{defn}
 From \Cref{truncation of localization}, it follows that:
 \begin{prop}\label[prop]{motivic trunc is nis trunc}
     A motivic space is motivically $n$-truncated if and only if it is Nisnevich locally $n$-truncated, i.e., $$\mathcal{H}^{mot}_{\leq n}=\mathcal{H}^{mot}\bigcap\mathcal{P}^{nis}_{\leq n}.$$
\end{prop}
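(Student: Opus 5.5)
The statement should follow directly from \Cref{truncation of localization}, applied to the accessible localization $\mathcal{H}^{mot}\subset\mathcal{P}_{nis}$ rather than $\mathcal{H}^{mot}\subset\mathcal{P}$. First, we note that $\mathcal{H}^{mot}$ is indeed a localizing subcategory of $\mathcal{P}_{nis}$. It is the intersection of the Nisnevich-local and the $\mathbb{A}^1$-local objects inside $\mathcal{P}$. Both classes are defined by locality with respect to small sets of morphisms, so $\mathcal{H}^{mot}\subset\mathcal{P}_{nis}$ is the accessible localization at the images under $L_{nis}$ of the $\mathbb{A}^1$-projections $h_S(\mathbb{A}^1\times X)\to h_SX$. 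Equivalently, $L_{mot}$ restricted to $\mathcal{P}_{nis}$ is a left adjoint to the inclusion, and $\mathcal{H}^{mot}$ is closed under limits in $\mathcal{P}_{nis}$.

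Next, we apply \Cref{truncation of localization} with $\mathcal{C}=\mathcal{P}_{nis}$ and $L\mathcal{C}=\mathcal{H}^{mot}$. This gives $(\mathcal{H}^{mot})_{\leq n}=\mathcal{H}^{mot}\cap(\mathcal{P}_{nis})_{\leq n}$. By \Cref{motivically n truncated space}, the left side is exactly the category of motivically $n$-truncated motivic spaces. The right side consists of motivic spaces that are $n$-truncated as objects of the $\infty$-topos $\mathcal{P}_{nis}$, which is what "Nisnevich locally $n$-truncated" means, written $\mathcal{P}^{nis}_{\leq n}$.

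The only point requiring care is to identify $n$-truncatedness in $\mathcal{P}_{nis}$ with a concrete condition, for instance sectionwise $n$-truncatedness. I expect this step to be the main obstacle, mostly as a matter of bookkeeping. Applying \Cref{truncation of localization} to $\mathcal{P}_{nis}\subset\mathcal{P}$ shows that an object of $\mathcal{P}_{nis}$ is $n$-truncated there iff it is $n$-truncated in $\mathcal{P}$. In $\mathcal{P}$, truncatedness is objectwise by Yoneda, since $\mathrm{Map}_{\mathcal{P}}(h_X,\esc{X})=\esc{X}(X)$ and every presheaf is a colimit of representables. Combining the two applications of the lemma, motivically $n$-truncated is the same as $\mathcal{P}_{nis}$-$n$-truncated, which in turn is the same as sectionwise $n$-truncated. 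No hypothesis on the base is needed. Note that this is not the same as the Nisnevich homotopy sheaves $\pi_i^{nis}$ vanishing above $n$, which would require hypercompleteness as in \Cref{truncated condition}. I would flag this distinction in a remark rather than use it in the proof.
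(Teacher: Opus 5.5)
Your proposal is correct and takes essentially the paper's approach: the paper obtains the statement as an immediate consequence of \Cref{truncation of localization} applied to the localization $\mathcal{H}^{mot}$ (and it also writes the equivalent form $\mathcal{H}^{mot}\cap\mathcal{P}_{\leq n}$ later, which is your second application of the lemma to $\mathcal{P}_{nis}\subset\mathcal{P}$). Your caveat is also right: identifying this with the vanishing of $\pi_i^{nis}$ for $i>n$ requires hypercompleteness, and the paper's remark after the proposition says exactly this, assuming finite Krull dimension and citing \Cref{truncated condition}.
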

\begin{rem}
     Recall that, when $n\geq 0$ and $S$ has finite Krull dimension, by \Cref{truncated condition} this last condition is equivalent to the requirement that for all $i>n$ the Nisnevich-local space has trivial $\pi_i^{nis}$.
 \end{rem}
 Since $\mathcal{P}$ is an $\infty$-topos, \Cref{L tunc is a localization} implies that $\mathcal{H}^{mot}_{\leq n}\subset \mathcal{P}$ is also a localization. We let $$L_{\leq n}^{mot}: \mathcal{P}\to \mathcal{H}^{mot}_{\leq n}$$
stand for the induced motivic truncation functors. In particular, these restrict to the internal motivic truncation functors $$\tau_{\leq n}^{mot}: \mathcal{H}^{mot}\to \mathcal{H}^{mot}_{\leq n},$$ i.e., $\tau_{\leq n}^{mot}$ is the localization functor for $\mathcal{H}^{mot}_{\leq n}\subset \mathcal{H}^{mot}$.

\begin{con}\label{motivic truncation formulae}
      To set this in concrete terms, let us discuss how one constructs a formula for $L_{\leq n}^{mot}$.
      
    We keep in mind that, from \Cref{truncation of localization}, $$\mathcal{H}^{mot}_{\leq n}=\mathcal{H}^{mot}\bigcap\mathcal{P}_{\leq n} =L_{\mathbb{A}^1}\mathcal{P}\bigcap\mathcal{P}_{nis}\bigcap \mathcal{P}_{\leq n}.$$ The idea is then standard: define 
    $$L_{\leq n}^{mot}:=\colim_{m}\Big (L_{nis}\tau_{\leq n}L_{\mathbb{A}^1}\Big)^m$$ where $L_{nis}:\mathcal{P}\to\mathcal{P}^{{nis}}$ is the nisnevich $\infty$-sheafification, and $\tau_{\leq n}: \mathcal{P}\to\mathcal{P}_{\leq n}$ is the ordinary Postnikov truncation of the presheaf topos (which can, for example, be modeled sectionwise by any simplicial truncation model, such as the Moore-Postnikov model $P_n$) and $L_{\mathbb{A}^1}: \mathcal{P}\to L_{\mathbb{A}^1}\mathcal{P}$ is the (presheaf) $\mathbb{A}^1$-localization functor (which can be modeled by taking the singular Suslin-type construction). In fact, since $\tau_{\leq n}^{nis}=L_{nis}\tau_{\leq n}$, this formula reads $L_{\leq n}^{mot}=\colim_{m} (\tau^{nis}_{\leq n}L_{\mathbb{A}^1})^m$.
\end{con}
\begin{thm}\label{formula for motivic truncation}
    The construction above provides the desired localization functor $$L_{\leq n}^{mot}: \mathcal{P}\to \mathcal{H}^{mot}_{\leq n}.$$
\end{thm}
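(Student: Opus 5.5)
We have to show that $F:=\colim_m (L_{nis}\tau_{\leq n}L_{\mathbb{A}^1})^m$, with the colimit taken along the evident units, lands in $\mathcal{H}^{mot}_{\leq n}=L_{\mathbb{A}^1}\mathcal{P}\cap\mathcal{P}_{nis}\cap\mathcal{P}_{\leq n}$ and that the canonical map $\esc{X}\to F\esc{X}$ is an $L^{mot}_{\leq n}$-local equivalence. The plan follows the standard pattern for building a localization from an iterated composite of localizations (as in Morel--Voevodsky's $\mathbb{A}^1$-localization functor). We write $T:=L_{nis}\tau_{\leq n}L_{\mathbb{A}^1}$, which by the remark in the construction equals $\tau^{nis}_{\leq n}L_{\mathbb{A}^1}$.

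\textbf{Step 1 (local equivalences).} Each of $L_{\mathbb{A}^1}$, $\tau_{\leq n}$ and $L_{nis}$ is a localization of $\mathcal{P}$ whose local objects contain $\mathcal{H}^{mot}_{\leq n}$. So each unit map is an $L^{mot}_{\leq n}$-equivalence, i.e.\ it induces an equivalence on $\mathrm{Map}(-,\esc{Z})$ for every $\esc{Z}\in\mathcal{H}^{mot}_{\leq n}$. Such equivalences are closed under composition and filtered colimits, since $\mathrm{Map}(-,\esc{Z})$ turns colimits into limits. Hence $\esc{X}\to T\esc{X}\to T^2\esc{X}\to\cdots\to F\esc{X}$ consists of $L^{mot}_{\leq n}$-equivalences. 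It then remains only to show that $F\esc{X}$ is local; the universal property gives $F\simeq L^{mot}_{\leq n}$.

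\textbf{Step 2 (locality of the colimit).} We check the three conditions on $F\esc{X}=\colim_m T^m\esc{X}$ separately, using the interleaving in the sequence.
\begin{enumerate}
\item \emph{$n$-truncatedness and Nisnevich locality.} Each $T^m\esc{X}$ lies in $\mathcal{P}_{nis}\cap\mathcal{P}_{\leq n}$, being $\tau^{nis}_{\leq n}$ of something. In $\mathcal{P}$, filtered (indeed $\omega$-indexed) colimits are computed sectionwise, and $n$-truncated spaces are closed under filtered colimits, so $F\esc{X}\in\mathcal{P}_{\leq n}$. Nisnevich descent is checked on Nisnevich cd squares, i.e.\ finite limits, which commute with filtered colimits in $Spc$; hence $F\esc{X}$ is Nisnevich local.
\item \emph{$\mathbb{A}^1$-locality.} Write the sequence as the cofinal interleaving $\esc{X}\to L_{\mathbb{A}^1}\esc{X}\to T\esc{X}\to L_{\mathbb{A}^1}T\esc{X}\to T^2\esc{X}\to\cdots$. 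Then $F\esc{X}\simeq\colim_m L_{\mathbb{A}^1}T^m\esc{X}$, a filtered colimit of $\mathbb{A}^1$-local presheaves. The condition $\esc{Y}(U)\simeq\esc{Y}(\mathbb{A}^1\times U)$ is sectionwise and preserved by sectionwise filtered colimits.
\end{enumerate}
The same interleaving argument also re-proves the truncatedness and Nisnevich locality in item 1, so all three properties follow uniformly.

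\textbf{Main obstacle.} The delicate point is item 1: $\mathcal{P}_{nis}$ is closed under filtered colimits in $\mathcal{P}$ only because Nisnevich descent reduces to cd-squares, i.e.\ finite limits. This is the content of the cd-structure description recalled above (Voevodsky / [\cite{hoyois2017six}]), and requires $Sm_S$ to be built from qcqs schemes so that the topology is finitary. I would cite that characterization explicitly, and handle the emptyset condition $\esc{Y}(\emptyset)\simeq *$ directly, since it is preserved by colimits of this shape. One must also note that the colimit is taken in $\mathcal{P}$, not in $\mathcal{P}_{nis}$. With these points in place, $F\esc{X}\in\mathcal{H}^{mot}_{\leq n}$, and functoriality of $F$ together with Step 1 identifies $F$ with the left adjoint $L^{mot}_{\leq n}$.
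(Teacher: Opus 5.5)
Your proposal is correct and follows the paper's proof essentially step for step. The paper likewise shows that $\mathcal{P}_{nis}$, $L_{\mathbb{A}^1}\mathcal{P}$ and $\mathcal{P}_{\leq n}$ are each closed under filtered colimits in $\mathcal{P}$, uses a cofinality argument to place the colimit in $\mathcal{H}^{mot}_{\leq n}$, and gets the local-equivalence property from saturation. Your interleaving $\esc{X}\to L_{\mathbb{A}^1}\esc{X}\to T\esc{X}\to L_{\mathbb{A}^1}T\esc{X}\to\cdots$ makes explicit the cofinality step the paper leaves implicit; note that it relies on the transition maps being the units $\eta_{T^m\esc{X}}$.
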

\begin{proof}
    To see why this works, first observe that all three subcategories in question are closed under filtered colimits. Indeed, $\mathcal{P}_{nis}\subset\mathcal{P}$ is closed under filtered colimits because the nisnevich topology is a cd topology; $L_{\mathbb{A}^1}\mathcal{P}\subset\mathcal{P}$ is closed under all small colimits, as can be checked on sections; and $ \mathcal{P}_{\leq n}\subset\mathcal{P}$ is closed under filtered colimits, which can be proven by induction: the base case $n=-2$ follows from [\cite{lurie2009higher}, Definition 5.5.6.1], while the inductive step follows from [Proposition 5.5.6.5, \cite{lurie2009higher}] and the commutation of pullbacks and filtered colimits in toposes. A cofinality argument then shows that $$\dcolim_{m}(L_{nis}\tau_{\leq n}L_{\mathbb{A}^1}\esc{X})^m\in \mathcal{P}^{nis}\bigcap L_{\mathbb{A}^1}\mathcal{P}\bigcap\mathcal{P}_{\leq n}=\mathcal{H}^{mot}\bigcap\mathcal{P}_{\leq n}=\mathcal{H}^{mot}_{\leq n}.$$ That the natural transformation $$1\to \colim_{m}(L_{nis}\tau_{\leq n}L_{\mathbb{A}^1})^m$$ is an $L_{\leq n}^{mot }$ equivalence follows from the definition of associated saturated classes.
\end{proof}

\begin{rem}
    It follows that $\tau_{\leq n}^{mot}\simeq {L_{\leq n}^{mot}}_{|\mathcal{H}^{mot}}$.
\end{rem}
\begin{rem}\label[rem]{remark on truncation of localization}
    From the arguments of the above proof, it is not hard to see that the same truncation formula can equivalently be given by:
    $$L_{\leq n}^{mot}=\colim_{m} (\tau^{nis}_{\leq n}L_{mot})^m=\colim_{m} (\tau_{\leq n}L_{mot})^m.$$ 
\end{rem}

The following is not generally true for arbitrary localizing categories. In fact, it is a property of an $\infty$-topos [\cite{lurie2009higher}, Lemma 6.5.1.2]:  
 \begin{prop}
     The localization $L_{\leq n}^{mot}$ is effective, i.e., for every $S$-space $\esc{X}$, the map $\esc{X}\to L_{\leq n}^{mot}\esc{X}$ is an effective epimorphism.
 \end{prop}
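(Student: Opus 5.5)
We prove that $\esc{X}\to L^{mot}_{\leq n}\esc{X}$ is an effective epimorphism in the presheaf $\infty$-topos $\mathcal{P}$. By [\cite{lurie2009higher}, Proposition 7.2.1.14], a morphism $f$ in an $\infty$-topos is an effective epimorphism if and only if $\tau_{\leq 0}f$ is an epimorphism of discrete objects. In $\mathcal{P}$ this is a sectionwise statement: we must show that $\pi_0\esc{X}\to \pi_0 L^{mot}_{\leq n}\esc{X}$ is an epimorphism of presheaves of sets. Equivalently, we may work in $\mathcal{P}_{nis}$, after first replacing $\esc{X}$ by $L_{nis}\esc{X}$; the unit $\esc{X}\to L_{nis}\esc{X}$ is an effective epimorphism in $\mathcal{P}$, since sheafification induces an epimorphism on $\pi_0$ after Nisnevich sheafification.

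The plan uses the formula of \Cref{formula for motivic truncation}:
$$L^{mot}_{\leq n}=\colim_m \big(L_{nis}\tau_{\leq n}L_{\mathbb{A}^1}\big)^m.$$
First, effective epimorphisms are closed under composition. They are also closed under filtered colimits in the arrow category: on $\pi_0$ a filtered colimit of epimorphisms of sets, computed sectionwise, is again an epimorphism, and $\pi_0$ commutes with filtered colimits. So it suffices to show that each of the three units is an effective epimorphism:
\begin{enumerate}
\item $1\to L_{\mathbb{A}^1}$. Model $L_{\mathbb{A}^1}$ by $\colim_k(\mathrm{Sing}^{\mathbb{A}^1})^k$. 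The map $\esc{Y}\to \mathrm{Sing}^{\mathbb{A}^1}\esc{Y}=|\esc{Y}(-\times\Delta^\bullet)|$ is surjective on sectionwise $\pi_0$, because $\pi_0$ of a geometric realization is a quotient of $\pi_0$ of the zero simplices. This is Morel--Voevodsky's [\cite{morel19991}, Corollary 2.3], as recalled in \Cref{Effective localization remark}.
\item $1\to\tau_{\leq n}$. This is an isomorphism on $\pi_0$ for $n\geq 0$, i.e.\ [\cite{lurie2009higher}, Lemma 6.5.1.2] in the topos $\mathcal{P}$.
\item $1\to L_{nis}$. This is an effective epimorphism in $\mathcal{P}$ in the sense above.
\end{enumerate}

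The main obstacle is the Nisnevich step. The unit $\esc{Y}\to L_{nis}\esc{Y}$ is not sectionwise surjective on $\pi_0$ in $\mathcal{P}$; it is only Nisnevich-locally surjective. So the statement must be read in $\mathcal{P}_{nis}$, where effective epimorphisms are detected by the sheafified $\pi_0^{nis}$, and there $L_{nis}$ induces an isomorphism. I would therefore run the whole argument in the $\infty$-topos $\mathcal{P}_{nis}$, using $\tau^{nis}_{\leq n}L_{\mathbb{A}^1}$ in place of the composite with $L_{nis}$ (cf.\ \Cref{remark on truncation of localization}). Steps 1 and 2 then pass to $\mathcal{P}_{nis}$: sheafification preserves epimorphisms of presheaves of sets and commutes with the filtered colimit, and the composite $\esc{X}\to L_{nis}\esc{X}\to L^{mot}_{\leq n}\esc{X}$ is an effective epimorphism of $\mathcal{P}_{nis}$. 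If one insists on $\mathcal{P}$ itself, the conclusion has to be interpreted locally. I would add this caveat rather than claim sectionwise surjectivity.
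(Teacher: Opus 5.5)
Your argument is correct and is essentially the paper's. Both reduce via [HTT, Proposition 7.2.1.14] to surjectivity on $\pi_0^{nis}$ and push this through the iterated formula for $L^{mot}_{\leq n}$; the only input is that the $\mathbb{A}^1$-step is surjective on $\pi_0$. The paper cites Morel--Voevodsky for $\pi_0^{nis}\to\pi_0^{\mathbb{A}^1}$, whereas you check this directly for $\mathrm{Sing}^{\mathbb{A}^1}$ and then use closure of effective epimorphisms under composition and sequential colimits.

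Your caveat that the claim holds in $\mathcal{P}_{nis}$, not sectionwise in $\mathcal{P}$, is right and matches how the paper's proof actually reads it. So delete the contradictory assertions in your first paragraph and in step 3 that $\esc{X}\to L_{nis}\esc{X}$ is an effective epimorphism in $\mathcal{P}$.
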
 
 \begin{proof}
  By [\cite{lurie2009higher}, Proposition 7.2.1.14], it suffices to prove that the morphism is a surjection of Nisnevich sheaves under $\pi_0^{nis}$. But this morphism of Nisnevich sheaves is identified with $\pi_0^{nis}\esc{X}\to \colim_{m}\pi_0^{\mathbb{A}^1}\esc{X}$. The claim thus follows from [\cite{morel19991}, Corollary 1.2.3].
 \end{proof}
 \begin{rem}
    It is clear from the above proof that:
    $$\tau_{\leq 0}^{mot}=\colim_{m}(\pi_0^{\mathbb{A}^1})^{\circ m}.$$
\end{rem} 

\begin{prop}
    $L^{mot}_{\leq n}$ preserves products. 
\end{prop}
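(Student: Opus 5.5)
The plan is to prove product preservation using the explicit formula of \Cref{formula for motivic truncation}, namely
$$L^{mot}_{\leq n}\simeq \colim_{m}\big(L_{nis}\tau_{\leq n}L_{\mathbb{A}^1}\big)^m,$$
by checking that each of the three building blocks preserves finite products in $\mathcal{P}$. Finite products are what is needed here; the empty product is the terminal object, and each functor fixes $*$ because $*$ is Nisnevich local, $\mathbb{A}^1$-local and $n$-truncated. Since the colimit over $m$ is filtered, it commutes with finite products in the $\infty$-topos $\mathcal{P}$. So once each stage $F:=L_{nis}\tau_{\leq n}L_{\mathbb{A}^1}$ preserves binary products, we get a chain of equivalences
$$\colim_m F^m(\esc{X}\times\esc{Y})\simeq \colim_m \big(F^m\esc{X}\times F^m\esc{Y}\big)\simeq \colim_m F^m\esc{X}\times \colim_m F^m\esc{Y}.$$
For the last equivalence I would pass to the diagonal of the $\mathbb{N}\times\mathbb{N}$-indexed diagram, using cofinality of the diagonal $\mathbb{N}\to\mathbb{N}\times\mathbb{N}$ together with commutation of filtered colimits and finite products in $Spc$, computed sectionwise.

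The three building blocks are handled as follows.
\begin{enumerate}
\item $L_{nis}$ is left exact (\cite{hoyois2017six}, Corollary 3.9), so in particular it preserves finite products.
\item $\tau_{\leq n}$ on the presheaf topos $\mathcal{P}$ is computed sectionwise, and Postnikov truncation of spaces preserves finite products because $\pi_i(A\times B)=\pi_iA\times\pi_iB$. Alternatively, one can cite \cite{lurie2009higher}, Lemma 6.5.1.2.
\item For $L_{\mathbb{A}^1}$, I would use the Suslin-type model $\mathrm{Sing}^{\mathbb{A}^1}\esc{X}=|\esc{X}(-\times\Delta^\bullet_{\mathbb{A}^1})|$. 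Products of simplicial objects are preserved levelwise, and geometric realization of simplicial spaces preserves finite products because $\Delta^{op}$ is sifted. If the model has to be iterated transfinitely, this is again a filtered colimit of product-preserving functors.
\end{enumerate}
Alternatively, I would note directly that $L_{\mathbb{A}^1}$ on $\mathcal{P}$ is the localization at the projections $X\times\mathbb{A}^1\to X$. This class is stable under products with representables and hence, by the universality of colimits in $\mathcal{P}$, under products with arbitrary objects. It is then a standard fact that such a localization preserves finite products.

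A cleaner alternative avoids the formula altogether. By \Cref{L tunc is a localization}, $L^{mot}_{\leq n}$ is the left adjoint of $\mathcal{H}^{mot}_{\leq n}\subset\mathcal{P}$, and a localization preserves finite products as soon as its class of local equivalences is stable under $-\times\esc{Z}$. Equivalently, one needs that $\underline{\mathrm{Map}}(\esc{Z},\esc{W})\in\mathcal{H}^{mot}_{\leq n}$ whenever $\esc{W}\in\mathcal{H}^{mot}_{\leq n}$. Each of the three defining conditions is closed under taking internal mapping objects out of arbitrary presheaves.
\begin{itemize}
\item Nisnevich descent: this holds because $\mathcal{P}_{nis}$ is an exponential ideal, by left exactness of $L_{nis}$.
\item $\mathbb{A}^1$-invariance: $\underline{\mathrm{Map}}(\esc{Z},\esc{W})(X\times\mathbb{A}^1)=\mathrm{Map}(\esc{Z}\times h_{X\times\mathbb{A}^1},\esc{W})$, and $\esc{Z}\times h_{X\times \mathbb{A}^1}\to\esc{Z}\times h_X$ is an $\mathbb{A}^1$-equivalence.
\item $n$-truncatedness: mapping objects into $n$-truncated objects are $n$-truncated.
\end{itemize}
I would present this exponential-ideal argument as the main proof, and keep the formula-based argument as a concrete check.

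The main obstacle is the $\mathbb{A}^1$ step, that is, stability of $\mathbb{A}^1$-equivalences under products with arbitrary presheaves. The plan is to reduce to generating equivalences via a saturation argument. The class of maps $f$ such that $f\times\esc{Z}$ is an $\mathbb{A}^1$-equivalence is strongly saturated, because $-\times\esc{Z}$ preserves colimits in $\mathcal{P}$. This class contains the generators $h_{X\times\mathbb{A}^1}\to h_X$: writing $\esc{Z}$ as a colimit of representables $h_T$ reduces the claim to $h_{X\times T\times\mathbb{A}^1}\to h_{X\times T}$, which is itself a generator. By the same argument, the Nisnevich generators are also stable under products, since Nisnevich squares pull back to Nisnevich squares.
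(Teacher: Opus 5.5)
Your formula-based argument is exactly the paper's proof. The paper writes $L^{mot}_{\leq n}\simeq\colim_m(L_{nis}\tau_{\leq n}L_{\mathbb{A}^1})^m$ via Theorem~\ref{formula for motivic truncation}, uses that filtered colimits commute with finite products, and checks the three factors as you do: $L_{nis}$ is left exact, $\tau_{\leq n}$ is handled by [\cite{lurie2009higher}, Lemma 6.5.1.2], and $L_{\mathbb{A}^1}$ works because $\mathbb{A}^1$-projections are stable under base change.

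The exponential-ideal argument, which you promote to the main proof, is a genuinely different and correct route. Instead of using a model for the localization, you show that $\mathcal{H}^{mot}_{\leq n}=\mathcal{P}_{nis}\cap L_{\mathbb{A}^1}\mathcal{P}\cap\mathcal{P}_{\leq n}$ is closed under $\underline{\mathrm{Map}}(Z,-)$ for every $Z\in\mathcal{P}$. The key input is that each generating class of local equivalences is stable under $-\times h_T$.

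What this buys you:
\begin{itemize}
\item It is independent of Theorem~\ref{formula for motivic truncation}. You never need closure of $\mathcal{P}_{nis}$ under filtered colimits or the cofinality argument behind that formula. Nor do you need the check that the comparison maps $F(X\times Y)\to FX\times FY$, for $F=L_{nis}\tau_{\leq n}L_{\mathbb{A}^1}$, are compatible with the transition maps of the sequential colimit.
\item It proves more. $\mathcal{H}^{mot}_{\leq n}$ is an exponential ideal of $\mathcal{P}$, hence cartesian closed.
\item Dropping the truncation bullet gives that $L_{mot}$ itself preserves finite products at no extra cost.
\item It applies verbatim to any localization generated by maps stable under products with representables.
\end{itemize}

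The paper's route is cheaper in context, because the explicit formula is needed anyway. It is reused for the effectivity of $L^{mot}_{\leq n}$ and for $\tau^{mot}_{\leq 0}=\colim_m(\pi_0^{\mathbb{A}^1})^{\circ m}$.

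One minor point: on presheaves, $\mathrm{Sing}^{\mathbb{A}^1}$ is already $\mathbb{A}^1$-local, so your caveat about transfinite iteration is unnecessary.
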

\begin{proof}
    We use the formula constructed above. Since filtered colimits commute with finite products, it suffices to observe that each component functor in the formula has the desired property. The case of $L_{nis}$ is standard (it is, in fact, left exact); the case of $L_{\mathbb{A}^1}$ is well known (using the fact that $\mathbb{A}^1$-projections are stable under arbitrary base change); for $\tau_{\leq n}$, use [\cite{lurie2009higher}, Lemma 6.5.1.2].
\end{proof}
\begin{cor}\label[cor]{mot truncation cart}
    $\tau^{mot}_{\leq n}$ preserves products. 
\end{cor}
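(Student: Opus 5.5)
The plan is to deduce this from the preceding proposition, that $L^{mot}_{\leq n}$ preserves finite products, together with the remark that $\tau^{mot}_{\leq n}\simeq {L^{mot}_{\leq n}}_{|\mathcal{H}^{mot}}$.

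First, finite products in $\mathcal{H}^{mot}$ are computed in $\mathcal{P}$. Indeed, $\mathcal{H}^{mot}\subset\mathcal{P}$ is a localizing subcategory, hence closed under all limits. So for motivic spaces $\esc{X},\esc{Y}$, the product $\esc{X}\times\esc{Y}$ formed in $\mathcal{P}$ already lies in $\mathcal{H}^{mot}$ and is the product there. The terminal object $*$ is likewise motivic.

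Second, by \Cref{L tunc is a localization} and \Cref{truncation of localization}, $\mathcal{H}^{mot}_{\leq n}$ is closed under limits in $\mathcal{P}$. Hence products in $\mathcal{H}^{mot}_{\leq n}$ are also computed in $\mathcal{P}$.

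Third, I compose the two identifications:
$$\tau^{mot}_{\leq n}(\esc{X}\times\esc{Y})\simeq L^{mot}_{\leq n}(\esc{X}\times\esc{Y})\simeq L^{mot}_{\leq n}\esc{X}\times L^{mot}_{\leq n}\esc{Y}\simeq \tau^{mot}_{\leq n}\esc{X}\times\tau^{mot}_{\leq n}\esc{Y}.$$
The middle equivalence is the previous proposition. The last product may be read in either $\mathcal{H}^{mot}_{\leq n}$ or $\mathcal{P}$, by the second step. For the empty product, $L^{mot}_{\leq n}*\simeq *$, since $*$ is motivic and $n$-truncated.

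I also need the comparison map to be the canonical one, namely the map induced by the projections. This holds because the product-preservation statement for $L^{mot}_{\leq n}$ concerns that same canonical map, and restricting to $\mathcal{H}^{mot}$ does not change it.

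There is no real obstacle here. The only point requiring care is confirming that the earlier proposition's product statement applies to objects already in $\mathcal{H}^{mot}$, which is immediate, since it is stated for all of $\mathcal{P}$.
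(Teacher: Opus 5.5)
Your proposal is correct and follows the paper's own route. The corollary is read off from the preceding proposition that $L^{mot}_{\leq n}$ preserves (finite) products, via the identification $\tau^{mot}_{\leq n}\simeq {L^{mot}_{\leq n}}_{|\mathcal{H}^{mot}}$ and the fact that products in $\mathcal{H}^{mot}$ and in $\mathcal{H}^{mot}_{\leq n}$ are computed in $\mathcal{P}$; your extra bookkeeping (where the products live, the empty product, canonicity of the comparison map) is sound and makes explicit what the paper leaves implicit.
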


\begin{cor}
    $L_{mot}$ is a path cartesian localization (see \Cref{path cart}).
\end{cor}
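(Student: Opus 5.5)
By \Cref{path cart}, we must show that the discrete localization $L_{\leq 0}:\mathcal{P}_{\leq 0}\to \mathcal{H}^{mot}_{\leq 0}$ induced by $L_{mot}$ preserves finite products. Since $L_{\leq 0}$ is a left adjoint into a full subcategory of $\mathcal{P}$, it suffices to show that it preserves the terminal object and binary products. My plan is to identify $L_{\leq 0}$ with the restriction of $L^{mot}_{\leq 0}$ to discrete presheaves, and then invoke the preceding proposition that $L^{mot}_{\leq n}$ preserves products.

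First, I will show that for a discrete presheaf $\esc{F}\in\mathcal{P}_{\leq 0}$ the two localizations agree, $L_{\leq 0}\esc{F}\simeq L^{mot}_{\leq 0}\esc{F}$. Both functors are localizations onto the same full subcategory $\mathcal{H}^{mot}_{\leq 0}=\mathcal{H}^{mot}\cap\mathcal{P}_{\leq 0}$ (\Cref{truncation of localization}); the first has source $\mathcal{P}_{\leq 0}$ and the second has source $\mathcal{P}$. For $\esc{Y}\in\mathcal{H}^{mot}_{\leq 0}$, the mapping spaces out of $\esc{F}$ in $\mathcal{P}_{\leq 0}$ and in $\mathcal{P}$ coincide, because $\mathcal{P}_{\leq 0}\subset\mathcal{P}$ is full. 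Hence $L^{mot}_{\leq 0}\esc{F}$ and $L_{\leq 0}\esc{F}$ corepresent the same functor on $\mathcal{H}^{mot}_{\leq 0}$, and the Yoneda lemma gives the identification. This is the same argument as in \Cref{formulae for tau under localization}, which more generally yields $L^{mot}_{\leq 0}\simeq L_{\leq 0}\tau_{\leq 0}$ on all of $\mathcal{P}$.

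Second, for discrete $\esc{F},\esc{G}$, the product $\esc{F}\times\esc{G}$ taken in $\mathcal{P}$ is again discrete, since $\mathcal{P}_{\leq 0}$ is closed under limits. It is therefore also the product in $\mathcal{P}_{\leq 0}$. The preceding proposition then gives
\[
L_{\leq 0}(\esc{F}\times\esc{G})\simeq L^{mot}_{\leq 0}(\esc{F}\times\esc{G})\simeq L^{mot}_{\leq 0}\esc{F}\times L^{mot}_{\leq 0}\esc{G}\simeq L_{\leq 0}\esc{F}\times L_{\leq 0}\esc{G}.
\]
For the terminal object, $*$ is discrete, $\mathbb{A}^1$-invariant, and a Nisnevich sheaf, so it already lies in $\mathcal{H}^{mot}_{\leq 0}$ and is fixed by $L_{\leq 0}$.

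The main obstacle is making sure the product preservation established in the preceding proposition is genuinely product preservation as a functor into $\mathcal{H}^{mot}_{\leq 0}$, not merely an identification inside some larger category. This holds because products in $\mathcal{H}^{mot}_{\leq 0}$ are computed in $\mathcal{P}$: the subcategory $\mathcal{H}^{mot}_{\leq 0}$ is closed under limits in $\mathcal{P}$ by \Cref{L tunc is a localization}. I will also remark that the canonical comparison map $L_{\leq 0}(\esc{F}\times\esc{G})\to L_{\leq 0}\esc{F}\times L_{\leq 0}\esc{G}$ is the one produced by the chain of equivalences above. This follows from naturality of the Yoneda identification in the first step.
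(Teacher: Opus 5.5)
Your proposal is correct and follows the paper's route: the paper states the corollary without proof, as an immediate consequence of the preceding proposition that $L^{mot}_{\leq n}$ preserves products (proved via the formula $\colim_m (L_{nis}\tau_{\leq n}L_{\mathbb{A}^1})^m$), taken at $n=0$. The paper leaves implicit the identification of the abstract $L_{\leq 0}$ with the restriction of $L^{mot}_{\leq 0}$ to discrete presheaves, and your Yoneda argument simply spells that step out.
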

In the particular case of discrete motivic spaces, one finds $$\mathrm{Disc}(\mathcal{H}^{mot})=\mathcal{H}^{mot}\bigcap disc(\mathcal{P}_{nis})=\mathcal{H}^{mot}\bigcap Sh_{nis}(Sm_S, Sets)=Sh_{nis}^{\mathbb{A}^1}$$ (here $Sh_{nis}^{\mathbb{A}^1}$ is the category of $\mathbb{A}^1$-invariant nisnevich sheaves of sets on $Sm_S$). Following the above machinery, a localization for the inclusion $Sh_{nis}^{\mathbb{A}^1}\subset \mathcal{P}$ can be given by the formula:
$$L_{\leq 0}^{mot}:=\colim_{m} (L_{nis}\tau_{\leq 0}L_{\mathbb{A}^1})^m.$$

A well-developed version of this was obtained for sheaves of sets in [\cite{amit}], where it was denoted by $\mathcal{L}$. 
\begin{cor}
    For motivic spaces $\esc{X}$, one has the sheaf isomorphism $\mathcal{L}\pi_0^{nis}\esc{X}\cong \pi_0^{mot}\esc{X}$.
\end{cor}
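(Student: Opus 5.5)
The corollary should follow by combining the identification of $\pi_0^{mot}$ as the internal $0$-truncation $\tau^{mot}_{\leq 0}$ with the explicit formula for $L^{mot}_{\leq 0}$ in \Cref{formula for motivic truncation} and \Cref{remark on truncation of localization}. Since $\esc{X}$ is already motivic, we have $\esc{X}\simeq L_{mot}\esc{X}$, and therefore
\[
\pi_0^{mot}\esc{X}=\tau^{mot}_{\leq 0}\esc{X}\simeq L^{mot}_{\leq 0}\esc{X}\simeq \colim_m \big(\tau^{nis}_{\leq 0}L_{mot}\big)^m\esc{X}.
\]
The first step is to rewrite the first stage of this colimit. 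Because $L_{mot}\esc{X}\simeq\esc{X}$, it is $\tau^{nis}_{\leq 0}\esc{X}=\pi_0^{nis}\esc{X}$, a Nisnevich sheaf of sets.

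The second step is to show that every later stage lives among sheaves of sets. Since $\tau^{nis}_{\leq 0}$ followed by $L_{mot}$ stays in $\mathcal{P}_{nis}$, each later stage is obtained from the previous discrete sheaf $F$ by applying $F\mapsto \pi_0^{nis}L_{mot}F=\pi_0^{\mathbb{A}^1}F$. So the tower is
\[
\pi_0^{nis}\esc{X}\to \pi_0^{\mathbb{A}^1}\pi_0^{nis}\esc{X}\to (\pi_0^{\mathbb{A}^1})^{2}\pi_0^{nis}\esc{X}\to\cdots,
\]
and its colimit is taken in $\mathcal{P}$. By the filtered-colimit closure established in the proof of \Cref{formula for motivic truncation}, this colimit already lies in $Sh^{\mathbb{A}^1}_{nis}$. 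The remark following the effectivity proposition, $\tau^{mot}_{\leq 0}=\colim_m(\pi_0^{\mathbb{A}^1})^{\circ m}$, packages exactly this reduction.

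The third step, and the main obstacle, is to match this with Balwe–Hogadi–Sawant's construction $\mathcal{L}$ from [\cite{amit}]. There $\mathcal{L}F$ is defined for a Nisnevich sheaf of sets $F$ as a transfinite, or $\omega$-indexed, colimit of iterates of a functor $\mathcal{S}$, the Nisnevich sheafification of naive $\mathbb{A}^1$-chain components. The cited paper shows that $\mathcal{L}F$ is the universal $\mathbb{A}^1$-invariant Nisnevich sheaf under $F$, i.e. the left adjoint to $Sh^{\mathbb{A}^1}_{nis}\subset Sh_{nis}$. I would avoid comparing the two towers term by term and argue instead by universal property.

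By \Cref{truncation of localization}, $\mathrm{Disc}(\mathcal{H}^{mot})=Sh^{\mathbb{A}^1}_{nis}$, and $L^{mot}_{\leq 0}$ restricted to discrete Nisnevich sheaves is left adjoint to this inclusion. Hence $\mathcal{L}\simeq L^{mot}_{\leq 0}|_{Sh_{nis}}$ by uniqueness of adjoints. For any $G\in Sh^{\mathbb{A}^1}_{nis}$ one then computes
\[
\mathrm{Hom}(\mathcal{L}\pi_0^{nis}\esc{X},G)=\mathrm{Hom}(\pi_0^{nis}\esc{X},G)=\mathrm{Map}(\esc{X},G)=\mathrm{Hom}(\pi_0^{mot}\esc{X},G).
\]
Here the middle equality uses that $G$ is $0$-truncated in $\mathcal{P}_{nis}$, and the last uses that $\pi_0^{mot}$ is left adjoint to the inclusion of discrete objects into $\mathcal{H}^{mot}$. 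Yoneda then gives the isomorphism. The only point needing care is verifying from [\cite{amit}] that $\mathcal{L}$ really is this reflector, i.e. that $\mathcal{L}F$ is $\mathbb{A}^1$-invariant and that $F\to\mathcal{L}F$ is initial among maps to $\mathbb{A}^1$-invariant sheaves. If that universality were only stated over a field in the source, I would fall back on showing that both towers are cofinal in each other, using the natural maps $\mathcal{S}F\to\pi_0^{\mathbb{A}^1}F$ and $\pi_0^{\mathbb{A}^1}F\to \mathcal{L}F$.
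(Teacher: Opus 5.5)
Your argument is correct, but it reaches the identification by a different route from the paper. The paper matches the two towers term by term. It models $L_{\mathbb{A}^1}$ on presheaves by the singular construction $|(-)^{\mathbb{A}^\bullet}|$. It then notes that for a sheaf of sets $F$ one has $\tau_{\leq 0}|F^{\mathbb{A}^\bullet}|\cong\mathrm{coeq}(F^{\mathbb{A}^1}\rightrightarrows F)$, which is exactly the presheaf $\mathcal{S}^{pre}F$ of \cite{amit}. Hence $\mathcal{S}=L_{nis}\tau_{\leq 0}L_{\mathbb{A}^1}$ on $Sh_{nis}$, and $\mathcal{L}=\colim_m\mathcal{S}^m$ is literally the formula for $L^{mot}_{\leq 0}$ from the motivic truncation construction. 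Evaluating on a motivic $\esc{X}$, whose first stage is $\pi_0^{nis}\esc{X}$, gives the claim.

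You instead get $\mathcal{L}\simeq L^{mot}_{\leq 0}|_{Sh_{nis}}$ from uniqueness of left adjoints. You then compose the reflections $\tau^{nis}_{\leq 0}$ and $\mathcal{L}$, testing against $G\in Sh^{\mathbb{A}^1}_{nis}=\mathrm{Disc}(\mathcal{H}^{mot})$. Your first two steps (the $\pi_0^{\mathbb{A}^1}$-tower) are never used in the final argument and can be dropped.

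What each route buys:
\begin{itemize}
\item The paper's route needs only the definition of $\mathcal{S}$ from \cite{amit}. It even yields $\mathbb{A}^1$-invariance of $\mathcal{L}F$ as a by-product of the truncation theorem. However, it depends on a specific model of $L_{\mathbb{A}^1}$ and on the cofinality argument in that theorem. It also passes from $\mathcal{L}$ on sheaves to $\pi_0^{mot}$ on motivic spaces in one line, a step your Hom computation makes explicit.
\item Your route is model-independent, but takes as input that $\mathcal{L}$ is the reflector onto $Sh^{\mathbb{A}^1}_{nis}$.
\end{itemize}

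That input is easy over any qcqs base. This matters because the paper states this corollary before specializing to fields. A section $s$ of $\mathcal{S}^nF$ over $U\times\mathbb{A}^1$ is naively homotopic to $p^*i_0^*s$ via $(x,t,u)\mapsto(x,tu)$, so the two agree in $\mathcal{S}^{n+1}F$. Since the $\omega$-colimit of sheaves is computed sectionwise on qcqs objects, $\mathcal{L}F$ is $\mathbb{A}^1$-invariant. Universality then follows because $F\to\mathcal{S}F$ is an epimorphism and $\mathcal{S}G=G$ for $\mathbb{A}^1$-invariant $G$. So the cofinality fallback you mention is unnecessary.
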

\begin{proof}
    By taking the $\mathbb{A}^1$-singular formulae $L_{\mathbb{A}^1}: = |(-)^{\mathbb{A}^\bullet}|$, one finds that $$\tau_{\leq 0}|(-)^{\mathbb{A}^\bullet}|\simeq |(\tau_{\leq 0}(-))^{\mathbb{A}^\bullet}|\cong coeq\big ((-)^{\mathbb{A}^1}\rightrightarrows  (-)\big ).$$ But this is precisely the formulae obtained in [\cite{amit}] for presheaves of sets, which they called $\mathcal{S}^{pre}$. Consequently, $$L_{nis}\tau_{\leq 0}L_{\mathbb{A}^1}=L_{nis}\pi_0^{ch}=\mathcal{S}.$$ Thus their formulae $\mathcal{L}:=\colim_{m}\mathcal{S}^m\equiv \colim_{m} (L_{nis}\tau_{\leq 0}L_{\mathbb{A}^1})^m $ is indeed the motivic truncation functor $$L_{\leq 0}^{mot}: Sh_{nis}\to disc(\mathcal{H}^{mot})=Sh_{nis}^{\mathbb{A}^1}$$ as constructed in \Cref{motivic truncation formulae}. 
\end{proof}
The actual motivic truncation is thus given by $\tau_{\leq 0}^{mot}:={L_{\leq 0}^{mot}}_{|\mathcal{H}^{mot}}\simeq \mathcal{L}({\pi_0^{nis}}_{|{\mathcal{H}^{mot}}})$.

\textit{Motivic homotopy groups: } 

Let $\pi_i^{mot}$ be the internal homotopy groups of $\mathcal{H}^{mot}(S)$, following \Cref{LC homotopy group}. By definition, it is the internal motivic $0$-truncation of the $i$-fold loop space: $$\pi_i^{mot}:=\tau_{\leq 0}^{mot}\Omega_{\mathcal{H}^{mot}(k)}^i\simeq \tau_{\leq 0}^{mot}\Omega^i.$$

\begin{cor}\label[cor]{piimot is Lpiinis}
For all $i\geq 0$ and for $\esc{X}$ a motivic space, we have $$\pi_i^{mot}\esc{X}=\tau_{\leq 0}^{mot}\pi_i^{nis}\esc{X}\simeq \mathcal{L}\pi_i^{nis}\esc{X}.$$ where $\mathcal{L
}$ is a localization for the inclusion $Sh^{\mathbb{A}^1}_{nis}\subset Sh_{nis}$.
\end{cor}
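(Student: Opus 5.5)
We apply the general machinery of \S2.1 to $L_{mot}$, viewed as a localization of the $\infty$-topos $\mathcal{P}_{nis}(S)$ rather than of $\mathcal{P}(S)$. The point is that homotopy groups internal to $\mathcal{P}_{nis}$ are exactly the sheaves $\pi_i^{nis}$: the $0$-truncation in $\mathcal{P}_{nis}$ is $L_{nis}\tau_{\leq 0}$, and loops commute with $L_{nis}$ because $L_{nis}$ is left exact.

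The first step identifies loop objects. Since $\mathcal{H}^{mot}\subset\mathcal{P}_{nis}\subset\mathcal{P}$ are each closed under limits, for a pointed motivic space $\esc{X}$ we have
$$\Omega^i_{\mathcal{H}^{mot}}\esc{X}\simeq\Omega^i_{\mathcal{P}_{nis}}\esc{X}\simeq\Omega^i\esc{X}.$$
This is exactly the argument opening the proof of \Cref{homotopy groups of localization}.

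The second step applies \Cref{homotopy groups of localization} with $\mathcal{C}=\mathcal{P}_{nis}$ and $L=L_{mot}$. This gives
$$\pi_i^{mot}\esc{X}=\tau^{mot}_{\leq 0}\Omega^i\esc{X}\simeq L_{\leq 0}\tau_{\leq 0}^{nis}\Omega^i\esc{X}=L_{\leq 0}\pi_i^{nis}\esc{X}.$$
Here $L_{\leq 0}$ is the localization $\mathrm{Disc}(\mathcal{P}_{nis})=Sh_{nis}\to\mathrm{Disc}(\mathcal{H}^{mot})=Sh^{\mathbb{A}^1}_{nis}$ supplied by \Cref{L tunc is a localization}, together with the identification $(L\mathcal{C})_{\leq 0}=L\mathcal{C}\cap\mathcal{C}_{\leq 0}$ from \Cref{truncation of localization}. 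By uniqueness of left adjoints, $L_{\leq 0}$ is the restriction of $L^{mot}_{\leq 0}$ of \Cref{motivic truncation formulae} to Nisnevich sheaves of sets. The preceding corollary then identifies it with $\mathcal{L}=\colim_m\mathcal{S}^m$, which gives the second isomorphism $\pi_i^{mot}\esc{X}\simeq\mathcal{L}\pi_i^{nis}\esc{X}$.

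The main obstacle is the notation $\tau^{mot}_{\leq 0}\pi_i^{nis}\esc{X}$ in the statement. The sheaf $\pi_i^{nis}\esc{X}$ is typically not a motivic space, so $\tau^{mot}_{\leq 0}$ must be read as its canonical extension $L^{mot}_{\leq 0}$ (see the remark after \Cref{formula for motivic truncation}). To check that this reading matches the second step, we use \Cref{formulae for tau under localization} and the factorization of $L^{mot}_{\leq 0}$ through the Postnikov truncation:
$$L^{mot}_{\leq 0}\Omega^i\esc{X}\simeq L_{\leq 0}\tau^{nis}_{\leq 0}\Omega^i\esc{X}.$$
This holds because $\tau^{nis}_{\leq 0}$ is a localization whose local objects contain the target subcategory $\mathcal{H}^{mot}_{\leq 0}$, so the localizations compose. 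Finally, \Cref{motivic trunc is nis trunc} guarantees that the target categories agree, which makes all three expressions canonically isomorphic.
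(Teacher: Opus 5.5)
Your proposal is correct and takes the paper's route: the paper's proof is simply \Cref{homotopy groups of localization} applied to $L_{mot}$ on $\mathcal{P}_{nis}$, combined with the identification, from the preceding corollary, of the discrete localization $L^{mot}_{\leq 0}$ on Nisnevich sheaves of sets with $\mathcal{L}$. Your remark that $\tau^{mot}_{\leq 0}\pi_i^{nis}\esc{X}$ must be read as $L^{mot}_{\leq 0}\pi_i^{nis}\esc{X}$, since $\pi_i^{nis}\esc{X}$ need not be motivic, is a useful clarification that the paper leaves implicit.
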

\begin{proof}
    This is \Cref{homotopy groups of localization} applied to $L_{mot}$, together with the observation ${L_{\leq 0}^{mot}}\simeq \mathcal{L}({\pi_0^{nis}})$.
\end{proof}
(For the rest of this section, we shall work over fields. Thus, we let $S=Speck$ for a field $k$.)

\begin{rem}\label[rem]{strong A1 invariance of pin}
     In [\cite{morel2012a1}], Morel proved that, at least over perfect fields, all higher Nisnevich homotopy sheaves of a motivic local space are $\mathbb{A}^1$-invariant (in fact, strongly/strictly!), rendering a further discrete motivic localization unnecessary. Following this, he conjectured that the same would hold even for $\pi_0$. Since $\mathcal{L}$ is a localization for the inclusion $Sh^{\mathbb{A}^1}_{nis}\subset Sh_{nis}$, this conjecture is equivalent to the statement that $$\pi_0^{mot}\simeq {\mathcal{L}\pi_0^{nis}}_{|\mathcal{H}^{mot}}\simeq {\pi_0^{nis}}_{|\mathcal{H}^{mot}}$$ [\cite{morel2012a1}, Conjecture 1.12]. Thus, in the early stages of motivic homotopy theory, no importance was given to the internal truncation functor $L_{\leq 0}^{mot}$ and internal homotopy groups.
 \end{rem}
 
 Though it was known that the above conjecture is very close to being true:
        \begin{lem}
   If $\esc{X}$ is a $k$-motivic space over a perfect field $k$, then $\pi_0^{nis}\esc{X}$ is $\mathbb{A}^1$-invariant if and only if it is weakly unramified if and only if, for any irreducible $X\in Sm_k$, the map $\pi_0^{nis}\esc{X}(X)\to \pi_0^{nis}\esc{X}(k(X))$ is injective. 
\end{lem}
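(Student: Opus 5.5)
We prove the cycle of implications ($\mathbb{A}^1$-invariant) $\Rightarrow$ (weakly unramified) $\Rightarrow$ (injectivity into generic points) $\Rightarrow$ ($\mathbb{A}^1$-invariant). Throughout, write $F:=\pi_0^{nis}\esc{X}$ and evaluate $F$ on essentially smooth $k$-schemes such as $\mathrm{Spec}\,k(X)$ or local schemes by filtered colimits. The key input from Morel [\cite{morel2012a1}] is the following: when $\esc{X}$ is motivic over a perfect field, the sheaf $F$ already satisfies a weak form of unramifiedness. Namely, for $X$ irreducible smooth and $U\subset X$ dense open, the restriction $F(X)\to F(U)$ is injective. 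This is proved via the Gabber presentation lemma applied to the Nisnevich-local, $\mathbb{A}^1$-local space $\esc{X}$: any two sections of $\pi_0$ agreeing on $U$ lift to points of $\esc{X}(X)$ that become $\mathbb{A}^1$-chain-homotopic after a Nisnevich refinement. Together with the fact (\Cref{strong A1 invariance of pin}) that the higher $\pi_i^{nis}$ are strongly $\mathbb{A}^1$-invariant, this is the toolkit.

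\textbf{Equivalence of the last two conditions.} Weak unramifiedness of a Nisnevich sheaf of sets $F$ means that for every irreducible $X\in Sm_k$ and every dense open $U\subset X$, the map $F(X)\to F(U)$ is injective. Passing to the colimit over all dense opens, injectivity for every $U$ gives injectivity of $F(X)\to F(k(X))$. Conversely, suppose $F(X)\to F(k(X))$ is injective, and take $U\subset X$ dense open. Then $k(U)=k(X)$, so the composite $F(X)\to F(U)\to F(k(X))$ is injective, and hence so is $F(X)\to F(U)$. This step is formal.

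\textbf{$\mathbb{A}^1$-invariance $\Rightarrow$ unramified.} Here I would use Morel's result that an $\mathbb{A}^1$-invariant Nisnevich sheaf of sets which is $\pi_0$ of a motivic space is unramified [\cite{morel2012a1}, Lemma 6.1.3 / Remark 1.17]. Concretely, the plan is: given $s,t\in F(X)$ equal on $U$, reduce to $X$ local, hence henselian local at a point of codimension $1$ by induction on codimension. There, Gabber's lemma gives an étale neighbourhood mapping to an open of $\mathbb{A}^1_V$. $\mathbb{A}^1$-invariance of $F$ then allows us to spread the equality along the affine line and conclude $s=t$.

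\textbf{Unramified $\Rightarrow$ $\mathbb{A}^1$-invariant.} We must show that $F(X)\to F(\mathbb{A}^1_X)$ is bijective. Injectivity is clear, since the zero section splits the projection. For surjectivity, let $s\in F(\mathbb{A}^1_X)$. By injectivity into generic points, it suffices to compare $s$ with $p^*0^*s$ in $F(k(\mathbb{A}^1_X))$. Generically, both sections lift to $\esc{X}(\mathbb{A}^1_{k(X)})$ after restricting to the field $k(X)(T)$. Since $\esc{X}$ is $\mathbb{A}^1$-local and henselian local schemes have $\pi_0$ computed sectionwise, the lift over $\mathbb{A}^1_{k(X)}$ is homotopic to its pullback from $k(X)$. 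Pullback via $p$ of the restriction to $0$ then agrees with $s$ on the generic point, and injectivity gives equality on $\mathbb{A}^1_X$.

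The main obstacle is the last step. $\mathbb{A}^1_{k(X)}$ is not local, so lifting a Nisnevich-sheaf section to an actual point of $\esc{X}(\mathbb{A}^1_{k(X)})$ is not automatic. I would first try to use that over a field $\mathbb{A}^1$ has Nisnevich cohomological dimension $1$ and that the obstruction lives in $H^1_{nis}$ with coefficients in $\pi_1^{\mathbb{A}^1}$, which is strongly $\mathbb{A}^1$-invariant. This cohomology vanishes on $\mathbb{A}^1_{k(X)}$, so a lift exists. Failing that, I would cite Morel's unramifiedness theory directly.
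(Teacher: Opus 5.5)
Your formal equivalence between weak unramifiedness and injectivity into $\pi_0^{nis}\esc{X}(k(X))$ is fine. So is your proof that injectivity at generic points implies $\mathbb{A}^1$-invariance: it is a self-contained version of what the paper obtains by citing [\cite{choudhury2014connectivity}, Corollary 3.2 and Lemma 4.14]. One correction there concerns lifting a section of $\pi_0^{nis}\esc{X}$ over $\mathbb{A}^1_K$ to $\pi_0(\esc{X}(\mathbb{A}^1_K))$. This works because the Nisnevich $\infty$-topos of the one-dimensional scheme $\mathbb{A}^1_K$ has homotopy dimension $\leq 1$: the fibre of $\esc{X}\to\tau_{\leq 0}\esc{X}$ over the section is $1$-connective, hence has a global section (compare the paper's use of [\cite{bachmann2024strongly}, Lemma 2.17]). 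It is not an $H^1$-obstruction with coefficients in $\pi_1^{\mathbb{A}^1}$. Obstructions to such a lift sit in degrees $\geq 2$, and neither strong $\mathbb{A}^1$-invariance nor perfectness of $k$ plays any role.

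The genuine gap is the implication ($\mathbb{A}^1$-invariant) $\Rightarrow$ (weakly unramified). You cite a result of Morel and sketch a Gabber-type argument, but for sheaves of sets that sketch cannot work. Two sections agreeing off a closed $Z$ give no information near $Z$, because $\mathbb{A}^1$-invariance only constrains sections defined on all of $\mathbb{A}^1_V$. Without a group structure there is no ``difference'' to spread along the affine line. This is why the known statement, recalled in the paper right after this lemma, only gives a \emph{trivial fibre over the base point}.

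In fact the implication is false as stated. Let $G$ be $\mathbb{G}_m$ with the point $1$ doubled.
For $Y\in Sm_k$, a map $Y\to G$ is the same as a unit $f$ on $Y$ together with a clopen partition of $|V(f-1)|$.
Units on $\mathbb{A}^1_Y$ come from $Y$ (as $Y$ is reduced), and clopen subsets of $\mathbb{A}^1_Z$ are pulled back from $Z$.
Hence the Nisnevich sheaf $h_G$ on $Sm_k$ is $\mathbb{A}^1$-invariant, so it is a discrete motivic space with $\pi_0^{nis}h_G=h_G$.
Yet the two open embeddings $\mathbb{G}_m\rightrightarrows G$ are distinct and agree on the dense open $\mathbb{G}_m\setminus\{1\}$.
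So $h_G$ is not weakly unramified.

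This direction can therefore only hold under extra structure, e.g.\ when $\pi_0^{nis}\esc{X}$ is a sheaf of groups, where trivial fibre means injectivity; that is Choudhury's route for $H$-groups. No citation can supply it in the stated generality, including the paper's one-line appeal to [\cite{bachmann2024strongly}, Corollary 2.4].

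The same example also refutes the ``key input'' in your first paragraph, namely that $\pi_0^{nis}\esc{X}$ is weakly unramified for every motivic $\esc{X}$. Independently, that claim combined with your own last step would make $\pi_0^{nis}$ always $\mathbb{A}^1$-invariant, contradicting Ayoub's counterexamples \cite{ayoubcounterexamples}.
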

\begin{proof}
  If $\pi_0^{nis}\esc{X}$ is $\mathbb{A}^1$-invariant, then it is weakly unramified by [\cite{bachmann2024strongly}, Corollary 2.4].
  
  Conversely, if $\pi_0^{nis}\esc{X}$ is weakly unramified, then its $\mathbb{A}^1$-invariance follows from [\cite{choudhury2014connectivity}, Corollary 3.2 and Lemma 4.14].
\end{proof}
It was known that $\pi_0^{nis}\esc{X}(X)\to \pi_0^{nis}\esc{X}(k(X))$ has a trivial fiber for any pointed motivic space $\esc{X}$. Building on this observation and using the lemma above, Choudhury proved in [\cite{choudhury2014connectivity}] that, for motivic spaces with a minimal amount of group structure, the Nisnevich homotopy path component $\pi_0^{nis}$ is indeed $\mathbb{A}^1$-invariant:
\begin{prop}\label[prop]{pi0 of h space}
       Let $\esc{X}$ be a $k$-motivic $H$-group space (i.e., a group object in the homotopy $1$-category $h\mathcal{H}^{\mathbb{A}^1}(k)$) over a perfect field $k$. Then $\pi_0^{mot}\esc{X}\simeq \pi_0^{nis}\esc{X}$. 
   \end{prop}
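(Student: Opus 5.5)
The plan is to reduce the claim to $\mathbb{A}^1$-invariance of $\pi_0^{nis}\esc{X}$. By \Cref{piimot is Lpiinis} we have $\pi_0^{mot}\esc{X}\simeq \mathcal{L}\pi_0^{nis}\esc{X}$, where $\mathcal{L}$ is a localization onto $Sh^{\mathbb{A}^1}_{nis}$. So if $\pi_0^{nis}\esc{X}$ is already an $\mathbb{A}^1$-invariant Nisnevich sheaf, $\mathcal{L}$ fixes it and the canonical map $\pi_0^{nis}\esc{X}\to\pi_0^{mot}\esc{X}$ is an isomorphism. By the preceding lemma (over a perfect field), $\mathbb{A}^1$-invariance is equivalent to injectivity of the restriction maps $\pi_0^{nis}\esc{X}(X)\to \pi_0^{nis}\esc{X}(k(X))$ for every irreducible $X\in Sm_k$. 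The goal is therefore to prove this injectivity.

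The input is the trivial-fiber statement recalled just before the proposition. For a pointed motivic space $\esc{X}$, the fiber of $\pi_0^{nis}\esc{X}(X)\to\pi_0^{nis}\esc{X}(k(X))$ over the base point is trivial. I would use the $H$-group structure to upgrade trivial kernel to injectivity, as for a group homomorphism. Since $\esc{X}$ is a group object in $h\mathcal{H}^{mot}(k)$, and $\pi_0^{nis}$ preserves finite products (because $L_{nis}$ and $\tau_{\leq 0}$ do), $\pi_0^{nis}\esc{X}$ is a Nisnevich sheaf of groups. The restriction to the generic point is then a group homomorphism: here $\pi_0^{nis}\esc{X}(k(X))$ means the colimit over dense opens, or the value on the essentially smooth scheme $\mathrm{Spec}\,k(X)$, and the group structure passes to it.

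Next, given $a,b\in\pi_0^{nis}\esc{X}(X)$ with the same image, the element $ab^{-1}$ lies in the trivial fiber. Hence $ab^{-1}=e$ and $a=b$. Combined with the lemma and \Cref{piimot is Lpiinis}, this gives the proposition.

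The main obstacle is the trivial-fiber assertion itself. It is quoted rather than proved in the text, and it requires the base point to be the unit of the $H$-group structure. I would take the unit $e:\ast\to\esc{X}$ as the pointing. The trivial-fiber statement is then the known fact (Morel's unramifiedness argument, via Gabber's presentation lemma over a perfect field) that a section over $X$ which becomes the base point generically is Nisnevich-locally the base point. I would cite [\cite{choudhury2014connectivity}] for this. The remaining care is in checking that the group structure on $\pi_0^{nis}$ is compatible with the generic restriction, which is routine naturality.
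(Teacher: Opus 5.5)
Your proposal is correct and is essentially the paper's argument. The paper's proof just cites Choudhury's Theorem 4.18 [\cite{choudhury2014connectivity}] together with \Cref{piimot is Lpiinis}, and your chain of steps (Morel's trivial-kernel statement at the unit, upgraded to injectivity via the group structure on $\pi_0^{nis}\esc{X}$, then the preceding lemma giving $\mathbb{A}^1$-invariance so that $\mathcal{L}$ fixes $\pi_0^{nis}\esc{X}$) is exactly the content of that cited theorem, as the paragraph preceding the statement indicates.
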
 
   \begin{proof}
       This is [\cite{choudhury2014connectivity}, Theorem 4.18] using \Cref{piimot is Lpiinis}.
   \end{proof}
Unfortunately, despite all that, it is now known, thanks to Ayoub, that there are motivic spaces $\esc{X}$ such that $\pi_0^{nis}\esc{X}$ is not $\mathbb{A}^1$-invariant [\cite{ayoubcounterexamples}]. Thus motivic truncation functors $L_{\leq n}^{mot}$ are unavoidable. At best, what is known is:
\begin{prop}\label[prop]{pi0 A1 inv in one vsariable}
    Let the base be a field $k$.  Then for a $k$-motivic space $\esc{X}$, the canonical map $$\pi_0^{\mathbb{A}^1}\esc{X}\simeq \pi_0^{nis}\esc{X}\to \pi_0^{mot}\esc{X}$$ 
    is an isomorphism on $\mathcal{F}_k$ and on $\mathbb{A}^1_{\mathcal{F}_k}$.
\end{prop}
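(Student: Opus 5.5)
Since $\pi_0^{mot}\esc{X}=\mathcal{L}\pi_0^{nis}\esc{X}$ with $\mathcal{L}=\colim_m\mathcal{S}^m$ (\Cref{piimot is Lpiinis}), it suffices to show that each transition map $\mathcal{S}^m\pi_0^{nis}\esc{X}\to\mathcal{S}^{m+1}\pi_0^{nis}\esc{X}$ is bijective on sections over $F$ and over $\mathbb{A}^1_F$, for $F\in\mathcal{F}_k$ a finitely generated field extension of $k$ (sections over $F$ being the colimit over smooth models). Write $F_0:=\pi_0^{nis}\esc{X}$. For $\esc{X}$ motivic, $F_0$ coincides with $\pi_0^{\mathbb{A}^1}\esc{X}$, and I will first establish the base case: the map $F_0\to\mathcal{S}F_0$ is bijective on fields and on $\mathbb{A}^1_F$. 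The induction is then carried by a general statement. For a Nisnevich sheaf $G$ that is ``$\mathbb{A}^1$-invariant on $\mathbb{A}^1_F$'' (meaning $G(F)\to G(\mathbb{A}^1_F)$ is a bijection for all $F$), the map $G\to\mathcal{S}G$ is bijective on $F$ and $\mathbb{A}^1_F$, and $\mathcal{S}G$ again has this property.

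For fields, $\mathcal{S}^{pre}G(F)$ is the coequalizer of $G(\mathbb{A}^1_F)\rightrightarrows G(F)$ given by the evaluations at $0$ and $1$. Nisnevich sheafification does not change stalks at henselian local schemes, and $\mathrm{Spec}\,F$ is such. So $\mathcal{S}G(F)$ is $G(F)$ modulo naive $\mathbb{A}^1$-homotopy. If $G(F)\to G(\mathbb{A}^1_F)$ is bijective, every element of $G(\mathbb{A}^1_F)$ is constant, the two evaluations agree, and $G(F)\cong\mathcal{S}G(F)$. Surjectivity of $G\to\mathcal{S}G$ on any henselian local scheme is automatic.

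For $\mathbb{A}^1_F$, the difficulty is that $\mathbb{A}^1_F$ is not local, so $\mathcal{S}G(\mathbb{A}^1_F)$ involves sheafification. I would use the fact that $\mathbb{A}^1_F$ is a curve: Nisnevich sheaves on one-dimensional smooth schemes are determined by sections on the generic point and at henselizations of closed points, with gluing over the punctured henselizations (which are fields). Using the bijections $G(F)\cong G(\mathbb{A}^1_F)$ and $G(F)\cong\mathcal{S}G(F)$, together with the analogues for the finite residue field extensions $F(x)$ and for $F(t)$, I would show that $\mathcal{S}G(\mathbb{A}^1_F)$ consists of sections constant along the generic point. This gives $\mathcal{S}G(\mathbb{A}^1_F)\cong\mathcal{S}G(F)\cong G(F)\cong G(\mathbb{A}^1_F)$, and in particular the stability of the hypothesis under $\mathcal{S}$. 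Passing to the colimit over $m$ then concludes the proof.

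The base case, namely that $\pi_0^{\mathbb{A}^1}\esc{X}(F)\to\pi_0^{\mathbb{A}^1}\esc{X}(\mathbb{A}^1_F)$ is bijective for motivic $\esc{X}$, is the main obstacle. Injectivity is clear via the zero section. For surjectivity I would use $\mathbb{A}^1$-invariance of $\esc{X}$ itself: $\esc{X}(\mathbb{A}^1_F)\simeq\esc{X}(F)$. The remaining task is to compare $\pi_0$ of sections with Nisnevich $\pi_0$ on $\mathbb{A}^1_F$, i.e.\ to show that every element of $\pi_0^{nis}\esc{X}(\mathbb{A}^1_F)$ lifts to a global section. This is a descent problem, which I would attack with the Nisnevich descent spectral sequence on the one-dimensional scheme $\mathbb{A}^1_F$. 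The obstruction lies in $H^1_{nis}(\mathbb{A}^1_F,\pi_1^{\mathbb{A}^1})$, and it vanishes because $\pi_1^{\mathbb{A}^1}$ is strongly $\mathbb{A}^1$-invariant (Morel, in the perfect case); for imperfect $k$ I would instead cite the known literature on this base case, e.g.\ Choudhury and Balwe--Sawant. If this step proves too delicate, I would fall back on citing [\cite{morel2012a1}, Lemma 6.1.3]-type results giving $\pi_0^{\mathbb{A}^1}(\mathbb{A}^1_F)=\pi_0^{\mathbb{A}^1}(F)$ directly.
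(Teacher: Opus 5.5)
Your proof takes a genuinely different route from the paper, whose proof is a one-line citation of [\cite{choudhury2022characterisation}, Corollary 2.15]. You split the statement into two parts. Part (a) is a motivic input: $\pi_0^{nis}\esc{X}(F)\to\pi_0^{nis}\esc{X}(\mathbb{A}^1_F)$ is bijective for motivic $\esc{X}$. Part (b) is a purely sheaf-theoretic lemma: if a Nisnevich sheaf of sets $G$ satisfies $G(F)\cong G(\mathbb{A}^1_F)$ for all $F\in\mathcal{F}_k$, then $G\to\mathcal{S}G$ is bijective on $F$ and on $\mathbb{A}^1_F$, and $\mathcal{S}G$ has the same property. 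This strategy works, and it gives more than the cited statement. It shows that $G\to\mathcal{L}G$ is bijective on $F$ and $\mathbb{A}^1_F$ for \emph{every} sheaf with this one-variable invariance, and it isolates (a) as the only place where $\esc{X}$ enters. Your field case and the passage to the colimit are fine.

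Your justification of (a), however, is wrong as written. Since $\esc{X}$ is not connected, there is no global $\pi_1^{\mathbb{A}^1}$ to use. Moreover, $H^1_{nis}(\mathbb{A}^1_F,\pi_1)$ parametrizes lifts rather than obstructing them; the obstructions to lifting a section of $\pi_0^{nis}$ live in (nonabelian) $H^2(\pi_1)$ and $H^{n+1}(\pi_n)$. Relying on Morel's strong $\mathbb{A}^1$-invariance would also restrict you to perfect $k$, while the statement is over any field.

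The correct reason is dimensional. The Nisnevich $\infty$-topos of the one-dimensional noetherian scheme $\mathbb{A}^1_F$ has homotopy dimension $\le 1$. Hence the connected fiber of $\esc{X}\to\pi_0^{nis}\esc{X}$ over any $s\in\pi_0^{nis}\esc{X}(\mathbb{A}^1_F)$ has a global section, so $\pi_0(\esc{X}(\mathbb{A}^1_F))\to\pi_0^{nis}\esc{X}(\mathbb{A}^1_F)$ is surjective. This is [\cite{bachmann2024strongly}, Lemma 2.17], the same input the paper uses in the remark after \Cref{motivic 0 truncation is injective}. Combined with $\pi_0(\esc{X}(F))=\pi_0^{nis}\esc{X}(F)$ and $\esc{X}(F)\simeq\esc{X}(\mathbb{A}^1_F)$, this gives (a) over every field.

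The $\mathbb{A}^1_F$ half of (b) also needs to be made precise. Showing the section is constant along the generic point is not enough by itself, because sections of $\mathcal{S}G$ need not be detected at the generic point. The following argument works.

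Take $s\in\mathcal{S}G(\mathbb{A}^1_F)$. On some dense open $U$, represent $s$ by $g\in G(U)$; this is possible because $\mathcal{S}G(F(t))=G(F(t))$. At each of the finitely many closed points $x\notin U$, represent $s$ by some $\tilde t_x\in G(\mathcal{O}^h_x)$; this is possible because sheafification does not change henselian stalks.

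The restrictions of $g$ and $\tilde t_x$ to $K^h_x:=\mathrm{Frac}(\mathcal{O}^h_x)$ agree in $\mathcal{S}G(K^h_x)$. Since $K^h_x$ is a filtered colimit of fields in $\mathcal{F}_k$, your field case shows they already agree in $G(K^h_x)$. Nisnevich gluing on the curve then produces $\hat g\in G(\mathbb{A}^1_F)\cong G(F)$ restricting to $g$ on $U$ and to each $\tilde t_x$. The section $s$ equals the class of $\hat g$, since the two agree on $U$ and at every $x\notin U$. Injectivity follows from the zero section together with $\mathcal{S}G(F)=G(F)$.

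Note that the fields this argument actually uses are $F(t)$ and the $K^h_x$, not the residue fields $F(x)$.
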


\begin{proof}
    This is a restatement of [\cite{choudhury2022characterisation}, Corollary 2.15].
\end{proof}

\begin{rem}
    Since $\tau_{\leq0}^{mot}$ preserves products by \Cref{mot truncation cart}, it follows that $\pi_1^{mot}$ is a group object in $Sh^{\mathbb{A}^1}_k$ and that $\pi_i^{mot}$ is an abelian group object in $Sh_k^{\mathbb{A}^1}$. Although, these observations are immediate when $k$ is perfect:
\end{rem}

  \begin{prop}\label[prop]{pi_i mot=nis}
Let $k$ be a perfect field. Then for all $i\geq 1$ and for every $k$-motivic space $\esc{X}$, there are equivalences $$\pi_i^{mot}\esc{X}\simeq \pi_i^{nis}\esc{X}\simeq \pi_i^{\mathbb{A}^i}\esc{X}.$$      
  \end{prop}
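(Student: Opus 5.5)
By \Cref{piimot is Lpiinis}, for every motivic space $\esc{X}$ and every $i\geq 1$ we have $\pi_i^{mot}\esc{X}\simeq \mathcal{L}\pi_i^{nis}\esc{X}$, where $\mathcal{L}$ is the localization onto $\mathbb{A}^1$-invariant Nisnevich sheaves of sets. Since $\mathcal{L}$ restricts to the identity on $Sh^{\mathbb{A}^1}_{nis}$, the first equivalence $\pi_i^{mot}\esc{X}\simeq \pi_i^{nis}\esc{X}$ reduces to a single claim: for a motivic space $\esc{X}$ over a perfect field $k$ and $i\geq 1$, the Nisnevich sheaf $\pi_i^{nis}\esc{X}$ is $\mathbb{A}^1$-invariant. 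The second equivalence is then only a matter of definitions. Indeed, $\pi_i^{\mathbb{A}^1}:=\pi_i^{nis}L_{mot}$, and $L_{mot}\esc{X}\simeq\esc{X}$ because $\esc{X}$ is already motivic.

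The basepoint needs care. Since $\pi_i$ is taken at a pointing $*\to\esc{X}$, the plan is to pass to the pointed component. Nothing is lost here, because $\pi_i^{nis}$ only depends on the Nisnevich sheaf of components containing the basepoint. Equivalently, we may replace $\esc{X}$ by $\Omega^{i-1}\esc{X}$. This is again motivic, since $\mathcal{H}^{mot}$ is closed under limits in $\mathcal{P}$, and it satisfies $\pi_i^{nis}\esc{X}\simeq\pi_1^{nis}\Omega^{i-1}\esc{X}$. So the claim comes down to $\mathbb{A}^1$-invariance of $\pi_1^{nis}$ of a pointed motivic space.

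For that, I will invoke Morel's theorem [\cite{morel2012a1}, Theorem 6.1 and Corollary 6.2], recalled in \Cref{strong A1 invariance of pin}. It states that over a perfect field, for a pointed $\mathbb{A}^1$-local Nisnevich sheaf of spaces, $\pi_1^{nis}$ is strongly $\mathbb{A}^1$-invariant and $\pi_i^{nis}$ for $i\geq2$ is strictly $\mathbb{A}^1$-invariant. In both cases the sheaf is in particular $\mathbb{A}^1$-invariant, hence lies in $Sh^{\mathbb{A}^1}_{nis}$. As an alternative to citing that theorem directly, I could instead apply \Cref{pi0 of h space} to the motivic $H$-group $\Omega^{i}\esc{X}$. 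This gives $\pi_0^{mot}\Omega^i\esc{X}\simeq\pi_0^{nis}\Omega^i\esc{X}$, and that is exactly the statement $\tau_{\leq0}^{mot}\Omega^i\esc{X}=\pi_i^{nis}\esc{X}$. This second route is attractive because it stays within results already stated in the paper.

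The main obstacle is the perfectness hypothesis together with the application of Morel's theorem (or of Choudhury's result) to components that need not be connected, so I will check that the pointed-component reduction is compatible with the cited statements. Once that is settled, the group structure on both sides comes automatically from $\Omega$. The identification is then an isomorphism of group, respectively abelian group, objects, since $\mathcal{L}$ preserves finite products by \Cref{mot truncation cart}.
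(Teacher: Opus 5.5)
Your proposal is correct and matches the paper's proof. You reduce via \Cref{piimot is Lpiinis} to the $\mathbb{A}^1$-invariance of $\pi_i^{nis}\esc{X}$ for $i\geq 1$, which is Morel's theorem (the paper cites [\cite{morel2012a1}, Corollary 6.3]), and the second equivalence is definitional since $L_{mot}\esc{X}\simeq\esc{X}$. Your worry about the pointed component is unnecessary because Morel's results apply to arbitrary pointed spaces, and your alternative route through \Cref{pi0 of h space} applied to the grouplike object $\Omega^i\esc{X}$ is also valid.
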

\begin{proof}
By the corollary above, this is an immediate consequence of the fact that for every $i\geq 1$ the sheaf $\pi_i^{\mathbb{A}^1}\esc{X}$ is $\mathbb{A}^1$-invariant [\cite{morel2012a1}, Corollary 6.3].  \end{proof}
 \begin{prop}\label[prop]{layers of connected motivic space}
Let $\esc{X}\in  \mathcal{P}_{nis}(k)_{\geq 1}$ be pointed. Then the following conditions are equivalent:
\begin{enumerate}
    \item $\esc{X}\in \mathcal{H}^{mot}(k)$.
    \item $\mathrm{K}_{nis}(\pi_i^{nis}\esc{X},i)\in \mathcal{H}^{mot}(k)$ for all $i$.
    \item $\tau_{\leq i}^{nis}\esc{X}\in \mathcal{H}^{mot}(k)$ for all $i$.
\end{enumerate}
 In this situation, $\tau^{nis}_{\leq i}\esc{X}\simeq \tau^{mot}_{\leq i}\esc{X}$ for all $i\geq 0$.
 \end{prop}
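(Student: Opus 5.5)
The plan is to prove the cycle $(1)\Rightarrow(2)\Rightarrow(3)\Rightarrow(1)$ and then read off the comparison of truncations. Morel's theorems need a perfect field, so I assume $k$ is perfect, as elsewhere in this subsection. The implication $(3)\Rightarrow(1)$ is formal. Since $Spec\,k$ has finite Krull dimension, $\mathcal{P}_{nis}(k)$ is hypercomplete (\Cref{truncated condition}), and it is in fact Postnikov complete, so $\esc{X}\simeq \lim_i \tau^{nis}_{\leq i}\esc{X}$. Since $\mathcal{H}^{mot}(k)\subset\mathcal{P}(k)$ is closed under limits, (3) gives (1).

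For $(1)\Rightarrow(2)$, I would take $\esc{X}$ pointed, connected and motivic. By Morel's results, already used in \Cref{pi_i mot=nis}, $\pi_1^{nis}\esc{X}$ is strongly $\mathbb{A}^1$-invariant and $\pi_i^{nis}\esc{X}$ is strictly $\mathbb{A}^1$-invariant for $i\geq 2$. By Morel's characterisation of strong and strict invariance, $\mathrm{K}_{nis}(G,1)$ is $\mathbb{A}^1$-local exactly when $G$ is strongly $\mathbb{A}^1$-invariant, and $\mathrm{K}_{nis}(A,n)$ is $\mathbb{A}^1$-local for all $n$ exactly when $A$ is strictly $\mathbb{A}^1$-invariant. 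This gives (2), with the case $i=0$ trivial by connectivity.

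For $(2)\Rightarrow(3)$, I would induct along the Nisnevich Postnikov tower. The base case is $\tau^{nis}_{\leq 0}\esc{X}=*$. Since $\tau^{nis}_{\leq i-1}\esc{X}$ is connected, there is a fibre sequence in the $\infty$-topos $\mathcal{P}_{nis}(k)$:
$$\mathrm{K}_{nis}(\pi_i^{nis}\esc{X},i)\to \tau^{nis}_{\leq i}\esc{X}\to \tau^{nis}_{\leq i-1}\esc{X}.$$
The fibre is motivic by (2) and the base is motivic by induction, so I need that the total space is motivic. For $i\geq 2$ this step is easy. The fibration is principal, classified by a $k$-invariant $\tau^{nis}_{\leq i-1}\esc{X}\to \mathrm{K}_{nis}(\pi_i^{nis}\esc{X},i+1)$ twisted by the $\pi_1$-action. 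Hypothesis (2) with $i\geq 2$ forces $\pi_i^{nis}\esc{X}$ to be strictly $\mathbb{A}^1$-invariant, since $\mathrm{K}(A,i)$ local for one $i\geq 2$ already gives $H^{0}$ and $H^1$ (and in fact all $H^j$ by Morel) invariant. Hence the twisted $\mathrm{K}(\pi_i,i+1)$ over $\mathrm{B}\pi_1$ is local, and the total space, being a pullback of local objects, is local.

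The main obstacle is the nonabelian case $i=1$ together with the twisting by the $\pi_1$-action. Here I would instead invoke the general fact that motivic spaces are closed under extensions with connected motivic base. Concretely, an object of $\mathcal{P}_{nis}$ over a connected motivic $\esc{B}$ with motivic fibre is motivic; this is the Morel / Asok–Hoyois–Wendt statement that $L_{mot}$ preserves such fibre sequences. I would try first to deduce it from $L_{mot}$ being locally cartesian, as in \Cref{L covering is covering of L}, applied to the effective epimorphism $*\to\esc{B}$.

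Finally, the comparison of truncations is formal. $\tau^{nis}_{\leq i}\esc{X}$ is motivic and Nisnevich $i$-truncated, so it lies in $\mathcal{H}^{mot}_{\leq i}$ by \Cref{motivic trunc is nis trunc}. The map $\esc{X}\to\tau^{nis}_{\leq i}\esc{X}$ is initial among maps to Nisnevich $i$-truncated objects, and hence among maps to objects of $\mathcal{H}^{mot}_{\leq i}$. By the Yoneda lemma this gives $\tau^{mot}_{\leq i}\esc{X}\simeq \tau^{nis}_{\leq i}\esc{X}$.
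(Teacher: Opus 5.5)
Your proposal is correct and follows essentially the paper's route. It uses Morel's results (his Corollary 6.2) for (1)$\Rightarrow$(2). It inducts up the Nisnevich Postnikov tower, using that $\mathcal{H}^{mot}(k)$ is closed under extensions with connected motivic base (Morel's Lemma 6.51), for (2)$\Rightarrow$(3). It uses Postnikov completeness of $\mathcal{P}_{nis}(k)$ together with closure under limits for (3)$\Rightarrow$(1). Your universal-property argument for $\tau^{mot}_{\leq i}\esc{X}\simeq\tau^{nis}_{\leq i}\esc{X}$ is a cleaner replacement for the paper's stabilization of $\colim_m(\tau^{nis}_{\leq i}L_{\mathbb{A}^1})^m$.

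One correction: the step $i=1$ is not the main obstacle but is trivial. The base $\tau^{nis}_{\leq 0}\esc{X}$ is a point, so $\tau^{nis}_{\leq 1}\esc{X}\simeq\mathrm{K}_{nis}(\pi_1^{nis}\esc{X},1)$, which is motivic by (2). The extension-closure fact is genuinely needed for $i\geq 2$, where the $\pi_1$-twisting lives. Your $k$-invariant shortcut for $i\geq 2$ therefore does not avoid it: it tacitly uses extension closure to know that the twisted $\mathrm{K}(\pi_i^{nis}\esc{X},i+1)$ over $\mathrm{B}\pi_1$ is motivic.
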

\begin{proof}
The equivalence of the three conditions is well known (see [Corollary 6.3 in \cite{morel2012a1}]); we merely repeat it to set the stage. 

$(1)\implies (2)$ follows from [Corollary 6.2, \cite{morel2012a1}].   
 
Suppose (2) holds, i.e., for all $i$ we have $\mathrm{K}_{nis}(\pi_i^{nis}\esc{X},i)\in \mathcal{H}^{mot}(k)$. Since $\esc{X}$ is nisnevich locally connected, we have $\tau_{\leq 0}^{nis}\esc{X}\simeq *\in \mathcal{H}^{mot}(k)$. This proves the base case of $(3)$. Suppose inductively that $\tau_{\leq i}^{nis}\esc{X}\in \mathcal{H}^{mot}(k)$ for some $i$. Consider the fiber sequence $$\mathrm{K}^{nis}(\pi^{nis}_{i+1}(\esc{X}),i+1)\to \tau^{nis}_{\leq i+1}\esc{X}\to\tau^{nis}_{\leq i}\esc{X}$$
Because of (2), we know that the fiber $\mathrm{K}^{nis}(\pi^{nis}_{i+1}(\esc{X}),i+1)$ lies in $\mathcal{H}^{mot}(k)$. By [\cite{morel2012a1}, Lemma 6.51], it then follows that $\tau^{nis}_{\leq i+1}\esc{X}$ lies in $\mathcal{H}^{mot}(k)$. 

Finally, if (3) holds, i.e., $\tau^{nis}_{\leq n}\esc{X}\in \mathcal{H}^{mot}(k)$. By the Postnikov convergence of $\mathcal{P}_{nis}(Sm_k)$, we have $$\esc{X}\simeq  \plim_{n}\tau^{nis}_{\leq n}\esc{X}.$$ Since the inclusion $\esc{H}^{mot}(k)\subset \mathcal{P}_{nis}(Sm_k)$ is closed under limits, it follows that $\plim_{n}\tau^{nis}_{\leq n}\esc{X}\in \mathcal{H}^{mot}(k)$, and consequently, $\esc{X}\in \mathcal{H}^{mot}(k)$, i.e., (1) holds.

Now we come to the final statement. From \cref{formula for motivic truncation} and \Cref{motivic truncation formulae}  we know that $\tau^{mot}_{\leq i}\simeq\colim_{m} (\tau^{nis}_{\leq n}L_{\mathbb{A}^1})^m$. Since $\esc{X}$ is motivic local, it follows that $\tau^{nis}_{\leq i}L_{\mathbb{A}^1}\esc{X}\simeq \tau_{\leq i}^{nis}\esc{X} $, which from the second condition is motivic local as well. Hence for all $n$ we have  $(\tau^{nis}_{\leq n}L_{\mathbb{A}^1})^m\esc{X}\simeq \tau_{\leq n}^{nis}\esc{X}$. Hence, $$\tau^{mot}_{\leq i}\esc{X}\simeq\dcolim_{m} (\tau^{nis}_{\leq n}L_{\mathbb{A}^1})^m\esc{X}\simeq \dcolim_{m}\tau^{nis}_{\leq n}\esc{X}\simeq \tau_{\leq i}^{nis}\esc{X}$$ as claimed.
\end{proof}

\begin{rem}[On the postnicov convergence of the motivic hhomotopy category]
 To the best of my knowledge, the Postnikov convergence of the motivic homotopy category is not yet known. However, it is known that the presentable subcategory of Nisnevich-connected motivic spaces $\tau^{nis}_{\geq 1}\mathcal{H}^{mot}$ is Postnikov convergent, at least over base fields. Indeed, the higher motivic truncations $\tau^{mot}_{\leq n}$ for objects in $\tau^{mot}_{\geq 1}\mathcal{H}^{nis}$ agree with their Nisnevich truncation $\tau_{\leq n}^{nis}$ in this case (see \Cref{layers of connected motivic space}). Since, by closedness under limits and Remark 2.9 of \textit{loc.cit.} (again), $\tau^{nis}_{\geq 1}\mathcal{H}^{mot}$ is Postnikov convergent, we conclude that so is $\tau^{mot}_{\geq 1}\mathcal{H}^{mot}$.
\end{rem}
We now come to one of the most important observations of this paper, namely that the motivic localization functor is path-injective (see \Cref{path injectivity}). As we have seen in the categorical section, \S2, this will have far-reaching consequences.

\begin{lem}\label[lem]{motivic 0 truncation is injective}
  Let $\tau_{\leq 0}^{{nis}}\mathcal{H}^{mot}$ denote the image of $\mathcal{H}^{mot}$ under $\tau_{\leq 0}^{nis}$ (we may call it the category of $\mathbb{A}^1$-homotopy path components). Then the restriction functor $$L_{\leq 0}^{mot}\simeq \mathcal{L}:\tau_{\leq 0}^{{nis}}\mathcal{H}^{mot}\to {Sh_{nis}^{\mathbb{A}^1}}$$ is injective. In other words, the motivic localization is path injective (\Cref{path injectivity}).
\end{lem}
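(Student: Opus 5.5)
The plan is to reduce path-injectivity to a concrete statement. We need: for a motivic space $\esc{X}$ over $k$, $\pi_0^{nis}\esc{X}\simeq *$ if and only if $\pi_0^{mot}\esc{X}\simeq\mathcal{L}\pi_0^{nis}\esc{X}\simeq *$. In other words, we check that $\mathcal{L}$ restricted to $\tau_{\leq 0}^{nis}\mathcal{H}^{mot}$ detects the terminal object, as required by \Cref{path injectivity}.

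The forward implication is formal. If $\pi_0^{nis}\esc{X}=*$, then it is already $\mathbb{A}^1$-invariant, so $\mathcal{L}$ fixes it and $\pi_0^{mot}\esc{X}=*$. This is also the content of the lemma following \Cref{L connected and LC connected}. All the work lies in the converse.

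For the converse, assume $\pi_0^{mot}\esc{X}\simeq *$. The first step evaluates on finitely generated field extensions $F\in\mathcal{F}_k$. By \Cref{pi0 A1 inv in one vsariable}, the canonical map $\pi_0^{nis}\esc{X}(F)\to\pi_0^{mot}\esc{X}(F)=*$ is a bijection. Hence $\pi_0^{nis}\esc{X}(F)$ is a singleton for every such $F$.

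Taking $F=k$, the set $\pi_0^{nis}\esc{X}(k)$ is a singleton. The stalk at $\operatorname{Spec} k$ is just the global sections, so $\esc{X}(k)$ is nonempty. We therefore choose a $k$-point $x:*\to\esc{X}$, which makes $\esc{X}$ a pointed motivic space.

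The second step passes from fields to Nisnevich stalks. It suffices to show that $\pi_0^{nis}\esc{X}(U)=*$ for every henselian local essentially smooth $k$-scheme $U$, obtained as a limit of smooth schemes. Let $F$ be the fraction field of such a $U$. We invoke the unramifiedness input recalled before \Cref{pi0 of h space}: for a pointed motivic space, the restriction map $\pi_0^{nis}\esc{X}(U)\to\pi_0^{nis}\esc{X}(F)$ has trivial fiber over the base point. This is Morel's result, also used in \cite{choudhury2014connectivity}, and it is stated for irreducible smooth schemes. We extend it to essentially smooth henselian local $U$ by writing $U$ as a cofiltered limit of smooth schemes with affine étale transition maps, and using that $\pi_0^{nis}\esc{X}$ commutes with such limits.

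Since $\pi_0^{nis}\esc{X}(F)$ is a single point, the fiber over the base point is all of $\pi_0^{nis}\esc{X}(U)$. Trivial fiber therefore forces $\pi_0^{nis}\esc{X}(U)=*$. We get nonemptiness from the base point, so each stalk is a singleton, and hence $\pi_0^{nis}\esc{X}\simeq *$ as a Nisnevich sheaf.

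I expect the main obstacle to be this second step, namely justifying the trivial-fiber property. The difficulty is to apply it with the correct base point, and at henselian local essentially smooth schemes rather than at smooth schemes. If the continuity argument is delicate, I would instead prove the sheaf is trivial by working on a smooth irreducible $X$ directly. The idea is to show that every section of $\pi_0^{nis}\esc{X}(X)$ Nisnevich-locally agrees with the restricted base point. This follows from the trivial fiber over $k(X)$, applied at the local rings $\mathcal{O}_{X,x}^h$ through the same limit argument.

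A secondary check is that \Cref{pi0 A1 inv in one vsariable} holds over a not-necessarily-perfect field. Since the statement is made over an arbitrary field, I would rely on it as stated.
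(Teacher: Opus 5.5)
Your argument is correct and is essentially the paper's. You use the field-level bijection $\pi_0^{nis}\esc{X}(F)\cong\pi_0^{mot}\esc{X}(F)$ for $F\in\mathcal{F}_k$ (\Cref{pi0 A1 inv in one vsariable}), then Morel's trivial-fiber (unramifiedness) property for pointed motivic spaces to pass from fields to all of $Sm_k$; the paper packages that second step as \cite[Corollary 2.10]{bachmann2024strongly}.

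The henselian detour is unnecessary, and as written it needs an extra continuity argument because $\mathrm{Frac}(\mathcal{O}^h_{X,x})\notin\mathcal{F}_k$. Applying the trivial-fiber statement directly to an irreducible $X\in Sm_k$, whose function field $k(X)$ does lie in $\mathcal{F}_k$, already gives $\pi_0^{nis}\esc{X}(X)=*$.
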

\begin{proof}
    Using the isomorphism on $\mathcal{F}_k$ as in \Cref{pi0 A1 inv in one vsariable}, this follows immediately from [Corollary 2.10, \cite{bachmann2024strongly}].
\end{proof}
\begin{rem}
Here is a complete argument for the benefit of the reader: First, since $\pi_0\esc{X}(K)\simeq
    \pi_0^{nis}\esc{X(K)}$ for every $K\in \mathcal{F}_k$, we want to show that for any $K\in \mathcal{F}_k$ any two $t,t'\in \esc{X}_0(K)$ are connected by a simplicial homotopy in $\esc{X}(K)$. Since $\mathcal{L}\pi_0^{nis}\esc{X}(K)=*$, there is an $n$-ghost homotopy $$H=(\pi_0^{nis}\esc{X}\xleftarrow{h}V\xrightarrow[]{nis} \mathbb{A}^1_K,h^W:W\mathrel{\substack{\xrightarrow{ha} \\ \xrightarrow[hb]{}}} \pi_0^{nis}\esc{X}, \tilde{\sigma_i}),$$ where the last two maps $a$ and $b$ are the following composites (in which the first map is a Nisnevich cover): $$a,b: W\xrightarrow{nis}V\times_{\mathbb{A}^1_K}V\mathrel{\substack{\xrightarrow{pr_1} \\ \xrightarrow[pr_2]{}}} V.$$ Note that since $V$ is a $1$-dimensional scheme, it has homotopy dimension 1, and $\esc{X}$ has Nisnevich descent. By [\cite{bachmann2024strongly}, Lemma 2.17], we have a surjection $[V,\esc{X}]\to [V, \pi_0^{nis}\esc{X}]$. Therefore, up to homotopy, there is a map $u: V\to \esc{X}$ descending to $h$.

Now, we claim that for every $K\in \mathcal{F}_k$ and any two sections $ t,t'$, as above, are simplicially homotopic. We prove this by induction on $n$. As in [\cite{10.2140/akt.2022.7.385}, Theorem 2.2], we may assume that $V$ is an elementary Nisnevich cover $V=V_1\sqcup V_2$ and let $h_i=h_{|V_i}$ and $u_i=u_{|V_i}$, and denote $W'=V_1\times _{\mathbb{A}^1_K}V_2$. Let $W'=\cup W'_j$ be the irreducible components of $W'$, and let $\eta _j$ be the corresponding generic points. We deduce that $H'_{\eta_j}$ is an $(n-1) $ ghost homotopy of ${(u_1)}_{\eta_j}$ and $(u_2)_{\eta_j}$. By the induction hypothesis, this implies that $(u_1)_{\eta_j}$ and $(u_2)_{\eta_j} $ are simplicially homotopic in $\esc{X}$. By definition, these simplicial homotopies are defined over an open subset $W ''\subset  W '$. Consequently, $(u_1)_{|W''}$ and $(u_2)_{|W''}$ are simplicially homotopic in $\esc{X}$. Moreover, by further shrinking $V_2$, we may assume that actually $ W''= V_1 \times _{\mathbb{A}^1_K}V_2$. 

Therefore, we may assume that $u_1:V_1\to \esc{X}$ and $u_2: V_2\to \esc{X}$ agree on $V_1\times_{\mathbb{A}^1_k}V_2$ up to a simplicial homotopy. Since $V=V_1\sqcup V_2\to \mathbb{A}^1_K$ is a Nisnevich cover and $\esc{X}$ is a Nisnevich $\infty$-sheaf, we may therefore find a lift of $u: V\to \esc{X}$ to $u': \mathbb{A}^1_K\to \esc{X}$. But since $\esc{X}$ is $\mathbb{A}^1$-local, this yields a simplicial homotopy between the endpoints, namely $t$ and $t'$. 

The base case is easy. Indeed, a $0$-ghost homotopy from $t$ to $t'$ in $\pi_0^{nis}\esc{X}$ is given by an explicit map $\mathbb{A}^1_K\to \pi_0^{nis}\esc{X}$, which, by appeal to the same homotopy dimension argument [\cite{bachmann2024strongly}, Lemma 2.17], comes from $[\mathbb{A}^1_K, \esc{X}]$ and, by the $\mathbb{A}^1$-locality of $\esc{X}$, is nothing but a simplicial homotopy.

\end{rem}
\subsection{Motivic Eilenberg Maclane spaces}
Let $L_{\mathbb{A}^1}Grp_k$ and $L_{\mathbb{A}^1}Ab_k$ denote the full subcategories of $Sh^{nis}_k$ consisting of $\mathbb{A}^1$-invariant sheaves of groups and abelian groups, respectively. Both are full subcategories of $Sh^{\mathbb{A}^1}_k$. As introduced by Morel in his textbook [\cite{morel2012a1}], $Grp_k^{\mathbb{A}^1}$ denotes the full subcategory of $L_{\mathbb{A}^1}Grp_k$ consisting of strongly $\mathbb{A}^1$-invariant sheaves of groups, while $Ab_k^{\mathbb{A}^1}$ is the full subcategory consisting of strictly $\mathbb{A}^1$-invariant sheaves of abelian groups. In the rest of section \S3, we will require the base to be a field $k$ that we don't assume to be perfect unless otherwise specified.

As we did in the previous subsection, we shall restate the definitions of \S3.2 for the motivic localization. 

\begin{defn}\label[defn]{motivically n connective}
A motivic space $\esc{X}$ is motivically $1$-connective (equivalently, $0$-connected) if $\pi_0^{mot}\esc{X}$ is trivial. We say that a pointed motivic space $x: *\to \esc{X}$ is motivically $(n+1)$-connective (or $n$-connected) for $n> 0$ if, in addition, $\pi_i^{mot}(\esc{X},x )\simeq *$ for all $i\leq n$. We denote the full subcategory of $n$-connective objects of $\mathcal{H}^{mot}(k)$ by $\tau_{\geq n}^{mot}\mathcal{H}^{mot}(k)$.
\end{defn} 
\begin{thm}\label{nis connective vs mot connective of motivic spaces}
    For every $n\geq 1$ and $k$ a field (assumed perfect when $n\geq 2$), $$\tau_{\geq n}^{mot}\mathcal{H}^{mot}(k)\simeq \tau_{\geq n}^{nis}\mathcal{H}^{mot}(k).$$ 
\end{thm}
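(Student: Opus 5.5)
We have to show that a motivic space $\esc{X}$ is motivically $n$-connective exactly when it is Nisnevich-locally $n$-connective. The plan is to treat $\pi_0$ and the higher homotopy groups separately, using \Cref{em of localization} for the first and \Cref{piimot is Lpiinis} together with \Cref{pi_i mot=nis} for the second.

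\textbf{Degree $0$ (the case $n=1$).} By \Cref{motivic 0 truncation is injective}, $L_{mot}$ is path injective in the sense of \Cref{path injectivity}. Hence \Cref{em of localization} applies to the $\infty$-topos $\mathcal{C}=\mathcal{P}_{nis}(k)$ with $L=L_{mot}$, and gives
$$(\mathcal{H}^{mot})_{\geq n}=\mathcal{H}^{mot}\cap \mathcal{P}_{nis}(k)_{\geq n}$$
for all $n$. For $n=1$ this says directly that $\pi_0^{mot}\esc{X}=*$ if and only if $\pi_0^{nis}\esc{X}=*$ for $\esc{X}\in\mathcal{H}^{mot}(k)$. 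No perfectness is needed here, since \Cref{motivic 0 truncation is injective} holds over any field.

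\textbf{Higher degrees ($n\geq 2$).} Rather than relying only on the abstract proposition, I would check each homotopy group directly. By \Cref{piimot is Lpiinis}, $\pi_i^{mot}\esc{X}\simeq \mathcal{L}\pi_i^{nis}\esc{X}$. When $k$ is perfect and $i\geq 1$, \Cref{pi_i mot=nis} identifies this with $\pi_i^{nis}\esc{X}$ itself, because Morel's theorem makes $\pi_i^{\mathbb{A}^1}$ of a motivic space strongly $\mathbb{A}^1$-invariant, so $\mathcal{L}$ acts as the identity on it. Therefore, for $1\le i\le n-1$, the vanishing of $\pi_i^{mot}(\esc{X},x)$ is equivalent to the vanishing of $\pi_i^{nis}(\esc{X},x)$. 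Combining this with the degree-$0$ step gives the claimed equality of full subcategories.

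\textbf{Main obstacle.} The delicate point is the bookkeeping between two different notions of connectivity. In the Nisnevich topos, $\tau_{\geq n}^{nis}$ is defined through the local homotopy sheaves $\pi_i^{nis}$ at all local base points, as in \Cref{em in infinity topos}. The motivic notion in \Cref{motivically n connective} only refers to a single global base point $x$. To bridge them, I would use that once $\esc{X}$ is Nisnevich (equivalently, by the first step, motivically) connected, the Nisnevich homotopy sheaves can be computed from any one global point. This is because $x:*\to\esc{X}$ is then an effective epimorphism, and [\cite{lurie2009higher}, Proposition 6.2.3.17] allows us to detect vanishing after pulling back along $x$. With that identification in place, both notions reduce to conditions on $\pi_i(\esc{X},x)$ for the same base point $x$, and the argument above applies.
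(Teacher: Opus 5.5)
Your proposal is correct and follows essentially the same route as the paper, which obtains $n=1$ from the path-injectivity of $L_{mot}$ (\Cref{motivic 0 truncation is injective}) and the cases $n\geq 2$ from \Cref{pi_i mot=nis}; your base-point discussion just makes explicit a reduction the paper leaves implicit. Note also that the equality you quote from \Cref{em of localization} for all $n$ already covers the higher cases by itself, since path-injectivity is applied there to the motivic spaces $\Omega^i\esc{X}$, so your direct check using perfectness is a redundant second route rather than a necessary step.
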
 
\begin{proof}
    When $n=1$, this is \Cref{motivic 0 truncation is injective}. With this, the $n \geq 2$, the cases follow immediately from \Cref{pi_i mot=nis}.
\end{proof}

\begin{defn}
 Let $n\geq 1$. A Nisnevich sheaf of $n$-groups (i.e., of groups when $n=1$ and of abelian groups when $n\geq 2$) $G$ is said to be $n$-strictly $\mathbb{A}^1$-invariant if $\mathrm{B}^n_{nis}G$ is $\mathbb{A}^1$-local. We denote the full subcategory of $n-Grp^{nis}_k$ consisting of $n$-strictly $\mathbb{A}^1$-invariant sheaves by $n\mbox{-}Grp^{L_{mot}}$.
\end{defn}  
 \begin{rem}\label[rem]{strict local is local}
  Since $\Omega^n$ preserves local objects, it follows from the equivalence $G\simeq \Omega ^n\mathrm{B}_{nis}^nG$ that $n$-strictly $\mathbb{A}^1$-invariant sheaves are $\mathbb{A}^1$-invariant.
 \end{rem}
\begin{rem}
This is equivalent to saying that for all $i\leq n$, the presheaves $H^i_{nis}(-,G)$ are $\mathbb{A}^1$-local.
\end{rem}
\begin{rem}
    Clearly, $1$-strictly $\mathbb{A}^1$-invariant sheaves are precisely the strongly $\mathbb{A}^1$-invariant sheaves in the sense of [\cite{morel2012a1}, Definition 7 (2)]. In other words, $1\mbox{-}Grp^{L_{mot}}=Grp^{\mathbb{A}^1}_k$. Whereas $G$ is strictly $\mathbb{A}^1$-invariant in the sense of [\cite{morel2012a1}, Definition 7 (3)] if and only if it is $n$-strictly $\mathbb{A}^1$-invariant for all $n\geq 1$. 
\end{rem}
\begin{lem}\label[lem]{n-strict groups}
 Let $k$ be a perfect field. Then, for all $ n\geq 2$ $$n\mbox{-}Grp_{L_{mot}}=Ab^{\mathbb{A}^1}_k.$$
\end{lem}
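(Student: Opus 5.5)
We have to show that, for $n\geq 2$, a Nisnevich sheaf of abelian groups $G$ over a perfect field is $n$-strictly $\mathbb{A}^1$-invariant exactly when it is strictly $\mathbb{A}^1$-invariant. Recall that $n$-strict invariance means $\mathrm{B}^n_{nis}G=\mathrm{K}_{nis}(G,n)$ is $\mathbb{A}^1$-local. Equivalently, the presheaves $H^i_{nis}(-,G)$ are $\mathbb{A}^1$-invariant for all $i\leq n$, since $\pi_{n-i}\mathrm{Map}(X,\mathrm{K}(G,n))=H^i_{nis}(X,G)$. Strict invariance asks this for all $i$. The inclusion $Ab^{\mathbb{A}^1}_k\subset n\mbox{-}Grp_{L_{mot}}$ is therefore immediate from the last remark before the lemma: a strictly $\mathbb{A}^1$-invariant sheaf is $n$-strictly invariant for every $n$. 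All the work is in the reverse inclusion.

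For the reverse inclusion, let $G$ be $n$-strictly $\mathbb{A}^1$-invariant with $n\geq 2$. The plan is to exhibit $G$ as $\pi_n^{\mathbb{A}^1}$ of a motivic space and then invoke Morel.

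\begin{enumerate}
\item Set $\esc{X}:=\mathrm{K}_{nis}(G,n)$, which is motivic by hypothesis.
\item Form $\Omega^{n-1}\esc{X}\simeq \mathrm{K}_{nis}(G,1)$. Since $\Omega$ preserves $L_{mot}$-local objects, $\mathrm{B}_{nis}G$ is $\mathbb{A}^1$-local, so $G$ is strongly $\mathbb{A}^1$-invariant. As $G$ is abelian, $G\in Grp^{\mathbb{A}^1}_k$ with abelian values.
\item Apply Morel's theorem that a strongly $\mathbb{A}^1$-invariant sheaf of abelian groups over a perfect field is strictly $\mathbb{A}^1$-invariant ([\cite{morel2012a1}, Theorem 5.46]). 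This gives $G\in Ab^{\mathbb{A}^1}_k$.
\end{enumerate}

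Alternatively, one can bypass naming Theorem 5.46 and use Corollary 6.2 of \cite{morel2012a1}, as already quoted in \Cref{layers of connected motivic space}. Since $\esc{X}$ is a pointed, Nisnevich-connected motivic space, each $\mathrm{K}_{nis}(\pi_i^{nis}\esc{X},i)$ lies in $\mathcal{H}^{mot}(k)$, and for $i\geq 2$ the sheaf $\pi_i^{\mathbb{A}^1}\esc{X}$ is strictly $\mathbb{A}^1$-invariant by [\cite{morel2012a1}, Corollary 6.3]. Taking $i=n$ gives $\pi_n^{nis}\esc{X}=G$, so $G$ is strictly $\mathbb{A}^1$-invariant.

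The main obstacle is this second inclusion, which is a genuinely deep input: passing from $\mathbb{A}^1$-invariance of low cohomology to $\mathbb{A}^1$-invariance of all cohomology. This needs perfectness of $k$ and cannot be obtained formally. The only thing to check is that the cited form of Morel's theorem matches the present definitions. Strong invariance here is $\mathrm{B}_{nis}G$ being $\mathbb{A}^1$-local, which agrees with Morel's definition via $H^0$ and $H^1$, as recorded in the remark above. The rest is formal bookkeeping with loop spaces of local objects.
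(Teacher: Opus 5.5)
Your proposal is correct and follows the paper's argument: an $n$-strictly $\mathbb{A}^1$-invariant sheaf is in particular a strongly $\mathbb{A}^1$-invariant sheaf of abelian groups (your loop-space step), and over a perfect field Morel's theorem that such sheaves are strictly $\mathbb{A}^1$-invariant finishes the proof. The paper cites that theorem under a different number (4.46 rather than 5.46), and you additionally spell out the easy inclusion $Ab^{\mathbb{A}^1}_k\subset n\mbox{-}Grp_{L_{mot}}$, which the paper leaves implicit in the preceding remark.
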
 
\begin{proof}
    First, observe that for every $n\geq 2$, an object of $n\mbox{-}Grp^{L_{mot}}(\mathcal{H}^{mot}(k))$ is, in particular, a strongly $\mathbb{A}^1$-invariant sheaf of abelian groups. By [\cite{morel2012a1}, Theorem 4.46], these are strictly $\mathbb{A}^1$-invariant when $k$ is perfect.
\end{proof}  

\begin{defn}
 For $n\geq 0$, a motivic space $\esc{X}$ is an $n$-Eilenberg-MacLane space if it is motivically $n$-truncated (see \Cref{motivically n truncated space}) and motivically $n$-connective. 
\end{defn} 
 \begin{lem}\label[lem]{motivic em vs nis em}
  For every $n\geq 0$ and every field $k$, there is an equality  \begin{flalign}
\mathcal{EM}_n(\mathcal{H}^{mot}(S))=L_{\mathbb{A}^1}\mathcal{P}(S)\bigcap \mathcal{EM}_{n}(\mathcal{P}^{nis}).\end{flalign}
 \end{lem}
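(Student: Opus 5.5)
The plan is to deduce the equality from \Cref{em of localization}, applied not to the localization $L_{mot}:\mathcal{P}\to\mathcal{H}^{mot}$ but to the localization $L_{mot}$ of the $\infty$-topos $\mathcal{C}=\mathcal{P}_{nis}(k)$. Since $\mathcal{H}^{mot}(k)\subset\mathcal{P}_{nis}(k)$ is an accessible localization, \Cref{em of localization} gives
\[
\mathcal{EM}_n(\mathcal{H}^{mot}(k))=\mathcal{H}^{mot}(k)\cap\mathcal{EM}_n(\mathcal{P}_{nis}(k)),
\]
provided this localization is path-injective. Since $\mathcal{H}^{mot}=L_{\mathbb{A}^1}\mathcal{P}\cap\mathcal{P}_{nis}$ and $\mathcal{EM}_n(\mathcal{P}_{nis})\subset\mathcal{P}_{nis}$, the right-hand side equals $L_{\mathbb{A}^1}\mathcal{P}\cap\mathcal{EM}_n(\mathcal{P}^{nis})$, which is the claimed formula.

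The substantive input is path-injectivity. By definition this means that for $\esc{X}$ in $\mathcal{H}^{mot}$, $\tau_{\leq 0}^{mot}\esc{X}=\mathcal{L}\pi_0^{nis}\esc{X}\simeq *$ forces $\pi_0^{nis}\esc{X}\simeq *$. This is exactly \Cref{motivic 0 truncation is injective}, which holds over an arbitrary field. Its proof rests on \Cref{pi0 A1 inv in one vsariable} and [\cite{bachmann2024strongly}, Corollary 2.10], neither of which requires perfectness.

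One could instead unfold the argument directly, and this also shows why no perfectness is needed. Truncatedness costs nothing: \Cref{truncation of localization} (equivalently \Cref{motivic trunc is nis trunc}) gives $\mathcal{H}^{mot}_{\leq n}=\mathcal{H}^{mot}\cap(\mathcal{P}_{nis})_{\leq n}$. For connectivity, the inclusion $\mathcal{H}^{mot}\cap(\mathcal{P}_{nis})_{\geq n}\subset\mathcal{H}^{mot}_{\geq n}$ is \Cref{connective em objects of ordinary localization}. The converse is where path-injectivity enters, as in the proof of \Cref{em of localization}.
\begin{itemize}
\item In degree $0$, \Cref{motivic 0 truncation is injective} handles $\pi_0$.
\item For $0<i<n$, the vanishing of $\pi_i^{mot}=\mathcal{L}\pi_i^{nis}$ (\Cref{piimot is Lpiinis}) must force $\pi_i^{nis}=*$.
\end{itemize}

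The step I expect to be the main obstacle is the second item, for $i\geq 1$ when $k$ is not perfect, where \Cref{pi_i mot=nis} is unavailable. I would handle it as follows. The sheaf $\pi_i^{nis}\esc{X}$ is $\pi_0^{nis}\Omega^i\esc{X}$, and $\Omega^i\esc{X}$ is again motivic, since $\mathcal{H}^{mot}$ is closed under limits. Moreover, $\mathcal{L}\pi_0^{nis}\Omega^i\esc{X}=\tau_{\leq 0}^{mot}\Omega^i\esc{X}$. So the vanishing in degree $i$ reduces to applying \Cref{motivic 0 truncation is injective} to the motivic space $\Omega^i\esc{X}$. This is precisely what the general argument of \Cref{em of localization} does, so invoking that proposition directly suffices.
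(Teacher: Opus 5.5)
Your proposal is correct and is the argument the paper intends. The paper states this lemma without proof, as the motivic instance of \Cref{em of localization} applied to $L_{mot}$ on the $\infty$-topos $\mathcal{P}_{nis}(k)$, with path-injectivity supplied by \Cref{motivic 0 truncation is injective}. Your observation that applying path-injectivity to the motivic spaces $\Omega^i\esc{X}$ bypasses \Cref{pi_i mot=nis}, and hence the perfectness that \Cref{nis connective vs mot connective of motivic spaces} assumes for $n\geq 2$, is exactly what justifies the lemma's ``every field'' hypothesis.
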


\begin{lem}
    Let $k$ be a perfect field. Then there is an adjunction:
    $$\mathrm{B}^{}_{\mathcal{H}^{mot}(k)}:Grp (\mathcal{H}^{mot}(k))\leftrightarrows \mathcal{EM}_{1}(\mathcal{H}^{mot}): \pi_1^{mot}$$ and for every $n\geq 2$ an adjunction:
    $$\mathrm{B}^{n}_{\mathcal{H}^{mot}(k)}:Ab(\mathcal{H}^{mot}(k))\leftrightarrows \mathcal{EM}_{n}(\mathcal{H}^{mot}): \pi_n^{mot}$$ 
\end{lem}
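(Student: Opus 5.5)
We need to prove that, over a perfect field $k$, there are adjunctions $\mathrm{B}_{\mathcal{H}^{mot}(k)}\dashv \pi_1^{mot}$ between $Grp(\mathcal{H}^{mot}(k))$ and $\mathcal{EM}_1(\mathcal{H}^{mot})$, and similarly $\mathrm{B}^n_{\mathcal{H}^{mot}(k)}\dashv\pi_n^{mot}$ for $n\geq 2$ with abelian group objects.

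The plan is to obtain each adjunction by restricting the internal bar--loop adjunction $\mathrm{B}_{\mathcal{C}}\dashv\Omega$ of \Cref{bar loop adjunction} (restricted to group objects) for $\mathcal{C}=\mathcal{H}^{mot}(k)$, and then post-composing with the motivic truncation. We read $Grp(\mathcal{H}^{mot}(k))$ on the left as the group objects that are discrete, i.e.\ $Grp(\mathrm{Disc}(\mathcal{H}^{mot}))$; these are $\mathbb{A}^1$-invariant Nisnevich sheaves of groups. Otherwise $\pi_1^{mot}$ does not land in it, since $\pi_1^{mot}$ is discrete by \Cref{LC homotopy group}.

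\textbf{Step 1: the left adjoint lands in $\mathcal{EM}_1$.} For $G$ a discrete motivic group, set $\mathrm{B}^{mot}G:=\tau^{mot}_{\leq 1}\mathrm{B}_{\mathcal{H}^{mot}}G$. The object $\mathrm{B}_{\mathcal{H}^{mot}}G=L_{mot}\mathrm{B}_{nis}G$ is connected: $\pi_0^{mot}$ commutes with colimits, and $\pi_0^{mot}$ of the simplicial diagram $G^{\times\bullet}$ has colimit $*$. Its truncation is therefore $1$-truncated and connected. By \Cref{nis connective vs mot connective of motivic spaces}, the $n=1$ case (which uses path injectivity, \Cref{motivic 0 truncation is injective}), it is $1$-connective. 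Hence it lies in $\mathcal{EM}_1(\mathcal{H}^{mot})$. If one prefers to use $\mathrm{B}_{\mathcal{H}^{mot}}$ literally, as written in the statement, Step 2 below works verbatim, because maps into $1$-truncated objects factor through $\tau^{mot}_{\leq 1}$.

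\textbf{Step 2: the adjunction equivalence.} For $\esc{X}\in\mathcal{EM}_1(\mathcal{H}^{mot})$ we compute
\[
\mathrm{Map}(\tau^{mot}_{\leq1}\mathrm{B}G,\esc{X})\simeq \mathrm{Map}_{\bullet}(\mathrm{B}G,\esc{X})\simeq \mathrm{Map}_{Grp}(G,\Omega\esc{X}).
\]
The first equivalence is the truncation adjunction, the second is \Cref{bar loop adjunction}. The object $\Omega\esc{X}$ is $0$-truncated by \Cref{omega reduce trancation level}, so $\Omega\esc{X}\simeq\tau^{mot}_{\leq0}\Omega\esc{X}=\pi_1^{mot}\esc{X}$ as group objects. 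This gives $\mathrm{B}^{mot}\dashv\pi_1^{mot}$.

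Two subtleties need attention. First, $\mathcal{EM}_1$ consists of pointed objects, so we take mapping spaces in pointed objects. Second, because $\esc{X}$ is connected, the choice of basepoint is harmless: it is unique up to the (non-canonical) data already encoded in the pointed category. By \Cref{pi_i mot=nis} we may identify $\pi_1^{mot}=\pi_1^{nis}$. Perfectness of $k$ enters only here, to ensure the group structures agree with Morel's.

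\textbf{Step 3: the case $n\geq2$.} We iterate. Define $\mathrm{B}^n$ as $\tau^{mot}_{\leq n}$ of the $n$-fold bar construction of an abelian (hence $E_\infty$) discrete group $A$. Connectivity up to degree $n-1$ follows from path injectivity together with \Cref{nis connective vs mot connective of motivic spaces}; this is where perfectness is required. The adjunction equivalence comes from iterating the loop/bar adjunction $n$ times on $E_n$-group objects (Dunn additivity), using $\Omega^n\esc{X}$ discrete and Eckmann--Hilton to identify $n\mbox{-}Grp(\mathrm{Disc})$ with $Ab(\mathrm{Disc})$.

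The main obstacle is the connectivity claim in Steps 1 and 3: that the motivic bar construction has vanishing internal $\pi_i^{mot}$ for $i<n$. Here the $n$-fold bar construction need not be Nisnevich-locally an Eilenberg--MacLane object, so we must rely on the commutation of $\tau^{mot}_{\leq 0}$ with colimits, together with \Cref{pi_i mot=nis} for the intermediate loop spaces.
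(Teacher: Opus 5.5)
Your $n=1$ argument is sound. It needs neither path injectivity nor perfectness. Being $1$-connective in $\mathcal{H}^{mot}$ just means $\pi_0^{mot}=*$, and you get this from $\tau^{mot}_{\leq 0}$ commuting with colimits together with $\tau^{mot}_{\leq 0}\tau^{mot}_{\leq 1}\simeq\tau^{mot}_{\leq 0}$. The pointed hom-equivalence $\mathrm{Map}_\bullet(\tau^{mot}_{\leq 1}\mathrm{B}G,\esc{X})\simeq\mathrm{Map}_{Grp}(G,\Omega\esc{X})$, with $\Omega\esc{X}$ discrete, then finishes it.

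Restricting to discrete group objects and inserting $\tau^{mot}_{\leq 1}$ is a real improvement on the paper. The paper's proof only records $\mathrm{B}_{\mathcal{H}^{mot}(k)}\simeq L_{mot}\mathrm{B}$ and $\pi_n^{mot}\simeq\pi_n^{nis}$, i.e.\ it transports the Nisnevich bar--loop adjunction through $L_{mot}$. It never checks that $L_{mot}\mathrm{B}^n$ lands in $\mathcal{EM}_n$. For the same reason, your aside that Step 2 ``works verbatim'' for the literal $\mathrm{B}_{\mathcal{H}^{mot}}$ gives only a hom-equivalence, not an adjunction with target $\mathcal{EM}_1$. Nothing makes $L_{mot}\mathrm{B}_{nis}G$ $1$-truncated when $G$ is not strongly $\mathbb{A}^1$-invariant.

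The genuine gap is the connectivity claim in Step 3.
\begin{itemize}
\item Path injectivity and \Cref{nis connective vs mot connective of motivic spaces} only say that, \emph{for a motivic space}, motivic and Nisnevich $n$-connectivity agree. They say nothing about whether $L_{mot}\mathrm{K}_{nis}(A,n)$ is $(n-1)$-connected in either sense.
\item Your proposed substitute does not close this. Commutation of $\tau^{mot}_{\leq 0}$ with colimits controls only $\pi_0^{mot}$. For $1\leq i\leq n-1$ one has $\pi_i^{mot}=\tau^{mot}_{\leq 0}\Omega^i$, and $\Omega^i$ does not commute with the geometric realizations defining $\mathrm{B}^n$.
\item You also skip a second point: outside an $\infty$-topos, $\tau^{mot}_{\leq n}$ is not a priori known to preserve $n$-connectivity.
\end{itemize}

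The missing input is Morel's unstable $\mathbb{A}^1$-connectivity theorem [\cite{morel2012a1}, Theorem 6.38]. Over a perfect field, $L_{mot}$ preserves Nisnevich $(n-1)$-connectedness, so $L_{mot}\mathrm{K}_{nis}(A,n)$ is Nisnevich $(n-1)$-connected. Then \Cref{layers of connected motivic space} gives $\tau^{mot}_{\leq n}L_{mot}\mathrm{K}_{nis}(A,n)\simeq\tau^{nis}_{\leq n}L_{mot}\mathrm{K}_{nis}(A,n)$. This is a motivic, Nisnevich $n$-Eilenberg--MacLane object, so it lies in $\mathcal{EM}_n(\mathcal{H}^{mot}(k))$ by \Cref{motivic em vs nis em}.

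With this in place, your iterated bar--loop hom-equivalence, the discreteness of $\Omega^n\esc{X}$, and Eckmann--Hilton complete the $n\geq 2$ case. The paper's two-line proof needs exactly the same connectivity input, and the truncation, but does not state either.
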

\begin{proof}
    This follows from the facts that $\mathrm{B}_{\mathcal{H}^{mot}(k)}\simeq L_{mot}\mathrm{B}$ and that, for every $n\geq 1$, \Cref{pi_i mot=nis} gives $\pi_n^{mot}\simeq {\pi_i^{nis}}_{|\mathcal{H}^{mot}(k)}$.
\end{proof}
\begin{thm}\label{motivic em}
Let $k$ be a perfect field. Then the adjunction in the above lemma 
induces equivalences of categories: $$Sh_{*,k}^{\mathbb{A}^1}\simeq \mathcal{EM}_0(\mathcal{H}^{mot}(k)) \text{ and  }Grp_k^{\mathbb{A}^1}\simeq \mathcal{EM}_1(\mathcal{H}^{mot}(k)): \pi_1^{mot}$$ and for all $n\geq 2$ $$Ab_k^{\mathbb{A}^1}\simeq \mathcal{EM}_n(\mathcal{H}^{mot}(k)):\pi_n^{mot}.$$
\end{thm}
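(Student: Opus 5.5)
The plan is to deduce the statement from the general categorical result \Cref{grp em}, applied to the localization $L=L_{mot}$ of the $\infty$-topos $\mathcal{C}=\mathcal{P}_{nis}(k)$. The hypothesis of \Cref{grp em} is path-injectivity, which is exactly \Cref{motivic 0 truncation is injective}. \Cref{grp em} then gives, for every $n\geq 0$, an equivalence
$$\mathrm{B}^n_{\mathcal{H}^{mot}}: n\mbox{-}Grp^{L_{mot}}\simeq \mathcal{EM}_n(\mathcal{H}^{mot}(k)):\pi_n^{mot},$$
and on $\mathcal{EM}_n(\mathcal{H}^{mot})$ it identifies $\pi_n^{mot}$ with $\pi_n^{nis}$. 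By \Cref{motivic em vs nis em} (or \Cref{em of localization}), these EM objects are precisely the Nisnevich $n$-EM objects that happen to be $\mathbb{A}^1$-local. So the remaining work is to identify $n\mbox{-}Grp^{L_{mot}}$ in each range of $n$.

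\textbf{Case $n=0$.} A $0$-EM object is a motivic space that is discrete and pointed (connectivity is vacuous beyond $\pi_0$ being pointed). By \Cref{motivic trunc is nis trunc}, such objects are exactly pointed $\mathbb{A}^1$-invariant Nisnevich sheaves of sets, i.e. $Sh^{\mathbb{A}^1}_{*,k}$. The equivalence is then just the identification $\mathrm{Disc}(\mathcal{H}^{mot})=Sh^{\mathbb{A}^1}_{nis}$ recorded after \Cref{mot truncation cart}.

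\textbf{Case $n=1$.} Here $1\mbox{-}Grp^{L_{mot}}$ consists of Nisnevich sheaves of groups $G$ with $\mathrm{B}_{nis}G$ $\mathbb{A}^1$-local. By the remark following \Cref{strict local is local}, this is by definition $Grp^{\mathbb{A}^1}_k$ (Morel's strongly $\mathbb{A}^1$-invariant sheaves). We also need the image of $\pi_1^{mot}$ restricted to $\mathcal{EM}_1$ to land there. For $\esc{X}\in\mathcal{EM}_1(\mathcal{H}^{mot})$, \Cref{em of localization} gives $\esc{X}\simeq \mathrm{B}_{nis}\pi_1^{nis}\esc{X}$ with $\esc{X}$ motivic, so $\pi_1^{nis}\esc{X}$ is strongly $\mathbb{A}^1$-invariant. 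By \Cref{pi_i mot=nis} (perfectness used), $\pi_1^{nis}\esc{X}=\pi_1^{mot}\esc{X}$.

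\textbf{Case $n\geq 2$.} By \Cref{n-strict groups}, $n\mbox{-}Grp^{L_{mot}}=Ab^{\mathbb{A}^1}_k$. The argument runs in two directions:
\begin{enumerate}
\item An $n$-strictly invariant abelian sheaf is in particular $1$-strictly invariant, since $\mathrm{B}_{nis}A\simeq\Omega^{n-1}\mathrm{B}^n_{nis}A$ and $\Omega$ preserves local objects. Morel's theorem that strongly invariant abelian sheaves are strictly invariant over perfect $k$ then applies.
\item Conversely, a strictly invariant sheaf has all $\mathrm{K}(A,n)$ $\mathbb{A}^1$-local.
\end{enumerate}
Plugging this into \Cref{grp em} finishes the proof, and \Cref{pi_i mot=nis} again identifies $\pi_n^{mot}$ with $\pi_n^{nis}$.

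The step I expect to be most delicate is ensuring that the adjunction of the preceding lemma really restricts to the functor $\mathrm{B}^n_{\mathcal{C}}$. Concretely, for $G\in n\mbox{-}Grp^{L_{mot}}$ one needs $\mathrm{B}^n_{\mathcal{H}^{mot}}G\simeq L_{mot}\mathrm{B}^n_{nis}G\simeq \mathrm{B}^n_{nis}G$. This requires iterating the bar construction: each intermediate $\mathrm{B}^j_{nis}G$ must be motivic, which holds because $\mathrm{B}^j_{nis}G\simeq\Omega^{n-j}\mathrm{B}^n_{nis}G$. The other point to check is that fully faithfulness is inherited from the Nisnevich-topos equivalence [\cite{lurie2009higher}, Proposition 7.2.2.12]. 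This holds because both $\mathcal{EM}_n(\mathcal{H}^{mot})\subset\mathcal{EM}_n(\mathcal{P}_{nis})$ and $Grp^{\mathbb{A}^1}_k, Ab^{\mathbb{A}^1}_k$ are full subcategories that correspond under it.
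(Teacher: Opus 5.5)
Your proposal is correct and follows essentially the same route as the paper. The paper's proof is the one-line deduction from path-injectivity (\Cref{motivic 0 truncation is injective}) via \Cref{grp em}, with \Cref{n-strict groups} and the identification $1\mbox{-}Grp^{L_{mot}}=Grp^{\mathbb{A}^1}_k$ pinning down the source categories; your extra details (the converse inclusion $Ab^{\mathbb{A}^1}_k\subset n\mbox{-}Grp^{L_{mot}}$ and the check that $\mathrm{B}^n_{\mathcal{H}^{mot}}G\simeq \mathrm{B}^n_{nis}G$ for strictly local $G$) only make explicit what the paper leaves implicit.
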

\begin{proof}
Since $L_{mot}$ is path-injective, this follows from \Cref{grp em} and \Cref{n-strict groups}.
\end{proof}
 Such characterizations are largely unknown over arbitrary base schemes. Nevertheless, over an arbitrary base, we have the equivalence $$\mathcal{EM}_0(\mathcal{H}^{mot})\simeq Sh_k^{\mathbb{A}^1}=\text{Disc}(\mathcal{H}^{mot})$$, which is expected of an $\infty$-topos. Unfortunately, 
 \begin{cor}[\cite{spitzweck2012motivic}, Remark 3.5]\label{mot not topos}
    Over any perfect field $k$, the motivic homotopy category $\mathcal{H}^{mot}(k)$ is not an $\infty$-topos. 
    \end{cor}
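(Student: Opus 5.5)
The plan is to deduce the statement from \Cref{not a topos localization}, applied to the motivic localization $L_{mot}:\mathcal{P}_{nis}(k)\to\mathcal{H}^{mot}(k)$. The hypotheses of that corollary are already available from this section. $L_{mot}$ is path-injective by \Cref{motivic 0 truncation is injective}, and it is path-cartesian because $\tau_{\leq 0}^{mot}$ preserves products (\Cref{mot truncation cart}). By \Cref{motivic em} we also have $\mathcal{EM}_1(\mathcal{H}^{mot}(k))\simeq Grp^{\mathbb{A}^1}_k$ via $\pi_1^{mot}$.

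The argument is by contradiction. If $\mathcal{H}^{mot}(k)$ were an $\infty$-topos, then [\cite{lurie2009higher}, Proposition 7.2.2.12] would give $\mathcal{EM}_1(\mathcal{H}^{mot}(k))\simeq Grp(\mathrm{Disc}(\mathcal{H}^{mot}(k)))$. Here $\mathrm{Disc}(\mathcal{H}^{mot}(k))=Sh^{\mathbb{A}^1}_{nis}$, so the right-hand side is $L_{\mathbb{A}^1}Grp_k$, the category of all $\mathbb{A}^1$-invariant Nisnevich sheaves of groups. Combining this with \Cref{motivic em}, the inclusion $Grp^{\mathbb{A}^1}_k\subset L_{\mathbb{A}^1}Grp_k$ would be essentially surjective. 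So everything reduces to exhibiting one $\mathbb{A}^1$-invariant sheaf of groups that is not strongly $\mathbb{A}^1$-invariant, i.e.\ whose $\mathrm{H}^1_{nis}(-,G)$ fails to be $\mathbb{A}^1$-invariant.

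This explicit example is the main obstacle; everything else is formal. The first thing I would try is a constant sheaf of groups, or more generally an étale-type group that is $\mathbb{A}^1$-invariant on sections. A constant sheaf $G$ is $\mathbb{A}^1$-invariant because $\mathbb{A}^1_X$ and $X$ have the same connected components. Its Nisnevich $\mathrm{H}^1$, however, detects Nisnevich $G$-torsors on singular-free but non-normal-like covers, and these can fail homotopy invariance. The cleaner alternative is to use an abelian sheaf: $\mathbb{A}^1$-invariant abelian sheaves need not be strictly $\mathbb{A}^1$-invariant. A standard example is $G=\mathbb{G}_a$ in positive characteristic, modified suitably. Alternatively I would take the sheaf $\mathcal{O}^\times/\mathbb{G}_m$-type quotients used by Morel.

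If it turns out that every $\mathbb{A}^1$-invariant sheaf I try is in fact strongly invariant over a perfect field, I would fall back on the route of [\cite{spitzweck2012motivic}, Remark 3.5]. There one uses a sheaf $\mathcal{F}$ with $\pi_0^{nis}$-behavior coming from Ayoub's counterexample [\cite{ayoubcounterexamples}] to produce $\mathbb{A}^1$-invariant group sheaves whose classifying space is not $\mathbb{A}^1$-local. Either way, once one such $G$ is found, its $\mathrm{B}_{nis}G$ is not motivic. Then $G$ lies in $Grp(\mathrm{Disc}(\mathcal{H}^{mot}(k)))$ but not in $Grp^{\mathbb{A}^1}_k$, so the inclusion is strict and \Cref{not a topos localization} gives the result.
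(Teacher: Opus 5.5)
Your formal reduction is the same as the paper's. Assuming $\mathcal{H}^{mot}(k)$ is an $\infty$-topos, [\cite{lurie2009higher}, Proposition 7.2.2.12] together with \Cref{motivic em} would make $Grp^{\mathbb{A}^1}_k\subset L_{\mathbb{A}^1}Grp_k$ essentially surjective; equivalently, one invokes \Cref{not a topos localization}. That part is purely formal. The only substantive input of the corollary is a single $\mathbb{A}^1$-invariant Nisnevich sheaf of groups that is not strongly $\mathbb{A}^1$-invariant. Your proposal does not supply one: it lists candidates and a fallback, and none of them works as stated. So there is a genuine gap, and it sits exactly at the one non-formal step.

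Here is what goes wrong with each candidate.

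\emph{Constant sheaves.} A constant sheaf $\underline{G}$ is strongly $\mathbb{A}^1$-invariant on $Sm_k$, and strictly so for abelian $G$. Smooth schemes are normal, so a Nisnevich $\underline{G}$-torsor is trivial at each generic point and hence trivial. Alternatively, $\underline{G}$ is a birational sheaf, so by \S4 of the paper $\mathrm{B}_{nis}\underline{G}$ is birationally local and in particular motivic.

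\emph{$\mathbb{G}_a$.} This is not $\mathbb{A}^1$-invariant at all, since $\mathbb{G}_a(\mathbb{A}^1_X)=\mathcal{O}(X)[t]$. You do not say what ``modified suitably'' means.

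\emph{$\mathcal{O}^\times/\mathbb{G}_m$.} This is the trivial sheaf. More generally, abelian sheaves built from $\mathbb{G}_m$ and constant sheaves by kernels, cokernels and extensions stay inside Morel's abelian category $Ab^{\mathbb{A}^1}_k$ over a perfect field. Also, by [\cite{morel2012a1}, Theorem 4.46] (used in \Cref{n-strict groups}), a strongly $\mathbb{A}^1$-invariant abelian sheaf over a perfect field is automatically strictly invariant. So any abelian example must already fail at $\mathrm{H}^1_{nis}$. Remarking that $\mathbb{A}^1$-invariant abelian sheaves need not be strictly invariant only restates what has to be found.

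\emph{Ayoub's counterexample.} \cite{ayoubcounterexamples} concerns the set-valued sheaf $\pi_0^{nis}$ of a motivic space failing to be $\mathbb{A}^1$-invariant. It postdates \cite{spitzweck2012motivic}, so it cannot be the route of that remark. You also give no mechanism that turns it into a group $G$ with $\mathrm{B}_{nis}G$ not $\mathbb{A}^1$-local.

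The missing ingredient, which the paper uses, is Choudhury's example: the reduced free abelian sheaf $\mathbb{Z}(\mathbb{G}_m,1)$. It is $\mathbb{A}^1$-invariant ([\cite{choudhury2014connectivity}, paragraph after Lemma 5.4]) but not strongly $\mathbb{A}^1$-invariant ([\emph{loc.\ cit.}, Lemma 5.6]). With that input, your argument goes through as written.
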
 
    \begin{proof}
     First note that the inclusion $Grp^{\mathbb{A}^1}_k\subset L_{\mathbb{A}^1}Grp_k$ is not essentially surjective; indeed the reduced subgroup $\mathbb{Z}(\mathbb{G}m,1)$ is $\mathbb{A}^1$-invariant [\cite{choudhury2014connectivity}, see the paragraph after Lemma 5.4] but not strongly $\mathbb{A}^1$-invariant, see [Lemma 5.6]. At this point one may directly apply \Cref{not a topos localization}, but we shall sketch the argument again in the present (motivic) context. 

     If $\mathcal{H}^{mot}(k)$ were to be an $\infty$-topos, one would have equivalences of categories given by the functor $$\pi_1^{mot}\simeq \pi_1^{nis}:\mathcal{EM}_1(\mathcal{H}^{mot})\to  \text{Grp}(\text{Disc}(\mathcal{H}^{mot})) $$ [\cite{lurie2009higher}, Proposition 7.2.2.12]. However we see that by the above theorem, the essential image of the same is the full subcategory $Grp_k^{\mathbb{A}^1}$ of $\text{Grp}(\text{Disc}(\mathcal{H}^{mot})) = L_{\mathbb{A}^1}Grp_k^{nis}$.  But
     \end{proof}

\begin{rem}
Since $\mathbb{Z}(\mathbb{G}_m,1)$ is a sheaf of abelian groups, the proof above applies to all $n\geq 1$. In particular, there are $\mathbb{A}^1$-invariant sheaves of groups that are not strongly $\mathbb{A}^1$-invariant and hence not strictly $\mathbb{A}^1$-invariant.  Hence, for any $n\geq 1$ $$
\pi_n^{mot}\simeq \pi_n^{nis}:\mathcal{EM}_n(\mathcal{H}^{mot})\to  n{\mbox{-}}\text{Grp}(\text{Disc}(\mathcal{H}^{mot})) $$ is never an equivalence. This failure of the motivic homotopy category to be a topos was first brought to notice by [\cite{spitzweck2012motivic}, Remark 3.5]. In their argument, however, they did not explain what all the internal Eilenberg-MacLane objects of the motivic homotopy category are, making their argument somewhat incomplete.
\end{rem} 

    \begin{rem}[On the postnicov convergence of the motivic connected motivic homotopy category]\label{post comvg of conn}
 The presentable subcategory of categorically connected motivic spaces $\tau^{mot}_{\geq 1}\mathcal{H}^{mot}$ is Postnikov convergent, at least over base fields. Indeed, $$\tau^{mot}_{\geq 1}\mathcal{H}^{mot}=\tau^{nis}_{\geq 1}\mathcal{H}^{mot},$$ (\Cref{nis connective vs mot connective of motivic spaces}), and $\tau^{nis}_{\geq 1}\mathcal{H}^{mot}$ is Postnikov convergent by \Cref{post comvg of conn}. Thus, $\tau^{mot}_{\geq 1}\mathcal{H}^{mot}$ is Postnikov convergent.
\end{rem}
 \begin{cor}\label[cor]{internal layers of connected motivic space}
Let $\esc{X}\in  \mathcal{P}_{nis}(k)_{\geq 1}$ be a pointed object. Then the following conditions are equivalent:
\begin{enumerate}
    \item $\esc{X}\in \mathcal{H}^{mot}(k)$.
    \item $\mathrm{K}_{nis}(\pi_i^{nis}\esc{X},i)\in \mathcal{EM}_i(\mathcal{H}^{mot}(k))$ for all $i$.
    \item $\tau_{\leq i}^{nis}\esc{X}\in \mathcal{H}^{mot}(k)_{\leq i}$ for all $i$.
\end{enumerate}
 \end{cor}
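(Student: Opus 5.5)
The plan is to deduce this corollary from \Cref{layers of connected motivic space}. The only new ingredient is the internal identification of motivic truncations and Eilenberg--MacLane objects obtained above. Fix a pointed $\esc{X}\in\mathcal{P}_{nis}(k)_{\geq 1}$. The equivalence of (1) with the weaker conditions ``$\mathrm{K}_{nis}(\pi_i^{nis}\esc{X},i)\in\mathcal{H}^{mot}(k)$ for all $i$'' and ``$\tau^{nis}_{\leq i}\esc{X}\in\mathcal{H}^{mot}(k)$ for all $i$'' is exactly \Cref{layers of connected motivic space}. It therefore suffices to show that, for objects of this shape, membership in $\mathcal{H}^{mot}(k)$ automatically upgrades to membership in $\mathcal{EM}_i(\mathcal{H}^{mot}(k))$, respectively in $\mathcal{H}^{mot}(k)_{\leq i}$. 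The converse implications (2)$\Rightarrow$ and (3)$\Rightarrow$ of these upgrades are trivial inclusions.

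For (3), I will use \Cref{motivic trunc is nis trunc}, i.e.\ \Cref{truncation of localization} applied to $L_{mot}$:
\[
\mathcal{H}^{mot}_{\leq i}=\mathcal{H}^{mot}\cap\mathcal{P}^{nis}_{\leq i}.
\]
Since $\tau^{nis}_{\leq i}\esc{X}$ is Nisnevich $i$-truncated by construction, it lies in $\mathcal{H}^{mot}(k)_{\leq i}$ as soon as it is motivic. So condition (3) is equivalent to condition (3) of \Cref{layers of connected motivic space}.

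For (2), I will use \Cref{motivic em vs nis em}, or directly \Cref{connective em objects of ordinary localization}, which gives
\[
\mathcal{H}^{mot}\cap\mathcal{EM}_i(\mathcal{P}_{nis})\subset\mathcal{EM}_i(\mathcal{H}^{mot}).
\]
The object $\mathrm{K}_{nis}(\pi_i^{nis}\esc{X},i)$ is by definition an $i$-Eilenberg--MacLane object of the $\infty$-topos $\mathcal{P}_{nis}(k)$. Hence, if it is motivic, it lies in $\mathcal{EM}_i(\mathcal{H}^{mot}(k))$. Conversely, membership in $\mathcal{EM}_i(\mathcal{H}^{mot}(k))$ includes membership in $\mathcal{H}^{mot}(k)$. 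So (2) matches condition (2) of the proposition.

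The case $i=0$ is degenerate: both $\tau^{nis}_{\leq 0}\esc{X}$ and $\mathrm{K}(\pi_0,0)$ are $*$, since $\esc{X}$ is Nisnevich connected. The only point requiring care is that the inclusion of \Cref{connective em objects of ordinary localization} runs in the correct direction for the argument, and it does, so no path-injectivity or perfectness hypothesis is needed beyond what \Cref{layers of connected motivic space} itself uses. I expect no real obstacle. The main thing to check is that the cited proposition's hypotheses (field $k$, with perfectness implicit via Morel's results) are the ones in force here.
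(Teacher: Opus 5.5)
Your proposal is correct and follows the paper's proof: reduce to \Cref{layers of connected motivic space}, then match condition (3) using \Cref{motivic trunc is nis trunc} and condition (2) using \Cref{motivic em vs nis em}. Your remark is also accurate that only the one-sided inclusion $\mathcal{H}^{mot}\cap\mathcal{EM}_i(\mathcal{P}_{nis})\subset\mathcal{EM}_i(\mathcal{H}^{mot})$ from \Cref{connective em objects of ordinary localization} is needed, so no path-injectivity enters; this slightly sharpens the paper's citation of the full equality.
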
 
 \begin{proof}
    By \Cref{layers of connected motivic space}, this follows from identifying the conditions.

    For (2), use the identification in \Cref{motivic em vs nis em}. To identify the condition (3) here with the condition (3) there, use \Cref{motivic trunc is nis trunc}.
 \end{proof}
\subsection{Motivic connective objects and grouplike objects}
In the previous section, we have seen that the motivic homotopy category fails to internally deloop arbitrary discrete motivic groups. In fact, we have seen that those that can be delooped are precisely those that arise as the internal homotopy groups of (internal) motivic Eilenberg-MacLane objects. This section is devoted to understanding the condition required for the internal delooping of arbitrary motivic grouplike monoids and thus to establishing a motivic version of the loop-deloop equivalence between the $\infty$-categories of internal motivic connected objects and of those special grouplike objects.

The failure of the equivalence of the functor $\pi_1^{mot}:\mathcal{EM}_1(\mathcal{H}^{mot}(k))\to L_{\mathbb{A}^1}Grp_k^{nis}$, as explained in \Cref{mot not topos}, extends to the deeper fact that the functor $$\Omega: \mathcal{H}^{mot}(S)_{\geq 1}\to  Grp(\mathcal{H}^{mot}(S))$$ is not an equivalence of infinity categories. The primary issue is that, as shown by the same example of Choudhury ([\cite{choudhury2014connectivity}], which we have cited multiple times), this functor is not essentially surjective. It is easy to see that, over perfect fields, the spaces in the image must have strongly $\mathbb{A}^1$-invariant motivic $\pi_0$, since these are just motivic $\pi_1^{\mathbb{A}^1}$'s of the starting space [\cite{morel2012a1}, Corollary 6.3]. In fact, this condition suffices to ensure a motivic delooping. Before we explain this, let us first restate the main definition and result of \S2.3 applied to the motivic case:
\begin{defn}
We say that a motivic monoid $\esc{M}$ is strongly $L_{mot}$-local if $\mathrm{B}_{mot}\esc{M}\simeq \mathrm{B}_{nis}\esc{M}$, i.e., if $\mathrm{B}_{nis}\esc{M}$ is motivic local. We denote the full subcategory of strongly $L_{mot}$-local spaces by $\mathrm{Mon}_{L_{mot}}(\mathcal{H}^{mot}(k))$. We denote the further full subcategory of grouplike objects by $\mathrm{Grp}_{L_{mot}}(\mathcal{H}^{mot}(k))$.
\end{defn}
\begin{cor}\label[cor]{strong motivic groups and motivic connected spaces}
        There is an equivalence of $\infty$-categories:
        $$\mathrm{B}_{\mathcal{H}^{mot}(k)}: \mathrm{Grp}_{L_{mot}}(\mathcal{H}^{mot}(k))\simeq \mathcal{H}^{mot}(k)_{\geq 1}:\Omega.$$
\end{cor}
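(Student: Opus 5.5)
The plan is to deduce this directly from \Cref{loop bar for localization}, applied with $\mathcal{C}=\mathcal{P}_{nis}(k)$ and $L=L_{mot}$. There are three things to check.

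First, $\mathcal{P}_{nis}(k)$ is an $\infty$-topos; this is recalled at the beginning of \S3. Second, $L_{mot}$, viewed as an accessible localization $\mathcal{P}_{nis}(k)\to\mathcal{H}^{mot}(k)$, is path-injective. Third, the category $Grp_L(L\mathcal{C})$ of strongly $L$-local group objects from \S2.3 coincides with $\mathrm{Grp}_{L_{mot}}(\mathcal{H}^{mot}(k))$ as defined just above.

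For the second point, I would invoke \Cref{motivic 0 truncation is injective}. It says that $\mathcal{L}\simeq L^{mot}_{\leq 0}$ is injective on $\tau^{nis}_{\leq 0}\mathcal{H}^{mot}$, and in particular detects the terminal object, which is exactly \Cref{path injectivity}. One subtlety is that the lemma is phrased starting from $\mathcal{P}$ (presheaves) rather than from $\mathcal{P}_{nis}$. Since $L_{mot}$ factors through $L_{nis}$ and $\pi_0^{\mathcal{P}_{nis}}=\pi_0^{nis}$, path-injectivity relative to the Nisnevich topos is the form actually proved there. So I would state the theorem with the topos $\mathcal{C}=\mathcal{P}_{nis}(k)$, where $\mathrm{B}_{\mathcal{C}}=\mathrm{B}_{nis}$.

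For the third point, the definition of $Grp_L$ in \S2.3 requires $\mathrm{B}_{\mathcal{C}}\esc{M}\in L\mathcal{C}$. With $\mathcal{C}=\mathcal{P}_{nis}(k)$ this reads: $\mathrm{B}_{nis}\esc{M}$ is motivic local. That is precisely the definition of a strongly $L_{mot}$-local motivic group, so the two full subcategories agree. The ambient group objects also agree: $Grp(\mathcal{H}^{mot}(k))$ is the same whether computed in $\mathcal{H}^{mot}$ or in $\mathcal{P}_{nis}$, because $\mathcal{H}^{mot}\subset\mathcal{P}_{nis}$ is closed under finite products.

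With these checks in place, \Cref{loop bar for localization} yields the equivalence
\[
\mathrm{B}_{\mathcal{H}^{mot}(k)}:\mathrm{Grp}_{L_{mot}}(\mathcal{H}^{mot}(k))\simeq (\mathcal{H}^{mot}(k))_{\geq 1}:\Omega_{\mathcal{H}^{mot}}\simeq\Omega_{nis}.
\]
Here $(\mathcal{H}^{mot})_{\geq 1}$ is the internal (motivic) $1$-connective category. By \Cref{em of localization}, or equivalently \Cref{nis connective vs mot connective of motivic spaces} for $n=1$, it equals $\mathcal{H}^{mot}\cap\mathcal{P}_{nis,\geq 1}$.

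Concretely, the inverse check is as follows. For $\esc{X}$ motivic and Nisnevich-connected, Lurie's loop–bar equivalence in the topos $\mathcal{P}_{nis}$ gives $\mathrm{B}_{nis}\Omega\esc{X}\simeq\esc{X}$. This is motivic, so $\Omega\esc{X}$ is strongly $L_{mot}$-local. Conversely, for $\esc{M}$ strongly local, $\mathrm{B}_{mot}\esc{M}=\mathrm{B}_{nis}\esc{M}$ is Nisnevich-connected and $\Omega\mathrm{B}_{nis}\esc{M}\simeq\esc{M}$.

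The main obstacle is the path-injectivity step, which rests on \Cref{motivic 0 truncation is injective}. That is where the field hypothesis enters; everything else is formal bookkeeping identifying the categories of \S2.3 with their motivic counterparts.
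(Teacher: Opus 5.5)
Your proposal is correct and takes the same route as the paper, which simply applies \Cref{loop bar for localization} to $L_{mot}$ viewed as a localization of the Nisnevich topos $\mathcal{P}_{nis}(k)$, with path-injectivity supplied by \Cref{motivic 0 truncation is injective}. Your extra bookkeeping makes explicit what the paper leaves implicit: with $\mathcal{C}=\mathcal{P}_{nis}(k)$ (rather than $\mathcal{P}$), so that $\mathrm{B}_{\mathcal{C}}=\mathrm{B}_{nis}$, the category $Grp_L(L\mathcal{C})$ is exactly $\mathrm{Grp}_{L_{mot}}(\mathcal{H}^{mot}(k))$, and $(\mathcal{H}^{mot})_{\geq 1}=\mathcal{H}^{mot}\cap\mathcal{P}_{nis,\geq 1}$.
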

\begin{proof}
         Due to \Cref{motivic 0 truncation is injective}, the categorical theorem \Cref{loop bar for localization} applies to the motivic localization $L_{mot}$ of the Nisnevich topos.
\end{proof}
    
    We first make a preliminary observation about the motivic counterparts of notons from \S2.3:
\begin{prop}\label[prop]{strong mot is strong a1}
    A monoid motivic space is strongly $L_{mot}$-local iff it is strongly $\mathbb{A}^1$-invariant in the sense of \textup{[\cite{elmanto2021motivic}, Definition 3.1.6(1)]}. If $k$ is a perfect field, then this is iff $\pi_0^{nis}\esc{M}$ is strongly $\mathbb{A}^1$-invariant.
\end{prop}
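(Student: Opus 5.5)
The plan is to unwind both notions to the same condition on the Nisnevich bar construction, and then use the Postnikov-layer criterion of \Cref{layers of connected motivic space} for the perfect-field statement. Throughout I treat $\esc{M}$ as grouplike, which is the setting of [\cite{elmanto2021motivic}, Definition 3.1.6(1)] and of \Cref{strong motivic groups and motivic connected spaces}. I use that $\mathcal{P}_{nis}(k)$ is an $\infty$-topos, so $\mathrm{B}_{nis}$ and $\Omega$ are mutually inverse between grouplike monoids and pointed connected objects there.

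\textbf{First equivalence.} Since $L_{mot}$ preserves colimits and finite products, \Cref{bar of cart localization} gives $\mathrm{B}_{mot}\esc{M}\simeq L_{mot}\mathrm{B}_{nis}\esc{M}$. Hence $\esc{M}$ is strongly $L_{mot}$-local exactly when $\mathrm{B}_{nis}\esc{M}$ is motivic local. A Nisnevich sheaf is motivic local exactly when it is $\mathbb{A}^1$-invariant, and $\mathbb{A}^1$-invariance of $\mathrm{B}_{nis}\esc{M}$ is the defining condition of strong $\mathbb{A}^1$-invariance in \textit{loc.\ cit.} So the first claim is essentially a restatement of definitions. The only thing to check is that the bar constructions used on both sides agree, namely the geometric realization computed in $\mathcal{P}_{nis}$.

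\textbf{Perfect field, forward direction.} Set $\esc{X}:=\mathrm{B}_{nis}\esc{M}$. It is pointed and Nisnevich connected, with
\[
\pi_{i}^{nis}\esc{X}\simeq \pi_{i-1}^{nis}\esc{M}\quad\text{for } i\geq 1,
\]
because $\esc{M}\simeq\Omega\esc{X}$ (using grouplikeness for $i=1$). If $\esc{X}$ is motivic, then \Cref{layers of connected motivic space} (1)$\Rightarrow$(2) shows that $\mathrm{K}_{nis}(\pi_0^{nis}\esc{M},1)$ is $\mathbb{A}^1$-local. This says precisely that $\pi_0^{nis}\esc{M}$ is strongly $\mathbb{A}^1$-invariant.

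\textbf{Perfect field, converse direction.} Assume $\pi_0^{nis}\esc{M}$ is strongly $\mathbb{A}^1$-invariant. I will verify condition (2) of \Cref{layers of connected motivic space} for $\esc{X}$ and conclude by (2)$\Rightarrow$(1).
\begin{enumerate}
\item For $i=1$, the condition is exactly the hypothesis.
\item For $i\geq 2$, I need $\pi_{i-1}^{nis}\esc{M}$, based at the unit, to be strictly $\mathbb{A}^1$-invariant. Note first that $\esc{M}$ is a motivic space pointed by the unit. Since $\esc{M}$ is grouplike, translation identifies all components, so these sheaves do not depend on the basepoint.
\item By \Cref{pi_i mot=nis} and [\cite{morel2012a1}, Corollary 6.3], $\pi_j^{nis}\esc{M}$ is strongly $\mathbb{A}^1$-invariant for $j\geq 1$ and strictly $\mathbb{A}^1$-invariant for $j\geq 2$.
\item For $j=1$, $\pi_1^{nis}\esc{M}$ is abelian by Eckmann--Hilton, since $\esc{M}$ is an $H$-group. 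A strongly $\mathbb{A}^1$-invariant abelian sheaf is strictly $\mathbb{A}^1$-invariant over a perfect field [\cite{morel2012a1}, Theorem 4.46], as used in \Cref{n-strict groups}.
\item Strict invariance makes $\mathrm{K}_{nis}(\pi_{i-1}^{nis}\esc{M},i)$ $\mathbb{A}^1$-local for every $i\geq 2$.
\end{enumerate}

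The main obstacle I expect is bookkeeping rather than content. I must make sure that the identification $\pi_i^{nis}\mathrm{B}_{nis}\esc{M}\simeq\pi_{i-1}^{nis}\esc{M}$ holds at the level of sheaves of groups. I must also justify the basepoint independence for $\esc{M}$ from grouplikeness. If the statement is meant for non-grouplike monoids, the forward direction would instead see the group completion of $\pi_0^{nis}\esc{M}$. In that case I would restrict the second claim to grouplike $\esc{M}$.
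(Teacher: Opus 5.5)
Your first part is the same as the paper's: both strong $L_{mot}$-locality and strong $\mathbb{A}^1$-invariance in the sense of \cite{elmanto2021motivic} unwind to ``$\mathrm{B}_{nis}\esc{M}$ is motivic''.

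For the perfect-field part, the paper gives no argument of its own. It simply quotes [\cite{elmanto2021motivic}, Proposition 3.1.12(1)]. You instead reprove that statement inside the paper's framework, as follows.
\begin{enumerate}
\item You identify $\pi_i^{nis}\mathrm{B}_{nis}\esc{M}\cong\pi_{i-1}^{nis}(\esc{M},e)$.
\item You read off the $i=1$ layer from the hypothesis, or, in the forward direction, from (1)$\Rightarrow$(2) of \Cref{layers of connected motivic space}.
\item You control the higher layers by applying Morel's structure theorem to the motivic space $\esc{M}$ itself. For $\pi_1^{nis}(\esc{M},e)$ you add Eckmann--Hilton and [\cite{morel2012a1}, Theorem 4.46].
\item You conclude by (2)$\Rightarrow$(1).
\end{enumerate}

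This is sound and is essentially the mechanism behind the cited result. The citation is shorter. Your version is self-contained and shows exactly where perfectness enters (Morel's theorems, and strong$\Rightarrow$strict for abelian sheaves) and why grouplikeness is needed.

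Restricting to grouplike $\esc{M}$ is the right reading, for three reasons. It is the setting of \cite{elmanto2021motivic}. It is the only case in which ``$\pi_0^{nis}\esc{M}$ is strongly $\mathbb{A}^1$-invariant'' makes sense as a condition on a sheaf of groups. It is also the only case the paper uses (\Cref{strongly Lmot iff pi0 strongly Lmot}).

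Two minor simplifications are available.
\begin{itemize}
\item Basepoint independence is unnecessary, since everything is based at the unit.
\item If you want Morel's theorem only for connected spaces, note that your hypothesis makes $\pi_0^{nis}\esc{M}$ $\mathbb{A}^1$-invariant, hence motivic. The unit component $\esc{M}\times_{\pi_0^{nis}\esc{M}}*$ is then a Nisnevich-connected motivic space with the same higher homotopy sheaves at $e$, so the connected case suffices.
\end{itemize}
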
 \begin{proof}
    The first line follows from comparing the definitions of the two terms. When $k$ is a perfect field, the second line is a restatement of [\cite{elmanto2021motivic}, Proposition3.1.12(1)] under the first line. 
\end{proof}
\begin{lem}\label[lem]{pi0 of monoid}
    Let $\esc{G}$ be a grouplike motivic monoid space. Then $\pi_0^{mot}\esc{G}\simeq \pi_0^{nis}\esc{G}$.
\end{lem}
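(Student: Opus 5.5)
The plan is to reduce the statement to the fact that $\pi_0^{nis}\esc{G}$ is already $\mathbb{A}^1$-invariant. Then the discrete motivic localization $\mathcal{L}=L_{\leq 0}^{mot}$ acts trivially on it, and by \Cref{piimot is Lpiinis} we get $\pi_0^{mot}\esc{G}\simeq\mathcal{L}\pi_0^{nis}\esc{G}\simeq\pi_0^{nis}\esc{G}$.

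Over a perfect field the quickest route is \Cref{pi0 of h space}. A grouplike motivic monoid $\esc{G}$ is a group object in $\mathcal{H}^{mot}(k)$, so its image in the homotopy $1$-category $h\mathcal{H}^{mot}(k)$ is an $H$-group, and Choudhury's theorem gives the claim directly.

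To avoid assuming perfectness, I would instead use the loop–deloop machinery. Consider $\mathrm{B}_{mot}\esc{G}=L_{mot}\mathrm{B}_{nis}\esc{G}$, the latter by \Cref{bar of cart localization}, since $L_{mot}$ commutes with finite products and geometric realizations on the relevant diagrams. First I would check that it is connected. $\mathrm{B}_{nis}\esc{G}$ is Nisnevich connected, so its $\pi_0^{nis}$ is $*$. The unit $\mathrm{B}_{nis}\esc{G}\to L_{mot}\mathrm{B}_{nis}\esc{G}$ is an effective epimorphism (\Cref{Effective localization remark}), so $\pi_0^{nis}$ of the target is also $*$. Hence $\mathrm{B}_{mot}\esc{G}\in\tau_{\geq 1}^{nis}\mathcal{H}^{mot}$.

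The main step is to identify $\Omega\mathrm{B}_{mot}\esc{G}$ with $\esc{G}$, that is, to show that $\esc{G}$ is strongly $L_{mot}$-local. Here I would apply \Cref{strong motivic groups and motivic connected spaces}, but only in the direction that produces $\Omega$ of a motivic connected space. Write $\esc{H}:=\Omega\mathrm{B}_{mot}\esc{G}$. Then $\pi_0^{nis}\esc{H}=\pi_1^{nis}\mathrm{B}_{mot}\esc{G}$. This sheaf is strongly $\mathbb{A}^1$-invariant by Morel over perfect fields, and in general it is at least $\mathbb{A}^1$-invariant: the connected motivic space $\mathrm{B}_{mot}\esc{G}$ has $\tau^{nis}_{\leq 1}$ motivic by \Cref{layers of connected motivic space}, and $\Omega$ preserves local objects.

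The obstacle is comparing $\esc{G}$ with $\esc{H}$ on $\pi_0$. The canonical map $\esc{G}\to\esc{H}$ is the unit of $\mathrm{B}_{mot}\dashv\Omega$ (\Cref{bar loop adjunction}). If this identification becomes too delicate without perfectness, I would fall back to the direct ghost-homotopy argument of the remark after \Cref{motivic 0 truncation is injective}. The group structure lets one translate any section to the unit. It then suffices to show that the kernel of $\pi_0^{nis}\esc{G}(X)\to\pi_0^{nis}\esc{G}(k(X))$ is trivial, and this trivial-fiber property is the one recalled before \Cref{pi0 of h space}. Injectivity then follows from the group structure, which gives weak unramifiedness and hence $\mathbb{A}^1$-invariance by the lemma preceding \Cref{pi0 of h space}.
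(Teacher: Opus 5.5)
Your first two paragraphs are the paper's proof. The paper notes that a grouplike motivic monoid is a motivic $H$-group and invokes \Cref{pi0 of h space}, which is Choudhury's theorem that $\pi_0^{nis}$ is then already $\mathbb{A}^1$-invariant, so $\mathcal{L}$ acts trivially by \Cref{piimot is Lpiinis}. Like you, it tacitly assumes $k$ perfect, and that is the only setting where the lemma is used (\Cref{strongly Lmot iff pi0 strongly Lmot}). That part of your proposal is complete.

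Your attempt to remove perfectness does not work.

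\textbf{The main step is false in general.} You want $\esc{G}$ to be strongly $L_{mot}$-local, i.e.\ the unit $\esc{G}\to\Omega\mathrm{B}_{mot}\esc{G}$ to be an equivalence. Take $\esc{G}$ to be the discrete $\mathbb{A}^1$-invariant but not strongly $\mathbb{A}^1$-invariant sheaf of groups used in \Cref{mot not topos} and \Cref{mot bar omega}.
\begin{itemize}
\item Over a perfect field, $\pi_0^{nis}\Omega\mathrm{B}_{mot}\esc{G}=\pi_1^{nis}\mathrm{B}_{mot}\esc{G}$ is strongly $\mathbb{A}^1$-invariant by Morel.
\item But $\pi_0^{nis}\esc{G}=\esc{G}$ is not strongly $\mathbb{A}^1$-invariant.
\item The unit is a map of group objects, so it is not even an isomorphism on $\pi_0^{nis}$.
\end{itemize}
Hence $\mathbb{A}^1$-invariance of $\pi_0^{nis}\Omega\mathrm{B}_{mot}\esc{G}$ cannot be transported back to $\pi_0^{nis}\esc{G}$ along the unit. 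This holds even though the lemma itself is trivially true for this $\esc{G}$. Separately, your claim that $\pi_1^{nis}\mathrm{B}_{mot}\esc{G}$ is $\mathbb{A}^1$-invariant over an arbitrary field goes through \Cref{layers of connected motivic space}. Its proof rests on Morel's perfect-field results.

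\textbf{The fallback does not remove perfectness either.} It is not really the ghost-homotopy argument; that remark establishes path injectivity, not $\mathbb{A}^1$-invariance of $\pi_0^{nis}$. What you describe is the proof of Choudhury's theorem itself, as recalled before \Cref{pi0 of h space}. The trivial fibre over the unit, together with the group structure, gives injectivity of $\pi_0^{nis}\esc{G}(X)\to\pi_0^{nis}\esc{G}(k(X))$. That gives weak unramifiedness, hence $\mathbb{A}^1$-invariance. The last step uses the lemma preceding \Cref{pi0 of h space}, which is stated only for perfect $k$.
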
 
\begin{proof}
    Since grouplike motivic monoid spaces are, in particular, motivic $H$-groups, this is a special case of \Cref{pi0 of h space}.
\end{proof}
  \begin{cor}\label[cor]{strongly Lmot iff pi0 strongly Lmot}
Over a perfect field, a grouplike monoid motivic space $\esc{G}$ is strongly $L_{mot}$-local if and only if $\pi_0^{mot}\esc{G}$ is strongly $L_{mot}$-local.      
    \end{cor}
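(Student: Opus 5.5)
The plan is to chain together the three facts established just before the statement. First, by \Cref{strong mot is strong a1}, over a perfect field a grouplike monoid motivic space $\esc{G}$ is strongly $L_{mot}$-local if and only if $\pi_0^{nis}\esc{G}$ is strongly $\mathbb{A}^1$-invariant. Second, by \Cref{pi0 of monoid} (itself a special case of \Cref{pi0 of h space}, since a grouplike motivic monoid is in particular a motivic $H$-group), the canonical map gives an isomorphism $\pi_0^{mot}\esc{G}\simeq \pi_0^{nis}\esc{G}$. Substituting the second into the first, $\esc{G}$ is strongly $L_{mot}$-local iff $\pi_0^{mot}\esc{G}$ is strongly $\mathbb{A}^1$-invariant.

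It remains to identify ``strongly $\mathbb{A}^1$-invariant'' for the discrete group $\pi_0^{mot}\esc{G}$ with ``strongly $L_{mot}$-local'' in the sense of the definition preceding \Cref{strong mot is strong a1}, regarded as a discrete monoid in $\mathcal{H}^{mot}(k)$. Here I note that $\pi_0^{mot}\esc{G}$ lies in $\mathrm{Disc}(\mathcal{H}^{mot})=Sh^{\mathbb{A}^1}_{nis}$, and that it is a group object because $\tau_{\leq 0}^{mot}$ preserves products (\Cref{mot truncation cart}). For a sheaf of groups $H$, being strongly $L_{mot}$-local means that $\mathrm{B}_{nis}H$ is motivic. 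This is exactly Morel's strong $\mathbb{A}^1$-invariance, namely $1$-strict $\mathbb{A}^1$-invariance, by the remark identifying $1\mbox{-}Grp^{L_{mot}}$ with $Grp^{\mathbb{A}^1}_k$. Alternatively, one can apply \Cref{strong mot is strong a1} directly to the discrete grouplike monoid $\pi_0^{mot}\esc{G}$, using $\pi_0^{nis}$ of a discrete sheaf being itself.

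The only point requiring care is the bookkeeping that the group structure on $\pi_0^{mot}\esc{G}$ induced by $\esc{G}$ matches the one on $\pi_0^{nis}\esc{G}$ under the isomorphism of \Cref{pi0 of monoid}. This follows because the comparison map $\pi_0^{nis}\to\pi_0^{mot}$ is natural and both truncations preserve finite products. I do not expect any genuine obstacle: the corollary is a formal combination of the cited results, with perfectness of $k$ used in both \Cref{strong mot is strong a1} and \Cref{pi0 of h space}.
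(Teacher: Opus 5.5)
Your proposal is correct and matches the paper's proof, which simply combines \Cref{strong mot is strong a1} with \Cref{pi0 of monoid} exactly as you do. Your extra bookkeeping (identifying strong $L_{mot}$-locality of the discrete group $\pi_0^{mot}\esc{G}$ with Morel's strong $\mathbb{A}^1$-invariance, and checking that the group structures agree) spells out what the paper leaves implicit.
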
 
\begin{proof}
    This follows immediately from \Cref{strong mot is strong a1} and \Cref{pi0 of monoid}.
\end{proof}
Let us denote by $\mathcal{G}rp_{mot}(\mathcal{H}^{mot}(k))$ the full subcategory of $\mathrm{Grp}(\mathcal{H}^{mot}(k))$ consisting of motivic monoid spaces whose motivic path component $\pi_0^{mot}$ is strongly $L_{mot}$-local.
\begin{cor}\label[cor]{mot bar omega}
Over a perfect field, there is an equivalence of categories $$ \mathrm{B}^{nis}:  \mathcal{G}rp_{mot}(\mathcal{H}^{mot}(k))\simeq\mathcal{H}_\bullet^{mot}(k)_{\geq 1}:\Omega$$  where $ Grp_{mot}(\mathcal{H}^{mot}(S))$ denotes the full subcategory of $\mathcal{G}rp(\mathcal{H}^{mot}(k))$ consisting of motivic monoids with strongly $L_{mot}$-local $\pi_0^{mot}.$ This provides further evidence that $\mathcal{H}^{mot}$ is not an $\infty$-topos.
\end{cor}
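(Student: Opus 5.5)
The plan is to obtain the equivalence by combining \Cref{strong motivic groups and motivic connected spaces} with a comparison of two full subcategories of $\mathrm{Grp}(\mathcal{H}^{mot}(k))$. \Cref{strong motivic groups and motivic connected spaces} already supplies an equivalence $\mathrm{B}_{\mathcal{H}^{mot}(k)}:\mathrm{Grp}_{L_{mot}}(\mathcal{H}^{mot}(k))\simeq \mathcal{H}^{mot}(k)_{\geq 1}:\Omega$, coming from path injectivity of $L_{mot}$ (\Cref{motivic 0 truncation is injective}) and \Cref{loop bar for localization}. It therefore suffices to establish two things. First, over a perfect field,
\[
\mathrm{Grp}_{L_{mot}}(\mathcal{H}^{mot}(k))=\mathcal{G}rp_{mot}(\mathcal{H}^{mot}(k))
\]
as full subcategories of $\mathrm{Grp}(\mathcal{H}^{mot}(k))$. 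Second, on this subcategory the internal bar construction $\mathrm{B}_{\mathcal{H}^{mot}(k)}$ agrees with $\mathrm{B}^{nis}$.

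The first identification is \Cref{strongly Lmot iff pi0 strongly Lmot}. For a grouplike motivic monoid $\esc{G}$, \Cref{pi0 of monoid} gives $\pi_0^{mot}\esc{G}\simeq\pi_0^{nis}\esc{G}$. By \Cref{strong mot is strong a1}, $\esc{G}$ is strongly $L_{mot}$-local if and only if $\pi_0^{nis}\esc{G}$ is strongly $\mathbb{A}^1$-invariant. I will need to check that "strongly $L_{mot}$-local" for the discrete group $\pi_0^{mot}\esc{G}$ means exactly $1$-strict $\mathbb{A}^1$-invariance, i.e. that $\mathrm{B}_{nis}\pi_0^{mot}\esc{G}$ is motivic. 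This is the content of the remark identifying $1\mbox{-}Grp^{L_{mot}}$ with $Grp^{\mathbb{A}^1}_k$. So the two defining conditions coincide.

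The second point is immediate from definitions. On $\mathrm{Grp}_{L_{mot}}$ we have $\mathrm{B}_{nis}\esc{G}$ motivic by definition. Since $\mathrm{B}_{\mathcal{H}^{mot}}\simeq L_{mot}\mathrm{B}_{nis}$ (the bar construction being a colimit computed in $\mathcal{P}_{nis}$ and then localized), we get $\mathrm{B}_{\mathcal{H}^{mot}}\esc{G}\simeq\mathrm{B}^{nis}\esc{G}$. Also $\Omega_{\mathcal{H}^{mot}}\simeq\Omega_{nis}$ restricted, because $\mathcal{H}^{mot}\subset\mathcal{P}_{nis}$ is closed under limits. For pointings, note that a connected object of the topos $\mathcal{P}_{nis}(k)$ which is pointed gives the pointed version, and $Grp$ objects are canonically pointed, so $\mathcal{H}^{mot}_\bullet(k)_{\geq1}$ is the right target.

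The main obstacle is bookkeeping rather than mathematics: making sure the $\pi_0$ condition in $\mathcal{G}rp_{mot}$ is exactly strong $\mathbb{A}^1$-invariance of the sheaf of groups $\pi_0^{nis}\esc{G}$. In particular, the group structure on $\pi_0^{mot}\esc{G}$ induced via the cartesian $\tau^{mot}_{\leq0}$ (\Cref{mot truncation cart}) must match the one on $\pi_0^{nis}\esc{G}$, so that \cite{elmanto2021motivic}'s criterion applies. This is where perfectness of $k$ enters, through \Cref{pi0 of h space} and \cite{elmanto2021motivic}, Proposition 3.1.12.

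For the concluding sentence of the statement, I would then remark the following. In an $\infty$-topos, \Cref{em equiv of a topos} gives $\mathrm{B}\dashv\Omega$ as an equivalence on all of $\mathrm{Grp}$. Here the essential image of $\Omega$ is instead the proper subcategory $\mathcal{G}rp_{mot}$, which is proper because the discrete group $\mathbb{Z}(\mathbb{G}_m,1)$ used in \Cref{mot not topos} is $\mathbb{A}^1$-invariant but not strongly so.
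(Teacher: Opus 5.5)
Your proposal is correct and is essentially the paper's proof. The paper obtains the equivalence from \Cref{strong motivic groups and motivic connected spaces}, using the identification $\mathrm{Grp}_{L_{mot}}(\mathcal{H}^{mot}(k))=\mathcal{G}rp_{mot}(\mathcal{H}^{mot}(k))$ of \Cref{strongly Lmot iff pi0 strongly Lmot}, and it justifies the final claim with the same Choudhury example $\mathbb{Z}(\mathbb{G}_m,1)$ used in \Cref{mot not topos}. The extra bookkeeping you add is left implicit in the paper and is correct: that $\mathrm{B}_{\mathcal{H}^{mot}}$ agrees with $\mathrm{B}^{nis}$ on this subcategory, and that the group structures on $\pi_0^{mot}$ and $\pi_0^{nis}$ match.
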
  
\begin{proof}
By \Cref{strong motivic groups and motivic connected spaces}, the claim follows from \Cref{strongly Lmot iff pi0 strongly Lmot}.
    
To justify the last line, it suffices to know that there exist grouplike motivic spaces whose motivic path components $\pi_0^{mot}$ ( $\simeq \pi_0^{\mathbb{A}^1}$) are not strongly $\mathbb{A}^1$-invariant. This is the content of [\cite{choudhury2014connectivity}, the paragraph after Lemma 5.4], which we have used earlier in \Cref{mot not topos}.\end{proof}
\subsection{Motivic theory of covering spaces}

\textbf{Motivic theory of geometric coverings}
\begin{defn}
A collection $\{\esc{U}_i\to \esc{X}\}$ of morphisms of Nisnevich local spaces is said to be a motivic Geometric covering if $L_{mot}\check{C}_\bullet p\to L_{mot}\esc{X}$ is a colimit diagram in $\mathcal{H}^{mot}(k)$, where $p: \bigsqcup\esc{U}_i\to \esc{X}$ is the disjoint union $\sqcup p_i$.
\end{defn}
\begin{ex}
    Every Nisnevich cover of a (even, not necessarily smooth) scheme is a Motivic geometric cover.
\end{ex}
\begin{cor}\label[cor]{a1 geom conn}
A $k$-space $\esc{X}$ is $\mathbb{A}^1$-connected if and only if there exists an $\mathbb{A}^1$-geometric covering (which is if and only if a Nisnevich-geometric covering) $\esc{U}_i\to \esc{X}$ such that each $\esc{U}_{i}$ is $\mathbb{A}^1$-connected and each $\esc{U}_{ij}$ admits a global point. Then $\esc{X}$ is $\mathbb{A}^1$-connected.
\end{cor}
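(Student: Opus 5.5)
The plan is to treat the two directions separately, with the substance in the backward one. That direction is \Cref{L geom L connectivity} specialised to $\mathcal{C}=\mathcal{P}_{nis}(k)$ and $L=L_{mot}$; its hypotheses are path injectivity of $L_{mot}$ (\Cref{motivic 0 truncation is injective}) and the covering being a geometric $L_{mot}$-cover. The forward direction is essentially trivial.

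For the forward implication, assume $\esc{X}$ is $\mathbb{A}^1$-connected and take the one-element family $\{\mathrm{id}:\esc{X}\to\esc{X}\}$. This is a geometric $\mathcal{C}$-cover, since the \v{C}ech nerve of an equivalence is constant, hence also a geometric $L_{mot}$-cover. The single piece $\esc{X}$ is $\mathbb{A}^1$-connected by assumption. The only remaining condition is that $\esc{U}_{ii}=\esc{X}$ admit a global point. I would obtain one from the identification $\pi_0^{\mathbb{A}^1}\esc{X}(k)=*$: a global point of $\pi_0^{nis}L_{mot}\esc{X}$ over $\mathrm{Spec}\,k$ lifts to $L_{mot}\esc{X}(k)$, because Nisnevich stalks at fields are sections. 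Note that this gives a point of $L_{mot}\esc{X}$ rather than of $\esc{X}$ itself. For this reason I would read the condition ``admits a global point'' in the sense used in \Cref{L geom L connectivity}, namely $\mathrm{Disc}(\mathcal{H}^{mot})(*,\pi_0^{L_{mot}}(\esc{U}_{ij}))\neq\emptyset$. For schemes the statement in the introduction asks for a $k$-point, and when $X$ has a rational point this is immediate.

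For the backward implication, let $\{\esc{U}_i\to\esc{X}\}$ be as in the statement.
\begin{enumerate}
\item By \Cref{motivic 0 truncation is injective}, $L_{mot}$ is path injective on the Nisnevich $\infty$-topos.
\item By the definition of motivic geometric covering, $L_{mot}\check{C}_\bullet p$ is a colimit diagram in $\mathcal{H}^{mot}(k)$. If instead the cover is only Nisnevich-geometric, it is a geometric $\mathcal{C}$-cover, and the lemma preceding \Cref{L geom L connectivity} upgrades it to a geometric $L$-cover.
\item A global point $*\to\esc{U}_{ij}$ yields, after applying $L_{mot}$ and $\pi_0^{mot}$, a global section of $\pi_0^{L\mathcal{C}}(\esc{U}_{ij})$, which is the nontriviality hypothesis.
\item Since each $\esc{U}_i$ is $\mathbb{A}^1$-connected, i.e.\ $L$-connected, \Cref{L geom L connectivity} gives $\pi_0^{L_{mot}}\esc{X}\simeq *$, which is $\mathbb{A}^1$-connectedness of $\esc{X}$.
\end{enumerate}

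The main obstacle is making sure the step inside \Cref{L geom L connectivity} really goes through here. That step says a global section makes $\pi_0^{mot}\esc{U}_{ij}\to *$ an epimorphism in $Sh^{\mathbb{A}^1}_{nis}$. I would check this by observing that a map to the terminal sheaf is epi as soon as the source has a section over $\mathrm{Spec}\,k$, since every smooth scheme maps to $\mathrm{Spec}\,k$. I would also check the passage from $L\mathcal{C}$-connectedness back to $L$-connectedness, which is exactly where \Cref{motivic 0 truncation is injective} (valid over any field) is used.
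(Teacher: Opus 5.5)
Your backward direction is the paper's argument: \Cref{L geom L connectivity} combined with path injectivity from \Cref{motivic 0 truncation is injective}. The paper also uses the identity cover for the forward direction. The gap is in the forward direction. There you only produce a point of $L_{mot}\esc{X}$, and then you weaken the statement by reading ``$\esc{U}_{ij}$ admits a global point'' as ``$\pi_0^{\mathbb{A}^1}\esc{U}_{ij}$ has a global section''. As stated, the corollary asks for an honest point $*\to\esc{X}$, and your argument does not supply one. In the same way, your remark that for schemes ``when $X$ has a rational point this is immediate'' treats as an extra hypothesis something that has to be proved, and can be.

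The missing ingredient is the effectivity of $L_{mot}$ (\Cref{Effective localization remark}).
\begin{enumerate}
\item By Morel--Voevodsky, $\pi_0^{nis}\esc{X}\to\pi_0^{\mathbb{A}^1}\esc{X}$ is an epimorphism of Nisnevich sheaves.
\item Every Nisnevich cover of $\mathrm{Spec}\,k$ has a section. Hence an epimorphism of Nisnevich sheaves is surjective on sections over $\mathrm{Spec}\,k$, and moreover $\pi_0^{nis}\esc{X}(k)=\pi_0(\esc{X}(k))$.
\item So the unique element of $\pi_0^{\mathbb{A}^1}\esc{X}(k)=*$ lifts to a component of $\esc{X}(k)$, which gives a genuine point $*\to\esc{X}$.
\end{enumerate}
The same argument shows that, for any $k$-space $\esc{Y}$, $\pi_0^{\mathbb{A}^1}\esc{Y}$ has a global point if and only if $\esc{Y}$ is pointed. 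This is exactly how the paper's proof states it. So in the motivic setting the two readings of the hypothesis coincide, and no reinterpretation is needed; in particular, $\mathbb{A}^1$-connected schemes automatically have $k$-points. Compare the birational analogue: there $L_{bir}$ is not effective, so the paper has to phrase the condition through $\pi_0^b$.
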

\begin{proof}
    If $\esc{X}$ is $\mathbb{A}^1$-connected then the identity $\esc{X}\to \esc{X}$ is a Nisnevich cover with this property, which in turn is an $\mathbb{A}^1$-grometric covering. Note that the existence of the rational point from connectivity follows from the effectivity of $L_{mot}$. The remaining implication follows from \Cref{geom L connectivity} in the presence of \Cref{motivic 0 truncation is injective} and the fact that $L_{mot}$ is effective, so that for any space $\esc{Y}$, existence of a global point of $\pi_0^{\mathbb{A}^1}\esc{Y}$ is equivalent to $\esc{Y}$ being pointed. 
\end{proof}
\begin{cor}\label[cor]{local-global-connectivity}
  A scheme $X/k$ is $\mathbb{A}^1$-connected iff there is a nisnevich covering $U_i\to X$ such that each $U_i$ is $\mathbb{A}^1$-connected and for every pair $i,j$ the fiber product $U_i\times_XU_j$ has a ($k$-)point.
\end{cor}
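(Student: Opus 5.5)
The plan is to specialize \Cref{a1 geom conn} to representable spaces, so there are two directions to handle. For the forward implication, suppose $X$ is $\mathbb{A}^1$-connected. Take the trivial Nisnevich covering $\{X\xrightarrow{id}X\}$. Then the only pairwise intersection is $X\times_XX=X$, so it remains to see that $X$ has a $k$-point. I will argue this from effectivity of $L_{mot}$ (\Cref{Effective localization remark}): the unit $h_X\to L_{mot}h_X$ is an effective epimorphism, so $\pi_0^{nis}h_X\to\pi_0^{\mathbb{A}^1}X=*$ is an epimorphism of Nisnevich sheaves. Evaluating on the henselian local scheme $\mathrm{Spec}\,k$ (a point of the Nisnevich site), the stalk surjection $X(k)\to *$ forces $X(k)\neq\emptyset$. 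This is the same remark used inside the proof of \Cref{a1 geom conn}.

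For the converse, let $\{U_i\to X\}$ be a Nisnevich covering with each $U_i$ $\mathbb{A}^1$-connected and each $U_i\times_XU_j$ possessing a $k$-point. First, the Čech nerve of $\bigsqcup h_{U_i}\to h_X$ is a colimit diagram in $\mathcal{P}_{nis}(k)$. This holds because Nisnevich covers of representables become effective epimorphisms after sheafification, and $L_{nis}h_X$ is the relevant object. Since $L_{mot}$ is a left adjoint, this covering is a motivic geometric covering in the sense of the preceding definition, as in the Example following it. Second, because fiber products of representables are representable and $L_{nis}$ is left exact, the fiber products $h_{U_i}\times_{h_X}h_{U_j}\simeq h_{U_i\times_XU_j}$ are pointed by the given $k$-points. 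Hence each $\pi_0^{\mathbb{A}^1}(U_i\times_XU_j)$ has a global section.

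Now \Cref{L geom L connectivity} applies, with $\mathcal{C}=\mathcal{P}_{nis}(k)$ and $L=L_{mot}$. It needs path injectivity, which is \Cref{motivic 0 truncation is injective}, so it gives $\pi_0^{mot}L_{mot}X=*$ and then $\pi_0^{\mathbb{A}^1}X=*$. Equivalently, I will just invoke \Cref{a1 geom conn} with $\esc{X}=h_X$ and $\esc{U}_i=h_{U_i}$.

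The main obstacle is the bookkeeping in the first step of the converse: checking that a Nisnevich covering of schemes yields a colimit Čech diagram after passing to $\mathcal{P}_{nis}$, and hence after $L_{mot}$. I would cite Nisnevich descent, i.e.\ that $L_{nis}$ of a covering sieve is an equivalence onto $L_{nis}h_X$, together with the topos-theoretic fact that effective epimorphisms have Čech nerves as colimits. The point-existence claim via the stalk at $\mathrm{Spec}\,k$ is the other place where some care is needed.
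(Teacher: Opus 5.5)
Your proposal is correct and is the paper's own (implicit) argument: the corollary is \Cref{a1 geom conn} specialized to representables. In the forward direction, the identity cover together with effectivity of $L_{mot}$, evaluated at the Nisnevich point $\mathrm{Spec}\,k$, produces the $k$-point; in the converse, \Cref{L geom L connectivity} combined with path-injectivity (\Cref{motivic 0 truncation is injective}) gives $\mathbb{A}^1$-connectivity. The only thing to add is the tacit assumption $X\neq\emptyset$ (equivalently, a nonempty covering family), since the empty family covers $\emptyset$ vacuously while $\emptyset$ is not $\mathbb{A}^1$-connected.
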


\begin{cor}[Motivic Van Kampen]
Let $\esc{U}_i\to \esc{X}$ be an $\mathbb{A}^1$-geometric covering (for example, a Nisnevich local covering) such that $\esc{U}_{ijk}$ is $\mathbb{A}^1$-connected for every $i,j,k$ (1-, 2-, and 3-fold). Then $\esc{X}$ is $\mathbb{A}^1$-connected, and the $\mathbb{A}^1$ homotopy group of $\esc{X}$ is given by the following coequalizer in $Grp^{\mathbb{A}^1}_k$:
$$\underset{i,j}{*}\pi_1^{\mathbb{A}^1}({\esc{U}_i\times_\esc{X}\esc{U}_j})\rightrightarrows \underset{i}{*}\pi_1^{\mathbb{A}^1}({\esc{U}_i})\to \pi_1^{\mathbb{A}^1}(\esc{X})\to *$$
\end{cor}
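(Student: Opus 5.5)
The plan is to specialize the categorical $L$-local Van Kampen theorem (\Cref{L van campen}) to $\mathcal{C}=\mathcal{P}_{nis}(k)$ and $L=L_{mot}$. Afterwards I will rewrite the internal groups $\pi_1^{L\mathcal{C}}$ and the target category $Grp^L$ in classical motivic language. Three hypotheses need checking. First, $L_{mot}$ is path-injective; this is \Cref{motivic 0 truncation is injective}. Second, $\{\esc{U}_i\to\esc{X}\}$ is a geometric $L$-cover; this holds by assumption, since a Nisnevich covering is a geometric $\mathcal{C}$-cover and hence a geometric $L$-cover. Third, the covering is well-pointed. For this I use effectivity of $L_{mot}$: $\mathbb{A}^1$-connectedness of each $\esc{U}_{ij}$ yields a global point, which is the same argument used in \Cref{a1 geom conn}. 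I will state the corollary with a chosen compatible base point, as the categorical theorem implicitly does.

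Next, $\mathbb{A}^1$-connectedness of $\esc{X}$. Since all the $\esc{U}_i$ are $\mathbb{A}^1$-connected and all the $\esc{U}_{ij}$ are pointed, \Cref{L geom L connectivity} (equivalently \Cref{a1 geom conn}) gives $\pi_0^{\mathbb{A}^1}\esc{X}=*$. Here path-injectivity converts $L\mathcal{C}$-connectivity back into $L$-connectivity. Note that $\pi_0^{L}=\pi_0^{nis}L_{mot}=\pi_0^{\mathbb{A}^1}$.

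Then comes the identification of the groups. The theorem produces a coequalizer in $Grp^L=1\mbox{-}Grp^{L_{mot}}$, which is $Grp^{\mathbb{A}^1}_k$ by the remark identifying $1$-strictly $\mathbb{A}^1$-invariant sheaves with strongly $\mathbb{A}^1$-invariant ones. Its terms are $\pi_1^{\mathcal{H}^{mot}}(L_{mot}\esc{U})=\pi_1^{mot}$. Over a perfect field, \Cref{pi_i mot=nis} gives $\pi_1^{mot}\simeq\pi_1^{nis}L_{mot}=\pi_1^{\mathbb{A}^1}$. Over a nonperfect field I would instead use \Cref{grp em}: it says that on $\mathcal{EM}_1(\mathcal{H}^{mot})$ one has $\pi_1^{L\mathcal{C}}\simeq\pi_1^{\mathcal{C}}$. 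Applied to $\tau_{\leq1}^{mot}L_{mot}\esc{U}$, this should still identify the result with $\pi_1^{\mathbb{A}^1}\esc{U}$, provided $\tau^{mot}_{\leq1}$ agrees with $\tau^{nis}_{\leq1}$ on connected motivic spaces (\Cref{layers of connected motivic space}). To be safe I would add the hypothesis that $k$ is perfect.

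The main obstacle is making this last identification rigorous. The categorical proof applies $\tau_{\leq1}^{L\mathcal{C}}$ and then $\pi_1$ on $\mathcal{EM}_1(L\mathcal{C})$, and I must ensure that this agrees with Morel's $\pi_1^{\mathbb{A}^1}$ of $L_{mot}\esc{U}$ itself. The route is: on connected motivic spaces, \Cref{layers of connected motivic space} gives $\tau^{mot}_{\leq1}=\tau^{nis}_{\leq1}$, and $\pi_1^{nis}\tau^{nis}_{\leq1}=\pi_1^{nis}$. The coproducts $\ast$ and the coequalizer are taken in $Grp^{\mathbb{A}^1}_k$; they exist because that category is equivalent to $\mathcal{EM}_1(\mathcal{H}^{mot})$, where connected colimits are computed as in the proof of \Cref{L van campen}.
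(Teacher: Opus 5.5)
Your proposal is correct and is essentially the paper's argument. Like the paper, you specialize \Cref{L van campen} to $L_{mot}$ on $\mathcal{P}_{nis}(k)$ using path-injectivity (\Cref{motivic 0 truncation is injective}), and then identify $\pi_1^{mot}$ with $\pi_1^{\mathbb{A}^1}$ via \Cref{pi_i mot=nis}; that proposition does require $k$ perfect. Your extra care about compatible base points and about $\tau^{mot}_{\leq 1}=\tau^{nis}_{\leq 1}$ on connected motivic spaces (\Cref{layers of connected motivic space}) fills in details the paper leaves implicit. However, your non-perfect aside would not actually remove the perfectness assumption, since that proposition also rests on Morel's perfect-field results.
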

\begin{proof}
    This follows at once from \Cref{L van campen} and \Cref{motivic 0 truncation is injective}, using the identification $\pi_1^{\mathbb{A}^1}=\pi_1^{mot}$ in \Cref{pi_i mot=nis}. 
\end{proof}
\begin{cor}[Generalized Morel's $\mathbb{A}^1$ Van Kampen, [\cite{morel2012a1}, Theorem 7.12\text{]}]
Let $\{{U}_i\to {X}\}_{i\in I}$ be a Nisnevich covering of a smooth scheme $X$ such that each ${U}_{ijk}$ is pointed $\mathbb{A}^1$-connected for every $i,j,k\in \{I\}\cup I$ (where $U_I$ is 1-, 2-, or 3-fold). Then ${X}$ is $\mathbb{A}^1$-connected, and the $\mathbb{A}^1$ homotopy group of ${X}$ is given by the following coequalizer in $Grp^{\mathbb{A}^1}_k$:
$$\underset{i,j}{*}\pi_1^{\mathbb{A}^1}({{U}_i\times_{X}{U}_j})\rightrightarrows \underset{i}{*}\pi_1^{\mathbb{A}^1}({{U}_i})\to \pi_1^{\mathbb{A}^1}({X})\to *$$
\end{cor}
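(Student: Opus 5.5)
The plan is to deduce the statement from the preceding Motivic Van Kampen corollary, applied to the family of representable spaces $\esc{U}_i:=h_kU_i\to \esc{X}:=h_kX$, after Nisnevich sheafification. The work is in checking that the hypotheses of that corollary, which are phrased for $\mathbb{A}^1$-geometric coverings and $\mathbb{A}^1$-connectivity of all 1-, 2- and 3-fold intersections, follow from the scheme-level hypotheses.

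First, since $\{U_i\to X\}$ is a Nisnevich covering of a smooth scheme, the map $p:\bigsqcup_i h_kU_i\to h_kX$ becomes an effective epimorphism in $\mathcal{P}_{nis}(k)$. Hence its Čech nerve is a colimit diagram there, i.e.\ it is a geometric $\mathcal{P}_{nis}$-cover. Because $L_{mot}$ preserves colimits, it is then a geometric $L_{mot}$-cover, which is the earlier lemma that a geometric $\mathcal{C}$-cover is a geometric $L$-cover. This also matches the example that every Nisnevich cover of a scheme is a motivic geometric cover. Fiber products of representables are representable, so the terms of the Čech nerve are the sheaves $h_k(U_{i}\times_XU_j\times_X\cdots)$. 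In particular, the 2- and 3-fold intersections appearing in the Van Kampen statement are exactly the schemes $U_{ij}$ and $U_{ijk}$ of the hypothesis.

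Second, the hypothesis that each $U_{ijk}$, with indices in $\{I\}\cup I$, is pointed and $\mathbb{A}^1$-connected gives three things. It gives $\mathbb{A}^1$-connectivity of every $U_i$, $U_{ij}$ and $U_{ijk}$, and, by the convention $U_I=X$, of $X$ itself as well. It also gives compatible $k$-points. In particular each $U_i\times_XU_j$ has a $k$-point, so \Cref{local-global-connectivity} (equivalently \Cref{L geom L connectivity} combined with \Cref{motivic 0 truncation is injective}) independently yields that $X$ is $\mathbb{A}^1$-connected. The pointings matter because the $L$-local Van Kampen theorem \Cref{L van campen} requires a well-pointed covering: the base points of $\pi_1^{\mathbb{A}^1}(U_i)$, $\pi_1^{\mathbb{A}^1}(U_{ij})$ and $\pi_1^{\mathbb{A}^1}(X)$ must be compatible. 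I would fix a $k$-point $x_0$ of the total intersection (or of each $U_{ijk}$) and transport it along the structure maps, using that the spaces are connected so that the choice does not matter up to isomorphism.

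Third, I apply the Motivic Van Kampen corollary, that is, \Cref{L van campen} for $L_{mot}$, which is legitimate because $L_{mot}$ is path-injective by \Cref{motivic 0 truncation is injective}. This gives a coequalizer in $Grp^{L_{mot}}=Grp_k^{\mathbb{A}^1}$ of the groups $\pi_1^{mot}$. Since $k$ is perfect in the setting where $Grp^{\mathbb{A}^1}_k$ and \Cref{pi_i mot=nis} apply, $\pi_1^{mot}\simeq\pi_1^{\mathbb{A}^1}$, and this identifies the coequalizer with the displayed one.

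I expect the main obstacle to be the base-point bookkeeping of the second step. The categorical theorem computes a colimit over a semisimplicial diagram of connected 1-truncated objects. Turning this into a coequalizer of \emph{groups} requires choosing base points coherently so that the equivalence $\mathcal{EM}_1\simeq Grp^{L}$ of \Cref{grp em} applies to pointed objects and the induced group maps are well defined. I would handle this by working in $\mathcal{H}^{mot}_\bullet$, pointing everything by the image of a single $k$-point of a common intersection when one exists. Otherwise I would use the pairwise $k$-points together with connectivity to produce conjugation-canonical identifications, which do not affect the coequalizer.
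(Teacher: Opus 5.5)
Your proposal is correct and follows the paper's route. The paper obtains this corollary as the special case of the preceding Motivic Van Kampen corollary for a Nisnevich cover of a scheme (which is an $\mathbb{A}^1$-geometric covering), i.e.\ \Cref{L van campen} for the path-injective $L_{mot}$ (\Cref{motivic 0 truncation is injective}) together with $\pi_1^{mot}\simeq\pi_1^{\mathbb{A}^1}$ from \Cref{pi_i mot=nis}.

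One caution on base points: the argument needs a single compatible pointing of all iterated fiber products, namely a $k$-point of the total fiber product, which is what ``well-pointed'' in \Cref{L van campen} presupposes. Your fallback via separate $k$-points and ``conjugation-canonical identifications'' should be dropped. With three or more opens, twisting the maps $\pi_1^{\mathbb{A}^1}(U_i\times_XU_j)\to\pi_1^{\mathbb{A}^1}(U_i)$ by inner automorphisms can genuinely change the coequalizer.
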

\begin{rem}
While Morel establishes an $\mathbb{A}^1$-local Van Kampen theorem specifically for Zariski covers over perfect fields [\cite{morel2012a1}, Theorem 7.12], the preceding result applies to more general covers. Furthermore, our statement includes a connectivity assumption on the triple intersections to resolve a technical gap in \textit{loc. cit.} The original argument relies on the exactness of the following diagram:$$Hom_{\mathcal{G}r}(\pi_1^{\mathbb{A}^1}(\check{C}(\mathcal{U})), G) \to \prod_i Hom_{\mathcal{G}r}(\pi_1^{\mathbb{A}^1}(U_i), G) \rightrightarrows \prod_{i,j} Hom_{\mathcal{G}r}(\pi_1^{\mathbb{A}^1}(U_i \cap U_j), G).$$ Technically, this diagram represents the $\pi_0$ of the corresponding limit diagram of spaces:$$\prod_i Map(U_i,\mathrm{B} G) \rightrightarrows \prod_{i,j} Map(U_i \cap U_j,\mathrm{B} G)\cdots$$Since $\pi_0$ may fail to commute with limits, the required exactness is not automatic. Resolving the resulting cocycle condition on the double intersections inherently requires the connectivity of the triple intersections.
\end{rem} 
\textbf{Motivic theory of spatical coverings}

\begin{defn}[\Cref{defn for covering}, \Cref{definition for L covering}]
\begin{enumerate}
\item A morphism $f:\esc{Y}\to \esc{X}$ of motivic spaces is a motivic covering if $f\in \big({\mathcal{H}^{mot}_{/\esc{X}}}\big)_{\leq 0}$. Denote the category of such coverings by $Cov_{\mathcal{H}^{mot}}(\esc{X})$. 
\end{enumerate}
When $f:\esc{Y}\to \esc{X}$ is a morphism of Nisnevich local spaces, then:
\begin{enumerate}[resume] 
\item We say that it is an $\mathbb{A}^1$-fibration if it is a local object of ${\mathcal{P}_{nis}}_{/\esc{X}}$ with respect to motivic equivalences contained in ${\mathcal{P}_{nis}}_{/\esc{X}}$. We denote the category of $\mathbb{A}^1$-fibrations of $\esc{X}$ by $\mathcal{F}ib_{\esc{X}}^{\mathbb{A}^1}.$
\item We call it an $\mathbb{A}^1$-covering if it is an $\mathbb{A}^1$-fibration and is a $0$-truncated object in the $\infty$-category of $\mathbb{A}^1$-fibrations over $\esc{X}$. Denote the category of $\mathbb{A}^1$-coverings of $\esc{X}$ as $Cov_{\mathbb{A}^1}(\esc{X})$. 
\end{enumerate}
\end{defn}
\begin{prop}
A morphism $\esc{X}\to \esc{Y}$ of motivic spaces is a motivic covering if and only if it is an $\mathbb{A}^1$ covering if and only if it is a Nisnevich covering.
\end{prop}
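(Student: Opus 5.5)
The proposition compares three notions for a morphism $f:\esc{X}\to\esc{Y}$ of motivic spaces: being a motivic covering, being an $\mathbb{A}^1$-covering, and being a Nisnevich covering. Here a Nisnevich covering means a $0$-truncated object of ${\mathcal{P}_{nis}}_{/\esc{Y}}$, i.e.\ a covering in the $\infty$-topos $\mathcal{P}_{nis}(k)$ in the sense of \Cref{defn for covering}. The plan is to reduce everything to the general results of \S2.4, applied to the localization $L=L_{mot}$ of the $\infty$-topos $\mathcal{C}=\mathcal{P}_{nis}(k)$.

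\textbf{Motivic versus Nisnevich.} This is \Cref{Covering of Localization} verbatim. Since $\mathcal{H}^{mot}(k)\subset\mathcal{P}_{nis}(k)$ is closed under limits, for $\esc{Y}\in\mathcal{H}^{mot}$ the slice $\mathcal{H}^{mot}_{/\esc{Y}}\subset{\mathcal{P}_{nis}}_{/\esc{Y}}$ is again a localization. \Cref{truncation of localization} then gives
$$\bigl(\mathcal{H}^{mot}_{/\esc{Y}}\bigr)_{\leq 0}=\bigl({\mathcal{P}_{nis}}_{/\esc{Y}}\bigr)_{\leq 0}\cap\mathcal{H}^{mot}_{/\esc{Y}}.$$
Because $\esc{X}$ is motivic, $f$ lies in $\mathcal{H}^{mot}_{/\esc{Y}}$. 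So $f$ is a motivic covering iff it is $0$-truncated in ${\mathcal{P}_{nis}}_{/\esc{Y}}$, that is, iff it is a Nisnevich covering.

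\textbf{Motivic versus $\mathbb{A}^1$.} By \Cref{L fib over L local object}, a morphism between $L_{mot}$-local objects is automatically an $\mathbb{A}^1$-fibration. Conversely, any $\mathbb{A}^1$-fibration over the motivic space $\esc{Y}$ has motivic source, as observed in the proof of \Cref{L covering of L local object}. Hence $\mathcal{F}ib^{\mathbb{A}^1}_{\esc{Y}}=\mathcal{H}^{mot}_{/\esc{Y}}$ as full subcategories of ${\mathcal{P}_{nis}}_{/\esc{Y}}$. Taking $0$-truncated objects on both sides gives $Cov_{\mathbb{A}^1}(\esc{Y})=Cov_{\mathcal{H}^{mot}}(\esc{Y})$, which is exactly \Cref{L covering of L local object} with $L=L_{mot}$. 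Combining this with the first step, and using the remark after \Cref{definition for L covering} to identify $0$-truncation in $\mathcal{F}ib^{\mathbb{A}^1}_{\esc{Y}}$ with $0$-truncation in the ambient slice, gives all three equivalences.

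\textbf{Main obstacle.} The only delicate point is that the source of an $\mathbb{A}^1$-fibration over a motivic base is motivic. Here I would argue directly. Let $g:\esc{Z}\to\esc{Y}$ be an $\mathbb{A}^1$-fibration. Then $\esc{Z}\to\esc{Y}$ is right orthogonal to every motivic equivalence lying over $\esc{Y}$. Since $\esc{Y}\to *$ is right orthogonal to all motivic equivalences and right-orthogonal classes are closed under composition, it suffices to show $g$ is right orthogonal to an arbitrary motivic equivalence $A\to B$ equipped with a map $B\to\esc{Y}$; for such a map $A\to B$ is a motivic equivalence in the slice, so this holds. Thus $\esc{Z}\to *$ is local, i.e.\ $\esc{Z}$ is motivic. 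The remaining identifications are formal bookkeeping of the definitions.
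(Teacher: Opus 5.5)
Your proposal is correct and follows the paper's proof, which simply cites \Cref{Covering of Localization} for the motivic/Nisnevich comparison and \Cref{L covering of L local object} for the motivic/$\mathbb{A}^1$ comparison. Your orthogonality argument, that an $\mathbb{A}^1$-fibration over a motivic base has motivic source, is a sound justification of a step the paper asserts without proof in \Cref{L fib over L local object}.
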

\begin{proof}
Follows immediately from \Cref{Covering of Localization} and \Cref{L covering of L local object}.
\end{proof}
\begin{rem}
Because the motivic localization functor $L_{mot}$ is effective (see \Cref{Effective localization remark}), we obtain, by \cref{covering pullback description}, a pullback square of $1$-categories:
\[
\xymatrix{
\mathcal{C}ov_{\mathbb{A}^1}(\esc{X})\ar@{^(->}[r]\ar[d]&\mathcal{H}^{mot}_{/L_{mot}\esc{X}}\ar[d]\\
\mathcal{C}ov_{nis}(\esc{X})\ar@{^(->}[r] &{\mathcal{P}_{nis}(S)}_{/X}
}
\]
\end{rem}
\begin{cor}
Let $\esc{Z},\esc{X}$ be Nisnevich-local spaces, with $\esc{X}$ pointed and $\mathbb{A}^1$-connected. Then a morphism $f:\esc{Z}\to L_{mot}\esc{X}$ is an $\mathbb{A}^1$-covering iff $fib(f)\in Sh^{\mathbb{A}^1}$.
\end{cor}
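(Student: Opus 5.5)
This statement is the motivic specialization of \Cref{L covering iff fiber is L discrete}, combined with the identification of $\mathbb{A}^1$-coverings over a motivic base with internal motivic coverings. The plan has three steps.

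\emph{Step 1: reduce to motivic coverings.} The base $L_{mot}\esc{X}$ is motivic. By \Cref{L fib over L local object} and \Cref{L covering of L local object}, a morphism $f:\esc{Z}\to L_{mot}\esc{X}$ is an $\mathbb{A}^1$-covering iff $\esc{Z}$ is motivic and $f$ is a covering in $\mathcal{H}^{mot}$. By \Cref{Covering of Localization}, the latter is equivalent to $f$ being a Nisnevich covering, i.e. a $0$-truncated morphism of $\mathcal{P}_{nis}$.

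\emph{Step 2: get a connected, pointed base.} The pointing of $\esc{X}$ induces a pointing of $L_{mot}\esc{X}$. Since $\esc{X}$ is $\mathbb{A}^1$-connected, $\pi_0^{nis}L_{mot}\esc{X}=*$, so $L_{mot}\esc{X}$ is a connected object of the $\infty$-topos $\mathcal{P}_{nis}$.

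\emph{Step 3: the two directions.}

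\textbf{Forward.} If $f$ is an $\mathbb{A}^1$-covering, then by Steps 1 and 2 and \Cref{covering iff fiber is discrete}, $fib(f)$ is $0$-truncated in $\mathcal{P}_{nis}$. It is also a limit of motivic spaces, hence motivic. So by \Cref{motivic trunc is nis trunc} it lies in $\mathcal{H}^{mot}\cap \mathrm{Disc}(\mathcal{P}_{nis})=Sh^{\mathbb{A}^1}$.

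\textbf{Converse.} If $fib(f)\in Sh^{\mathbb{A}^1}$, then $fib(f)$ is discrete in $\mathcal{P}_{nis}$. By \Cref{covering iff fiber is discrete}, using that $*\to L_{mot}\esc{X}$ is an effective epimorphism, $f$ is $0$-truncated in $\mathcal{P}_{nis}$. It remains to show $\esc{Z}$ is motivic, i.e. that $L_{mot}$ is ``closed under extensions'' over $L_{mot}\esc{X}$, which is the hypothesis of \Cref{L covering iff fiber is L discrete}.

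\emph{Main obstacle: showing $\esc{Z}$ is motivic.} My first approach is to apply \Cref{L galois theory} (with $L=id$) to write $f$ as a Borel construction
\[\esc{Z}\simeq \mathrm{B}_{nis}(F,\Omega L_{mot}\esc{X},*),\qquad F=fib(f).\]
This exhibits $\esc{Z}\to L_{mot}\esc{X}$ as a fibre sequence with $\mathbb{A}^1$-invariant discrete fibre over a connected motivic base. I would then invoke Morel's criterion [\cite{morel2012a1}, Lemma 6.51]: the total space of a Nisnevich fibre sequence with motivic fibre and connected motivic base is motivic.

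A more concrete alternative uses that $f$ is $0$-truncated with connected base. Then $f$ factors through $\tau_{\leq 1}$ and is pulled back from $\mathrm{B}_{nis}(F,\pi_1,*)\to \mathrm{B}_{nis}\pi_1$, where $\pi_1=\pi_1^{nis}L_{mot}\esc{X}$ is strongly $\mathbb{A}^1$-invariant [\cite{morel2012a1}, Corollary 6.3]. The Borel quotient of an $\mathbb{A}^1$-invariant discrete sheaf by a strongly $\mathbb{A}^1$-invariant group is $\mathbb{A}^1$-local, because its sections compute nonabelian cohomology of $\pi_1$ twisted by $F$. Since $\mathcal{H}^{mot}$ is closed under limits, the pullback $\esc{Z}$ is then motivic.

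I expect this extension-closure step to be the genuine difficulty. Morel's lemma is stated over perfect fields, so perfectness of $k$ may be needed and should be flagged. Once it is established, \Cref{L covering iff fiber is L discrete} yields the claim directly.
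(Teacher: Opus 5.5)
Your proposal is correct and is essentially the paper's argument. The paper applies \Cref{L covering iff fiber is L discrete} together with the closure of $\mathcal{H}^{mot}$ under extensions over $\mathbb{A}^1$-connected bases [\cite{morel2012a1}, Lemma 6.51], and your Steps 1--3 simply unpack that corollary through \Cref{L covering of L local object}, \Cref{Covering of Localization} and \Cref{covering iff fiber is discrete}.

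The Borel-construction detour and the strong-invariance alternative are unnecessary, and the extension step does not need perfectness. Because the base is Nisnevich-connected, the fibration is Nisnevich-locally trivial with $\mathbb{A}^1$-local fibre, so $\mathbb{A}^1$-invariance of its sections follows by descent.
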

\begin{proof}
Follows immediately from \Cref{L covering iff fiber is L discrete} and from the fact that $L_{mot}$ is closed under extensions over $\mathbb{A}^1$-connected objects [\cite{morel2012a1}, Lemma 6.51].
\end{proof}
\begin{cor}
Suppose $\esc{X}$ is a pointed, connected motivic space. Then $Cov_{\mathcal{H}^{mot}}(\esc{X})_*$ has an initial object, namely the ordinary simply connected cover. 
\end{cor}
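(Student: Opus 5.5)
The statement to prove is that for a pointed, connected motivic space $\esc{X}$, the category $Cov_{\mathcal{H}^{mot}}(\esc{X})_*$ has an initial object, namely the ordinary simply connected cover. The plan is to apply \Cref{Universal cover for fundamental localization} with $\mathcal{C}=\mathcal{P}_{nis}(k)$ and $L=L_{mot}$. Since $\esc{X}$ is already motivic, $L_{mot}\esc{X}\simeq\esc{X}$. That proposition then yields the initial object $\tau^{nis}_{>1}\esc{X}\to\esc{X}$ of $Cov_{\mathcal{H}^{mot}}(\esc{X})_*$, which is precisely the ordinary (Nisnevich) simply connected cover. The only hypothesis to verify is that $L_{mot}$ is fundamental on $\esc{X}$, i.e.
\[
\tau^{nis}_{\leq 1}\esc{X}\simeq \tau^{mot}_{\leq 1}\esc{X}.
\]

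First, since $\esc{X}$ is motivically connected, \Cref{nis connective vs mot connective of motivic spaces} (the case $n=1$, which is \Cref{motivic 0 truncation is injective}) shows that $\esc{X}$ is Nisnevich connected, so $\esc{X}\in\mathcal{P}_{nis}(k)_{\geq 1}$. Together with the pointing, this lets us invoke \Cref{layers of connected motivic space}. Its last statement gives $\tau^{nis}_{\leq i}\esc{X}\simeq\tau^{mot}_{\leq i}\esc{X}$ for all $i$, and in particular for $i=1$. Hence $\tau^{nis}_{\leq1}\esc{X}$ is motivic local, and the fundamental condition holds.

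Concretely, the argument inside \Cref{Universal cover for fundamental localization} runs as follows. The fiber sequence $\tau^{nis}_{>1}\esc{X}\to\esc{X}\to\tau^{nis}_{\leq1}\esc{X}$ has its base and total space in $\mathcal{H}^{mot}$. Since $\mathcal{H}^{mot}\subset\mathcal{P}_{nis}$ is closed under limits, the fiber $\tau^{nis}_{>1}\esc{X}$ is also motivic. By \Cref{initial object of cov in a topos}, the map $\tau^{nis}_{>1}\esc{X}\to\esc{X}$ is initial among pointed Nisnevich coverings of $\esc{X}$. By \Cref{Covering of Localization}, $Cov_{\mathcal{H}^{mot}}(\esc{X})$ is the full subcategory of Nisnevich coverings with motivic source. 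An initial object of the larger category that lies in this full subcategory is therefore initial there too.

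The main obstacle is the identification $\tau^{nis}_{\leq1}\esc{X}\simeq\tau^{mot}_{\leq1}\esc{X}$. That identification rests on Morel's strong $\mathbb{A}^1$-invariance of $\pi_1^{\mathbb{A}^1}$ (via \Cref{layers of connected motivic space}, which uses [\cite{morel2012a1}, Corollary 6.3] and Lemma 6.51 of loc. cit.), so $k$ should be taken perfect, as in the surrounding section. Over a non-perfect field, I would first try to show directly that $\mathrm{B}_{nis}\pi_1^{nis}\esc{X}$ is $\mathbb{A}^1$-local. Failing that, I would restrict the statement to the perfect case.
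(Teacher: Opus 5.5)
Your proposal is correct and is essentially the paper's proof. The paper also simply invokes \Cref{Universal cover for fundamental localization}, verifying the fundamental hypothesis by citing [\cite{morel2012a1}, Corollary 6.2] to see that $\Pi^{\mathbb{A}^1}\esc{X}=\tau^{nis}_{\leq 1}\esc{X}$ is motivic; you obtain the same fact via path-injectivity and \Cref{layers of connected motivic space}, which rests on the same results of Morel, and your caveat that $k$ should be perfect matches the assumption the paper makes explicit in the next theorem.
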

\begin{proof}
    By [\cite{morel2012a1}, Corollary 6.2], $\Pi^{\mathbb{A}^1}\esc{X}$ is a motivic space. Whence, the claim follows from \Cref{Universal cover for fundamental localization}. 
\end{proof}

\begin{cor}\label[cor]{Lmot covering is covering of Lmot}
  Suppose $\esc{X}$ is a Nisnevich sheaf of spaces. We then have an equivalence of categories $$\eta^*_{mot}:Cov_{{\mathbb{A}^1}}({L_{mot}}\esc{X)}=Cov_{\mathcal{H}^{mot}}({L_{mot}}\esc{X)}\leftrightarrows  Cov_{{\mathbb{A}^1}}(\esc{X)}: L_{mot}$$ where $\eta_{mot}$ is the localization unit $1\to L_{mot}$. The category on the left-hand side is the category of covering spaces of $L_{mot}\esc{X}$ internal to the motivic homotopy category, while the right-hand side is the $\infty$-categorical analogue of Morel's category of $\mathbb{A}^1$-coverings of $\esc{X}$, as developed in \textup{[\cite{morel2012a1}, \S7]}.
\end{cor}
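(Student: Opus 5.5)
The plan is to deduce the statement as a direct specialization of \Cref{L covering is covering of L}, applied to the $\infty$-topos $\mathcal{C}=\mathcal{P}_{nis}(k)$ and the localization $L=L_{mot}$. The first equality, $Cov_{\mathbb{A}^1}(L_{mot}\esc{X})=Cov_{\mathcal{H}^{mot}}(L_{mot}\esc{X})$, is already covered. Since $L_{mot}\esc{X}$ is motivic local, \Cref{L covering of L local object} identifies $L$-coverings of an $L$-local object with internal $L\mathcal{C}$-coverings. The motivic reformulation of this is the proposition stating that motivic, $\mathbb{A}^1$- and Nisnevich coverings between motivic spaces coincide. So the real content is the equivalence $\eta^*_{mot}\dashv L_{mot}$, and I must check the two hypotheses of \Cref{L covering is covering of L} at $\esc{X}$.

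\textbf{Effectivity.} I need $\eta:\esc{X}\to L_{mot}\esc{X}$ to be an effective epimorphism in $\mathcal{P}_{nis}(k)$. By \Cref{Effective localization remark}, and [\cite{lurie2009higher}, Proposition 7.2.1.14], it suffices that $\pi_0^{nis}\esc{X}\to\pi_0^{\mathbb{A}^1}\esc{X}$ is an epimorphism of Nisnevich sheaves. This is Morel--Voevodsky's [\cite{morel19991}, Corollary 2.3] for localization at the interval $\mathbb{A}^1$. The same argument was already used for $L^{mot}_{\leq n}$ in the effectivity proposition of \S3.1.

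\textbf{Local cartesianity.} The section intro asserts that $L_{mot}$ is cartesian and locally cartesian. What the proof of \Cref{L covering is covering of L} actually uses is weaker and more specific. For an $\mathbb{A}^1$-fibration $Y\to\esc{X}$, the comparison map $Y\to L_{mot}Y\times_{L_{mot}\esc{X}}\esc{X}$ should be a motivic equivalence. Also, $L_{mot}$ should commute with the relevant base changes along $\eta$, which is needed for $L\eta^*\simeq\mathrm{id}$. I would justify this by the standard fact that $\mathbb{A}^1$-localization of Nisnevich sheaves is stable under pullback along maps into motivic spaces. One way is through the universality of colimits in $\mathcal{P}_{nis}$ together with the singular construction $\mathrm{Sing}^{\mathbb{A}^1}$ and the Nisnevich-local behaviour of $\mathbb{A}^1$-fibrations, as in [\cite{morel2012a1}, \S6]. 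The other is Morel's theorem that $L_{mot}$ preserves fiber sequences with connected base, [\cite{morel2012a1}, Theorem 6.53 / Lemma 6.51].

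\textbf{The main obstacle} is this second hypothesis in full generality: $\esc{X}$ is an arbitrary Nisnevich sheaf, not assumed $\mathbb{A}^1$-connected. Morel's fiber-sequence results need a connected base. I would therefore first try to reduce to the connected case. Both sides of the equivalence decompose along $\pi_0^{mot}L_{mot}\esc{X}$: covering categories of a coproduct split as products, and $\eta$ is effective, so the components of $\esc{X}$ surject onto those of $L_{mot}\esc{X}$. Path-injectivity (\Cref{motivic 0 truncation is injective}) then lets me work over each motivically connected piece, where [\cite{morel2012a1}, Lemma 6.51] applies. If this reduction proves awkward, I would instead invoke the asserted local cartesianity of $L_{mot}$ directly. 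Once both hypotheses hold, \Cref{L covering is covering of L} gives that $\eta^*_{mot}$ is an equivalence with inverse $L_{mot}$.
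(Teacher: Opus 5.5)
Your proposal is the paper's proof: it applies \Cref{L covering is covering of L} to $L_{mot}$ on $\mathcal{P}_{nis}$, with effectivity from \Cref{Effective localization remark} and local cartesianity of $L_{mot}$ cited from [\cite{hoyois2017six}, Proposition 3.15]. That local cartesianity holds for every Nisnevich sheaf with no connectivity hypothesis, so your ``main obstacle'' is moot and the reduction to motivically connected pieces should be dropped --- it would not work anyway, since a Nisnevich sheaf does not split as a coproduct indexed by its (generally non-constant) sheaf $\pi_0^{mot}$.
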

\begin{proof}
    Since $L_{mot}$ is locally cartesian [\cite{hoyois2017six}, Proposition 3.15] and effective (\Cref{Effective localization remark}), the result follows from \Cref{L covering is covering of L}.
\end{proof}
\begin{thm}\label{motivically simply connected covering space}
Let $\esc{X}$ be a pointed and motivically connected $k$-motivic space over a perfect field $k$.
\begin{enumerate}
    \item Then the category $\mathcal{C}ov_{\mathcal{H}^{mot}}(\esc{X})_*$ has an initial object ${\widetilde{{\esc{X}}}}^{^{mot}}\to \esc{X}$.
    \item This initial object is the unique one with a motivically simply connected source, i.e., it satisfies $\tau_{\leq 1}^{mot}{\widetilde{{\esc{X}}}}^{^{mot}}\simeq *$. The inclusion  $$\mathcal{C}ov_{\mathcal{H}^{mot}(k)}(\esc{X})_*\subset \mathcal{C}ov_{\mathcal{P}_{nis}(k)}(\esc{X})_*$$ preserves this initial object, i.e., this initial object is the unique Nisnevich covering with a Nisnevich simply connected source.
    \item The canonical map $ \pi_1^{mot}(\esc{X})\to \underline{Aut}_{\esc{X}}(\widetilde{{\esc{X}}}^{^{mot}})$ is an equivalence.
\end{enumerate}
\end{thm}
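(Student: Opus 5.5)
The plan is to deduce all three parts from the general covering theory of \S2.4, applied to the localization $L=L_{mot}$ of the Nisnevich $\infty$-topos $\mathcal{C}=\mathcal{P}_{nis}(k)$. Since $\esc{X}$ is already motivic, $L_{mot}\esc{X}\simeq\esc{X}$.

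For (1), I would first verify that $L_{mot}$ is fundamental on $\esc{X}$, i.e. $\Pi^{\mathbb{A}^1}\esc{X}=\tau^{nis}_{\leq 1}\esc{X}\simeq\tau^{mot}_{\leq 1}\esc{X}$. Because $\esc{X}$ is motivically connected, \Cref{motivic 0 truncation is injective} (via \Cref{nis connective vs mot connective of motivic spaces} with $n=1$) shows that $\esc{X}$ is Nisnevich connected. Then \Cref{layers of connected motivic space} gives $\tau^{nis}_{\leq i}\esc{X}\simeq\tau^{mot}_{\leq i}\esc{X}$ for all $i$, in particular for $i=1$; this step uses that $k$ is perfect. \Cref{Universal cover for fundamental localization} then produces the initial object $\widetilde{\esc{X}}^{mot}:=\tau^{nis}_{>1}\esc{X}\to\esc{X}$ of $\mathcal{C}ov_{\mathcal{H}^{mot}}(\esc{X})_*$. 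Its fibre sequence $\tau^{nis}_{>1}\esc{X}\to\esc{X}\to\tau^{nis}_{\leq 1}\esc{X}$ has motivic base and total space, so $\widetilde{\esc{X}}^{mot}$ is motivic.

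For (2), I would argue as follows.
\begin{itemize}
\item By \Cref{initial object of cov in a topos}, the same object $\tau^{nis}_{>1}\esc{X}\to\esc{X}$ is the initial object of $\mathcal{C}ov_{\mathcal{P}_{nis}}(\esc{X})_*$. So the inclusion of pointed covering categories preserves it.
\item The source is Nisnevich $1$-connected, so $\tau^{nis}_{\leq 1}\widetilde{\esc{X}}^{mot}\simeq *$.
\item Applying \Cref{layers of connected motivic space} again to the connected motivic space $\widetilde{\esc{X}}^{mot}$ gives $\tau^{mot}_{\leq 1}\widetilde{\esc{X}}^{mot}\simeq *$.
\end{itemize}
For uniqueness, take a pointed motivic covering $\esc{Y}\to\esc{X}$ with $\tau^{mot}_{\leq 1}\esc{Y}\simeq *$. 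By the same comparison, $\tau^{nis}_{\leq 1}\esc{Y}\simeq *$. The initial map $\widetilde{\esc{X}}^{mot}\to\esc{Y}$ is a covering by \Cref{covering lemma}(1), so its fibre is discrete. Since both objects are Nisnevich simply connected, the long exact sequence of Nisnevich homotopy sheaves shows this fibre is $*$. Hence the map is an equivalence. This is the step where I expect the main care is needed: the long exact sequence must be run with the connectivity of the fibre pinned down, using pointedness and the discreteness from \Cref{covering iff fiber is discrete}.

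For (3), I would invoke \Cref{aut=pi_1}. It applies because $L_{mot}$ is locally cartesian (as in the proof of \Cref{Lmot covering is covering of Lmot}) and fundamental on $\esc{X}$ (from (1)). This identifies $\pi_1^{L\mathcal{C}}\esc{X}\simeq\underline{Aut}_{\esc{X}}(\widetilde{\esc{X}}^{mot})$. Finally, $\pi_1^{L\mathcal{C}}=\pi_1^{mot}$ by definition, and it agrees with $\pi_1^{\mathbb{A}^1}$ by \Cref{pi_i mot=nis}.
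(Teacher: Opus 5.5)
Your proposal is correct and follows essentially the same route as the paper: path injectivity together with the comparison $\tau^{nis}_{\leq 1}\esc{X}\simeq\tau^{mot}_{\leq 1}\esc{X}$ feeds \Cref{Universal cover for fundamental localization} for (1), the identification $\widetilde{\esc{X}}^{mot}\simeq\tau^{nis}_{>1}\esc{X}$ gives (2), and \Cref{aut=pi_1} gives (3); you obtain the comparison via \Cref{layers of connected motivic space}, whereas the paper cites Morel's Corollary 6.2 directly. The only real addition is your explicit uniqueness argument in (2), which the paper simply asserts as a formal consequence, and it is sound: the fibre over the base point is discrete, its $\pi_0$ is a quotient of $\pi_1^{nis}\esc{Y}=*$, and an equivalence on the fibre over the effective epimorphism $*\to\esc{Y}$ descends to an equivalence.
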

\begin{proof}
    Since motivically connected spaces are $\mathbb{A}^1$-connected by \Cref{motivic 0 truncation is injective}, we know that $\esc{X}$ is $\mathbb{A}^1$-connected. By [\cite{morel2012a1}, Corollary 6.2], it follows that $\Pi^{\mathbb{A}^1}\esc{X}$ is a motivic space. Whence \Cref{Universal cover for fundamental localization} applies, and statement (1) follows. 
    
Statement (2) is a formal consequence of the fact that, from the referred proposition, one has $(\widetilde{{\esc{X}}}^{^{mot}})\simeq \tau_{>1}^{nis}\esc{X}$, which is the unique simply connected (simplicial) cover of $\esc{X}$. Since this is motivic local and $\mathbb{A}^1$ connected, we have $$\tau_{\leq 1}^{mot}{\widetilde{{\esc{X}}}}^{^{mot}}\simeq \tau_{\leq 1}^{nis}{\widetilde{{\esc{X}}}}^{^{mot}},$$ which is trivial, as obtained in the previous line. 
    
The third statement follows from \Cref{aut=pi_1}.
\end{proof}

\begin{thm}[compare the methods with [\cite{morel2012a1}, Theorem 7.8\text{]}]\label{a1 simply connected covering space}
Let $\esc{X}$ be a pointed, $\mathbb{A}^1$-connected, Nisnevich-local space. 
\begin{enumerate}
    \item Then the category $\mathcal{C}ov_{\mathbb{A}^1}(\esc{X})_*$ has an initial object $\widetilde{{\esc{X}}}^{^{\mathbb{A}^1}}$. 
    \item The initial object is the unique $\mathbb{A}^1$-covering with $\mathbb{A}^1$ simply connected source.
    \item   The canonical map $ \pi_1^{\mathbb{A}^1}(\esc{X})\to \underline{Aut}_{\esc{X}}(\widetilde{{\esc{X}}}^{^{\mathbb{A}^1}})$ is an equivalence.
\end{enumerate}
\end{thm}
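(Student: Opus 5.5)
The plan is to reduce everything to \Cref{motivically simply connected covering space}, applied to the motivic space $L_{mot}\esc{X}$, and then transport the conclusions along the equivalence of \Cref{Lmot covering is covering of Lmot}. Since $\esc{X}$ is pointed and $\mathbb{A}^1$-connected, $L_{mot}\esc{X}$ is pointed and connected in $\mathcal{P}_{nis}$. By \Cref{motivic 0 truncation is injective} (or simply \Cref{pi0 to Lpi0}), it is then also motivically connected.

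\textbf{Step 1: the initial object.} Applying \Cref{motivically simply connected covering space}(1) to $L_{mot}\esc{X}$ gives an initial object $\widetilde{L_{mot}\esc{X}}^{mot}\simeq\tau^{nis}_{>1}L_{mot}\esc{X}$ of $Cov_{\mathcal{H}^{mot}}(L_{mot}\esc{X})_*$. By \Cref{Lmot covering is covering of Lmot}, the pullback $\eta^*_{mot}$ is an equivalence $Cov_{\mathcal{H}^{mot}}(L_{mot}\esc{X})\simeq Cov_{\mathbb{A}^1}(\esc{X})$ with inverse $L_{mot}$. I must check that it respects pointings. A point of $\esc{Y}\to\esc{X}$ over the base point of $\esc{X}$ maps to a point of $L_{mot}\esc{Y}$ over the image point. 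Conversely, a point of a covering of $L_{mot}\esc{X}$ lying over $\eta(x)$ lifts uniquely to the pullback. So the equivalence upgrades to the pointed categories, and I set
\[\widetilde{\esc{X}}^{\mathbb{A}^1}:=\esc{X}\times_{L_{mot}\esc{X}}\tau^{nis}_{>1}L_{mot}\esc{X},\]
which proves (1).

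\textbf{Step 2: uniqueness via simple connectivity.} Because $L_{mot}$ inverts $\eta^*_{mot}$, we have $L_{mot}\widetilde{\esc{X}}^{\mathbb{A}^1}\simeq\tau^{nis}_{>1}L_{mot}\esc{X}$. This object is Nisnevich simply connected, so $\widetilde{\esc{X}}^{\mathbb{A}^1}$ is $\mathbb{A}^1$-simply connected. Conversely, let $\esc{Y}\to\esc{X}$ be a pointed $\mathbb{A}^1$-covering whose source is $\mathbb{A}^1$-simply connected. Then $L_{mot}\esc{Y}\to L_{mot}\esc{X}$ is a pointed motivic covering with Nisnevich simply connected source. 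By the uniqueness clause of \Cref{motivically simply connected covering space}(2), it is the universal cover. Applying $\eta^*_{mot}$ then identifies $\esc{Y}$ with $\widetilde{\esc{X}}^{\mathbb{A}^1}$. (Here one uses the standard fact that, over a connected base, a pointed covering with simply connected source is initial, as follows from the fiber sequence in \Cref{initial object of cov in a topos}.) This proves (2).

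\textbf{Step 3: the automorphism object, and the main obstacle.} By definition, $\pi_1^{\mathbb{A}^1}(\esc{X})=\pi_1^{nis}(L_{mot}\esc{X})$, which equals $\pi_1^{mot}(L_{mot}\esc{X})$ by \Cref{pi_i mot=nis}. By \Cref{motivically simply connected covering space}(3), this is $\underline{Aut}_{L_{mot}\esc{X}}(\widetilde{L_{mot}\esc{X}}^{mot})$. It remains to identify this with $\underline{Aut}_{\esc{X}}(\widetilde{\esc{X}}^{\mathbb{A}^1})$, and I expect this to be the main obstacle. The difficulty is that \Cref{Lmot covering is covering of Lmot} only gives an equivalence of categories, hence of global mapping spaces, whereas the statement concerns internal automorphism objects. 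I would argue sectionwise. For each $U\in Sm_k$, a $U$-point of $\underline{Aut}$ is an automorphism of the pulled-back covering over $U\times\esc{X}$. Since $L_{mot}$ is locally cartesian and effective, I would apply \Cref{L covering is covering of L} over the base $U\times\esc{X}$. I would also use that $L_{mot}(U\times\esc{X})\simeq U\times L_{mot}\esc{X}$, because $L_{mot}$ preserves finite products. Together these give a natural equivalence of the automorphism groupoids over $U$. Alternatively, one can run the computation in \Cref{aut=pi_1} directly with the unit $\esc{X}\to L_{mot}\esc{X}$, using local cartesianity exactly as in that lemma's Yoneda chain. That route gives $\pi_1^{\mathbb{A}^1}(\esc{X})\simeq\underline{Aut}_{\esc{X}}(\widetilde{\esc{X}}^{\mathbb{A}^1})$, which is (3).
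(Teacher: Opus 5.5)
Your proposal is correct and follows the paper's proof. You replace $\esc{X}$ by $L_{mot}\esc{X}$ via \Cref{Lmot covering is covering of Lmot}, apply \Cref{motivically simply connected covering space}, and identify $\pi_1^{\mathbb{A}^1}$ with $\pi_1^{mot}$ via \Cref{pi_i mot=nis}; both your argument and the paper's tacitly need $k$ perfect, as that theorem does. Your extra checks on pointings, uniqueness, and transporting the internal $\underline{Aut}$ (which the paper passes over) are sound, with one slip: $L_{mot}(U\times\esc{X})\simeq L_{mot}U\times L_{mot}\esc{X}$, not $U\times L_{mot}\esc{X}$. This is harmless, since $L_{mot}$ preserves finite products, so automorphisms of the pulled-back cover over $U\times L_{mot}\esc{X}$ and over $L_{mot}U\times L_{mot}\esc{X}$ coincide.
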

\begin{proof}
When $\esc{X}$ is pointed $\mathbb{A}^1$-connected, $L_{mot}\esc{X}$ is pointed and Nisnevich locally connected. Therefore, by the above corollary, \Cref{Lmot covering is covering of Lmot}, we may replace $\esc{X}$ with $L_{mot}\esc{X}$ and assume that $\esc{X}$ is $\mathbb{A}^1$-local. In that case, \Cref{motivically simply connected covering space} applies. For the third statement to follow from the third statement of the previous theorem, it suffices to know that $\pi_1^{\mathbb{A}^1}\esc{X}\simeq \pi_1^{mot}\esc{X}$ (see \Cref{pi_i mot=nis}).
\end{proof}
However, it might not be true that the ordinary simplicial covering of a space maps to the $\mathbb{A}^1$-simply connected covering under $L_{mot}$. This works for nisnevich-locally connected spaces when $\tau^{nis}_{\leq 1}\esc{X}$ is $\mathbb{A}^1$-local, which happens precisely when $\pi_1^{nis}\esc{X}$ is strongly $\mathbb{A}^1$-invariant.
\begin{prop}
Suppose $\esc{X}$ is a Nisnevich-local, pointed, connected space with a strongly $\mathbb{A}^1$-invariant fundamental group. Then $L_{mot}$ takes the universal Nisnevich-local cover of $\esc{X}$ to the universal $\mathbb{A}^1$-simply connected cover.
\end{prop}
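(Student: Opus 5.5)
The plan is to show that the Nisnevich universal cover $\tau^{nis}_{>1}\esc{X}\to\esc{X}$ already has motivic fundamental groupoid trivial after localization, and that its image under $L_{mot}$ is the initial pointed $\mathbb{A}^1$-covering of $L_{mot}\esc{X}$. Write $p:\tilde{\esc{X}}:=\tau^{nis}_{>1}\esc{X}\to \esc{X}$ for this cover. It sits in the fiber sequence $\tilde{\esc{X}}\to\esc{X}\to \tau^{nis}_{\leq 1}\esc{X}$ of \Cref{initial object of cov in a topos}, and $\tau^{nis}_{\leq 1}\esc{X}\simeq \mathrm{B}_{nis}\pi_1^{nis}\esc{X}$ since $\esc{X}$ is connected.

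The first step is to observe that $\tau^{nis}_{\leq 1}\esc{X}$ is motivic. Indeed, strong $\mathbb{A}^1$-invariance of $G:=\pi_1^{nis}\esc{X}$ means precisely that $\mathrm{B}_{nis}G$ is $\mathbb{A}^1$-local, i.e.\ $G\in 1\mbox{-}Grp^{L_{mot}}$. The second step is to use that $L_{mot}$ is locally cartesian and effective (\Cref{Lmot covering is covering of Lmot}) to compare fibrations over $\esc{X}$ and over $L_{mot}\esc{X}$. More concretely, I apply $L_{mot}$ to the fiber sequence. Since the base $\mathrm{B}_{nis}G$ is motivic and connected, Morel's result [\cite{morel2012a1}, Theorem 6.53/Lemma 6.51] on $L_{mot}$ preserving fiber sequences over connected $\mathbb{A}^1$-local bases with strongly invariant $\pi_1$ gives a fiber sequence
$$L_{mot}\tilde{\esc{X}}\to L_{mot}\esc{X}\to \mathrm{B}_{nis}G.$$
The same conclusion also follows from local cartesianity: $\tilde{\esc{X}}=\esc{X}\times_{\mathrm{B}G}*$, and $L_{mot}$ applied in the slice over the local object $\mathrm{B}G$ commutes with this pullback.

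The third step is to identify $L_{mot}\esc{X}\to \mathrm{B}_{nis}G$ with the truncation $L_{mot}\esc{X}\to\tau^{nis}_{\leq 1}L_{mot}\esc{X}=\Pi^{\mathbb{A}^1}\esc{X}$. The map $\esc{X}\to\mathrm{B}G$ is a $\tau_{\leq 1}$-equivalence, and $\mathrm{B}G$ is both motivic and $1$-truncated, so it is initial among maps from $\esc{X}$ to motivic $1$-truncated objects. Hence $\mathrm{B}G\simeq L^{mot}_{\leq 1}\esc{X}=\tau^{mot}_{\leq 1}L_{mot}\esc{X}$. Next, $L_{mot}\esc{X}$ is $\mathbb{A}^1$-connected, because $L_{mot}$ is effective and $\esc{X}$ is connected. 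By \Cref{layers of connected motivic space} (over a perfect field) or \Cref{Universal cover for fundamental localization}, $\tau^{mot}_{\leq1}$ agrees with $\tau^{nis}_{\leq 1}$ on $L_{mot}\esc{X}$. The fiber of the truncation is therefore $\tau^{nis}_{>1}L_{mot}\esc{X}$, which is the universal cover $\widetilde{\esc{X}}^{\mathbb{A}^1}$ of \Cref{a1 simply connected covering space} (equivalently of \Cref{motivically simply connected covering space}). Comparing the two fiber sequences gives $L_{mot}\tilde{\esc{X}}\simeq \widetilde{L_{mot}\esc{X}}$ over $L_{mot}\esc{X}$, which is the claim; under \Cref{Lmot covering is covering of Lmot} this corresponds to the initial object of $Cov_{\mathbb{A}^1}(\esc{X})_*$.

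The main obstacle is the second step: justifying that $L_{mot}$ preserves the fiber sequence. Local cartesianity, as stated in \Cref{L covering is covering of L}, only says that $L$ commutes with pullbacks along maps into local objects, up to $L$-equivalence in the slice. I would therefore first try to invoke [\cite{hoyois2017six}, Proposition 3.15] directly, applied to the pullback $\esc{X}\times_{\mathrm{B}G}*$ over the motivic object $\mathrm{B}G$. If that gives only an $L$-equivalence in the slice, I would fall back on Morel's $\mathbb{A}^1$-fiber sequence theorem, whose hypotheses are exactly that the base is $\mathbb{A}^1$-local and connected with strongly $\mathbb{A}^1$-invariant $\pi_1$; this is what the assumption on $\pi_1^{nis}\esc{X}$ is for.
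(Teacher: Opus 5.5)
Your argument is correct, and its skeleton matches the paper's. Both identify the universal $\mathbb{A}^1$-cover with $\tau^{nis}_{>1}L_{mot}\esc{X}$. Both then reduce the claim to comparing $\esc{X}\to\tau^{nis}_{\leq 1}\esc{X}$ with $L_{mot}\esc{X}\to\tau^{nis}_{\leq 1}L_{mot}\esc{X}$.

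The difference is how that comparison is justified. The paper just cites Morel's Theorem 6.59 with $n=1$, and leaves implicit why $L_{mot}$ carries $\tau^{nis}_{>1}\esc{X}$ to the corresponding fiber of $L_{mot}\esc{X}$. You make both ingredients explicit:
\begin{enumerate}
\item You get the fiber sequence $L_{mot}\tilde{\esc{X}}\to L_{mot}\esc{X}\to\mathrm{B}_{nis}G$ formally from local cartesianity, using only that $\mathrm{B}_{nis}G$ is motivic.
\item You identify $\mathrm{B}_{nis}G$ with $\tau^{mot}_{\leq 1}L_{mot}\esc{X}$ by the universal property of $\tau^{nis}_{\leq 1}\esc{X}$. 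Morel then enters only through $\tau^{mot}_{\leq 1}=\tau^{nis}_{\leq 1}$ on Nisnevich-connected motivic spaces, i.e.\ \Cref{layers of connected motivic space} (Morel's Corollary 6.2).
\end{enumerate}
This is more self-contained and isolates exactly where perfectness of $k$ is used. The paper's version is shorter but rests on a heavier black box. Both implicitly assume $k$ perfect, as does \Cref{motivically simply connected covering space}.

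Two small corrections.

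First, the obstacle in your last paragraph is not real. Local cartesianity says that the base change $\tilde{\esc{X}}\to L_{mot}\esc{X}\times_{\mathrm{B}_{nis}G}*$ of the unit, along the map of motivic objects $*\to\mathrm{B}_{nis}G$, is an $L_{mot}$-equivalence. Its target is a limit of motivic spaces, hence motivic. So this ``$L$-equivalence in the slice'' already gives $L_{mot}\tilde{\esc{X}}\simeq L_{mot}\esc{X}\times_{\mathrm{B}_{nis}G}*$, and no fallback to Morel's fiber-sequence theorem is needed.

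Second, you cannot cite \Cref{Universal cover for fundamental localization} for $\tau^{mot}_{\leq 1}L_{mot}\esc{X}\simeq\tau^{nis}_{\leq 1}L_{mot}\esc{X}$. That proposition takes this equivalence as a hypothesis; only \Cref{layers of connected motivic space} supplies it.
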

\begin{proof}
    This follows immediately from [\cite{morel2012a1}, Theorem 6.59] for $n=1$ and from the identification of the $\mathbb{A}^1$-simply connected covering with $(\widetilde{{\esc{X}}}^{^{\mathbb{A}^1}})\simeq \tau_{>1}^{nis}L_{mot}\esc{X}$.
\end{proof}
\begin{cor}[\cite{morel2012a1}, Lemma 7.5(1)]
Let $G$ be a strongly $\mathbb{A}^1$-invariant Nisnevich sheaf of groups. Then any $G$-torsor is an $\mathbb{A}^1$-covering. 
\end{cor}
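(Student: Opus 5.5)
The plan is to deduce this directly from \Cref{torsors are covering}, applied to the localization $L=L_{mot}$ of the Nisnevich $\infty$-topos $\mathcal{C}=\mathcal{P}_{nis}(k)$. Let $G$ be a strongly $\mathbb{A}^1$-invariant Nisnevich sheaf of groups and $\esc{Y}\to\esc{X}$ a $G$-torsor of Nisnevich-local spaces. The first step is to check the hypothesis of \Cref{torsors are covering}, namely $G\in Grp^{L_{mot}}$. By the remark identifying $1\mbox{-}Grp^{L_{mot}}$ with $Grp^{\mathbb{A}^1}_k$, this says exactly that $\mathrm{B}_{nis}G$ is $\mathbb{A}^1$-local, which is the definition of strong $\mathbb{A}^1$-invariance. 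So $\mathrm{B}_{nis}G\in\mathcal{H}^{mot}(k)$, and $\mathrm{B}_{nis}G$ is $1$-strictly $L_{mot}$-local in the sense of \Cref{strictly n local}.

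Next I make the chain of reductions in \Cref{torsors are covering} explicit, so that each step visibly holds in the motivic setting.

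\begin{enumerate}
\item The pointing $*\to\mathrm{B}_{nis}G$ has fiber $G$, which is discrete. Since $\mathrm{B}_{nis}G$ is connected, \Cref{covering iff fiber is discrete} shows that this map is a Nisnevich covering.
\item Both $*$ and $\mathrm{B}_{nis}G$ are motivic spaces. By \Cref{Covering of Localization}, the map is therefore a motivic covering, and by \Cref{L covering of L local object} it is also an $\mathbb{A}^1$-covering of $\mathrm{B}_{nis}G$.
\item A $G$-torsor $\esc{Y}\to\esc{X}$ is classified by a map $\esc{X}\to\mathrm{B}_{nis}G$, and $\esc{Y}\simeq\esc{X}\times_{\mathrm{B}_{nis}G}*$ (\cite{MR3423073}, \S3).
\item By \Cref{lemma for L covering}(1), $\mathbb{A}^1$-coverings are stable under pullback. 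Hence $\esc{Y}\to\esc{X}$ is an $\mathbb{A}^1$-covering.
\end{enumerate}

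The only point needing care is step (4): pullback preserves $\mathbb{A}^1$-fibrations even when $\esc{X}$ is not $\mathbb{A}^1$-local. This holds because the left adjoint $\mathcal{P}_{nis/\esc{X}}\to\mathcal{P}_{nis/\mathrm{B}G}$, given by composition, preserves motivic equivalences; the right adjoint therefore preserves local objects. That is precisely the argument of \Cref{lemma for L covering}(1).

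I expect no serious obstacle here. The one subtlety is the identification of torsors with pullbacks of $*\to\mathrm{B}_{nis}G$, which holds in the Nisnevich $\infty$-topos (\cite{MR3423073}). One should also note that no perfectness of $k$ is required, since strong $\mathbb{A}^1$-invariance is used only through the definition.
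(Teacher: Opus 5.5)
Your proposal is correct and follows the paper's route. The paper's proof just cites \Cref{torsors are covering} together with the definition of strong $\mathbb{A}^1$-invariance (namely that $\mathrm{B}_{nis}G$ is $\mathbb{A}^1$-local), and your steps (1)--(4) unpack that corollary's proof for $L=L_{mot}$ on $\mathcal{P}_{nis}(k)$.
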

\begin{proof}
    This follows at once from \Cref{torsors are covering} and the definition of strong $\mathbb{A}^1$ invariance.
\end{proof}
 
\begin{prop}[This was stated in \text{[}\cite{morel2012a1}, Remark 7.10\text{]}]
Suppose, $\esc{Y}\in \mathcal{P}_{nis}(k)$ and $y:*\to \esc{Y}$ a base point. Then pull back along $y$ induces a functor $$y^*:Cov_{\mathbb{A}^1}(\esc{{Y}})\to Cov_{\mathbb{A}^1}(*)=Sh^{\mathbb{A}^1}_k.$$ This functor factors through the category of $\mathbb{A}^1$-invariant nisnevich sheaves (of sets) having an action by $\pi_1^{\mathbb{A}^1}(\esc{Y},y)$ (by the canonnical functor forgetting the action): $$\Gamma_y^{\mathbb{A}^1}:Cov_{\mathbb{A}^1}(\esc{{Y}})\longrightarrow \pi_1^{\mathbb{A}^1}(\esc{Y},y)\mbox{-}Sh^{\mathbb{A}^1}_k.$$
When $\esc{Y}$ is $\mathbb{A}^1$ connected, $\Gamma_y^{\mathbb{A}^1}$ is an equivalence of categories.
\end{prop}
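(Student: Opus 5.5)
The plan is to reduce to the motivic localization $L_{mot}\esc{Y}$ and then apply the categorical Galois correspondence of \Cref{L galois theory}.

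\textbf{The functor $y^*$.} The base point $y:*\to\esc{Y}$ gives $L_{mot}y:*\to L_{mot}\esc{Y}$. Pulling back along $L_{mot}y$ carries $Cov_{\mathcal{H}^{mot}}(L_{mot}\esc{Y})$ to $Cov_{\mathcal{H}^{mot}}(*)=\mathrm{Disc}(\mathcal{H}^{mot})=Sh^{\mathbb{A}^1}_k$; this uses \Cref{covering lemma} (3) together with the fact that $\mathcal{H}^{mot}$ is closed under limits. I would identify $Cov_{\mathbb{A}^1}(\esc{Y})$ with $Cov_{\mathcal{H}^{mot}}(L_{mot}\esc{Y})$ via the equivalence of \Cref{Lmot covering is covering of Lmot}, whose inverse is $\eta^*$. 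Since $\eta\circ y=L_{mot}y$, pulling back along $y$ agrees with pulling back along $L_{mot}y$ after composing with this equivalence. This is also consistent with \Cref{lemma for L covering} (1), which says $y^*$ of an $\mathbb{A}^1$-covering is an $\mathbb{A}^1$-covering of $*$. So $y^*$ lands in $Cov_{\mathbb{A}^1}(*)=Sh^{\mathbb{A}^1}_k$.

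\textbf{The action.} Working in the Nisnevich topos, \Cref{action of the fundamerntal groupoid} (with $L=id$) gives an action of $\pi_1^{nis}(L_{mot}\esc{Y},L_{mot}y)$ on the fiber. By definition this group is $\pi_1^{\mathbb{A}^1}(\esc{Y},y)$. To meet the hypothesis ``fundamental on $\esc{Y}$'' needed for the $L$-version, I would check that $\Pi^{L_{mot}}\esc{Y}=\tau_{\leq 1}^{nis}L_{mot}\esc{Y}$ is motivic local when $\esc{Y}$ is $\mathbb{A}^1$-connected, using [\cite{morel2012a1}, Corollary 6.2] as in \Cref{motivically simply connected covering space}. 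I would also note that the action does not actually need this: it is defined by the loop object acting on the fiber in the Nisnevich topos. This yields $\Gamma^{\mathbb{A}^1}_y$, and by construction it factors the forgetful functor.

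\textbf{The equivalence.} When $\esc{Y}$ is $\mathbb{A}^1$-connected, $L_{mot}\esc{Y}$ is pointed and Nisnevich-connected. Then \Cref{L galois theory} applies to $L=L_{mot}$ over $L_{mot}\esc{Y}$. Its extension hypothesis, that $\mathcal{H}^{mot}$ is closed under extensions over $\mathbb{A}^1$-connected bases, is [\cite{morel2012a1}, Lemma 6.51], exactly as in the corollary preceding \Cref{Lmot covering is covering of Lmot}. The inverse sends an $\mathbb{A}^1$-invariant sheaf $F$ with a $\pi_1^{\mathbb{A}^1}$-action to the Borel construction $\mathrm{B}_{nis}(F,\Omega L_{mot}\esc{Y},*)\to L_{mot}\esc{Y}$; the action factors through $\pi_0\Omega=\pi_1^{\mathbb{A}^1}$ since $F$ is discrete. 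The fiber of this map is $F$, which is $\mathbb{A}^1$-local, so by \Cref{L covering iff fiber is L discrete} the map is a motivic covering.

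\textbf{Main obstacle.} The hardest step is showing that the total space of the Borel construction is motivic local. I would get this from Morel's extension lemma applied to the fiber sequence $F\to E\to L_{mot}\esc{Y}$ with connected base. Composing with $\eta^*$ then transfers the equivalence to $Cov_{\mathbb{A}^1}(\esc{Y})$.
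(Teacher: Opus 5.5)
Your proposal is correct and follows essentially the same route as the paper. Both arguments use pullback stability via \Cref{covering lemma}~(3), the action from \Cref{action of the fundamerntal groupoid} with fundamentality on $\mathbb{A}^1$-connected spaces coming from Morel's Corollary 6.2, and the equivalence from \Cref{L galois theory} with Morel's Lemma 6.51 supplying closure under extensions. Your explicit transport through \Cref{Lmot covering is covering of Lmot} is needed, since \Cref{L galois theory} is phrased for $Cov_{L\mathcal{C}}(LY)$. Your remark that the action exists without any connectivity hypothesis matches what the statement asserts. Both are small refinements of the paper's terse argument.
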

\begin{proof}
The existence of the pullback functor follows from the pullback stability of ($\mathbb{A}^1$) coverings, \Cref{covering lemma} (3). That it factors through $\pi_1^{\mathbb{A}^1}(Y,y)\mbox{-}Sh^{\mathbb{A}^1}_k$ is a consequence of \Cref{action of the fundamerntal groupoid}, in view of the fact that when $\esc{Y}$ is $\mathbb{A}^1$ connected, there is an equivalence $$\Pi^{\mathbb{A}^1}\esc{Y}\simeq \Pi^{mot}L_{mot}\esc{Y},$$ or, in other words, that $\Pi^{\mathbb{A}^1}\esc{Y}$ is $\mathbb{A}^1$ local (see \Cref{layers of connected motivic space}).

Since $\mathcal{H}^{mot}$ is closed under extensions among connected objects (see [\cite{morel2012a1}, Lemma 6.51]), the claim follows immediately from \Cref{L galois theory}.
\end{proof}

\section{The internal homotopy theory of \texorpdfstring{$\mathcal{H}^{bir}$}{h}} 
The main observation of the first three subsections of the previous section is that the motivic homotopy category, constructed out of smooth schemes, even over a field, is not an $\infty$-topos, in contrast to the ordinary homotopy theory of smooth manifolds. To elaborate, let us have a quick recap of how these categories are born from universality:
\begin{prop}\label[prop]{robalo}
    Let $S$ be a Qcqs schem. Then the functor $$h_S^{mot}:=L_{mot}h_S:Sm_S\to  \mathcal{H}^{mot}(S)$$ 
    is universal among functors to a presentable $\infty$-category that contracts the affine line and sends the Nisnevich squares to pushout squares. In other words, the large co-presheaf $$Cat_{\infty}^{\mathbb{A}^1,nis}(Sm_S, -): Pr^L\to \mathcal{G}_1\mbox{-}Spc$$ sending a presentable $\infty$-category $\mathcal{D}$ to the full subcategory of $Cat_\infty(Sm_S,\mathcal{D})$ satisfying the two properties mentioned above, is representable by $h_S^{mot}: Sm_S\to \mathcal{H}^{mot}(S)$. 
\end{prop}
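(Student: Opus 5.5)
We propose to deduce \Cref{robalo} from the universal property of presheaf categories combined with the universal property of accessible localizations. First, by [\cite{lurie2009higher}, Theorem 5.1.5.6], restriction along the Yoneda embedding $h_S:Sm_S\to\mathcal{P}(S)$ gives, for every presentable $\mathcal{D}$, an equivalence $\mathrm{Fun}^L(\mathcal{P}(S),\mathcal{D})\simeq \mathrm{Fun}(Sm_S,\mathcal{D})$, natural in $\mathcal{D}\in Pr^L$. Second, by [\cite{lurie2009higher}, Proposition 5.5.4.20], for the accessible localization $L_{mot}:\mathcal{P}(S)\to\mathcal{H}^{mot}(S)$ at the small set of generating maps described above, precomposition with $L_{mot}$ identifies $\mathrm{Fun}^L(\mathcal{H}^{mot}(S),\mathcal{D})$ with the full subcategory of $\mathrm{Fun}^L(\mathcal{P}(S),\mathcal{D})$ consisting of colimit-preserving functors that invert those generating maps. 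Combining the two gives a fully faithful functor $\mathrm{Fun}^L(\mathcal{H}^{mot}(S),\mathcal{D})\hookrightarrow \mathrm{Fun}(Sm_S,\mathcal{D})$, given by precomposition with $h_S^{mot}$.

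It remains to identify the essential image with $Cat_\infty^{\mathbb{A}^1,nis}(Sm_S,\mathcal{D})$. Take $F:Sm_S\to\mathcal{D}$ with left Kan extension $\tilde F$. The Nisnevich generators are $h_SU\sqcup_{h_SV}h_SY\to h_SX$ for cd squares $Q$, together with $\emptyset\to h_S\emptyset$. Since $\tilde F$ preserves pushouts and the initial object, $\tilde F$ inverts these exactly when $F$ sends each Nisnevich square to a pushout square and sends $\emptyset$ to an initial object. The $\mathbb{A}^1$-generators are the projections $h_S(\mathbb{A}^1\times X)\to h_SX$, and $\tilde F$ inverts them exactly when $F(\mathbb{A}^1_X)\to F(X)$ is an equivalence for all $X$. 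This is the meaning of ``contracts the affine line''; if the phrase is read only as $F(\mathbb{A}^1_S)\to F(S)$, the statement should be taken in the family form over all $X\in Sm_S$. Finally, because both equivalences above are natural in $\mathcal{D}$ with respect to left adjoints, we get an equivalence of functors $Pr^L\to\mathcal{G}_1\mbox{-}Spc$ after passing to maximal subgroupoids. This exhibits $h_S^{mot}$ as representing the copresheaf $Cat_{\infty}^{\mathbb{A}^1,nis}(Sm_S,-)$, consistent with [\cite{robalo2012noncommutative}, Theorem 5.2].

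The main obstacle is only bookkeeping. We must check that the localizing set used to define $\mathcal{H}^{mot}(S)$, namely Nisnevich squares plus $\mathbb{A}^1$-projections, generates the same strongly saturated class as ``Nisnevich descent plus $\mathbb{A}^1$-invariance''. We must also handle the empty-scheme condition correctly, since a pushout of the empty square alone does not force $F(\emptyset)$ to be initial. Both points follow from the description of the generators given earlier in this section.
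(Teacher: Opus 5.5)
Your argument is correct and is the standard proof behind [\cite{robalo2012noncommutative}, Theorem 5.2], which the paper simply cites: restriction along Yoneda via [\cite{lurie2009higher}, Theorem 5.1.5.6], then the universal property of the localization at the small generating set via [\cite{lurie2009higher}, Proposition 5.5.4.20], with the essential image read off generator by generator. Your two caveats are genuine corrections to the literal wording, not bookkeeping: the $\mathbb{A}^1$-condition must hold for all projections $\mathbb{A}^1_X\to X$, and ``$F(\emptyset)$ is initial'' must be built into the Nisnevich condition, since the constant functor $Sm_S\to Spc$ at $*$ satisfies the literal hypotheses but cannot factor through $\mathcal{H}^{mot}(S)$ (any $G\circ h_S^{mot}$ with $G$ colimit-preserving sends $\emptyset$ to the initial object).
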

\begin{proof}
    See [\cite{robalo2012noncommutative}, Theorem 5.2].
\end{proof}
In the classical case of smooth manifolds, the answer is well known and justifies the construction of the motivic homotopy category:
\begin{prop}
    Let $\mathrm{SmMan}$ be the category of smooth manifolds. Then the co-presheaf $$Cat_\infty^{\mathbb{R}^1,open}(\mathrm{SmMan}, -): Pr^L\to \mathcal{G}_1\mbox{-}Spc$$ sending a presentable $\infty$-category $\mathcal{D}$ to the full subcategory of $Cat_\infty(SmMan, \mathcal{D})$ consisting of functors that contract the real line $\mathbb{R}^1$ and map twofold open cover squares to pushout squares is representable by the embedding $\mathrm{SmMan}\hookrightarrow Spc$.
\end{prop}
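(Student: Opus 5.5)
The plan is to mirror Robalo's argument (\Cref{robalo}) for $\mathrm{SmMan}$ and then identify the resulting localization with $Spc$. By the universal property of presheaves, colimit-preserving functors $\mathcal{P}(\mathrm{SmMan})\to\mathcal{D}$ correspond to functors $\mathrm{SmMan}\to\mathcal{D}$. Asking the functor to send twofold open cover squares $U\cap V\to U,V\to U\cup V$ to pushouts is equivalent to asking it to invert the set of maps $h_U\sqcup_{h_{U\cap V}}h_V\to h_{U\cup V}$ (together with $\emptyset\to h_\emptyset$). Asking it to contract $\mathbb{R}^1$ means inverting the projections $h_{M\times\mathbb{R}}\to h_M$. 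Hence the copresheaf $Cat_\infty^{\mathbb{R}^1,open}(\mathrm{SmMan},-)$ is represented by $L_{\mathbb{R}^1}L_{open}\mathcal{P}(\mathrm{SmMan})$, the accessible localization at this small set of maps, exactly as in [\cite{robalo2012noncommutative}, Theorem 5.2]. One point to check here: locality for binary Mayer--Vietoris squares implies descent for arbitrary open covers. Since manifolds are second countable and paracompact, this reduces to countable increasing unions. For those, either include the sequential-colimit condition, or argue via the cd-structure formalism as the paper does for Nisnevich squares.

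Next I would identify this localization with $Spc$. Consider the functor $\mathrm{Sing}:\mathrm{SmMan}\to Spc$ sending $M$ to its underlying homotopy type. It contracts $\mathbb{R}$, and it sends open squares to pushouts by excision (Seifert--van Kampen for $\infty$-groupoids, [Lurie, HA A.3.1]). So it induces a colimit-preserving functor $F:\mathcal{H}^{\mathbb{R}}:=L_{\mathbb{R}^1}L_{open}\mathcal{P}(\mathrm{SmMan})\to Spc$. In the other direction, let $G:Spc\to\mathcal{H}^{\mathbb{R}}$ be the unique colimit-preserving functor with $G(*)=L(h_{pt})$. Then $F G(*)=*$, so $FG\simeq\mathrm{id}$.

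It remains to show $GF\simeq\mathrm{id}$, i.e.\ that $\mathcal{H}^{\mathbb{R}}$ is generated under colimits by the point, with $\mathrm{Map}(L h_{pt},L h_{pt})\simeq *$. The key input is the existence of good open covers [\cite{dugger1999sheaves}, Remark 3.4.10]. Every manifold $M$ has a good cover whose finite intersections are diffeomorphic to $\mathbb{R}^n$. By open descent in $\mathcal{H}^{\mathbb{R}}$, $Lh_M$ is the colimit of the Čech nerve of this cover. By $\mathbb{R}$-invariance, each $Lh_{\mathbb{R}^n}\simeq Lh_{pt}$. Hence $Lh_M\simeq\operatorname{colim}_{\check{C}}Lh_{pt}\simeq G(\operatorname{colim}_{\check C}*)$. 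The nerve theorem identifies $\operatorname{colim}_{\check C}*$ with $\mathrm{Sing}(M)$, so $Lh_M\simeq G F(Lh_M)$ naturally, and since representables generate, $GF\simeq\mathrm{id}$. Full faithfulness on the point follows because $\mathrm{Map}(Lh_{pt},X)=X(pt)$ for local $X$, and $GF\simeq\mathrm{id}$ gives $X(pt)\simeq\mathrm{Map}_{Spc}(*,FX)$.

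I expect the main obstacle to be the naturality of the Čech-nerve identification $Lh_M\simeq G\,\mathrm{Sing}(M)$. Good covers are not functorial in $M$. I would circumvent this by defining the comparison via the unit map $h_M\to G F h_M$, which is canonical, and only using good covers to check that this map is an equivalence. Checking this locally on a good cover uses the fact that both sides satisfy open descent and agree on contractible opens.
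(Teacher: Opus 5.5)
\textbf{Main gap: twofold squares do not give descent for infinite covers.} Your third paragraph uses ``open descent in $\mathcal{H}^{\mathbb{R}}$'' for a good cover, and nothing in the hypothesis as stated provides it. A good cover of a noncompact manifold is necessarily infinite. The cd-structure route you offer does not reach such covers. The Grothendieck topology generated by twofold open squares consists exactly of the sieves containing a finite open cover, so excision for those squares gives descent only for covers with a finite subcover. This is harmless for Nisnevich squares, because schemes are quasi-compact; manifolds are not.

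Nor can the step be repaired, because the statement read literally is false. Fix an infinite set $S$ and put $X(M)=\{f\colon\pi_0(M)\to S \text{ with finite image}\}$.
\begin{itemize}
\item $X(\emptyset)=*$ and $X$ is $\mathbb{R}$-invariant.
\item $X(U\cup V)\to X(U)\times_{X(U\cap V)}X(V)$ is bijective, since $\pi_0(U\cup V)=\pi_0(U)\sqcup_{\pi_0(U\cap V)}\pi_0(V)$ and a glued function has image the union of two finite sets.
\end{itemize}
So $X$ is local for every map you invert. If $M\mapsto Lh_M$ factored as $\Phi\circ\mathrm{Sing}$ with $\Phi$ colimit-preserving, it would send $\mathbb{N}\times\mathbb{R}=\bigsqcup_n\mathbb{R}$ to a coproduct. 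That would force the restriction map $X(\mathbb{N}\times\mathbb{R})\to\prod_n X(\{n\}\times\mathbb{R})=S^{\mathbb{N}}$ to be bijective, but here it is the inclusion of the finite-image sequences.

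Similarly, a functor constant at a non-initial object satisfies both stated conditions. So your map $\emptyset\to h_\emptyset$ is an added hypothesis, not part of an equivalent reformulation of the squares condition.

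You therefore have to commit to your first option and strengthen the hypothesis. Either require descent for all open covers, or require twofold squares plus $\emptyset$ plus increasing countable unions; by second countability the latter gives descent along the countable good covers you use. The paper gives no argument beyond citing Dugger, so the statement must be read in this strengthened form. With it, your route through Čech nerves of good covers, $\mathbb{R}$-invariance and the nerve theorem is the standard argument.

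\textbf{Secondary gap: the naturality fix.} There is no unit $h_M\to GFh_M$. $G$ is left adjoint to evaluation at the point, not right adjoint to $F$. An objectwise equivalence $Lh_M\simeq GF(Lh_M)$ does not yield $GF\simeq\mathrm{id}$, and neither does ``generated by the point with $\mathrm{Map}(Lh_{pt},Lh_{pt})\simeq *$''.

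A natural comparison does exist, in the other direction. Use extended simplices $\Delta^n_e\cong\mathbb{R}^n$. The maps $\sigma\colon\Delta^n_e\to M$, together with $Lh_{\Delta^n_e}\simeq Lh_{pt}$, give a map $G\bigl(|C^\infty(\Delta^\bullet_e,M)|\bigr)\to Lh_M$ natural in $M$. By smooth approximation, $|C^\infty(\Delta^\bullet_e,M)|\simeq\mathrm{Sing}(M)$ naturally. Extending by colimits gives $\theta\colon GF\to\mathrm{id}$ on $\mathcal{H}^{\mathbb{R}}$.

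Checking that $\theta$ is an equivalence is then formal. $Lh_{pt}=h_{pt}$ is terminal, so $\theta_{Lh_{pt}}$ is an equivalence. Descent along a good cover and $\mathbb{R}$-invariance propagate this to every $Lh_M$, and these objects generate under colimits.
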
 
\begin{proof}
    This is [\cite{dugger1999sheaves}, Remark 3.4.10].
\end{proof}  
Evidently, the resulting category in the classical case is not just a presentable $\infty$-category but, in fact, an $\infty$-topos. However, the schematic case is not as beautiful. Specifically, in \Cref{motivic em} we have seen that the motivic homotopy category need not be an $\infty$-topos. The problem is that, in the case of manifolds, one can always construct a `good' cover, in the sense that, 'open cover locally,' every manifold is homeomorphic to some $\mathbb{R}^n$; i.e., given a manifold $M$, one can always find an open cover $\{U_i\subset M\}$ such that each simplex of its \v{C}ech nerve is homeomorphic to a union of copies of $\mathbb{R}^{dimM}$. Even though one knows that (say over a field) etale locally every smooth scheme $X$ is isomorphic to $\mathbb{A}^{dimX}$, this does not pass to the corresponding \v{C}ech nerve, so that even etale locally schemes might have $\mathbb{A}^1$-locally non-zero homotopy dimension. Thus, even the etale local motivic homotopy category is not an $\infty$-topos. 

Instead of seeking a suitable topology in which such a dimension reduction (from pure homotopy dimension to $\mathbb{A}^n$'s) is possible, algebraic geometry offers an interesting way to brute-force this reduction. Namely, because fields have homotopy dimension zero, the desired $\infty$-topos might arise by identifying every variety with its function field, or equivalently, by identifying every open subscheme of a given scheme in $Sm_k$. This is precisely the birational motivic homotopy category of [\cite{0bat}] (where it is denoted $\mathcal{H}^{b\mathbb{A}^1}(S)$), [\cite{bachmann2019voevodsky}] (where it is denoted $L^0_{bir}\mathcal{H}^{\mathbb{A}^1}(S)$), and [\cite{pelaez2014unstable}], which we now recall below.

Given a Qcqs scheme $S$, we define the birational motivic homotopy category, denoted $\mathcal{H}^{bir}(S)$, as the localization of $\mathcal{H}^{mot}(S)$ at the set of all dense open immersions in $Sm_S$. The key observation of [\cite{0bat}] is what makes the contents of this section work and thus suggests that the birational motivic homotopy category is close to being an $\infty$-topos. Let us recall it here for the convenience of the reader:
\begin{thm}\label{Hbir is Hb}
    There is an equivalence of $\infty$-categories:
    $$\mathcal{H}^{bir}(S)=\mathcal{H}^b(S):=L_{dense}\mathcal{P}_\Sigma(S).$$
\end{thm}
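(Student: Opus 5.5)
We need to prove $\mathcal{H}^{bir}(S)\simeq L_{dense}\mathcal{P}_\Sigma(S)$, where $L_{dense}$ denotes localization of $\mathcal{P}_\Sigma(S)$ at the images of all dense open immersions $U\hookrightarrow X$ in $Sm_S$. Both sides are accessible localizations of $\mathcal{P}(S)$: the left at Nisnevich squares, $\emptyset\to h_S\emptyset$, the $\mathbb{A}^1$-projections and the dense open immersions; the right at the finite-coproduct maps $h_SX\sqcup h_SY\to h_S(X\sqcup Y)$, $\emptyset\to h_S\emptyset$, and the dense open immersions. So we will show that each generating set consists of equivalences for the other localization. Then both subcategories of local objects coincide inside $\mathcal{P}(S)$, and we are done.

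\textbf{Step 1 (the easy inclusion).} Every $\mathcal{H}^{bir}$-local object $\esc{F}$ is Nisnevich local. Since Nisnevich sheaves send finite disjoint unions to products, $\esc{F}\in\mathcal{P}_\Sigma(S)$. By definition $\esc{F}$ is also local for dense open immersions, so $\mathcal{H}^{bir}(S)\subset L_{dense}\mathcal{P}_\Sigma(S)$.

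\textbf{Step 2 ($\mathbb{A}^1$-invariance).} Let $\esc{F}\in\mathcal{P}_\Sigma(S)$ be dense-local. We must show $\esc{F}(X)\to\esc{F}(X\times\mathbb{A}^1)$ is an equivalence. The zero section $s:X\to X\times\mathbb{A}^1$ is a retraction of the projection, so it suffices to show $p^*s^*\simeq \mathrm{id}$ on $\esc{F}(X\times\mathbb{A}^1)$. For this I will use the multiplication homotopy $m:\mathbb{A}^1\times\mathbb{A}^1\to\mathbb{A}^1$ and restrict to the dense open $\mathbb{G}_m\times\mathbb{A}^1\subset\mathbb{A}^1\times\mathbb{A}^1$. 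Dense-locality identifies the sections over $X\times\mathbb{A}^1\times\mathbb{A}^1$ with those over any dense open. It also identifies $\esc{F}(X\times\mathbb{A}^1)$ with $\esc{F}(X\times\mathbb{G}_m)$, and in fact with $\esc{F}(X\times U)$ for any dense open $U$. Hence the evaluations at $t=0$ and $t=1$ can be compared through a chain of dense opens. A cleaner alternative is to note that $\esc{F}$ restricted to $X\times\mathbb{A}^1$ is determined by its value on the generic point over $X$. Then we reduce to showing that the two sections $0,1$ of $\mathbb{A}^1_{X}$ induce homotopic maps after passing to the ``generic fiber''. This is the point I expect to be the main obstacle. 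Passing to the generic point does not literally retract onto closed points, so I would instead use the automorphism $t\mapsto t+1$ together with the scaling family. Each of these preserves dense opens, which lets me transfer the identification along the graph of $m$, whose preimages of dense opens stay dense.

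\textbf{Step 3 (Nisnevich descent).} Let $\esc{F}\in\mathcal{P}_\Sigma(S)$ be dense-local, and take a Nisnevich square with $j:U\hookrightarrow X$ and étale $p:Y\to X$. We may reduce to $X$ connected, by $\Sigma$-locality. If $U$ is dense in $X$, then $V$ is dense in $Y$ componentwise. Both horizontal restrictions are then equivalences, so the square is a pullback. If $U$ is not dense in some component, then $U$ is empty on that component, and $Y\to X$ is an isomorphism over it; there the square is trivially cartesian. Mixing the two cases is handled by splitting $X$, and hence $Y$ and $V$, into components, using $\Sigma$-locality. Finally, $\esc{F}(\emptyset)=*$ follows from $\Sigma$. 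So $L_{dense}\mathcal{P}_\Sigma(S)\subset\mathcal{H}^{bir}(S)$, which gives the equivalence; it is compatible with the localization functors since both are left adjoints to the same inclusion.
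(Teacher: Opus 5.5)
Your frame (show that the two classes of local objects in $\mathcal{P}(S)$ coincide) is right, and Step 1 is fine. But Step 2, which is the real content of the theorem, is not proved, and the mechanisms you sketch cannot supply it. Dense-locality only compares sections over a scheme with sections over its dense opens, whereas $s_0^*$ and $s_1^*$ are restrictions to \emph{closed} subschemes. The locus $\{t=0\}$ where you must evaluate is disjoint from $\mathbb{G}_m\times\mathbb{A}^1$, so no chain of dense opens relates the two evaluations. Likewise, knowing that $F(X\times\mathbb{A}^1)$ is determined generically over $X$ places no constraint on these two restriction maps. The automorphism $\sigma\colon t\mapsto t+1$ and the flat map $m$ do preserve dense opens, but that only yields identities like $s_1^*=s_0^*\sigma^*$, and $\sigma^*\simeq\mathrm{id}$ is essentially what you are trying to prove.

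The missing idea is that $L_{dense}$ inverts every morphism restricting to an isomorphism between dense opens, including morphisms that \emph{contract} a subscheme; such a contraction is what identifies the two sections. Concretely, let $\pi\colon X\times\mathbb{A}^2\to X\times\mathbb{A}^2$, $(x,t,u)\mapsto(x,t,tu)$. It satisfies $\pi^{-1}\{t\neq0\}=\{t\neq0\}$ and restricts to an automorphism of this dense open, so $L\pi$ is an equivalence by two-out-of-three. The map $\tau\colon(x,u)\mapsto(x,0,u)$ satisfies $\pi\tau=z\circ p$ with $z(x)=(x,0,0)$. Hence $L\tau\simeq(L\pi)^{-1}\,Lz\,Lp$ and $L(\tau s_0)\simeq L(\tau s_1)$; composing with $(x,t,u)\mapsto(x,u)$ gives $Ls_0\simeq Ls_1$. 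Apply this over $X\times\mathbb{A}^1$ and use $m(\mathrm{id}\times s_1)=\mathrm{id}$ and $m(\mathrm{id}\times s_0)=s_0p$. This gives $L(s_0p)\simeq\mathrm{id}$, so $Lp$ is an equivalence and dense-local $\Sigma$-presheaves are $\mathbb{A}^1$-invariant.

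Step 3 is correct over a field, but not over a general qcqs base, which is the generality of the statement. A connected smooth $S$-scheme can be reducible (e.g.\ $X=S=\mathrm{Spec}\,k[x,y]/(xy)$). So ``$U$ not dense in a component implies $U$ empty there'' fails, and irreducible components cannot be split off using $\Sigma$. Instead, use the dense open $U\sqcup D$ with $D=X\setminus\overline{U}$ (quasi-compact e.g.\ when $S$ is noetherian). Its preimage $V\sqcup p^{-1}(D)$ is dense in $Y$ because $p$ is open. Since $p^{-1}(D)\to D$ is \'etale and an isomorphism on reductions, it is an isomorphism. The square then becomes $F(U)\times F(D)\to F(V)\times F(D)$ over $F(U)\to F(V)$, which is visibly cartesian.
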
 
Here, $\mathcal{P}_\Sigma(S)$ is the full subcategory of $\mathcal{P}(S)$ consisting of presheaves that turn coproducts of schemes into products of spaces. Being (identically) the category of sheaves $\mathcal{P}_{\sqcup}(S)$ for the disjoint cover topology $\sqcup$, this is also an $\infty$-topos [\cite{bachmann2017norms}, Lemma 2.4], just like the Nisnevich one $\mathcal{P}_{nis}(S)$. But this new topos is quite well-behaved compared to the Nisnevich one. To see this, let $\tau^\sigma_{\leq n}$ denote the truncation functors for the various topologies we deal with. The ordinary truncation $\tau_{\leq n}$ of $\mathcal{P}(S)$ preserves products and hence takes $\mathcal{P}_\Sigma(S)$ to itself. Since $\mathcal{P}_\Sigma(S)=\mathcal{P}_\sqcup(S)$ is a left exact localization of $\mathcal{P}(S)$, the previous sentence, together with [\cite{lurie2009higher}, 5.5.6.28], implies that $\tau_{\leq n}^{\sqcup}\simeq {\tau_{\leq n}}_{|\mathcal{P}_{\Sigma}(S)}$. This implies, in particular, that $\pi_i^\sqcup\simeq {\pi_i}_{|\mathcal{P}_\Sigma(S)}$. (Such a statement is not true nisnevich locally, and the correct relation there is only $\tau^{nis}_{\leq n}\simeq L_{nis}\tau_{\leq n}$.)
\subsection{Internal Birational motivic truncations and Birational motivic homotopy group}

We now discuss the birational local analogue of \S2.1. We won't follow the lines of \S3.1 to restate the definitions for the birational category; we recommend that the reader look at \S2.1 or \S3.1. Let us denote the internal truncations and the internal homotopy groups of the presentable $\infty$-category $\mathcal{H}^{bir}(S)$ by $\tau_{\leq n}^{bir}$ and $\pi_i^{bir}$, respectively.

\begin{lem}\label[lem]{checking bir locality on pi}
 An $S$-space $\esc{X}$ is a birational motivic space if and only if, for every $i\geq 0$, $\pi_i\esc{X}$ is birational motivic. 
\end{lem}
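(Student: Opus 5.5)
We use \Cref{Hbir is Hb}, which identifies $\mathcal{H}^{bir}(S)$ with $L_{dense}\mathcal{P}_\Sigma(S)$. An $S$-space is then birational motivic if and only if it satisfies two conditions. First, it lies in $\mathcal{P}_\Sigma(S)$, i.e.\ it sends finite coproducts to products. Second, it is local with respect to dense open immersions, i.e.\ for every dense open $j:U\hookrightarrow X$ in $Sm_S$ the restriction $\esc{X}(X)\to\esc{X}(U)$ is an equivalence of spaces. Both conditions are sectionwise conditions on the presheaf of spaces, so the plan is to check each one against the homotopy presheaves $\pi_i\esc{X}$. Throughout, we read "$\pi_i\esc{X}$ is birational motivic" for all choices of base points, i.e.\ for $\pi_i$ regarded as a presheaf over $\pi_0$ (equivalently, for every $X$ and every point $x\in\esc{X}(X)$, with the pulled back base points).

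First consider the product condition. We have $\pi_0(\esc{X}(X\sqcup Y))=\pi_0(\esc{X}(X)\times\esc{X}(Y))=\pi_0\esc{X}(X)\times\pi_0\esc{X}(Y)$, and similarly for $\pi_i$ at a base point. Hence $\esc{X}\in\mathcal{P}_\Sigma$ implies $\pi_i\esc{X}\in\mathcal{P}_\Sigma$. For the converse, the map $\esc{X}(X\sqcup Y)\to\esc{X}(X)\times\esc{X}(Y)$ induces isomorphisms on $\pi_0$ and on all $\pi_i$ at every base point. By Whitehead's theorem for Kan complexes it is therefore an equivalence. The empty scheme is handled in the same way: $\pi_0\esc{X}(\emptyset)=*$ together with all $\pi_i$ trivial gives $\esc{X}(\emptyset)\simeq *$.

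Next consider the dense-open condition. For a dense open $U\subset X$ and a point $x\in\esc{X}(X)$, the map $\esc{X}(X)\to\esc{X}(U)$ is an equivalence if and only if it is a bijection on $\pi_0$ and an isomorphism on $\pi_i(-,x)\to\pi_i(-,x_{|U})$ for all $x$ and all $i\geq1$. This is again Whitehead's theorem. The $\pi_0$ condition says exactly that the presheaf of sets $\pi_0\esc{X}$ is dense-local. The higher conditions say exactly that $\pi_i\esc{X}$, viewed as a presheaf of groups over $\pi_0\esc{X}$, i.e.\ in $\mathcal{P}(Sm_S)_{/\pi_0\esc{X}}$ or pointwise at each section, is dense-local. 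Combining the two conditions gives both directions of the lemma. For a discrete presheaf, being birational motivic means it is $0$-truncated, lies in $\mathcal{P}_\Sigma$, and is dense-local, and this matches the conditions above.

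The main subtlety is making sense of "$\pi_i\esc{X}$ is birational motivic" when $\esc{X}$ is not pointed or connected, so that base points vary. Any global choice of base point is insufficient, since the Whitehead comparison must range over all sections $x\in\esc{X}(X)$ for each $X$. I would resolve this by stating the condition as dense-locality of the map of presheaves $\pi_i\esc{X}\to\pi_0\esc{X}$ of pointed fibers. Equivalently, for each $X$ and each $x$, the restriction $\pi_i(\esc{X}(X),x)\to\pi_i(\esc{X}(U),x_{|U})$ is bijective. This is exactly what the Whitehead argument consumes. Note also that no sheafification enters anywhere: on $\mathcal{P}_\Sigma$ the truncations are computed sectionwise (as remarked just before the statement), so the argument is purely sectionwise and avoids the Nisnevich complications of \S3.
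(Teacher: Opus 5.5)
Your proposal is correct and follows the paper's route: the paper also reduces via \Cref{Hbir is Hb} to the two sectionwise conditions ($\Sigma$-presheaf and dense-locality) and checks them on homotopy, compressing everything into the remark that $\pi_i$ of a $\Sigma$-presheaf is again a $\Sigma$-presheaf, with the Whitehead step left implicit. Your explicit Whitehead argument for the converse $\Sigma$-direction and your handling of base points fill in what the paper leaves unsaid; the one adjustment is that $\pi_i\esc{X}$ naturally lives over $\esc{X}$, with one group for each section $x$, rather than over $\pi_0\esc{X}$ (where $\pi_i$ would only be defined up to the $\pi_1$-action), so the pointwise formulation you end with is the correct one to use.
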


\begin{proof}
By \Cref{Hbir is Hb}, this follows from the observation that ${\pi_i}_{|\mathcal{P}_\Sigma(S)}$ is automatically a $\Sigma$-presheaf (i.e., a $\sqcup$-sheaf).
\end{proof}

Therefore, we see that $\tau_{\leq n }$ maps $\mathcal{H}^{bir}(S)$ to itself:
\begin{thm}\label{birational truncation}
  There are identifications (of truncations restricted to the the birational homotopy category) $$\tau^{bir}_{\leq n}={\tau_{\leq n}}_{|\mathcal{H}^{bir}(S)}={\tau^{\sqcup}_{\leq n}}_{|\mathcal{H}^{bir}(S)}={\tau^{nis}_{\leq n}}_{|\mathcal{H}^{bir}(S)}$$ and of homotopy groups $$\pi_i^{bir}={\pi_i}_{|\mathcal{H}^{bir}(S)}.$$
\end{thm}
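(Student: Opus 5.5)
The plan is to show that the ordinary presheaf truncation $\tau_{\leq n}$ preserves $\mathcal{H}^{bir}(S)$, and then deduce everything else formally. Concretely, for a birational motivic space $\esc{X}$ I will first check that $\tau_{\leq n}\esc{X}\in\mathcal{H}^{bir}(S)$. By \Cref{checking bir locality on pi}, it suffices to verify that every $\pi_i\tau_{\leq n}\esc{X}$ is birational motivic. These sheaves are either $\pi_i\esc{X}$ (for $i\leq n$, suitably based) or trivial (for $i>n$). The former are birational motivic by \Cref{checking bir locality on pi} applied to $\esc{X}$ itself, and the terminal presheaf lies in any localization. One point needs care here: \Cref{checking bir locality on pi} is phrased with unbased $\pi_i$. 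I will read it, via \Cref{Hbir is Hb}, as a statement about the presheaves $\pi_i$ over $\pi_0$, or equivalently as locality with respect to dense open immersions $U\subset X$ checked sectionwise on homotopy groups. Under that reading, a map $\tau_{\leq n}\esc{X}(X)\to\tau_{\leq n}\esc{X}(U)$ is an equivalence as soon as it induces isomorphisms on $\pi_0$ and on all $\pi_i$ at all basepoints, and those are exactly the corresponding maps for $\esc{X}$ in degrees $\leq n$. Since $\tau_{\leq n}$ also preserves finite products, $\tau_{\leq n}\esc{X}$ stays in $\mathcal{P}_\Sigma(S)$.

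Granting this, I will argue as follows. The object $\tau_{\leq n}\esc{X}$ is $n$-truncated in $\mathcal{P}(S)$ and birational, so it lies in $\mathcal{P}_{\leq n}\cap\mathcal{H}^{bir}=(\mathcal{H}^{bir})_{\leq n}$ by \Cref{truncation of localization}. It then satisfies the universal property of the internal truncation: for $\esc{Y}\in(\mathcal{H}^{bir})_{\leq n}$ we have $\mathrm{Map}(\tau_{\leq n}\esc{X},\esc{Y})\simeq\mathrm{Map}(\esc{X},\esc{Y})$, because $\esc{Y}$ is $n$-truncated in $\mathcal{P}(S)$. Equivalently, in \Cref{formulae for tau under localization} the functor $L_{\leq n}$ acts as the identity on $\tau_{\leq n}\esc{X}$. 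This gives $\tau^{bir}_{\leq n}={\tau_{\leq n}}_{|\mathcal{H}^{bir}}$.

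The remaining identifications follow from this. We have $\mathcal{H}^{bir}\subset\mathcal{P}_\Sigma$ by \Cref{Hbir is Hb}, and the paper already notes $\tau^{\sqcup}_{\leq n}={\tau_{\leq n}}_{|\mathcal{P}_\Sigma}$, which gives the second equality. For the Nisnevich one, $\mathcal{H}^{bir}\subset\mathcal{H}^{mot}\subset\mathcal{P}_{nis}$, and $\tau^{nis}_{\leq n}=L_{nis}\tau_{\leq n}$. On $\esc{X}\in\mathcal{H}^{bir}$ the object $\tau_{\leq n}\esc{X}$ is already birational, hence Nisnevich local, so $L_{nis}$ acts trivially on it. For homotopy groups, $\Omega^i_{\mathcal{H}^{bir}}=\Omega^i$ because $\mathcal{H}^{bir}$ is closed under limits, so $\pi_i^{bir}=\tau^{bir}_{\leq 0}\Omega^i=\tau_{\leq 0}\Omega^i=\pi_i$ on $\mathcal{H}^{bir}$. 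Here I use that $\Omega^i\esc{X}$ is again birational, so the first part applies to it.

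The main obstacle is the first step, namely the precise form of \Cref{checking bir locality on pi} needed. If it only treats based homotopy groups at global points, I would instead prove directly that $\tau_{\leq n}$ on $\mathcal{P}_\Sigma$ commutes with restriction along a dense open $U\subset X$. Since $\tau_{\leq n}$ is computed sectionwise, a sectionwise equivalence $\esc{X}(X)\simeq\esc{X}(U)$ gives $\tau_{\leq n}\esc{X}(X)\simeq\tau_{\leq n}\esc{X}(U)$ immediately. This makes the locality of $\tau_{\leq n}\esc{X}$ with respect to dense open immersions evident, which together with \Cref{Hbir is Hb} is all that is needed.
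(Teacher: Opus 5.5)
Your proposal is correct and follows essentially the same route as the paper. Both show that the presheaf truncation $\tau_{\leq n}$ preserves birational motivic spaces, deduce $\tau^{bir}_{\leq n}={\tau_{\leq n}}_{|\mathcal{H}^{bir}(S)}$ by the universal-property (Yoneda) argument, note that $L_{nis}$ fixes birational objects, and read off $\pi_i^{bir}$ from the definition. Two small differences: your backup sectionwise argument for locality along dense open immersions is a more robust substitute for the paper's appeal to \Cref{checking bir locality on pi}, and your explicit use of $\tau^{\sqcup}_{\leq n}={\tau_{\leq n}}_{|\mathcal{P}_\Sigma(S)}$ fills in a step the paper leaves implicit.
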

\begin{proof}
    This follows directly from the fact that $\tau_{\leq n}$ preserves birational local objects, say by [\Cref{checking bir locality on pi}]. Indeed, for $$\esc{X}\in\mathcal{H}^{bir}(S)\text{ and }\esc{Y}\in\mathcal{H}^{bir}(S)_{\leq n}=\mathcal{H}^{bir}(S)\bigcap \mathcal{P}_{}$$ one computes \begin{flalign*} Map_{{\mathcal{H}^{bir}(S) }_{\leq n}}(\tau^{b}_{\leq n}\esc{X}, \esc{Y}) \simeq Map_{{\mathcal{H}^{bir}(S) }}(\esc{X}, \esc{Y}) &=Map_{\mathcal{P}(S)
    }(\esc{X}, \esc{Y}) \\&\simeq  Map_{\mathcal{P}(S)_{\leq n} }({\tau_{\leq n}}\esc{X}, \esc{Y})\\&=Map_{\mathcal{H}^{bir}(S)_{\leq n} }({\tau_{\leq n}}\esc{X}, \esc{Y})\end{flalign*}
    (here the last equality follows from the fact that $\tau_{\leq n}\esc{X}\in \mathcal{P}(S)_{\leq n}$ is birational and hence belongs to $\mathcal{H}^{bir}_{\leq n}$ as well, while it was given that $\esc{Y}\in\mathcal{H}^{bir}(S)_{\leq n}$. The result then follows by the Yoneda lemma.
Moreover, because $L_{nis}$ of a birational local object is the object itself [\cite{0bat}, Theorem 2.1.1], we have $\tau^{bir}_{\leq n}={\tau^{nis}_{\leq n}}_{|\mathcal{H}^{bir}{(S)}}$.

It then follows from definition that $$\pi_i^{bir}:=\tau_{\leq 0}^{bir}\Omega^i=\tau_{\leq 0}\Omega^i={\pi_i}_{|\mathcal{H}^{bir}(S)}.$$
\end{proof}
\begin{thm}\label{path injectivity of Lbir}
    $L_{\leq 0}=\pi_0^{b}$, and the restriction $$\tau_{\leq 0}^{bir}: \tau_{\leq 0}\mathcal{H}^{bir}(S)\to \text{Disc}\mathcal{P}_{nis}(S)=Sh^{nis}_S$$ is simply the inclusion of birational sheaves.
\end{thm}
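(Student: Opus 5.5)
The plan is to unwind the definitions of \Cref{path injectivity} and \Cref{formulae for tau under localization} for $L=L_{bir}$ acting on the base topos $\mathcal{C}=\mathcal{P}_{nis}(S)$, or equivalently on $\mathcal{P}(S)$, and then to feed in \Cref{birational truncation}. Recall that $L_{\leq 0}$ denotes the localization $\mathcal{C}_{\leq 0}\to(L\mathcal{C})_{\leq 0}$. By \Cref{truncation of localization}, its target is $\mathcal{H}^{bir}(S)\cap \mathrm{Disc}(\mathcal{C})$, which is the category of birational Nisnevich sheaves of sets.

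\textbf{First step: $\tau_{\leq 0}$ lands in $\mathcal{H}^{bir}(S)$.} If $\esc{X}\in\mathcal{H}^{bir}(S)$, then $\tau_{\leq 0}\esc{X}=\pi_0\esc{X}$ is already birational motivic. This follows from \Cref{checking bir locality on pi}, since $\pi_i\pi_0\esc{X}$ is $\pi_0\esc{X}$ for $i=0$ and trivial for $i>0$. It is also Nisnevich local, because $L_{nis}$ fixes birational objects (\cite{0bat}, Theorem 2.1.1). Hence $\pi_0\esc{X}\simeq\pi_0^{nis}\esc{X}$, and this image is an object of $(L\mathcal{C})_{\leq 0}$. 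So the image category $\tau_{\leq 0}\mathcal{H}^{bir}(S)$ sits inside $\mathrm{Disc}(\mathcal{H}^{bir}(S))\subset Sh^{nis}_S$.

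\textbf{Second step: $L_{\leq 0}$ is the identity on this image.} $L_{\leq 0}$ is a localization onto $(L\mathcal{C})_{\leq 0}$, so it restricts to the identity on objects that are already local. By the first step, the restriction of $\tau^{bir}_{\leq 0}\simeq L_{\leq 0}\tau_{\leq 0}$ to $\tau_{\leq 0}\mathcal{H}^{bir}(S)$ is therefore the inclusion of birational sheaves. This is consistent with \Cref{birational truncation}, which gives $\tau^{bir}_{\leq 0}=\tau_{\leq 0}|_{\mathcal{H}^{bir}}$. Injectivity, and in particular detection of the terminal object, is then automatic, which gives path injectivity.

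\textbf{Third step: the identification $L_{\leq 0}=\pi_0^b$.} The target is the statement that, on objects of the form $\tau_{\leq 0}$ of a space, the discrete localization agrees with $\pi_0L_{bir}$. For $\esc{X}\in\mathcal{P}(S)$, \Cref{formulae for tau under localization} and \Cref{birational truncation} give
\[
\pi_0^{bir}L_{bir}\esc{X}=\pi_0L_{bir}\esc{X}=\pi_0^b\esc{X}.
\]
The remaining task is to identify $L_{\leq 0}\pi_0\esc{X}$ with $L_{\leq 0}\pi_0L_{bir}\esc{X}=\pi_0L_{bir}\esc{X}$. This follows because both objects corepresent the same functor $\mathrm{Map}(\esc{X},\esc{Y})$ on $\esc{Y}\in\mathcal{H}^{bir}_{\leq 0}$, via the Yoneda argument already used in \Cref{formulae for tau under localization}. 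The real content, and the expected obstacle, is making sure the correct base topos is used: $\pi_0$ of a $\Sigma$-presheaf agrees with $\tau^{\sqcup}_{\leq 0}$, and by \cite{0bat} also with its Nisnevich sheafification on birational objects. Once that bookkeeping is fixed via \Cref{Hbir is Hb}, the argument is formal.
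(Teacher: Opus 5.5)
Your proposal is correct and follows the same route as the paper. The paper's proof is just the observation that $\pi_0$ of a birational motivic space is again birational (\Cref{checking bir locality on pi}), and your three steps unpack that one line: the image $\tau_{\leq 0}\mathcal{H}^{bir}(S)$ already lies in $\mathrm{Disc}(\mathcal{H}^{bir}(S))$, so $L_{\leq 0}$ acts as the identity there, and $L_{\leq 0}\tau_{\leq 0}\simeq\pi_0L_{bir}$ by the Yoneda argument. One small point: in Step 1 you only need the `only if' direction of that lemma applied to $\esc{X}$ itself, so the detour through $\pi_i\pi_0\esc{X}$ is unnecessary, and as phrased it would presuppose that $\pi_0\esc{X}$ is birational.
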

\begin{proof}
    This follows immediately from the observation that $\pi_0$ of birational motivic spaces is birational (\Cref{checking bir locality on pi}).
\end{proof}

\begin{thm}\label{post comp Hbir}
    The birational motivic homotopy category is postnikov complete.
\end{thm}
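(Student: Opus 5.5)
The plan is to reduce Postnikov completeness of $\mathcal{H}^{bir}(S)$ to that of the ambient presheaf topos $\mathcal{P}(S)$, using that birational locality is detected on homotopy presheaves (\Cref{checking bir locality on pi}) and that the internal truncations are the restrictions of the presheaf truncations (\Cref{birational truncation}). Recall that Postnikov completeness means that the canonical functor $\mathcal{H}^{bir}(S)\to \plim_n \mathcal{H}^{bir}(S)_{\leq n}$, along the truncation functors $\tau^{bir}_{\leq n}$, is an equivalence. Equivalently, Postnikov towers in $\mathcal{H}^{bir}(S)$ converge, and every compatible tower of $n$-truncated objects arises as the Postnikov tower of its limit.

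\textbf{Step 1 (convergence).} Take $\esc{X}\in\mathcal{H}^{bir}(S)$. By \Cref{birational truncation}, $\tau^{bir}_{\leq n}\esc{X}\simeq \tau_{\leq n}\esc{X}$. In $\mathcal{P}(S)$, which is Postnikov complete because limits and truncations are sectionwise and $Spc$ is Postnikov complete, we have $\esc{X}\simeq \plim_n\tau_{\leq n}\esc{X}$. Since $\mathcal{H}^{bir}(S)\subset\mathcal{P}(S)$ is closed under limits, this limit is also the limit in $\mathcal{H}^{bir}(S)$. Hence the functor to the tower category is fully faithful.

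\textbf{Step 2 (essential surjectivity).} Let $(\esc{X}_n)$ be a tower with $\esc{X}_n\in\mathcal{H}^{bir}(S)_{\leq n}$ and $\tau^{bir}_{\leq n}\esc{X}_{n+1}\simeq\esc{X}_n$. By \Cref{birational truncation} and \Cref{truncation of localization}, this is a Postnikov tower in $\mathcal{P}(S)$, so by Postnikov completeness of $\mathcal{P}(S)$ its limit $\esc{X}:=\plim\esc{X}_n$ satisfies $\tau_{\leq n}\esc{X}\simeq \esc{X}_n$. The object $\esc{X}$ is birational because $\mathcal{H}^{bir}(S)$ is closed under limits. As a cross-check, \Cref{checking bir locality on pi} also gives this directly: $\pi_i\esc{X}\simeq\pi_i\esc{X}_n$ for $n>i$ (sectionwise Milnor sequences, with basepoints taken over each section), and each such sheaf is birational. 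Then $\tau^{bir}_{\leq n}\esc{X}=\tau_{\leq n}\esc{X}\simeq\esc{X}_n$, as required.

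\textbf{Main obstacle.} The key subtlety is that the internal truncations of $\mathcal{H}^{bir}(S)$ agree with the \emph{presheaf} truncations rather than only the Nisnevich ones. This is exactly \Cref{birational truncation}, which rests on \Cref{Hbir is Hb} and the fact that $\tau_{\leq n}$ preserves $\Sigma$-presheaves. Once this is granted, all the nontrivial convergence happens sectionwise in $Spc$. In particular, no finite-dimensionality or hypercompleteness hypothesis on $S$ is needed, in contrast to the Nisnevich case of \Cref{truncated condition}. I would double check that the identification $\tau^{bir}_{\leq n}=\tau_{\leq n}$ holds on all of $\mathcal{H}^{bir}(S)$, and not merely on pointed or connected objects, since both steps use it for arbitrary objects.
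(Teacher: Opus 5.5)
Your proof is correct and is essentially the paper's argument: $\mathcal{H}^{bir}(S)\subset\mathcal{P}(S)$ is closed under limits and, by \Cref{birational truncation}, under the presheaf truncations, which agree with $\tau^{bir}_{\leq n}$ on all objects. Postnikov completeness therefore transfers from $\mathcal{P}(S)$. You just spell out the convergence (full faithfulness) and essential-surjectivity halves that the paper's one-line proof leaves implicit.
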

\begin{proof}
   By construction, $\mathcal{H}^{bir}(S)\subset \mathcal{P}(S)$ is closed under limits and, by \Cref{birational truncation}, under truncations. Since $\mathcal{P}(S)$ is Postnikov complete, so is $\mathcal{H}^{bir}(S)$. 
\end{proof}

\subsection{Birational Eilenberg Maclane spaces}
In Section \S3.2, we saw that the ordinary motivic homotopy category fails to be an $\infty$-topos. The argument stemmed from the observation that, in the ordinary motivic case, not every discrete motivic group admits an internal delooping. In this section, we shall show that this issue disappears after birational localization. The key algebraic reason is that birational sheaves are flasque.

We denote by $Sh^b_S$ the category of sheaves of sets that are birational local. This is precisely the full subcategory of discrete/$0$-truncated objects of $\mathcal{H}^{bir}(S)$. Similarly, $Grp^b_S$ and $Ab^b_S$ denote the full subcategories of birational local sheaves of groups and abelian groups, respectively. Regarding \Cref{strictly n local}, we say that,
\begin{defn}
    A presheaf of (abelian) groups $G$ is ($n$-strictly) strongly $L_{bir}$-local if ($\mathrm{B}^n_{nis}G$) $\mathrm{B}_{nis}G$ is $L_{bir}$-local. We denote the full subcategory of $n$-strictly $L_{bir}$-local groups by $n\mbox{-}Grp(Disc(\mathcal{H}^{bir}(S)))$.
\end{defn}
\begin{lem}
A presheaf of groups $G$ is $L_{bir}$-local if and only if it is strongly $L_{bir}$-local. In other words, there is an equivalence $Grp^b_S=Grp_{L_{bir}}(Disc(\mathcal{H}^{bir}(S)))$.
\end{lem}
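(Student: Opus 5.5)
One direction is formal, and I would dispose of it first. If $\mathrm{B}_{nis}G$ is $L_{bir}$-local, then $G\simeq\Omega\mathrm{B}_{nis}G$ is a loop object of a local object. Since $\mathcal{H}^{bir}(S)\subset\mathcal{P}(S)$ is closed under limits, $G$ is $L_{bir}$-local; this is the same argument as \Cref{strict local is local}.

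For the converse, let $G$ be a birational sheaf of groups. I want to show that $\mathrm{B}_{nis}G$ is birational motivic. The plan is to check this on homotopy presheaves, via \Cref{checking bir locality on pi}. The key step is to prove that $\mathrm{B}_{nis}G$ agrees with the $\sqcup$-local (equivalently presheaf-level, on $\Sigma$-presheaves) classifying space $\mathrm{B}_{\sqcup}G=\mathrm{B}G$ computed sectionwise. Concretely, I would show that $\mathrm{B}G$, the sectionwise delooping (which is a $\Sigma$-presheaf since $G$ turns coproducts into products), is already a Nisnevich sheaf. Its homotopy presheaves are $\pi_0\mathrm{B}G=*$, $\pi_1\mathrm{B}G=G$, and nothing else. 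By \Cref{Hbir is Hb}, $\mathcal{H}^{bir}(S)=L_{dense}\mathcal{P}_\Sigma(S)$. So by \Cref{checking bir locality on pi} it suffices that $*$ and $G$ are birational motivic, and they are: $*$ trivially, and $G$ by hypothesis. Hence $\mathrm{B}G\in\mathcal{H}^{bir}(S)$.

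In particular $\mathrm{B}G$ is Nisnevich local, since birational motivic spaces are Nisnevich local by [\cite{0bat}, Theorem 2.1.1]. Then $\mathrm{B}G\simeq L_{nis}\mathrm{B}G=\mathrm{B}_{nis}G$, the latter because $L_{nis}$ is left exact and preserves geometric realizations. This proves that $\mathrm{B}_{nis}G$ is $L_{bir}$-local, i.e. $G$ is strongly $L_{bir}$-local. Conceptually, this encodes the flasqueness of birational sheaves: $H^1_{nis}(-,G)$ vanishes, so no sheafification correction appears.

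The step I expect to be the main obstacle is justifying that \Cref{checking bir locality on pi} applies to $\mathrm{B}G$, which is an honest presheaf rather than a priori a Nisnevich sheaf. The plan is to use \Cref{Hbir is Hb}: birational locality only requires membership in $\mathcal{P}_\Sigma$ together with locality for dense open immersions, and both conditions can be tested on homotopy presheaves. If that lemma needs hypotheses beyond $\Sigma$-presheaves, I would instead verify directly that $\mathrm{B}G(X)\to\mathrm{B}G(U)$ is an equivalence for each dense open $U\subset X$ with $X\in Sm_S$. On $\pi_1$ this map is $G(X)\cong G(U)$, and on $\pi_0$ both sides are a point, so it is an equivalence of connected $1$-types. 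Finally, the abelian $n$-fold version follows by induction with $\mathrm{B}^n G$, using the same homotopy-presheaf check.
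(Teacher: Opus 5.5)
Your argument is correct, but it differs from the paper's proof, which is only a citation of [\cite{0bat}, Corollary 3.2.9]. You instead derive the nontrivial direction from \Cref{Hbir is Hb}. For a birational sheaf of groups $G$, the sectionwise delooping $\mathrm{B}G$ is a $\Sigma$-presheaf. Its restriction along any dense open immersion $U\subset X$ is an equivalence of connected $1$-types, because $G(X)\cong G(U)$. Hence $\mathrm{B}G$ lies in $L_{dense}\mathcal{P}_\Sigma(S)=\mathcal{H}^{bir}(S)$, and being Nisnevich local it agrees with $L_{nis}\mathrm{B}G\simeq\mathrm{B}_{nis}G$. This is essentially the argument the paper sketches later in \Cref{EM}, and it is the discrete case of \Cref{B_bir is B}.

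What your route gains over the citation is transparency:
\begin{enumerate}
\item It isolates the identification $\mathcal{H}^{bir}=L_{dense}\mathcal{P}_\Sigma$ as the only non-formal input, namely that dense-open-invariant $\Sigma$-presheaves are automatically Nisnevich local and $\mathbb{A}^1$-invariant.
\item It shows as a by-product that $\mathrm{B}_{nis}G$ needs no sheafification, so $H^1_{nis}(X;G)=*$ for every $X\in Sm_S$. This is the nonabelian flasqueness that the paper's later remark calls ``a little delicate''.
\item The same homotopy-presheaf check gives \Cref{n-strict bir groups} directly for sectionwise $\mathrm{K}(A,n)$, with no induction needed.
\end{enumerate}
The citation is shorter but explains nothing.

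One small caveat concerns the easy direction. The step $G\simeq\Omega\mathrm{B}_{nis}G$ uses that $G$ is a Nisnevich sheaf; for a bare presheaf you only get $\Omega\mathrm{B}_{nis}G\simeq L_{nis}G$. For example, over a field the separated constant presheaf $\underline{\mathbb{Z}}$ has $\mathrm{B}_{nis}\underline{\mathbb{Z}}\simeq\mathrm{B}_{nis}\mathbb{Z}^{nis}$ birational, yet $\underline{\mathbb{Z}}$ is not itself $L_{bir}$-local. So the ``iff'' should be read for sheaves of groups, as the ``in other words'' clause indicates.
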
 
\begin{proof}
 This is [\cite{0bat}, Corollary 3.2.9].
\end{proof} 
Similarly, 
\begin{lem}\label[lem]{n-strict bir groups}
    Let $n\geq 2$. A presheaf of groups $G$ is $n$-strictly $L_{bir}$-local if and only if $G$ is $L_{bir}$-local.
\end{lem}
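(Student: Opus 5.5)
One direction is immediate. Suppose $G$ is $n$-strictly $L_{bir}$-local, i.e.\ $\mathrm{B}^n_{nis}G$ is birational local. Then $G\simeq \Omega^n\mathrm{B}^n_{nis}G$, and $\mathcal{H}^{bir}(S)\subset\mathcal{P}(S)$ is closed under limits, so $G$ is $L_{bir}$-local. This is the argument of \Cref{strict local is local}.

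For the converse, assume $G$ is an $L_{bir}$-local sheaf of abelian groups; for $n\geq 2$ the group is automatically abelian. The plan is to apply \Cref{checking bir locality on pi}: an $S$-space is birational motivic if and only if each of its homotopy presheaves $\pi_i$ is birational motivic. So we need the presheaf homotopy groups of $\mathrm{B}^n_{nis}G=K_{nis}(G,n)$. At the level of the presheaf $\pi_i$, evaluated on $X\in Sm_S$, these are
\[
\pi_{n-j}\mathrm{B}^n_{nis}G(X)\simeq H^j_{nis}(X,G),\qquad 0\leq j\leq n,
\]
and they vanish otherwise. Two things then remain: the presheaf $X\mapsto H^0_{nis}(X,G)=G(X)$ is birational by hypothesis, and the higher cohomology presheaves $H^j_{nis}(-,G)$ for $1\leq j\leq n$ must be shown to be birational.

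The main step is to show $H^j_{nis}(X,G)=0$ for $j\geq 1$. For $n=1$ this is exactly what the preceding lemma ([\cite{0bat}, Corollary 3.2.9]) establishes, and I would follow the same mechanism here, namely the flasqueness of birational sheaves mentioned at the start of this subsection. A birational sheaf $G$ sends dense open immersions to isomorphisms, so on each irreducible component of a smooth $X$ we have $G(X)\simeq G(U)$ for every dense open $U$. Restriction maps to opens are therefore surjective (Zariski flasque), and more strongly $G$ is determined by its values at the generic points. For the Nisnevich topology I would use that a birational sheaf is (Nisnevich locally) the pushforward of its values at generic points, and that fields and their henselizations have vanishing higher Nisnevich cohomology. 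This should show that a Čech/Godement-type resolution of $G$ is acyclic, giving $H^j_{nis}(X,G)=0$ for $j>0$.

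Given that vanishing, every $\pi_i\mathrm{B}^n_{nis}G$ with $i<n$ is the trivial presheaf, which is birational, and $\pi_n\mathrm{B}^n_{nis}G=G$ is birational by hypothesis. One technical point is that $\pi_0$ here is the presheaf $\pi_0$, namely $H^n_{nis}(-,G)$, which again vanishes. By \Cref{checking bir locality on pi} it follows that $\mathrm{B}^n_{nis}G\in\mathcal{H}^{bir}(S)$. The main obstacle I expect is the vanishing of higher Nisnevich cohomology of birational sheaves over a general qcqs base. If a direct proof is awkward, the fallback is induction on $n$ using the fiber sequences $\mathrm{B}^{n-1}_{nis}G\to *\to \mathrm{B}^n_{nis}G$, together with the $n=1$ case from [\cite{0bat}] applied to the sheaves $H^j(-,G)$.
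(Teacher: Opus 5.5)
Your proof is essentially the paper's argument, which is spelled out in the remarks following \Cref{bir em}: the easy direction goes through $G\simeq\Omega^n\mathrm{B}^n_{nis}G$ as in \Cref{strict local is local}, and for the converse, flasqueness of birational sheaves kills $H^j_{nis}(-,G)$ for $j\geq 1$, so the presheaf homotopy of $\mathrm{B}^n_{nis}G$ is $G$ concentrated in degree $n$ and \Cref{checking bir locality on pi} applies. The only difference is how you justify the vanishing: the paper cites Riou's result that Zariski-flasque sheaves are Nisnevich-acyclic, which is more direct than your generic-point pushforward sketch, since flasqueness already gives surjectivity of $G(U)\oplus G(V)\to G(U\times_X V)$ on every elementary distinguished square, and that is exactly the excision condition making $X\mapsto \mathrm{K}(G(X),n)$ a Nisnevich sheaf, hence equal to $\mathrm{B}^n_{nis}G$.
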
 
\begin{defn}
    A pointed birational local space $x:*\to \esc{X}$ is said to be:
    \begin{enumerate}
        \item birationally connected ($1$-connective) if $\tau_{\leq 0}^{bir}\esc{X}\simeq *$.
        \item birationally globally $(n-1)$-connected (or, $n$-connective) if it is connected and for all $0<i< n$ $\pi_i^{bir}(\esc{X},x)\simeq *$. 
    \end{enumerate}
\end{defn} 
\begin{defn}
    We say that a pointed birational local space is birationally $n$-Eilenberg-MacLane if it is birationally globally $n$-connective and birationally $n$-truncated.
\end{defn}
\begin{thm}\label{bir em}
Let $S$ be a QCQS scheme. Then, similar to that of an $\infty$-topos \textup{[\cite{lurie2009higher}, Lemma 7.2.2.11 (1)]}, then there are equivalences of categories: 
\begin{flalign*}
   Sh_{*,S}^{b}\simeq  \mathrm{Disc}(\mathcal{H}^{bir})_\bullet&\simeq \mathcal{EM}_0(\mathcal{H}^{bir}):\pi_0^{bir}\\
   Grp_S^{b}\simeq  Grp(\mathrm{Disc}(\mathcal{H}^{bir}))&\simeq \mathcal{EM}_1(\mathcal{H}^{bir}): \pi_1^{bir}\\
   Ab_S^{b}\simeq Ab(\mathrm{Disc}(\mathcal{H}^{bir}))&\simeq \mathcal{EM}_n(\mathcal{H}^{bir}):\pi_n^{bir}  \text{ for all }n\geq 2
\end{flalign*}
 
\end{thm}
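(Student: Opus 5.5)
The plan is to deduce the statement from the general machinery of \S2.2, namely \Cref{grp em}, applied to the localization $L_{bir}$ of an $\infty$-topos. By \Cref{Hbir is Hb} I will regard $\mathcal{H}^{bir}(S)=L_{dense}\mathcal{P}_\Sigma(S)$ as an accessible localization of the $\infty$-topos $\mathcal{C}=\mathcal{P}_\Sigma(S)=\mathcal{P}_\sqcup(S)$. Alternatively, I can regard it as a localization of $\mathcal{P}_{nis}(S)$; this is harmless, since birational objects are Nisnevich local and, by \Cref{birational truncation}, all relevant truncations agree on $\mathcal{H}^{bir}(S)$.

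The first step is to check the hypothesis of \Cref{grp em}, which is path-injectivity. This is \Cref{path injectivity of Lbir}. Since $\pi_0$ of a birational space is already birational (\Cref{checking bir locality on pi}), $L_{\leq 0}$ restricted to $\tau_{\leq 0}\mathcal{H}^{bir}(S)$ is the identity inclusion. It therefore detects the terminal object trivially. Path-cartesianity also holds, because $\tau_{\leq 0}^{bir}=\tau_{\leq 0}|_{\mathcal{H}^{bir}}$ preserves products. \Cref{grp em} then gives, for every $n\geq 0$, equivalences
\[
\mathrm{B}^n_{\mathcal{H}^{bir}}: n\mbox{-}Grp^{L_{bir}}\simeq \mathcal{EM}_n(\mathcal{H}^{bir}(S)):\pi_n^{bir},
\]
with $\pi_n^{bir}\simeq \pi_n|_{\mathcal{H}^{bir}}$ as in \Cref{birational truncation}.

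The second step is to identify $n\mbox{-}Grp^{L_{bir}}$ with all $n$-group objects of $\mathrm{Disc}(\mathcal{H}^{bir})$. For $n=0$ this is immediate: $\mathcal{EM}_0$ consists of pointed $0$-truncated objects, so it equals $\mathrm{Disc}(\mathcal{H}^{bir})_\bullet=Sh^b_{*,S}$. For $n=1$, the lemma preceding the statement (citing [\cite{0bat}, Corollary 3.2.9]) says every birational sheaf of groups is strongly $L_{bir}$-local, so $Grp^{L_{bir}}=Grp^b_S=Grp(\mathrm{Disc}(\mathcal{H}^{bir}))$. For $n\geq 2$, \Cref{n-strict bir groups} gives $n\mbox{-}Grp^{L_{bir}}=Ab^b_S=Ab(\mathrm{Disc}(\mathcal{H}^{bir}))$, where Eckmann--Hilton identifies $n$-groups in a $1$-category with abelian groups.

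I expect the main obstacle to be \Cref{n-strict bir groups} itself, if it still needs proof. The task is to show that $\mathrm{B}^n_{\sqcup}A=K(A,n)$ is birational for a birational abelian sheaf $A$. My first attempt would use \Cref{checking bir locality on pi}: the homotopy presheaves of the presheaf-level $K(A,n)$, computed in $\mathcal{P}_\Sigma$ (where truncations and $\pi_i$ are sectionwise by the discussion after \Cref{Hbir is Hb}), are $A$ and $*$. These are birational, so $K(A,n)$ is birational. Moreover, the Nisnevich delooping agrees with this sectionwise one because birational sheaves are flasque and have vanishing higher Nisnevich cohomology. The same argument handles $n=1$ as an alternative to citing [\cite{0bat}].
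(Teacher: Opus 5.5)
Your proposal is correct and follows essentially the paper's own route. Path-injectivity of $L_{bir}$ (\Cref{path injectivity of Lbir}) is fed into \Cref{grp em}, and then the $n=1$ strict-locality lemma and \Cref{n-strict bir groups} identify $n\mbox{-}Grp^{L_{bir}}$ with all birational sheaves of groups (respectively abelian groups), while your sketch of the strictness lemma via \Cref{checking bir locality on pi} is exactly the argument the paper gives in \Cref{EM}.
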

\begin{proof}
Since $L_{bir}$ is path-injective (\Cref{path injectivity of Lbir}), this follows from \Cref{grp em} and \Cref{n-strict bir groups}.
\end{proof}
\begin{rem}\label[rem]{EM}
 Here is a rather straightforward argument. The induced presheaf Eilenberg-MacLane functors take a birational group object to birational Eilenberg-MacLane spaces, with inverse given by presheaf homotopy groups. Explicitly, there are equivalences 
    \begin{enumerate}
        \item $\mathrm{K}(-,0):Shv^b_S\simeq \mathcal{EM}_0(\mathcal{H}^{bir}(S)):\pi_0$.
        \item $\mathrm{K}(-,1):Grp^b_S\simeq \mathcal{EM}_1(\mathcal{H}^{bir}(S)): \pi_1$.
        \item $\mathrm{K}(-,i):Ab^b_S\simeq \mathcal{EM}_i(\mathcal{H}^{bir}(S)):\pi_i$ for all $i\geq 2$.
    \end{enumerate}
In short, we have equivalences of categories 
$\mathrm{K}(-,n):n\mbox{-}Grp^b_S\simeq \mathcal{EM}_n(\mathcal{H}^{bir}_S)$ for each $n\geq 0$. The identification of the theorem above with this statement follows from two facts (compare with the motivic case). First, $\pi_i$ of a birational local space is birational. Second, $\text{K}(A,n)$ is birational local iff $A$ is a birational presheaf. Indeed, these can be checked on the presheaf $\pi_i$'s by \Cref{checking bir locality on pi}.

Now the equivalence $$\text{K}(-,n):n\mbox{-}Grp_S\leftrightarrows \mathcal{EM}_n(\mathcal{P}(S)): \pi_n$$ restricts to the desired equivalence by restricting both sides to $\mathcal{H}^{bir}$ and noting that $$\mathcal{EM}_n(\mathcal{H}^{bir}(S))=\mathcal{EM}_n(\mathcal{P}(S))\bigcap \mathcal{H}^{bir}(S)$$ [\Cref{em of localization}] and that $\pi_i^{bir}={\pi_i}_{|\mathcal{H}^{bir}(S)}$ \Cref{birational truncation}.
\end{rem}

\begin{rem}
    An algebraic reason this is not surprising is that birational sheaves are flasque over arbitrary $qcqs$ base schemes. (To see this, let $U\xhookrightarrow{}X$ be an open immersion (not necessarily dense). Obviously, $(X\setminus\bar{U})\sqcup U\xhookrightarrow{}X$ is a dense open immersion. Thus, if $F$ is birational, $$F(X)\to F(X\setminus \bar{U}\sqcup U)\cong F(X\setminus \bar{U})\times F(U)$$ is an isomorphism. By composing with the (surjective) projection $ F(X\setminus \bar{U})\times F(U)\twoheadrightarrow  F(U)$, we see that the canonical map $F(X)\to F(U) $ is indeed surjective.) But flasque sheaves of abelian groups cannot have higher nisnevich cohomology groups. (Indeed, this follows from [Lemma 1.39, 1.40 \cite{riou2002theorie}] over noetherian bases and thus over all qcqs bases by noetherian approximation.) To follow the above theorem for $n\geq 2$, it suffices to verify the same on generators of $\mathcal{H}^{bir}_S$, namely $h^b(X)$ for smooth $X/S$. But,
\begin{flalign}
    \pi_iMap(h^b(X),\mathbf{K}(A-,n))&=\pi_i\mathcal{P}_{nis}(Sm_S)\big(X,\mathbf{K}(A-,n)\big )\\
&=\begin{cases}
A \text{ for }0=i\\
    H^{n-i}(X;A)=0 \text{ for }0<i\leq n\\
    \pi_{i-n}Map_{\mathcal{P}_{nis}(S)}\big(X,A\big )=0 \text{ for }i>n \text{ [as } A \text{ is discrete} ]
\end{cases}
\end{flalign}
The case $n=1$ is a little delicate, though it is also straightforward, since we are expecting $H^1_{nis}(-;G)$ to be birational and hence a nisnevich sheaf in particular. 
    \end{rem}
 \begin{rem}
This equivalence is an $\infty$-topos-theoretic property (\Cref{em equiv of a topos}) that fails in the standard motivic context (\Cref{mot not topos}). The theorem thus shows that a further birational localization forces the motivic homotopy category to exhibit properties of an $\infty$-topos.
 \end{rem}
 \begin{thm}\label{coh dim of bir}
Over a qcqs scheme, every object $\esc{E}\in\mathcal{H}^{bir}(S)$ has $0$-cohomological dimension. Hence, in particular, $\mathcal{H}^{bir}(S)$ has $0$-cohomological dimension.
\end{thm}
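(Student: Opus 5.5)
The plan is to use the definition of cohomological dimension from [\cite{lurie2009higher}, Definition 7.2.2.18], transported to the presentable $\infty$-category $\mathcal{H}^{bir}(S)$. Concretely, $\esc{E}$ has cohomological dimension $0$ if $H^m(\esc{E};\esc{A}):=\pi_0\mathrm{Map}_{\mathcal{H}^{bir}(S)_{/\esc{E}}}\big(\esc{E},\mathrm{K}_{\esc{E}}(\esc{A},m)\big)$ vanishes for every $m\geq 1$ and every abelian group object $\esc{A}$ of $\mathrm{Disc}(\mathcal{H}^{bir}(S)_{/\esc{E}})$.

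The first step is to make sense of $\mathrm{K}_{\esc{E}}(\esc{A},m)$ and to identify it concretely. By \Cref{Covering of Localization}, discrete objects of $\mathcal{H}^{bir}(S)_{/\esc{E}}$ are the birational objects that are $0$-truncated over $\esc{E}$. By \Cref{birational truncation}, truncations in $\mathcal{H}^{bir}(S)$ are computed in $\mathcal{P}(S)$. Running the argument of \Cref{bir em} and \Cref{EM} in the slice, one should get that the relative Eilenberg--MacLane object is the presheaf-level one, computed sectionwise. Its birationality is checked on homotopy presheaves via \Cref{checking bir locality on pi}.

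The second step treats generators. For $\esc{E}=h^b(X)$ with $X\in Sm_S$, a local system over $h^b(X)$ is just a birational sheaf $A$ on $Sm_{X}$. The computation displayed in \Cref{EM} applies: since birational sheaves are flasque, $H^m_{nis}(X;A)=0$ for $m\geq 1$. The delicate case $m=1$ uses that $H^1(-;G)$ is birational [\cite{0bat}, Corollary 3.2.9]. Equivalently, $\mathrm{Map}(h^b(X),\mathrm{K}(A,m))\simeq \mathrm{K}(A(X),m)$ sectionwise, so its $\pi_0$ vanishes.

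The third step passes from generators to arbitrary $\esc{E}$. Here I would use \Cref{Hbir is Hb}, which says $\mathcal{H}^{bir}(S)=L_{dense}\mathcal{P}_\Sigma(S)$. This lets one compute mapping spaces into the birational object $\mathrm{K}_{\esc{E}}(\esc{A},m)$ in $\mathcal{P}_\Sigma(S)$, i.e.\ as sections over the $\sqcup$-topos. A point of $\esc{E}$ over $X$ is then the same as its value at the generic points of $X$. The idea is to write a section of $\mathrm{K}_{\esc{E}}(\esc{A},m)\to\esc{E}$ as a compatible family over $\esc{E}(X)$. By flasqueness, the obstruction to lifting such a family lives in higher $\lim$-terms that vanish for birational coefficients. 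So the torsor or gerbe classified by a class in $H^m$ is trivialised compatibly by restricting to generic points, where the fibre is $\mathrm{K}(\esc{A}_\eta,m)$ with connected total space.

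I expect this third step to be the main obstacle. Cohomology of a general presheaf $\esc{E}$ is a limit over its simplices, and $\pi_0$ does not commute with limits, so vanishing on representables does not formally propagate. What I would try first is the following reduction. Choose an effective epimorphism $\bigsqcup h^b(X_i)\to \esc{E}$, pull the class back along it, and show that it dies there by the second step. Then argue that for a birational coefficient system, descent along this cover has no higher obstructions, because all terms of the Čech nerve again have vanishing cohomology and the relevant $\lim^s$ vanish by flasqueness. The final clause of the theorem, that $\mathcal{H}^{bir}(S)$ itself has cohomological dimension $0$, then follows by taking $\esc{E}=*$.
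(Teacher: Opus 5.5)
Your second step is, in substance, the paper's entire proof. The paper computes $\pi_0\mathrm{Map}(L_{bir}h_S(X),\mathrm{K}(A,n))=\pi_0\mathrm{K}(A(X),n)=0$. It then asserts that the vanishing spreads to every $\esc{E}$ because $\mathcal{H}^{bir}(S)$ is generated under colimits by the $L_{bir}h_S(X)$. You were right that this passage is not formal. Your third step does not close it, and no argument can, because the first sentence of the statement is false.

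The step that fails is the claim that the higher $\lim^s$-terms of the descent along $\bigsqcup h^b(X_i)\to\esc{E}$ vanish ``by flasqueness''. Flasqueness constrains the coefficient sheaf along open immersions of schemes. The $\lim^s$-terms are the cohomology of the cosimplicial abelian group obtained by evaluating the coefficients on the \v{C}ech nerve. That group is governed by the simplicial shape of $\esc{E}$, about which flasqueness says nothing.

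Here is a counterexample. Take $S=\operatorname{Spec}k$ and $A=\underline{\mathbb{Z}}\colon X\mapsto\mathbb{Z}^{\pi_0(X)}$. For smooth $X$ the connected components are irreducible, so a dense open $U\subset X$ has $\pi_0(U)\cong\pi_0(X)$. Hence $A$ is a birational sheaf of abelian groups, and $\mathrm{K}(A,1)\in\mathcal{H}^{bir}(k)$ by \Cref{bir em}. Let $\esc{E}=*\sqcup_{*\sqcup *}*$ be the simplicial circle, formed in $\mathcal{H}^{bir}(k)$. Then
\[
\mathrm{Map}_{\mathcal{H}^{bir}(k)}\big(\esc{E},\mathrm{K}(A,1)\big)\simeq \mathrm{K}(\mathbb{Z},1)\times_{\mathrm{K}(\mathbb{Z},1)\times\mathrm{K}(\mathbb{Z},1)}\mathrm{K}(\mathbb{Z},1)\simeq\mathrm{Map}_{Spc}\big(S^1,\mathrm{K}(\mathbb{Z},1)\big),
\]
whose $\pi_0$ is $\mathbb{Z}\neq 0$. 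The same holds for the pulled-back local system $\esc{E}^*A$ in your slice formulation. The \v{C}ech nerve of the effective epimorphism $*\to\esc{E}$ consists of coproducts of copies of $*$, each cohomologically trivial. Yet the class survives, and it sits exactly in $E_2^{1,0}=H^1(\mathrm{B}\mathbb{Z};\mathbb{Z})$, which is the $\lim^1$ you expected to vanish. This is the familiar fact that $Spc$ has cohomological dimension $0$ while $S^1$ does not.

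What is true is the second sentence, and it needs only your second step. In the sense of [\cite{lurie2009higher}, Definition 7.2.2.18] it concerns the terminal object $*=L_{bir}h_S(S)$, with coefficients in abelian group objects of $\mathrm{Disc}(\mathcal{H}^{bir}(S))$. Since $\mathrm{K}(A,m)$ is the sectionwise Eilenberg--MacLane presheaf (\Cref{bir em}), $H^m(*;A)=\pi_0\mathrm{K}(A(S),m)=0$ for $m\geq 1$. So take $X=S$ in step 2 and drop step 3.

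Separately, your identification of local systems over $h^b(X)$ with birational sheaves on $Sm_X$ is unjustified. The computation in \Cref{EM} only handles coefficients defined on $Sm_S$, but that is all the terminal-object statement requires.
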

\begin{proof}
    By definition, for any $n>0$ and $X$ smooth over $S$,
$$\mathbf{H}^{n}_{\mathcal{H}^{bir}(S)}(A)(X/S):=\pi_0\Big(\mathrm{Map}_{\mathcal{H}^{bir}(S)}\big(L_{bir}X,\mathbf{K}(A-, n) \big)\Big)=\pi_0(\mathbf{K}(AX, n) \big)$$
vanishes.
    
    Since $\mathcal{H}^{bir}(S)$ is generated under colimits by $L_{bir}h_S(X)$ (for smooth $X/S$), we conclude that $\mathbf{H}^{n}(\mathcal{H}^{bir}(S)(\esc{E)}$ is $0$ for all $n>0$.  
\end{proof}
\begin{rem}
 These properties do not already hold in any other higher birational motivic categories (see [\cite{sfbat}] for the definition). In fact, the same example as in [\cite{choudhury2014connectivity}] shows that the $1$-birational motivic homotopy category is not an $\infty$-topos either. Indeed, his $\mathbb{A}^1$-local example $\mathbb{Z}^{nis}(\mathbb{G}_m)$ is not strongly $\mathbb{A}^1$-invariant. From his proof, or otherwise, it is clear that $\mathbb{Z}^{nis}(\mathbb{G}_m)$ is a summand of $\mathbb{Z}^\sqcup (\mathbb{G}_m)$, and is thus $1$-birational local since $\mathbb{G}_m$ is so. But $\mathcal{EM}_1(\mathcal{H}^{1}(k))$ consists of strongly $\mathbb{A}^1$-invariant, strongly $1$-birational-invariant sheaves of groups. In fact, this failure is not just to the $\mathbb{A}^1$-local aspect; it is of birational nature as well. There are $n$-birational, strongly $\mathbb{A}^1$-invariant abelian groups that are not strictly $n$-birational. One quick example is $K_{n+1}^M$.
 \end{rem}

\subsection{Birational groups/monoids}
We can generalize the results of the previous section about birational sheaves of groups to arbitrary birational monoid spaces. Recall that the birational homotopy category is presentable, and hence there is an internal bar construction. We shall denote this by $\mathrm{B}_{bir}: \mathrm{Mon}(\mathcal{H}^{bir})\to \mathcal{H}^{bir}$. As in \S2.3, one may define a notion of strongly $L_{bir}$-local group objects, but the following theorem shows, as in \Cref{n-strict bir groups}, that such a concept is unnecessary.
\begin{thm}\label{B_bir is B}
    There is an equivalence $\mathbf{B}_{bir}\simeq \mathbf{B}_{|Mon(\mathcal{H}^{bir}(S))}$.
\end{thm}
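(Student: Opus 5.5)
Our plan is to show that for every monoid $\esc{M}\in \mathrm{Mon}(\mathcal{H}^{bir}(S))$ the presheaf-level bar construction $\mathrm{B}\esc{M}=|\esc{M}_\bullet|$, computed in $\mathcal{P}(S)$, is already birational motivic. Granting this, the comparison is formal. The inclusion $\mathcal{H}^{bir}(S)\subset\mathcal{P}(S)$ is a localization, so $\mathrm{B}_{bir}\esc{M}\simeq L_{bir}\mathrm{B}\esc{M}$, because the colimit in $\mathcal{H}^{bir}$ is $L_{bir}$ applied to the colimit in $\mathcal{P}$. Hence $\mathrm{B}_{bir}\simeq \mathrm{B}_{|Mon(\mathcal{H}^{bir}(S))}$ as soon as the unit $\mathrm{B}\esc{M}\to L_{bir}\mathrm{B}\esc{M}$ is an equivalence.

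To check locality we use \Cref{Hbir is Hb} together with \Cref{checking bir locality on pi}. By these, a presheaf is birational motivic if and only if it lies in $\mathcal{P}_\Sigma(S)$ and its presheaf homotopy groups $\pi_i$ (at all basepoints) are birational.

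\textbf{Step 1: $\Sigma$-property.} Each $\esc{M}_n\simeq \esc{M}^{\times n}$ lies in $\mathcal{P}_\Sigma(S)$. Geometric realizations of simplicial objects in $\mathcal{P}_\Sigma$ computed in $\mathcal{P}$ remain in $\mathcal{P}_\Sigma$, since sifted colimits commute with finite products. Hence $\mathrm{B}\esc{M}\in\mathcal{P}_\Sigma(S)$.

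\textbf{Step 2: homotopy presheaves.} The realization can be computed sectionwise, so for each $X\in Sm_S$ we get $(\mathrm{B}\esc{M})(X)=\mathrm{B}(\esc{M}(X))$, the classifying space of the $E_1$-monoid $\esc{M}(X)$. Thus $\pi_0\mathrm{B}\esc{M}=*$, which is birational. In the grouplike case we moreover have $\pi_{i+1}\mathrm{B}\esc{M}\simeq\pi_i\esc{M}$ for $i\ge0$, and these presheaves are birational by \Cref{checking bir locality on pi} applied to $\esc{M}$. Applying \Cref{checking bir locality on pi} once more gives $\mathrm{B}\esc{M}\in\mathcal{H}^{bir}(S)$.

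\textbf{Step 3: the main obstacle, the non-grouplike case.} For a general monoid, $\pi_1\mathrm{B}(\esc{M}(X))$ is the group completion of the monoid $\pi_0\esc{M}(X)$. The higher homotopy groups are governed by the group-completion theorem, i.e.\ by localizing $H_*(\esc{M}(X))$ at $\pi_0$. This is not obviously birational.

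We would first try the following. Restriction along a dense open immersion $U\hookrightarrow X$ induces an equivalence $\esc{M}(X)\simeq\esc{M}(U)$ of $E_1$-monoids, because $\esc{M}$ is birational and the restriction map is a monoid map. Applying $\mathrm{B}$, the map $\mathrm{B}\esc{M}(X)\to\mathrm{B}\esc{M}(U)$ is then an equivalence. This makes $\mathrm{B}\esc{M}$ local with respect to dense open immersions directly, with no need for homotopy groups.

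It then remains to check that $\mathrm{B}\esc{M}$ is $\mathbb{A}^1$-invariant. Sectionwise functoriality of $\mathrm{B}$ handles this in the same way, since $\esc{M}(X)\to\esc{M}(\mathbb{A}^1_X)$ is an equivalence of monoids. Combined with Step 1 and the description $\mathcal{H}^{bir}=L_{dense}\mathcal{P}_\Sigma$ of \Cref{Hbir is Hb}, this shows $\mathrm{B}\esc{M}\in\mathcal{H}^{bir}(S)$.

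This sectionwise argument subsumes Step 2, so we would present it as the main proof and keep the homotopy-group computation only as a remark.
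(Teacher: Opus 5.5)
Your proposal is correct, and its skeleton matches the paper's. Both identify $\mathrm{B}_{bir}$ with $L_{bir}\mathrm{B}$, then show that the presheaf bar construction of a birational monoid is already birational. The difference lies in how the two steps are justified:
\begin{itemize}
\item \emph{First step.} The paper invokes \Cref{bar of cart localization}, which is stated for cartesian localizations. You need no cartesianness hypothesis: the colimit in $\mathcal{H}^{bir}(S)$ of a diagram of local objects is $L_{bir}$ of the colimit in $\mathcal{P}(S)$.
\item \emph{Second step.} The paper simply cites [\cite{0bat}, Lemma 3.2.4 (3)], whereas you prove it. Via \Cref{Hbir is Hb}, membership in $\mathcal{H}^{bir}(S)$ reduces to two conditions. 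The $\Sigma$-condition survives geometric realization because sifted colimits commute with finite products. The condition $\esc{F}(X)\simeq \esc{F}(U)$ for dense open $U\subset X$ survives because the restriction $\esc{M}_\bullet(X)\to \esc{M}_\bullet(U)$ is a levelwise equivalence of simplicial spaces, hence induces an equivalence on realizations.
\end{itemize}

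The one input your argument genuinely depends on is \Cref{Hbir is Hb}, read (as the paper also reads it) as an equality of full subcategories of $\mathcal{P}(S)$. Without it you would have to verify Nisnevich descent of $\mathrm{B}\esc{M}$, which cannot be checked sectionwise since $\mathrm{B}$ does not commute with pullbacks.

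Given that input, your argument proves more than stated: $\mathcal{H}^{bir}(S)\subset \mathcal{P}(S)$ is closed under all sifted colimits. It uses neither grouplikeness nor any feature of the monoid beyond its underlying simplicial diagram. For the same reason, the homotopy-group discussion in Steps 2--3 and the separate $\mathbb{A}^1$-invariance check are redundant and can be dropped.

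The citation buys the paper brevity. Your route buys a self-contained proof that makes transparent that the theorem is just closure of $\mathcal{H}^{bir}(S)$ under geometric realizations in $\mathcal{P}(S)$.
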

\begin{proof}
It follows from \Cref{bar of cart localization} that $\mathrm{B}_{bir}\simeq L_{bir} \mathrm{B}$, where $\mathrm{B}$ is the presheaf-level bar construction. Thus, it suffices to know that $\mathrm{B}$ preserves birational local objects. This is [\cite{0bat}, Lemma 3.2.4 (3)].
\end{proof}
We caution that $\mathbf{B}_{mot}\not\simeq \mathbf{B}_{|Mon(\mathcal{H}^{mot}(k))}$.
\begin{thm}\label{bir commute group competition}
        $L_{bir}$ commutes with the (Nisnevich) group completion of commutative monoids in $\mathcal{P}(S)$ (as well as in $\mathcal{P}_{nis}(S)$).
\end{thm}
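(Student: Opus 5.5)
We need to show: for a commutative monoid $\esc{M}$ in $\mathcal{P}(S)$ (or $\mathcal{P}_{nis}(S)$), the canonical map $L_{bir}(\esc{M}^{gp})\to (L_{bir}\esc{M})^{gp}$ is an equivalence, where on the right the group completion is taken in $\mathcal{P}_{nis}(S)$ (equivalently, by \Cref{B_bir is B}, in $\mathcal{H}^{bir}(S)$).

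The plan is to use the standard formula $\esc{M}^{gp}\simeq \Omega\mathrm{B}\esc{M}$, valid in any $\infty$-topos. Group completion of $E_\infty$-monoids, or more generally of $E_1$-monoids, is computed as $\Omega\mathrm{B}$, and in $\mathcal{P}_{nis}(S)$ we have $\mathrm{B}_{nis}=L_{nis}\mathrm{B}$ and $\Omega_{nis}=\Omega$. The argument then runs through the following chain of equivalences:
$$L_{bir}(\esc{M}^{gp})=L_{bir}\Omega\mathrm{B}\esc{M}\overset{?}{\simeq}\Omega L_{bir}\mathrm{B}\esc{M}\simeq \Omega\mathrm{B}_{bir}L_{bir}\esc{M}\simeq\Omega\mathrm{B}L_{bir}\esc{M}\simeq (L_{bir}\esc{M})^{gp}.$$

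\begin{enumerate}
\item \emph{Commuting $L_{bir}$ past $\mathrm{B}$.} The localization $L_{bir}$ preserves finite products. This is because $\mathcal{H}^{bir}(S)=L_{dense}\mathcal{P}_\Sigma(S)$ by \Cref{Hbir is Hb}, and dense open immersions are stable under products with smooth schemes. Being a left adjoint, $L_{bir}$ also preserves geometric realizations. Hence $L_{bir}$ sends monoids to monoids, and \Cref{bar of cart localization} gives $L_{bir}\mathrm{B}\simeq \mathrm{B}_{bir}L_{bir}$.
\item \emph{Removing the internal bar construction.} \Cref{B_bir is B} identifies $\mathrm{B}_{bir}$ with the presheaf bar construction on birational monoids, so $\mathrm{B}_{bir}L_{bir}\esc{M}\simeq \mathrm{B}L_{bir}\esc{M}$.
\item \emph{Commuting $L_{bir}$ past $\Omega$.} This is the step marked $?$. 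The object $\mathrm{B}\esc{M}$ is connected (its $\pi_0$ is trivial in the presheaf topos), and it is pointed. I would show that $L_{bir}$ preserves the loop space of pointed connected objects: $L_{bir}\Omega \esc{Y}\simeq \Omega L_{bir}\esc{Y}$ for connected $\esc{Y}$. The plan is to write $\esc{Y}=\mathrm{B}(\Omega\esc{Y})$ using $\mathrm{B}\Omega\simeq \mathrm{id}$ on connected objects of the topos $\mathcal{P}(S)$ (\Cref{em equiv of a topos}). Steps 1–2 then give $L_{bir}\esc{Y}\simeq \mathrm{B}(L_{bir}\Omega \esc{Y})$, and this object is again connected. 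Applying $\Omega$ and using $\Omega\mathrm{B}G\simeq G$ for the grouplike object $G=L_{bir}\Omega\esc{Y}$ in $\mathcal{P}(S)$ yields the claim. Grouplikeness is preserved because $L_{bir}$ preserves products, and grouplikeness is the condition that the shear map is an equivalence.
\end{enumerate}

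The main obstacle is step 3, and specifically checking that the presheaf identities $\mathrm{B}\Omega\simeq\mathrm{id}$ and $\Omega\mathrm{B}\simeq\mathrm{id}$ are being applied to objects that actually lie in the topos $\mathcal{P}(S)$ (or $\mathcal{P}_\Sigma(S)$) rather than only in $\mathcal{H}^{bir}(S)$. This is where \Cref{B_bir is B}, i.e.\ [\cite{0bat}, Lemma 3.2.4(3)], is essential. Without it, as in the motivic case, this step fails.

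For the Nisnevich version, note first that the group completion in $\mathcal{P}_{nis}(S)$ is $L_{nis}$ applied to the presheaf group completion. Second, $L_{bir}L_{nis}\simeq L_{bir}$, since birational objects are Nisnevich local by [\cite{0bat}, Theorem 2.1.1]. The Nisnevich statement therefore follows from the presheaf statement.
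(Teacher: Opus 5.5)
Your proof is correct and follows the paper's argument: the paper also writes $\esc{M}^{gp}\simeq\Omega\mathrm{B}\esc{M}$ and uses that $L_{bir}$ commutes with $\mathrm{B}$ on all objects (via \Cref{bar of cart localization} and \Cref{B_bir is B}) and with $\Omega$ on connected objects. The only difference is that the paper simply asserts the commutation with $\Omega$. You instead derive it from the bar-construction statement, using $\esc{Y}\simeq\mathrm{B}\Omega\esc{Y}$ and $\Omega\mathrm{B}G\simeq G$ in $\mathcal{P}(S)$, and that derivation is valid.
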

\begin{proof}
 Since $\esc{M}^+\simeq \Omega\mathbf{B} \esc{M}$ and $L_{bir} $ commutes with $\Omega $ on connected objects and with $\mathbf{B}$ on all objects, the result follows immediately.
\end{proof}

Recall that the following is a property of an $\infty$-topos:
\begin{prop}\label[prop]{bir omega bar}
    The functor $$\Omega: \mathcal{H}^{bir}(S)_{\geq 1} \to  Grp(\mathcal{H}^{bir}_\bullet(S))\simeq Grp(\mathcal{H}^{bir}(S))$$ is an equivalence of infinity categories, with inverse given by the bar construction $\mathbf{B}_{|bir}$.
\end{prop}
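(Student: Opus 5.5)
The plan is to deduce the statement from the general categorical result \Cref{loop bar for localization}, applied to the birational localization, and then remove the ``strongly local'' restriction using \Cref{B_bir is B}. I will take $\mathcal{C}$ to be an $\infty$-topos in which $\mathcal{H}^{bir}(S)$ sits as an accessible localization. Concretely, this is $\mathcal{P}_\Sigma(S)=\mathcal{P}_\sqcup(S)$ by \Cref{Hbir is Hb}, or equally $\mathcal{P}(S)$ or $\mathcal{P}_{nis}(S)$, since $\mathcal{H}^{bir}(S)$ is closed under limits in each. The hypothesis needed is path-injectivity of $L_{bir}$, and \Cref{path injectivity of Lbir} supplies it: there $\tau_{\leq 0}^{bir}$ on $\tau_{\leq 0}\mathcal{H}^{bir}(S)$ is just the inclusion of birational sheaves, so it certainly detects the terminal object. \Cref{loop bar for localization} then gives an equivalence
$$\mathrm{B}_{bir}: Grp_{L_{bir}}(\mathcal{H}^{bir}(S))\simeq \mathcal{H}^{bir}(S)_{\geq 1}:\Omega,$$
where $Grp_{L_{bir}}$ is the full subcategory of birational group objects $\esc{G}$ whose presheaf-level bar construction $\mathrm{B}\esc{G}$ is again birational local.

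The next step is to show $Grp_{L_{bir}}(\mathcal{H}^{bir}(S))=Grp(\mathcal{H}^{bir}(S))$. This is exactly the content of \Cref{B_bir is B}: $\mathrm{B}$ preserves birational local objects by [\cite{0bat}, Lemma 3.2.4 (3)], so $\mathrm{B}_{bir}\simeq \mathrm{B}_{|Mon(\mathcal{H}^{bir}(S))}$, and every group object is strongly $L_{bir}$-local. The identification $Grp(\mathcal{H}^{bir}_\bullet(S))\simeq Grp(\mathcal{H}^{bir}(S))$ is the standard one, since a group object is canonically pointed by its unit and the forgetful functor from pointed group objects is an equivalence.

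I expect the delicate point to be matching connectivity notions on the two sides. By \Cref{em of localization}, path-injectivity gives $(\mathcal{H}^{bir})_{\geq 1}=\mathcal{H}^{bir}\cap \mathcal{C}_{\geq 1}$, so a birationally connected object is connected in the ambient topos. This is what makes the unit $\mathrm{B}\Omega\esc{X}\simeq\esc{X}$ available from [\cite{lurie2017higher}, Theorem 5.2.6.10], and it is already built into the proof of \Cref{loop bar for localization}. For the other composite, $\Omega\mathrm{B}\esc{G}\simeq\esc{G}$ holds in the ambient topos (\Cref{em equiv of a topos}), and $\mathrm{B}\esc{G}$ is birational and connected. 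It lies in $\mathcal{H}^{bir}_{\geq 1}$ because $\pi_0$ of it is trivial and, by \Cref{birational truncation}, $\pi_0^{bir}$ is the restriction of the presheaf $\pi_0$. Finally, $\Omega_{bir}=\Omega$ because $\mathcal{H}^{bir}$ is closed under limits. Together these show that $\Omega$ and $\mathrm{B}_{|bir}$ are mutually inverse.
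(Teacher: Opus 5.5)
Your proposal is correct and is essentially the paper's argument. In both, the decisive input is \Cref{B_bir is B}: the presheaf-level bar construction preserves birational local objects, so the topos-level equivalence between $\mathrm{B}$ and $\Omega$ restricts to $\mathcal{H}^{bir}(S)$. The only difference is packaging. You route it through \Cref{loop bar for localization} and the path-injectivity from \Cref{path injectivity of Lbir}. The paper instead restricts the presheaf counit $\mathrm{B}\Omega\to 1_{\mathcal{P}(S)_{\geq 1}}$ directly and defers the rest to [\cite{bas}, Theorem 3.3.7], leaving implicit the connectivity matching and the unit $\Omega\mathrm{B}\esc{G}\simeq\esc{G}$ that you spell out.
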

\begin{proof}
Since $\mathbf{B}_{bir}\simeq \mathbf{B}_{|Mon(\mathcal{H}^{bir}(S))}$, the co-unit $$\mathbf{B}_{bir}\Omega \to 1_{\mathcal{H}^{bir}(S)_{\geq 1} }$$ is an equivalence, as it is equivalent to the restriction of the equivalence $ B\Omega\to 1_{\mathcal{P}(S)_{\geq 1}}$. 

Because grouplike objects are group-complete, this is just a restatement of [\cite{bas}, Theorem 3.3.7].     
\end{proof}
Note that this is a property of an $\infty$-topos that fails to hold in the ordinary motivic homotopy category \Cref{strong motivic groups and motivic connected spaces}. 
We conclude this section with one more topos-theoretic property of the birational motivic homotopy category: 
\begin{prop}\label[prop]{sub obj in bir}
    Suppose S is a geometrically unibranch scheme with finitely many generic points. Then the birational homotopy category of $S$ has a subobject classifier.
\end{prop}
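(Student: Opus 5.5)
Our plan is to produce the subobject classifier explicitly as a discrete birational sheaf and to verify its universal property through the identification of monomorphisms in $\mathcal{H}^{bir}(S)$ with $(-1)$-truncated maps of presheaves. By \Cref{truncation of localization} and \Cref{birational truncation}, a morphism $\esc{U}\to\esc{X}$ in $\mathcal{H}^{bir}(S)$ is a monomorphism iff it is $(-1)$-truncated in $\mathcal{P}(S)$, since truncation in slices is computed by the presheaf truncation and $\mathcal{H}^{bir}(S)\subset\mathcal{P}(S)$ is closed under limits. Thus a subobject of $\esc{X}$ in $\mathcal{H}^{bir}(S)$ is a presheaf subobject $\esc{U}\subset\esc{X}$ (a sieve-like subfunctor, determined on $\pi_0$) which happens to be birational. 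We then define the candidate classifier as the presheaf $\Omega^b(X):=\{$subobjects of $h^b(X)$ in $\mathcal{H}^{bir}(S)\}$ and aim to show it is representable-level computable and birational.

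The first key step is to compute $\Omega^b$ concretely. Using \Cref{Hbir is Hb}, $\mathcal{H}^{bir}(S)=L_{dense}\mathcal{P}_\Sigma(S)$, and a birational sheaf is determined by its values on generic points: for $X\in Sm_S$ with generic points $\eta_1,\dots,\eta_r$ (finitely many, since $X\to S$ is finitely presented and $S$ has finitely many generic points), one has $F(X)\simeq \prod_j \colim_{U\ni\eta_j}F(U)$, with $U$ ranging over dense opens of the component. We expect subobjects of $h^b(X)$ to correspond to subsets of the set of irreducible components of $X$, i.e. to clopen unions of components after removing the intersections, since a subobject $\esc{U}\hookrightarrow h^b(X)$ is determined by which generic points it contains (a $(-1)$-truncated birational subsheaf of a birational sheaf is determined on the local rings at generic points, where the relevant value is $\{0,1\}$). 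So the candidate is $\Omega^b(X)=\mathrm{Map}(\pi_0^{gen}(X),\{0,1\})$, the power set of the set of generic points of $X$. We must check that this is birational: dense open immersions do not change the generic points, and $\Sigma$-compatibility is clear. This is where geometric unibranchness of $S$ enters: for $X$ smooth over a geometrically unibranch $S$, each connected component of $X$ is irreducible (smooth over unibranch is unibranch), so generic points of $X$ correspond to connected components, and restriction along arbitrary maps $Y\to X$ in $Sm_S$ is well defined on generic points (each component of $Y$ maps into a unique component of $X$), making $\Omega^b$ a functor on $Sm_S$, not merely on open immersions.

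Second, we verify the universal property: for $\esc{X}\in\mathcal{H}^{bir}(S)$, $\mathrm{Sub}(\esc{X})\simeq\mathrm{Map}(\esc{X},\Omega^b)$. Both sides send colimits in $\mathcal{H}^{bir}(S)$ to limits: the right side trivially; the left side because $\mathcal{H}^{bir}(S)$ is generated by $h^b(X)$ and, using \Cref{birational truncation}, pullbacks of monomorphisms and the descent of subobjects can be computed in the $\infty$-topos $\mathcal{P}_\Sigma(S)$, where $\mathrm{Sub}(-)$ does turn colimits into limits; we need that a presheaf subobject of a birational $\esc{X}$ which is birational on each $h^b(X)\to\esc{X}$ is itself birational, which we check via $\pi_0$ using \Cref{checking bir locality on pi}. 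This reduces to representables, where it is the computation of the previous paragraph together with Yoneda.

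The main obstacle is the identification of subobjects of $h^b(X)$ with sets of generic points, in particular showing that a birational subobject must be "all or nothing" on each irreducible component; we would attack it by noting that for a field-valued point $\mathrm{Spec}\,k(\eta)$ the space $h^b(X)(k(\eta))$ is computed as a colimit over dense opens and that birational subfunctors are flasque (as in \Cref{EM}'s remark), so a section over a dense open extends, forcing the subfunctor to contain the identity section of the whole component.
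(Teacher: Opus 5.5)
\textbf{There is a genuine gap.} The step you flagged as the main obstacle is false, not merely hard: $\mathrm{Sub}(h^b(X))$ is not the power set of the generic points of $X$.

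\emph{Why your argument does not give it.} A subobject $\esc{U}\hookrightarrow h^b(X)$ is a birational subpresheaf. It is a condition on sections over every $Y\in Sm_S$, so it is not determined by its value on $X$ or at the generic points of $X$. Your parenthetical, that the relevant value at generic points is $\{0,1\}$, conflates the subobject with whether it contains the identity section. A subobject can be nonempty while containing no section over any dense open of $X$. Flasqueness only shows that a subobject containing the identity section over a dense open is everything on that component. It yields no all-or-nothing dichotomy.

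\emph{Counterexample.} Let $S=\mathrm{Spec}\,k$ and let $L/k$ be a nontrivial finite separable extension. Let $\esc{U}_L\subset *$ be the subpresheaf with $\esc{U}_L(Y)=*$ if $Y$ admits a $k$-morphism to $\mathrm{Spec}\,L$, and $\esc{U}_L(Y)=\emptyset$ otherwise.
\begin{enumerate}
\item It is a $\Sigma$-presheaf.
\item It is birational: for $U\subset Y$ dense open, every $k$-algebra map $L\to\mathcal{O}(U)$ factors through $\mathcal{O}(Y)$, because $L$ is integral over $k$ and $Y$ is normal.
\item By \Cref{Hbir is Hb}, it is therefore a subobject of $*=h^b(\mathrm{Spec}\,k)$ in $\mathcal{H}^{bir}(k)$.
\item It is neither $\emptyset$ nor $*$, since $\esc{U}_L(\mathrm{Spec}\,k)=\emptyset$ while $\esc{U}_L(\mathrm{Spec}\,L)=*$.
\end{enumerate}
So $\mathrm{Sub}(*)$ has at least three elements, whereas your $\Omega^b(\mathrm{Spec}\,k)$ has two. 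Over an arbitrary field there is a similar example: take an elliptic curve $E$. Its functor of points on $Sm_k$ is already birational, since rational maps from smooth $k$-schemes to abelian varieties extend. The origin $*\to h_E$ is then a subobject of $h^b(E)=h_E$ that is neither empty nor all of $h_E$.

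\emph{Structural reason.} The presheaf you end up with is just the coproduct $*\sqcup *$: under your hypotheses its value on $X$ is $\mathrm{Hom}_S(X,S\sqcup S)$, the set of clopen subsets of $X$. This object classifies only \emph{complemented} subobjects. It would be the subobject classifier only if $\mathcal{H}^{bir}(S)$ were Boolean, and $\esc{U}_L$ shows it is not. Any $\esc{V}\subset *$ disjoint from $\esc{U}_L$ must be empty: if $\esc{V}(Y)=*$ with $Y\neq\emptyset$, then $\esc{V}\times\esc{U}_L$ is nonempty on $Y\times_k\mathrm{Spec}\,L$. Hence $\esc{U}_L$ has no complement.

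\textbf{Second, smaller issue.} Your reduction to representables uses more than you cite. Descent of subobjects along colimits in $\mathcal{H}^{bir}(S)$ needs those colimits to be computed in $\mathcal{P}_\Sigma(S)$. That is \Cref{closed under coproducts}, which is where the hypotheses on $S$ actually enter.

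\textbf{The paper's proof does not close the gap.} It makes the same identification one level up, describing the subobject classifier of $\mathcal{P}_\Sigma(Sm_S)$ as $X\mapsto\{\text{clopen subsets of }X\}$. That is again $*\sqcup *$, and $\esc{U}_L$ is a non-complemented subobject of $*$ in $\mathcal{P}_\Sigma(Sm_k)$ as well.

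\textbf{A sound route.} Use the argument behind \Cref{Hbir a topos}:
\begin{enumerate}
\item By \Cref{closed under coproducts}, $\mathcal{H}^{bir}(S)$ is closed under all limits and colimits in $\mathcal{P}_\Sigma(S)$.
\item Hence it is an $\infty$-topos by \Cref{coreflexive sub topos}.
\item Every $\infty$-topos has a subobject classifier [\cite{lurie2009higher}, \S6.1.6].
\end{enumerate}
Its value at $X$ is $\mathrm{Sub}(h^b(X))$, which by the examples above is strictly larger than the power set of the components of $X$.
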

\begin{proof}
    It suffices to show that the subobject classifier of $\mathcal{P}_{\Sigma}(Sm_S)$ is birational local. Indeed, since a localization is closed under limits, the subobject functor is simply the restriction of the subobject functor from the parent category to the localized one. 
    
    In general, the subobject classifier of $\mathcal{P}_{\Sigma}(Sm_S)=\mathcal{P}_{\sqcup}(Sm_S)$ is given by the sifted sheaf:
    $$X/S \mapsto \Omega^{\sqcup}_S(X):= \{ 
    W/S\big | W \text{  is a clopen subset of } X\}$$
    Since $X$ has a finite decomposition by its irreducible components (which are thus both open and closed) (see the lemma below), $$\Omega^{\sqcup}_S(X):= \{ 
    W/S\big | W \text{  is a component of } X\}.$$
   It is obvious that $\Omega^\sqcup _S$ is birational local.
\end{proof}
\begin{lem}\label[lem]{unibranch lemma}
   Suppose S is a Qcqs scheme. And $f:X\to S$ a smooth (qc and separated) morphism.
   \begin{enumerate}
       \item If $S$ is geometrically unibranch then the irreducible components of $X$ are disjoint.
       \item  If $S$ has finitely many generic points, then so does $X$.
   \end{enumerate}
\end{lem}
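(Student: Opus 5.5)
The plan is to prove the two parts separately, using standard facts about smooth (hence flat, open, locally of finite presentation) morphisms. Part (2) is the more elementary one, so I outline it first. Part (1) relies on the permanence of the geometrically unibranch property under smooth morphisms.

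For (2), let $\xi\in X$ be a generic point, i.e.\ a maximal point of $X$, and put $s=f(\xi)$. Since $f$ is flat, going down holds for $f$: every generization of $s$ lifts to a generization of $\xi$. Maximality of $\xi$ then forces $s$ to be a maximal point of $S$. Hence every generic point of $X$ lies in one of the finitely many fibers $X_\eta$ with $\eta$ a generic point of $S$. Moreover $\xi$ is maximal in $X_\eta$, because generizations inside $X_\eta$ are generizations in $X$. Each such fiber is quasicompact, since $f$ is qc, and of finite presentation over the field $\kappa(\eta)$. So each fiber is a noetherian scheme and has only finitely many generic points. A finite union of finite sets is finite, which gives (2).

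For (1), I first want to show that $X$ is geometrically unibranch. The property is local in the étale topology, and a smooth morphism is étale-locally of the form $\mathbb{A}^n_S$. So it suffices to know that geometrically unibranch is preserved by étale morphisms and by passing to affine space over the base. This is the standard permanence result that a scheme smooth over a geometrically unibranch scheme is geometrically unibranch (EGA IV, around 6.15 / 17.5, or the corresponding Stacks Project statement). I would cite it rather than reprove it.

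Next, for any point $x\in X$, the strict henselization of $\mathcal{O}_{X,x}$ has a unique minimal prime. The map from $\mathcal{O}_{X,x}$ to its strict henselization is faithfully flat, so minimal primes of $\mathcal{O}_{X,x}$ lift to minimal primes of the strict henselization. Therefore $\mathcal{O}_{X,x}$ itself has exactly one minimal prime. Equivalently, $x$ lies on exactly one irreducible component of $X$, which is precisely the statement that the irreducible components of $X$ are pairwise disjoint.

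The step I expect to be the main obstacle is the permanence step in (1), in the generality of a qcqs, possibly non-noetherian base. If the cited reference only covers the locally noetherian case, I would reduce to it by noetherian approximation: write $X\to S$ as a base change of a smooth morphism of finite type schemes. I would then check that the relevant geometric-branch counts of $\mathcal{O}_{X,x}$ are controlled by the fiber and the base, i.e.\ the number of geometric branches of $X$ at $x$ is bounded by that of $S$ at $f(x)$, using smoothness of the fibers.

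Finally, combining (1) and (2) in the situation of \Cref{sub obj in bir}: $X$ has finitely many irreducible components, and they are pairwise disjoint closed sets. Each component is therefore the complement of the finite union of the others, hence open. So $X$ is the finite disjoint union of its clopen irreducible components, which is what the proof of \Cref{sub obj in bir} uses.
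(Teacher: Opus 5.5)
Your proof is correct and takes the same route as the paper. For (1), both arguments use permanence of geometric unibranchness under smooth morphisms, then observe that a point on two components would give $\mathcal{O}_{X,x}$ two minimal primes; for (2), flatness sends generic points of $X$ to generic points of $S$, and the fibers over those finitely many points are of finite type over fields. You supply more detail than the paper (going-down, lifting minimal primes along $\mathcal{O}_{X,x}\to\mathcal{O}_{X,x}^{sh}$), and the paper's citations for the permanence step (Stacks 0DQ1, 0C37) carry no noetherian hypothesis, so your noetherian-approximation fallback is not needed.
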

\begin{proof}
(1). We first show that $X$ has the same set of properties. Since being geometrically unibranch is a local condition, we reduce to the following algebra problem: if $A\to B$ is an essentially smooth local homomorphism of local rings such that $A$ is geometrically unibranch, then so is $B$. This follows from [\cite[\href{https://stacks.math.columbia.edu/tag/0DQ1}{Lemma 0DQ1 (1)}]{stacks-project}] and [\cite[\href{https://stacks.math.columbia.edu/tag/0C37}{Lemma 0C37 (3)}]{stacks-project}]. 

Therefore, it suffices to show that the irreducible components of a geometrically unibranch scheme are disjoint. This is, in fact, true for all unibranch schemes: if $x$ is a point that belongs to two irreducible components, then $\mathcal{O}_{X,x}$ will have at least two minimal primes.

(2). This follows from the fact that $f$ maps generic points to generic points and smooth schemes over fields have this property.
\end{proof}
\begin{rem}
    The motivic homotopy category most likely does not possess a subobject classifier. In fact, the subobject classifier $\Omega^{nis} _S$ of $Shv_S^{nis}$ cannot be $\mathbb{A}^1$-local, since there are nisnevich covers of $X$ does not extend (along the $0$ section) uniquely to a nisnevich cover of $\mathbb{A}^1\times X$.
\end{rem}

\begin{thm}
    The adjunction $\mathcal{H}^{bir}(S)\leftrightarrows \mathcal{P}(S)$ is monadic. In other words, the idempotent monads induced by the adjunctions identify $\mathcal{H}^{bir}$ as their Eilenberg-Moore category of algebras. In detail, this means that the forgetful map $\mathcal{H}^{bir}\hookrightarrow \text{LMod}_{L_{bir}}(\mathcal{P}
    {(S)})$ is an equivalence of categories over $\mathcal{P}(S)$.
\end{thm}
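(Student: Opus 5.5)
The plan is to reduce monadicity to the general fact that any reflective subcategory of an $\infty$-category is monadic over it. Write $\iota:\mathcal{H}^{bir}(S)\hookrightarrow\mathcal{P}(S)$ for the fully faithful inclusion and $L_{bir}\dashv\iota$ for the adjunction. The associated monad is $T:=\iota L_{bir}$. Since $\iota$ is fully faithful, the counit $L_{bir}\iota\to 1$ is an equivalence, so the multiplication $TT\to T$ is an equivalence; that is, $T$ is an idempotent monad.

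I would verify the hypotheses of the Barr--Beck--Lurie theorem [\cite{lurie2017higher}, Theorem 4.7.3.5] for the right adjoint $\iota$, in three steps.

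\textbf{Conservativity.} Since $\iota$ is fully faithful, a morphism in $\mathcal{H}^{bir}(S)$ is an equivalence if and only if its image in $\mathcal{P}(S)$ is one.

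\textbf{Geometric realizations of $\iota$-split simplicial objects.} Let $X_\bullet$ be a simplicial object of $\mathcal{H}^{bir}(S)$ such that $\iota X_\bullet$ is split. A split simplicial object has a colimit that is absolute, i.e.\ preserved by every functor. Hence $|\iota X_\bullet|$ exists in $\mathcal{P}(S)$ and is a retract of $\iota X_0$ (it is a retract via the splitting maps). The key observation is that a localizing subcategory is closed under retracts: a retract of a local object is local, since locality is the condition that the maps $\mathrm{Map}(f,-)$ are equivalences for the generating set of morphisms (dense open immersions together with the motivic generators), and retracts of equivalences are equivalences. Therefore $|\iota X_\bullet|\in\mathcal{H}^{bir}(S)$. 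It is then also the colimit computed in $\mathcal{H}^{bir}(S)$, because colimits in $\mathcal{H}^{bir}(S)$ are $L_{bir}$ of colimits in $\mathcal{P}(S)$, and $L_{bir}$ acts as the identity on local objects. So $\iota$ preserves this realization. Alternatively, one could note that $\mathcal{H}^{bir}(S)\subset\mathcal{P}(S)$ is closed under sifted colimits, using \Cref{Hbir is Hb} and the fact that sifted colimits in $\mathcal{P}_\Sigma(S)$ are computed sectionwise; but closure for the dense-open locality condition would need a separate check, so the retract argument is cleaner.

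\textbf{Identifying the comparison functor.} Barr--Beck then yields an equivalence $\mathcal{H}^{bir}(S)\simeq\mathrm{LMod}_T(\mathcal{P}(S))$ over $\mathcal{P}(S)$. For completeness I would also identify the essential image concretely. For an idempotent monad, a $T$-algebra structure on $X$ exhibits $X$ as a retract of $TX$ compatibly with the unit. Combined with idempotence, this forces the unit $X\to TX$ to be an equivalence, so $X$ is $L_{bir}$-local. Moreover, the forgetful functor $\mathrm{LMod}_T\to\mathcal{P}(S)$ is fully faithful, since the space of algebra structures on a local object is contractible (see [\cite{lurie2017higher}, Proposition 4.8.2.9]). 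This shows the comparison functor is an equivalence onto $\mathcal{H}^{bir}(S)$.

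The only step requiring care, and the expected obstacle, is the $\infty$-categorical claim that algebras over an idempotent monad carry a unique (contractible) algebra structure. I would cite Lurie's treatment of idempotent monads rather than reprove it.
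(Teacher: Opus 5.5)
Your proof is correct, but you verify the Barr--Beck--Lurie hypothesis on split simplicial objects by a different key fact than the paper, so it is worth comparing the two routes.

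\textbf{The paper's route.} Like you, the paper gets conservativity from full faithfulness. For the split-colimit condition it then uses a property of the birational setting: $\mathcal{H}^{bir}(S)$ is closed under \emph{all} geometric realizations in $\mathcal{P}(S)$. Hence the inclusion preserves every geometric realization, in particular the $\iota$-split ones. This is exactly your ``alternative'', and the check you were worried about is immediate. By \Cref{Hbir is Hb}, the local objects are the $\Sigma$-presheaves $F$ with $F(X)\to F(U)$ an equivalence for dense open $U\subset X$. $\mathcal{P}_\Sigma(S)$ is closed under sifted colimits in $\mathcal{P}(S)$, and the dense-open condition is sectionwise, so it survives sectionwise colimits.

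\textbf{Your route.} You use only that a split realization is an absolute colimit exhibiting $|\iota X_\bullet|$ as a retract of $\iota X_0$, together with retract-closure of local objects. This needs no input about $L_{bir}$. It therefore proves that every reflective inclusion is monadic, which the paper only asserts in the remark after the theorem. Incidentally, your argument also shows that $\mathcal{P}_{nis}(S)\subset\mathcal{P}(S)$ is monadic, contrary to the claim later in that same remark.

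\textbf{What each buys.} The paper's route gives a stronger conclusion that monadicity does not need: the monad $\iota L_{bir}$ itself commutes with geometric realizations. This genuinely fails for $\mathcal{H}^{mot}(k)\subset\mathcal{P}_{nis}(k)$, since the paper cautions that $\mathrm{B}_{mot}\not\simeq \mathrm{B}$.

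Your final paragraph on idempotent monads is a second, self-contained proof that bypasses Barr--Beck entirely. The statement you want there is Lurie's result on left modules over an idempotent algebra in Higher Algebra, \S4.8.2. Apply it to $T$ viewed as an idempotent algebra in $\mathrm{Fun}(\mathcal{P}(S),\mathcal{P}(S))$ acting on $\mathcal{P}(S)$.
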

\begin{proof}
    In light of the theorem above, this is a consequence of Beck's Monadicity Theorem [\cite{lurie2017higher}, Theorem 4.7.0.3]. In fact, this holds for any localization closed under geometric realizations. Indeed, the conservative condition is redundant for fully faithful functors. 
\end{proof}

\begin{rem}
  It turns out that any Bousfield localization is monadic. Consequently, $$\mathcal{H}^{bir}(S)\subset\mathcal{P}_{\Sigma}(S),\mathcal{H}^{bir}(S)\leftrightarrows\mathcal{P}_{nis}(S), \mathcal{H}^{bir}(S)\leftrightarrows\mathcal{H}^{mot}(S),\text{ and }\mathcal{H}^{mot}(S)\leftrightarrows\mathcal{P}_{nis}(S) $$ are all monadic. However, localization via descent is not monadic in general. The only monadic localization here that is obtained by imposing descent is $\mathcal{P}_{\Sigma}(S)\subset \mathcal{P}(S)$. Even the localization $\mathcal{P}_{nis}(S)\subset \mathcal{P}(S)$ is not monadic. To see this, observe that in a monadic localization, an object $X$ is local if and only if the localization map $X\to LX$ is a split monomorphism. An example of such an object for the localization $\mathcal{P}_{nis}(k)\subset \mathcal{P}(k)$ is the separated presheaf generated by $\mathbb{Z}$ (or any other constant set, for that matter):
    $$U\mapsto \underline{\mathbb{Z}}(U)=\begin{cases}\mathbb{Z} \text{  if  } U\neq \emptyset \\
    0 \text{ otherwise}        
    \end{cases}$$
    Evidently, the sheafification $\underline{\mathbb{Z}}\to \mathbb{Z}^{nis}$ is a nontrivial split-injective map with retract given by $(-)^{0}\to *$. The same example shows that this issue persists for the motivic adjunction $\mathcal{H}^{mot}(S)\subset \mathcal{P}(S)$.  
\end{rem}

\subsection{Birational coverings}

\textbf{Birational Geometric coverings}
\begin{defn}
A collection $\{\esc{U}_i\to \esc{X}\}$ of morphisms of presheaves of spaces is said to be a birational geometric covering if $L_{bir}\check{C} p\to L_{bir}\esc{X}$ is a colimit diagram in $\mathcal{H}^{bir}(k)$, where $p: \bigsqcup\esc{U}_i\to \esc{X}$.
\end{defn}
\begin{lem}
    An $\mathbb{A}^1$-geometric covering is a birational geometric covering.
\end{lem}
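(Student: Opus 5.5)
The statement to prove is that an $\mathbb{A}^1$-geometric covering is a birational geometric covering. The plan is to use that $L_{bir}$ factors through $L_{mot}$. By construction, $\mathcal{H}^{bir}(S)$ is the localization of $\mathcal{H}^{mot}(S)$ at the dense open immersions, so $\mathcal{H}^{bir}(S)\subset \mathcal{H}^{mot}(S)\subset \mathcal{P}(S)$ and there is a canonical equivalence $L_{bir}\simeq L'\circ L_{mot}$. Here $L':\mathcal{H}^{mot}(S)\to\mathcal{H}^{bir}(S)$ denotes the restricted localization functor. It is a left adjoint to the inclusion $\mathcal{H}^{bir}(S)\subset\mathcal{H}^{mot}(S)$, and so it preserves all small colimits.

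Now let $\{\esc{U}_i\to \esc{X}\}$ be an $\mathbb{A}^1$-geometric covering, and put $p=\sqcup p_i$. By definition, the augmented simplicial diagram $L_{mot}\check{C}_\bullet p\to L_{mot}\esc{X}$ is a colimit diagram in $\mathcal{H}^{mot}(S)$. Applying $L'$, which preserves colimit diagrams because it is a left adjoint, shows that $L'L_{mot}\check{C}_\bullet p\to L'L_{mot}\esc{X}$ is a colimit diagram in $\mathcal{H}^{bir}(S)$. Under the identification $L'L_{mot}\simeq L_{bir}$, this diagram is exactly $L_{bir}\check{C}_\bullet p\to L_{bir}\esc{X}$. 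That is precisely the definition of a birational geometric covering.

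The only point requiring care is the identification $L_{bir}\simeq L'L_{mot}$ as functors on $\mathcal{P}(S)$, and in particular on Nisnevich sheaves, which is where the $\mathbb{A}^1$-geometric coverings live. This follows formally: $\mathcal{H}^{bir}$ is a reflective subcategory of the reflective subcategory $\mathcal{H}^{mot}$, and composites of left adjoints are left adjoint to composites of inclusions. Equivalently, one can invoke the description $\mathcal{H}^{bir}=L_{dense}\mathcal{P}_\Sigma(S)$ from \Cref{Hbir is Hb}, which says that birational-local objects are in particular motivic-local. One should also note that the Čech nerve is formed in the ambient category before either localization is applied, so no compatibility of $L_{mot}$ or $L_{bir}$ with fiber products is needed. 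I do not expect any genuine obstacle here; the proof is the same argument as the earlier lemma stating that a geometric $\mathcal{C}$-cover is a geometric $L$-cover.
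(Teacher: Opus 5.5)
Your proposal is correct and follows the paper's argument: the paper's proof is simply that $L_{bir}\simeq L_{bir}L_{mot}$, which makes the claim obvious. You make that explicit by writing $L_{bir}\simeq L'\circ L_{mot}$ with $L'$ a colimit-preserving left adjoint, and applying $L'$ to the colimit diagram $L_{mot}\check{C}_\bullet p\to L_{mot}\esc{X}$.
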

\begin{proof}
    Since $L_{bir}\simeq L_{bir}L_{mot}$, this is obvious.
\end{proof}
So,
$$\text{Nis-geom-cover}\implies \mathbb{A}^1\text{-geom-cover}\implies \text{bir-geom-cover}$$
\begin{cor}
An $S$-space $\esc{X}$ is birationally connected if and only if there exists a birational (which is if and only if a Nisnevich if and only if an $\mathbb{A}^1$-) geometric covering $\esc{U}_i\to \esc{X}$ such that each $\esc{U}_{i}$ is birationally connected and, for every pair $i,j$, the sheaf $\pi_0^b(U_i\times_XU_j)$ has an $S$-rational point.
\end{cor}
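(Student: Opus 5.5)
The plan is to run the proof of \Cref{a1 geom conn} again, now with $L=L_{bir}$ as a localization of the $\infty$-topos $\mathcal{P}_\Sigma(S)=\mathcal{P}_\sqcup(S)$ (or of $\mathcal{P}_{nis}(S)$), and to feed in the birational path injectivity of \Cref{path injectivity of Lbir} in place of \Cref{motivic 0 truncation is injective}. First we record the equivalence of the three covering notions for the existence statement. Any birational geometric covering is a witness. Conversely, if $\esc{X}$ is birationally connected, the identity $\esc{X}\to\esc{X}$ is a Nisnevich geometric covering, hence an $\mathbb{A}^1$- and a birational one, and it trivially satisfies the hypotheses. In that case $\pi_0^b(\esc{X}\times_{\esc{X}}\esc{X})=\pi_0^b\esc{X}\simeq *$ certainly has an $S$-point. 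This settles the ``only if'' direction for all three versions at once, since Nis $\Rightarrow$ $\mathbb{A}^1$ $\Rightarrow$ bir.

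For the ``if'' direction we apply \Cref{L geom L connectivity} with $L=L_{bir}$. Its hypotheses need checking. (a) $L_{bir}$ is path-injective; this is exactly \Cref{path injectivity of Lbir}, because $\tau^{bir}_{\leq 0}$ on $\tau_{\leq 0}\mathcal{H}^{bir}(S)$ is simply the inclusion of birational sheaves, so it detects the terminal object. (b) The covering $\{\esc{U}_i\to\esc{X}\}$ is a geometric $L_{bir}$-cover by assumption. (c) Each $\esc{U}_i$ is $L_{bir}$-connected, i.e.\ $\pi_0^b\esc{U}_i=\pi_0L_{bir}\esc{U}_i\simeq *$. (d) Each $\pi_0^{L}(\esc{U}_i\times_{\esc{X}}\esc{U}_j)$ has a global section in $\mathrm{Disc}(\mathcal{H}^{bir})$. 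For (d) we use \Cref{birational truncation}: it gives $\pi_0^{bir}=\pi_0$ on birational spaces, so $\pi_0^{L\mathcal{C}}L_{bir}(\esc{U}_i\times_{\esc{X}}\esc{U}_j)=\pi_0^b(\esc{U}_i\times_{\esc{X}}\esc{U}_j)$, and an $S$-rational point of this sheaf is precisely a map $*\to$ it in $\mathrm{Disc}(\mathcal{H}^{bir})$. The lemma then yields $\pi_0^{L\mathcal{C}}L_{bir}\esc{X}\simeq *$, and path injectivity gives $\pi_0^b\esc{X}\simeq *$.

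The main point needing care is the hypothesis in \Cref{L geom L connectivity} that the base category be an $\infty$-topos and that the pairwise legs $\pi_0^{L\mathcal{C}}(U_i\times_XU_j)\to *$ be epimorphisms in $\mathrm{Disc}(L\mathcal{C})$. In the motivic case this came from effectivity of $L_{mot}$, which $L_{bir}$ lacks. Here we do not need effectivity. The hypothesis is stated directly on $\pi_0^b$ rather than on a point of $\esc{U}_i\times_{\esc{X}}\esc{U}_j$. In the $1$-category $\mathrm{Disc}(\mathcal{H}^{bir})$, a sheaf with a global section maps epimorphically onto $*$, since the section splits the map. So the pushout argument inside the lemma goes through verbatim. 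The coequalizer computation in that proof also needs the colimit in $\mathrm{Disc}(\mathcal{H}^{bir})$. This is where \Cref{birational truncation} helps: $\tau_{\leq 0}^{bir}$ is the restriction of the presheaf truncation, so it preserves the colimit diagram $L_{bir}\check{C}_\bullet p$, as any left adjoint does.
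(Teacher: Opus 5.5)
Your proposal is correct and follows the paper's proof. For ``only if'', the identity map is a Nisnevich (hence $\mathbb{A}^1$- and birational) geometric covering; for the converse, you apply \Cref{L geom L connectivity} (equivalently \Cref{geom L connectivity}) to $L_{bir}$, with path injectivity supplied by \Cref{path injectivity of Lbir}, and the hypothesis on $\pi_0^b(\esc{U}_i\times_{\esc{X}}\esc{U}_j)$ is exactly the lemma's global-section hypothesis, so effectivity of $L_{bir}$ is never needed. Invoking \Cref{birational truncation} for colimit preservation is harmless but unnecessary, since $\tau^{bir}_{\leq 0}$ preserves colimits simply because it is a left adjoint.
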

\begin{proof}
    If $\esc{X}$ is birationally connected, then the identity map $\esc{X}\to\esc{X}$ is a Nisnevich cover with the given property. Using the above chain of implications, we reduce to the case of a birational geometric cover. But this follows from \Cref{geom L connectivity} in the presence of \Cref{path injectivity of Lbir}.
\end{proof}
\begin{cor}
   A scheme $X$ is birationally-connected iff there is a $U_i\to X$ be a Nisnevich covering of a scheme $X/S$ and such that for every pair $i,j$ the $\pi_0^b(U_i\times_XU_j)$ has an $S$-rational point. If $U_i$ is birationally-connected for all $i$, then 
\end{cor}
\begin{rem}
    Compare with the ordinary motivic counterparts \Cref{a1 geom conn}, \Cref{local-global-connectivity}. The condition there is slightly stronger due to the effectiveness of $L_{mot}$.
\end{rem}
\begin{cor}[Birational Motivic Van Kampen]
Let $\esc{U}_i\to \esc{X}$ be a birational-geometric covering (for example, a Nisnevich local covering or a motivic geometric covering) such that $\esc{U}_{ijk}$ is birationally connected for every $i,j,k$ (1-, 2-, or 3-fold). Then $\esc{X}$ is birationally connected, and the birational motivic homotopy group of $\esc{X}$ is given by the following coequalizer in $Grp^b_S$:
$$\underset{i,j}{*}\pi_1^{b}({\esc{U}_i\times_\esc{X}\esc{U}_j})\rightrightarrows \underset{i}{*}\pi_1^{b}({\esc{U}_i})\to \pi_1^{b}(\esc{X})\to *$$
\end{cor}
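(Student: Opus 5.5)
The plan is to deduce the statement directly from the categorical $L$-local Van Kampen theorem, \Cref{L van campen}, applied to the birational localization $L_{bir}$. The only hypothesis of \Cref{L van campen} on $L$ itself is path-injectivity, and this is supplied by \Cref{path injectivity of Lbir}: there $\tau_{\leq 0}^{bir}$ restricted to $\tau_{\leq 0}\mathcal{H}^{bir}(S)$ is simply the inclusion of birational sheaves, so it detects the terminal object. There is one technical mismatch. \Cref{L van campen} is stated for a localization of an $\infty$-topos, whereas $L_{bir}$ is naturally a localization of $\mathcal{P}(S)$ or of $\mathcal{P}_{nis}(S)$. Both of these are $\infty$-topoi, and \Cref{Hbir is Hb} together with [\cite{0bat}, Theorem 2.1.1] shows that the internal notions of $\mathcal{H}^{bir}(S)$ do not depend on which ambient topos we choose. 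I would therefore work in $\mathcal{C}=\mathcal{P}_{nis}(S)$ (or in $\mathcal{P}(S)$, whichever is more convenient for the given covering).

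Next I would check the covering hypotheses. A birational geometric covering is by definition a geometric $L_{bir}$-cover, which is the general form allowed in \Cref{L van campen}. The examples mentioned in the statement are covered by the chain of implications recorded above: Nisnevich geometric covers and motivic geometric covers are both birational geometric covers. The connectivity hypothesis on the 1-, 2- and 3-fold intersections is exactly the $L$-connectivity of $U_i\times_XU_j\times_XU_k$ required in \Cref{L van campen}, reading "birationally connected" as $\pi_0^b\simeq *$. Path-injectivity makes this the same as $\mathcal{H}^{bir}$-connectivity. The theorem also asks for a well-pointed cover. I would get the base points from birational connectivity of the double intersections: a connected $\pi_0^b$ has a global section. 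If actual points of the spaces are needed, I would use the birational analogue of effectivity, or else simply add pointedness of the intersections as a hypothesis, as in the scheme-level corollary.

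Granting these hypotheses, \Cref{L van campen} gives two conclusions. First, $\esc{X}$ is birationally connected, via \Cref{L geom L connectivity}. Second, there is a coequalizer in $Grp^{L}$ computing $\pi_1^{L\mathcal{C}}(L\esc{X})$. It then remains to translate this into the stated form. By \Cref{bir em} (together with the lemma identifying strong $L_{bir}$-locality with $L_{bir}$-locality), the category $Grp^{L_{bir}}$ equals $Grp^b_S$. By \Cref{birational truncation}, $\pi_1^{bir}=\pi_1|_{\mathcal{H}^{bir}}$, so $\pi_1^{bir}L_{bir}=\pi_1^b$, and the displayed diagram becomes the claimed coequalizer in $Grp^b_S$. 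The free products appearing in the diagram are taken in $Grp^b_S$.

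I expect the main obstacle to be pointedness, that is, the well-pointed assumption of \Cref{L van campen}. Unlike $L_{mot}$, the localization $L_{bir}$ is not known to be effective (see [\cite{0bat}, Counterexample 3.5.7]). So birational connectivity of the $U_{ij}$ need not produce points of the $U_{ij}$ themselves, only global sections of $\pi_0^b$. My first approach would be to observe that the proof of \Cref{L van campen} uses the base points only after applying $\tau_{\leq 1}^{L\mathcal{C}}L$. Consequently, points of $L_{bir}U_{ijk}$, which do exist since these are connected objects of an $\infty$-category whose $\pi_0$ is computed sectionwise, should suffice. The argument can then be run entirely inside $\mathcal{H}^{bir}$ on the cover $L_{bir}\esc{U}_i\to L_{bir}\esc{X}$.
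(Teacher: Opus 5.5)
Your proposal follows the paper's proof: the paper deduces the corollary directly from \Cref{L van campen}, with path-injectivity of $L_{bir}$ supplied by \Cref{path injectivity of Lbir}, and the translation to $Grp^b_S$ and $\pi_1^b$ is as you describe. The paper never discusses the well-pointedness hypothesis, so your care there goes beyond it, but note two points: what is actually needed is a compatible system of base points on the localized diagram $L_{bir}\check{C}_\bullet p$, not merely some point in each $L_{bir}\esc{U}_{ijk}$; and you should keep that diagram rather than pass to the \v{C}ech nerve of $\bigsqcup_i L_{bir}\esc{U}_i\to L_{bir}\esc{X}$ in $\mathcal{H}^{bir}$, whose fiber products need not agree with $L_{bir}(\esc{U}_i\times_{\esc{X}}\esc{U}_j)$ because $L_{bir}$ is not locally cartesian.
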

\begin{proof}
    This follows immediately from \Cref{L van campen} and \Cref{path injectivity of Lbir}.
\end{proof}

\begin{thm}[Birational version of Morel's Van Kampen, [\cite{morel2012a1}, Theorem 7.12\text{]}]\label{bvk}
Assume $S$ is qcqs. Let $\mathcal{U}:=\{U_i\to X\}$ be a basic Nisnevich cover in $Sm_S$ such that the 1-, 2-, and 3-fold fiber products are birationally connected and compatibly pointed across all fiber products of all finite pairs. Then there is a coequalizer diagram in $Grp_S^b$:
$$\underset{i,j}{*}\pi_1^b({U_i\times_XU_j})\rightrightarrows \underset{i}{*}\pi_1^b({U_i})\to \pi_1^b(X)\to *$$
\end{thm}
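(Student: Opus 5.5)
The plan is to deduce this from the abstract $L$-local Van Kampen theorem \Cref{L van campen}, applied to the localization $L=L_{bir}$. For the ambient $\infty$-topos I would take $\mathcal{C}=\mathcal{P}_{nis}(S)$ (or equivalently $\mathcal{P}_\Sigma(S)$, using \Cref{Hbir is Hb}). That theorem needs three inputs: that the localization is path-injective, that $\mathcal{U}$ is a well-pointed geometric $L$-covering, and that the $1$-, $2$- and $3$-fold fiber products are $L\mathcal{C}$-connected. Its conclusion is a coequalizer in $Grp^L$ of the internal groups $\pi_1^{L\mathcal{C}}$. So the work consists of checking these hypotheses and then identifying $\pi_1^{L\mathcal{C}}$ with $\pi_1^b$ and $Grp^L$ with $Grp^b_S$.

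\emph{Step 1 (path injectivity).} This is \Cref{path injectivity of Lbir}: the restriction of $\tau^{bir}_{\leq 0}$ to $\tau_{\leq 0}\mathcal{H}^{bir}(S)$ is just the inclusion of birational sheaves, so it certainly detects the terminal object. In particular, ``birationally connected'' in either sense means $\pi_0^b\simeq *$.

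\emph{Step 2 (covering and connectivity).} A basic Nisnevich cover $\{U_i\to X\}$ of smooth schemes has a \v{C}ech nerve whose colimit in $\mathcal{P}_{nis}(S)$ is $h_S X$, so it is a geometric $\mathcal{C}$-cover. Since $L_{bir}$ preserves colimits, it is then a birational geometric covering. The standing hypothesis says the $1$-, $2$- and $3$-fold fiber products are birationally connected and compatibly pointed, which is exactly the well-pointedness and the connectivity assumption on $U_{ijk}$ in \Cref{L van campen}. The case involving $U_I=X$ is handled by \Cref{L geom L connectivity}, which shows that $X$ is birationally connected because the pairwise products are pointed.

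\emph{Step 3 (identification).} By \Cref{birational truncation} we have $\pi_1^{bir}=\pi_1|_{\mathcal{H}^{bir}}$, and hence $\pi_1^{L\mathcal{C}}(L_{bir}U)=\pi_1 L_{bir}U=\pi_1^b U$. Here $\pi_1$ agrees with $\pi_1^{nis}$ on birational objects, since $L_{nis}$ fixes them. The category $Grp^L$ of strongly $L_{bir}$-local groups equals $Grp^b_S$ by the lemma citing [\cite{0bat}, Corollary 3.2.9]. So the coequalizer of \Cref{L van campen} becomes the displayed coequalizer in $Grp^b_S$.

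The main obstacle is Step 3 inside the proof of \Cref{L van campen}. That proof uses \Cref{grp em} to say that $\pi_1^{L\mathcal{C}}:\mathcal{EM}_1(L\mathcal{C})\to Grp^L$ preserves colimits, and that the connected $(2,1)$-colimit can be computed in $\mathcal{EM}_1$. For $L_{bir}$ this is cleaner than in the motivic case, because \Cref{bir em} gives a genuine equivalence $Grp^b_S\simeq\mathcal{EM}_1(\mathcal{H}^{bir})$, so no strong-invariance subtlety arises. I would still check explicitly that the pointings are compatible along all face maps. Otherwise the free products and coequalizer taken in $Grp^b_S$ (the $L_{bir}$-localization of the presheaf-level ones) would not be canonically defined, and the triple-intersection cocycle condition discussed in the remark after the motivic Van Kampen would fail.
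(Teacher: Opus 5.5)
Your proposal is correct and is essentially the paper's own argument. \Cref{bvk} is the special case, for Nisnevich covers of smooth schemes, of the preceding Birational Motivic Van Kampen corollary, which the paper proves by combining \Cref{L van campen} with the path injectivity of $L_{bir}$ (\Cref{path injectivity of Lbir}), using the same identifications $\pi_1^{bir}=\pi_1|_{\mathcal{H}^{bir}}$ and $Grp^L=Grp^b_S$ that you supply.
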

\begin{rem}
   When all the connectivity conditions are replaced by $\mathbb{A}^1$-connectivity, we can derive our birational Van Kampen theorem \Cref{bvk} directly from the $\mathbb{A}^1$ motivic Van Kampen theorem. Indeed, if all the relevant schemes are $\mathbb{A}^1$-connected, then [\cite{0bat}, Proposition 3.6.7] tells us that the sequence involving $\pi_1^b$ is simply the application of $\pi_0^b$ to Morel's Van Kampen sequence. The result then follows from the fact that, being a localization, $\pi_0^b: Grp^{\mathbb{A}^1}\to Grp^b$ preserves colimits.
\end{rem}

\textbf{Birational Spatial coverings}

\begin{defn}[\Cref{defn for covering}, \Cref{definition for L covering}]
\begin{enumerate}
\item  A morphism $f:\esc{Y}\to \esc{X}$ of birational motivic spaces is a birational covering if $f\in \big({\mathcal{H}^{bir}_{/\esc{X}}}\big)_{\leq 0}$. Denote the category of such coverings by $Cov_{\mathcal{H}^{bir}}(\esc{X})$. 
\end{enumerate}
When $f:\esc{Y}\to \esc{X}$ is a morphism of $S$-spaces, then:
\begin{enumerate}[resume] 
    \item We say that it is a birational fibration if it is a local object of ${\mathcal{P}}_{/\esc{X}}$ with respect to birational motivic equivalences contained in ${\mathcal{P}}_{/\esc{X}}$. We denote the category of birational fibrations of $\esc{X}$ by $\mathcal{F}ib_{\esc{X}}^{b}.$
\item We call it an $\mathbb{A}^1$ covering if it is a birational fibration and is a $0$-truncated object in the $\infty$-category of birational fibrations of $\esc{X}$. Denote the category of birational coverings of $\esc{X}$ by $Cov_{b}(\esc{X})$.
\end{enumerate}
\end{defn}
\begin{prop}
A morphism $\esc{X}\to \esc{Y}$ of birational motivic spaces is a birational covering if and only if it is a (Nisnevich) covering if and only if it is an $\mathbb{A}^1$-covering.
\end{prop}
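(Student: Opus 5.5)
The plan mirrors the motivic analogue proved just before. We have a morphism $f:\esc{X}\to\esc{Y}$ between birational motivic spaces, and there are three conditions to compare.

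First, I would show that being a birational covering (internal to $\mathcal{H}^{bir}(S)$) is equivalent to being a covering in the ambient topos. Since $\mathcal{H}^{bir}(S)\subset\mathcal{P}(S)$ is an accessible localization closed under limits, \Cref{Covering of Localization} applies directly. It gives $(\mathcal{H}^{bir}_{/\esc{Y}})_{\leq 0}=(\mathcal{P}_{/\esc{Y}})_{\leq 0}\cap\mathcal{H}^{bir}_{/\esc{Y}}$. So $f$ is internally $0$-truncated iff it is $0$-truncated in $\mathcal{P}(S)$.

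Next I would pass from $\mathcal{P}(S)$ to the Nisnevich topos. Birational motivic spaces are Nisnevich local, since $L_{nis}$ fixes them by [\cite{0bat}, Theorem 2.1.1], so $\mathcal{H}^{bir}(S)\subset\mathcal{P}_{nis}(S)\subset\mathcal{P}(S)$. I would apply \Cref{Covering of Localization} twice: once to the localization $\mathcal{P}_{nis}(S)\subset\mathcal{P}(S)$, and once to $\mathcal{H}^{bir}(S)\subset\mathcal{P}_{nis}(S)$, which is also closed under limits. Together these show that, for $f$ in $\mathcal{H}^{bir}$, being $0$-truncated in $\mathcal{P}$, in $\mathcal{P}_{nis}$ and in $\mathcal{H}^{bir}$ all coincide. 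Hence $f$ is a birational covering iff it is a Nisnevich covering.

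For the third condition, I read the statement's ``$\mathbb{A}^1$-covering'' as the covering notion of item (3) of the preceding definition, i.e. a birational fibration that is $0$-truncated among birational fibrations; its name there appears to be a slip for ``birational covering.'' By \Cref{L fib over L local object}, any morphism between $L_{bir}$-local objects is automatically a birational fibration. Then \Cref{L covering of L local object}, applied with $L=L_{bir}$ and the $L$-local object $\esc{Y}$, gives $Cov_b(\esc{Y})=Cov_{\mathcal{H}^{bir}}(\esc{Y})$, which closes the chain of equivalences. If the literal motivic reading of ``$\mathbb{A}^1$-covering'' is intended instead, the same conclusion follows. Birational spaces are motivic, since $L_{bir}\simeq L_{bir}L_{mot}$, so the motivic proposition already proved gives motivic covering $\Leftrightarrow$ $\mathbb{A}^1$-covering $\Leftrightarrow$ Nisnevich covering.

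The only step needing care is checking that each inclusion used really is a limit-closed accessible localization. This is what justifies invoking \Cref{Covering of Localization}, and in particular it requires that birational spaces lie in $\mathcal{P}_{nis}(S)$. That fact is supplied by \Cref{Hbir is Hb} together with the cited Nisnevich-locality of birational objects. Everything else is formal.
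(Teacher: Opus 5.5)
Your proposal is correct and follows the paper's route: the paper's proof is simply an appeal to \Cref{Covering of Localization}, which is exactly your first two steps. Your extra use of \Cref{L fib over L local object} and \Cref{L covering of L local object} handles the fibration-based notion in item (3) of the definition, whose ``$\mathbb{A}^1$'' label is indeed a slip for ``birational''. This mirrors the proof of the motivic analogue and makes explicit a step the paper leaves implicit.
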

\begin{proof}
Follows immediately from \Cref{Covering of Localization}.
\end{proof}

\begin{prop}\label[prop]{birational torsor}
    Let $G$ be a birational sheaf of groups. Then any Nisnevich $G$-torsor $\esc{Y}\to \esc{X}$ is a birational cover. 
\end{prop}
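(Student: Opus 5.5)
The plan is to imitate the proof of \Cref{torsors are covering}, replacing the abstract hypothesis $G\in Grp^L(\mathcal{C})$ with its birational counterpart. First I check that a birational sheaf of groups $G$ is strongly $L_{bir}$-local, i.e.\ that $\mathrm{B}_{nis}G$ is birational motivic. This is exactly the lemma identifying $Grp^b_S$ with $Grp_{L_{bir}}(Disc(\mathcal{H}^{bir}(S)))$ ([\cite{0bat}, Corollary 3.2.9]). One can also see it via \Cref{checking bir locality on pi}: the presheaf homotopy groups of $\mathrm{B}_{nis}G$ are $G$ in degree $1$, the point in degree $0$, and $H^1_{nis}(-;G)$ contributes to $\pi_0$, which must itself be birational. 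This is the delicate $n=1$ case flagged in \Cref{EM}, so I would rely on the cited corollary rather than reprove it.

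Next, since $\mathrm{B}_{nis}G$ is pointed and connected, \Cref{covering iff fiber is discrete} shows that the pointing map $*\to\mathrm{B}_{nis}G$ is a covering in $\mathcal{P}_{nis}(S)$: its fiber is $G$, which is discrete. Both $*$ and $\mathrm{B}_{nis}G$ are birational local, so \Cref{Covering of Localization} makes $*\to \mathrm{B}_{nis}G$ a covering in $\mathcal{H}^{bir}(S)$. Then \Cref{L covering of L local object} makes it a birational covering in the sense of $Cov_b(\mathrm{B}_{nis}G)$, that is, a birational fibration which is $0$-truncated.

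A Nisnevich $G$-torsor $\esc{Y}\to\esc{X}$ is, by the theory of torsors in an $\infty$-topos [\cite{MR3423073}, \S3], classified by a map $\esc{X}\to\mathrm{B}_{nis}G$. Moreover $\esc{Y}\simeq \esc{X}\times_{\mathrm{B}_{nis}G}*$. By \Cref{lemma for L covering}(1), the pullback of a birational covering, i.e.\ of an $L_{bir}$-fibration that is $0$-truncated, along any map is again one. Hence $\esc{Y}\to\esc{X}$ is a birational covering. Note that \Cref{lemma for L covering} holds for arbitrary accessible localizations of an $\infty$-topos, so neither local cartesianity nor effectivity of $L_{bir}$ is needed; this matters because $L_{bir}$ has neither property.

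The main obstacle is the first step, the birational locality of $\mathrm{B}_{nis}G$, equivalently the birationality of $H^1_{nis}(-;G)$ for nonabelian $G$. Everything after it is formal.
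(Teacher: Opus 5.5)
Your proposal is correct and is essentially the paper's proof. The paper simply invokes \Cref{torsors are covering}, whose argument you have unpacked: the pointing map $*\to\mathrm{B}G$ is a covering with discrete fiber $G$, it is local over a local base, and you then pull back using \Cref{lemma for L covering}(1). The only cosmetic difference is that the paper obtains the strong $L_{bir}$-locality of $\mathrm{B}G$ from \Cref{B_bir is B}, whereas you use the lemma citing [\cite{0bat}, Corollary 3.2.9]; either input suffices.
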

\begin{proof}
Since $ Grp_{L{bir}}(\mathcal{H}^{bir})\simeq  Grp(\mathcal{H}^{bir})$ (\Cref{B_bir is B}), this follows from \Cref{torsors are covering}.
\end{proof}
\begin{rem}
Using the model categorical definition of Morel, one may use the original idea from [\cite{morel2012a1}, 7.5 (1)] to prove the above theorem. Let us repeat it for the convenience of the reader. Suppose we are given a commutative square on the left, where $\esc{A}\to \esc{B}$ is a birational weak equivalence \[\xymatrix{
 \esc{A}\ar[r]\ar[d]& \esc{Y}\ar[d]&& \esc{A}\ar[rd]\ar[r]&\esc{Y}\times_\esc{X}\esc{B}\ar[r]\ar@<-.2em>[d]_{a}& \esc{Y}\ar[d]\\
\esc{B}\ar[r]& \esc{X}&& &\esc{B}\ar@<-.2em>@{-->}[u]\ar[r]& \esc{X}
 }\]
 Consider the diagram on the right, obtained by pulling the torsor to $\esc{B}$. It follows that the projection map $\esc{Y}\times_\esc{X}\esc{B}\xrightarrow{\phi}\esc{B}$ is a $G$ torsor. The commutativity of the starting diagram yields that this torsor is trivial on $\esc{A}$. Since $\mathrm{B}\esc{G}$ is birational local by \Cref{B_bir is B}, and $\esc{A}\to \esc{B}$ is given to be a birational weak equivalence, it follows that the map $[\esc{B}, \mathrm{B}G]\to [\esc{A}, \mathrm{B}G]$ is an isomorphism. So the triviality of the torsor on $\esc{A}$ implies that it trivializes on $\esc{B}$ as well. Let $s: \esc{B}\to \esc{Y}\times_\esc{X}\esc{B}$ be the corresponding trivialization. Then the composite  $$\esc{B}\xrightarrow{s} \esc{Y}\times_\esc{X}\esc{B}\xrightarrow[]{a} \esc{Y}$$ makes the bottom triangle commute. Obviously, the upper triangle may not commute. However, since the torsor $\esc{Y}\to \esc{X}$ is a $\esc{G} $-torsor trivializing on $\esc{A}$, any two such trivializations are $\esc{G}$ automorphic. Therefore, there is a section $g: \esc{A}\to\esc{G}$ such that $g(a\circ s)$ works. 
\end{rem}
\begin{thm}\label{initial object of bir cov}
Let $\esc{X}$ be a pointed birational motivic space (not necessarily connected). 
\begin{enumerate}
    \item Then the category $\mathcal{C}ov_{\mathcal{H}^{bir}}(\esc{X})_*$ always has an initial object, say denoted by ${\widetilde{{\esc{X}}}}^{^{bir}}$.
    \item This initial object is the unique one with a birationally simply connected source (i.e., $\tau_{\leq 1}^{bir}{\widetilde{{\esc{X}}}}^{^{bir}}\simeq *$). Each of the inclusions  $$\mathcal{C}ov_{\mathcal{H}^{bir}}(\esc{X})_*\subset \mathcal{C}ov_{\mathcal{P}_{nis}}(\esc{X})_*\subset \mathcal{C}ov_{\mathcal{P}_{}}(\esc{X})_* $$ preserves this initial object. In other words, this initial object is the unique (Nisnevich) covering with a (Nisnevich) simply connected source.
    \item The canonical map $$ \pi_1^{bir}(\esc{X})\to \underline{Aut}_{\esc{X}}(\widetilde{{\esc{X}}}^{^{bir}})$$ is an equivalence.
\end{enumerate}
\end{thm}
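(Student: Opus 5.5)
The plan is to mimic the proof of \Cref{motivically simply connected covering space}, replacing the input from Morel (that $\Pi^{\mathbb{A}^1}$ of an $\mathbb{A}^1$-connected space is motivic) by the much stronger birational statement \Cref{birational truncation}: every presheaf truncation of a birational motivic space is again birational motivic. The key consequence is that $L_{bir}$ is fundamental on every birational motivic space $\esc{X}$, with no connectivity hypothesis. Indeed, $\tau_{\leq 1}\esc{X}=\tau_{\leq 1}^{bir}\esc{X}$ lies in $\mathcal{H}^{bir}(S)$, and it also agrees with $\tau^{nis}_{\leq 1}\esc{X}$. This is why connectivity can be dropped here, in contrast to the motivic case.

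For (1), I will apply \Cref{Universal cover for fundamental localization} to the localization $L_{bir}$ of the $\infty$-topos $\mathcal{P}(S)$ (or of $\mathcal{P}_\Sigma(S)$). Its hypothesis $\tau_{\leq 1}L_{bir}\esc{X}\simeq\tau^{\mathcal{H}^{bir}}_{\leq 1}L_{bir}\esc{X}$ is exactly the identification above. This gives the initial object $\widetilde{\esc{X}}^{bir}:=\tau_{>1}\esc{X}\to\esc{X}$ of $\mathcal{C}ov_{\mathcal{H}^{bir}}(\esc{X})_*$. Concretely, it is the fiber of $\esc{X}\to\tau_{\leq 1}\esc{X}$, a limit of birational objects, and hence birational.

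For (2), I will use \Cref{initial object of cov in a topos} in $\mathcal{P}(S)$, where $\tau_{>1}\esc{X}\to\esc{X}$ is already initial among pointed coverings. Since it lies in the full subcategory $\mathcal{C}ov_{\mathcal{H}^{bir}}(\esc{X})_*$ (by \Cref{Covering of Localization}, coverings there are just presheaf coverings between birational objects), it is initial in each intermediate category. The Nisnevich statement follows in the same way, because a birational object is Nisnevich local and $\tau^{nis}_{\leq n}$ agrees with $\tau_{\leq n}$ on it (\Cref{birational truncation}). For simple connectivity, I compute $\tau^{bir}_{\leq 1}\tau_{>1}\esc{X}=\tau_{\leq 1}\tau_{>1}\esc{X}\simeq *$, using the fiber sequence $\tau_{>1}\esc{X}\to\esc{X}\to\tau_{\leq1}\esc{X}$ and the long exact sequence of presheaf homotopy groups. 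This must be done on the basepoint component: for non-connected $\esc{X}$, the cover $\tau_{>1}$ is the relative truncation of $*\to\esc{X}$, so its $\pi_0$ is trivial and $\pi_1$ vanishes. For uniqueness, suppose $Z\to\esc{X}$ is a pointed covering with $\tau^{bir}_{\leq1}Z\simeq*$. The unique map $\widetilde{\esc{X}}^{bir}\to Z$ is then a covering morphism between $1$-connected objects by \Cref{covering lemma}(1), so its fibers are discrete and connected, hence it is an equivalence.

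For (3), I will invoke \Cref{aut=pi_1}. Here $L_{bir}$ is fundamental on $\esc{X}$, but it is \emph{not} locally cartesian, so the lemma does not apply verbatim. I expect this to be the main obstacle. The workaround is that $\esc{X}$ is already birational local, so I take $\esc{X}=L\esc{X}$ and avoid the reduction step in that proof. It then suffices to prove $\underline{End}_{\mathcal{C}_{/\esc{X}}}(\tau_{>1}\esc{X})\simeq\pi_1\esc{X}$ in $\mathcal{P}(S)$, which is the $L=id$ computation already carried out there. Finally, I must check that the internal $\underline{Aut}$ in $\mathcal{H}^{bir}_{/\esc{X}}$ agrees with the one in $\mathcal{P}(S)_{/\esc{X}}$. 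I plan to show this by noting that $\pi_1\esc{X}=\pi_1^{bir}\esc{X}$ is birational, so the presheaf-level internal hom is already birational local and hence represents the internal hom in $\mathcal{H}^{bir}$.
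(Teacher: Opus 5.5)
You follow the paper for (1) and (2). \Cref{birational truncation} makes $\tau_{\leq 1}\esc{X}=\tau^{bir}_{\leq 1}\esc{X}$ birational, so \Cref{Universal cover for fundamental localization} applies. The resulting cover $\tau_{>1}\esc{X}$ is already birational, hence initial in every intermediate category. Your uniqueness argument via \Cref{covering lemma}(1) and the fiber long exact sequence spells out what the paper calls a formal consequence.

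The genuine difference is (3), and here your version is more careful than the paper's. The paper cites \Cref{aut=pi_1} ``because $L_{bir}$ is fundamental on every object''. That lemma, however, also assumes $L$ is locally cartesian. Its proof uses this hypothesis to identify $\underline{End}$ in $L\mathcal{C}_{/LX}$ with $\underline{End}$ in $\mathcal{C}_{/LX}$, by testing against arbitrary, non-local $Z$ and rewriting $LZ\times_{LX}\tilde{X}$ as $L(Z\times_{LX}\tilde{X})$. The paper records right afterwards that $L_{bir}$ is not locally cartesian, so the lemma does not apply as stated.

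Your repair is correct. Both candidate objects are birational: the $\mathcal{H}^{bir}$-one by definition, and the presheaf one because it is $\Omega_f\tau_{\leq 1}\esc{X}$ with $\tau_{\leq 1}\esc{X}$ birational. It therefore suffices to test against birational $W$. For such $W$, the object $W\times\widetilde{\esc{X}}$ is already birational, and $\mathrm{Map}_{\mathcal{H}^{bir}_{/\esc{X}}}(W\times\widetilde{\esc{X}},\widetilde{\esc{X}})=\mathrm{Map}_{\mathcal{P}(S)_{/\esc{X}}}(W\times\widetilde{\esc{X}},\widetilde{\esc{X}})$ with no localization applied. In effect you show that the locally cartesian hypothesis in \Cref{aut=pi_1} is superfluous once $L$ is fundamental on a local object. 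That is what the paper's one-line justification tacitly needs.

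One point to state explicitly: as in the paper's lemma, $\underline{Aut}_{\esc{X}}$ should mean the absolute object of $\mathcal{H}^{bir}(S)$ representing $W\mapsto \mathrm{Map}_{/\esc{X}}(W\times\widetilde{\esc{X}},\widetilde{\esc{X}})$. That is the object your representability argument identifies with $\pi_1\esc{X}=\pi_1^{bir}\esc{X}$. It is not the sliced internal hom in $\mathcal{H}^{bir}_{/\esc{X}}$.
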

\begin{proof}
  (The first one is mentioned at [\cite{0bat}, Corollary 3.6.2], whose proof is similar to \Cref{Covering of Localization}. We shall, however, prove this using the categorical techniques developed in \S2 just to keep up with the style of the paper.)
     
 From \cref{birational truncation}, it follows that $\Pi^{b}\esc{X}$ is a birational motivic space. Thus, \Cref{Universal cover for fundamental localization} applies, and statement (1) follows. 
    
Statement (2) is a formal consequence of the fact that, from the referred proposition, one has $(\widetilde{{\esc{X}}}^{^{bir}})\simeq \tau_{>1}^{}\esc{X}$, where this last term is the unique (nisnevich) simply connected (simplicial) cover of $\esc{X}$. Since this is already birational local, we have $$\tau_{\leq 1}^{bir}{\widetilde{{\esc{X}}}}^{^{bir}}\simeq \tau_{\leq 1}^{}{\widetilde{{\esc{X}}}}^{^{bir}}\simeq \tau_{\leq 1}\tau_{>1}^{}\esc{X}\simeq *.$$
    
The third statement follows from \Cref{aut=pi_1} because $L_{bir}$ is fundamental on every object.
\end{proof}

\begin{rem}
    Compare the above theorem with its ordinary motivic counterpart, namely \Cref{motivically simply connected covering space}. There, a similar statement was proven for internally connected motivic spaces. Here, in the birational case, however, we did not require the base to be connected. Thus, we again see that the birational motivic homotopy category, $\mathcal{H}^{bir}$, behaves like an $\infty$-topos (see \Cref{initial object of cov in a topos}). Though in the motivic case, we did not really prove that an initial internal motivic covering may not exist when the base is not internally (motivically) connected. It is interesting in its own right to know whether this has a positive answer.
    \end{rem}

Since $L_{bir}$ is neither locally cartesian (see [\cite{0bat}, Counterexample 3.2.13]) nor effective (see [\cite{0bat}, Counterexample 3.5.7]), proving the following by pulling back the (birational) universal cover of $L_{bir}\esc{E}$ to $\esc{E}$ (similar to Morel [\cite{morel2012a1}, $\mathbb{A}^1$-coverings]) seems difficult. 
 \begin{cor}[compare with the $\mathbb{A}^1$ analogue, \Cref{Lmot covering is covering of Lmot}, no connectivity condition is required there]\label{Lbir covering is covering of Lbir}
Let $\esc{X}$ be an $\mathbb{A}^1$-connected (equivalently, motivically connected when the base is a field, \Cref{motivic 0 truncation is injective}) space. Then we have an equivalence of categories $$\eta^*_{bir}:Cov_{b}({L_{bir}}\esc{X)}=Cov_{\mathcal{H}^{bir}}({L_{bir}}\esc{X)}\leftrightarrows  Cov_{{b}}(\esc{X)}: L_{bir}$$ where $\eta_{bir}$ is the localization unit $1\to L_{bir}$.
\end{cor}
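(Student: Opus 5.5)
The plan is to reduce to a motivic, connected, pointed base. Then I will re-run the argument of \Cref{L covering is covering of L}, replacing the missing global local cartesianity by an explicit bar-construction computation.

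\emph{Reduction.} Since $L_{bir}\simeq L_{bir}L_{mot}$, and $L_{mot}$-equivalences are $L_{bir}$-equivalences, a birational fibration over $\esc{X}$ is in particular an $\mathbb{A}^1$-fibration. Hence $Cov_b(\esc{X})\subset Cov_{\mathbb{A}^1}(\esc{X})$. Via \Cref{Lmot covering is covering of Lmot}, this subcategory corresponds to the coverings of $L_{mot}\esc{X}$ whose total space is a birational fibration. So I will first check that $\eta_{mot}^*$ and $L_{mot}$ restrict to inverse equivalences between $Cov_b(L_{mot}\esc{X})$ and $Cov_b(\esc{X})$, which is a formal check on the local-object condition. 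This lets me replace $\esc{X}$ by $L_{mot}\esc{X}$. By \Cref{motivic 0 truncation is injective} (over a field), or simply by $\mathbb{A}^1$-connectivity, $L_{mot}\esc{X}$ is Nisnevich connected. After choosing a point (effectivity of $L_{mot}$ gives one locally, and the statement is local), I may assume $\esc{X}$ is a pointed, Nisnevich-connected motivic space.

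\emph{Effectivity on $\esc{X}$.} Both $\pi_0^{nis}\esc{X}$ and $\pi_0^{nis}L_{bir}\esc{X}$ are trivial; the latter holds by \Cref{path injectivity of Lbir}, since $L_{bir}$ preserves connectivity. Hence $\eta:\esc{X}\to L_{bir}\esc{X}$ is an effective epimorphism by [\cite{lurie2009higher}, Proposition 7.2.1.14]. The proof of \Cref{L covering is covering of L} therefore goes through once I show one thing: for every birational covering $f:\esc{Y}\to\esc{X}$, the comparison map $\esc{Y}\to L_{bir}\esc{Y}\times_{L_{bir}\esc{X}}\esc{X}$ is an $L_{bir}$-equivalence, and dually $L_{bir}\eta^*\simeq \mathrm{id}$ on $Cov_{\mathcal{H}^{bir}}(L_{bir}\esc{X})$. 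This replaces local cartesianity, which fails in general by [\cite{0bat}, Counterexample 3.2.13].

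\emph{Key step (main obstacle): $L_{bir}$ preserves fiber sequences over connected bases with discrete birational fibre.} Write $\esc{X}\simeq \mathrm{B}(*,\Omega\esc{X},*)$. By \Cref{covering iff fiber is discrete}, \Cref{L galois theory} and the fiber of $f$ being a discrete object $F$ with $\Omega\esc{X}$-action, I have $\esc{Y}\simeq \mathrm{B}(F,\Omega\esc{X},*)$. The fibre $F=y^*\esc{Y}$ is birational, since it is pulled back from a birational fibration to the point. The two-sided bar construction is built from finite products and a geometric realization. $L_{bir}$ is cartesian, so it commutes with both, as in \Cref{bar of cart localization}. Hence
\[L_{bir}\esc{Y}\simeq \mathrm{B}(L_{bir}F,L_{bir}\Omega\esc{X},*)\simeq \mathrm{B}(F,\Omega L_{bir}\esc{X},*).\]
The second equivalence uses $L_{bir}\Omega\simeq\Omega L_{bir}$ on connected objects, coming from \Cref{B_bir is B} and \Cref{bir omega bar} via $L_{bir}\esc{X}\simeq L_{bir}\mathrm{B}\Omega\esc{X}\simeq \mathrm{B}L_{bir}\Omega\esc{X}$. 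The right-hand side is a covering of $L_{bir}\esc{X}$ with fibre $F$. Pulling back along $\eta$, both $\esc{Y}$ and $\eta^*L_{bir}\esc{Y}$ are coverings of the connected $\esc{X}$ with the same fibre $F$ and compatible $\Omega\esc{X}$-action. The action on the second factors through $\Omega\esc{X}\to\Omega L_{bir}\esc{X}$. So by \Cref{L galois theory} (with $L=\mathrm{id}$), the comparison map is an equivalence. Symmetrically, for a birational covering $\esc{Z}\to L_{bir}\esc{X}$ with fibre $F$, the same bar formula shows $L_{bir}\eta^*\esc{Z}\simeq\esc{Z}$.

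The delicate point I expect is that the action of $\Omega\esc{X}$ on $F$ must actually factor through $L_{bir}\Omega\esc{X}$. This is where the hypothesis that $f$ is a birational fibration, rather than merely a Nisnevich covering, must be used. I would try to extract it from the birational locality of $F$ and of $\mathrm{B}_{bir}$ applied to the action groupoid, using that $\pi_1^{b}=\pi_0^b$ of $\pi_1^{\mathbb{A}^1}$ in the $\mathbb{A}^1$-connected case [\cite{0bat}, Proposition 3.6.7]. Once this is in hand, the two functors are mutually inverse, and \Cref{L covering of L local object} identifies $Cov_b(L_{bir}\esc{X})=Cov_{\mathcal{H}^{bir}}(L_{bir}\esc{X})$.
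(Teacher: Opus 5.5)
Your route differs from the paper's, and it works. The paper's proof is three lines. It imports from [\cite{0bat}, Corollary 3.2.16] that $L_{bir}$ is locally cartesian on motivically connected objects, and from [\cite{0bat}, Corollary 3.5.2] that $L_{bir}\esc{X}$ is connected. The latter makes $\esc{X}\to L_{bir}\esc{X}$ an effective epimorphism, and \Cref{L covering is covering of L} then applies directly. You instead use \Cref{Lmot covering is covering of Lmot} to pass to a pointed, Nisnevich-connected base. You then prove by hand the only instance of local cartesianity that is needed, via the two-sided bar construction. For this you use only that $L_{bir}$ preserves finite products and that birational objects are closed under geometric realizations, which is the mechanism behind \Cref{B_bir is B}. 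Your computation actually shows more: over a pointed Nisnevich-connected base, $L_{bir}$ sends every fibre sequence $F\to\esc{Y}\to\esc{X}$ to a fibre sequence $L_{bir}F\to L_{bir}\esc{Y}\to L_{bir}\esc{X}$. This makes your argument self-contained relative to the present paper, and it explains \emph{why} the cited local cartesianity holds. The $L_{mot}$-reduction bridges the gap between $\mathbb{A}^1$-connected and Nisnevich-connected. The paper's route is shorter and needs no global base point, only effectivity of the unit.

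Several points should be tightened.

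(1) The point you call delicate is automatic. Since $F$ is birational and $L_{bir}$ preserves finite products, applying $L_{bir}$ levelwise to the action object $[n]\mapsto F\times(\Omega\esc{X})^n$ yields $[n]\mapsto F\times(L_{bir}\Omega\esc{X})^n$. This is exactly the extension of the action along $\Omega\esc{X}\to L_{bir}\Omega\esc{X}\simeq\Omega L_{bir}\esc{X}$, and your displayed formula already contains it. So no appeal to $\pi_1^b$ or [\cite{0bat}, Proposition 3.6.7] is needed. The comparison map $L_{bir}\eta^*\esc{Z}\to\esc{Z}$ is then an equivalence, because it is the identity on fibres over the base point of the connected object $L_{bir}\esc{X}$.

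(2) To read off the fibre of $\mathrm{B}(F,L_{bir}\Omega\esc{X},*)\to L_{bir}\esc{X}$, you need the two-sided bar construction computed in $\mathcal{H}^{bir}$ to agree with the topos-level one. \Cref{B_bir is B} is stated only in the one-sided case. The same closure of birational $\Sigma$-presheaves under sectionwise geometric realizations gives the two-sided version.

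(3) The reduction is not purely a check on local objects. To see that $L_{mot}\esc{Y}\to L_{mot}\esc{X}$ is a birational fibration, proceed as follows. Replace a test birational equivalence $A\to B$ over $L_{mot}\esc{X}$ by $L_{mot}A\to L_{mot}B$, and pull back along $\eta_{mot}$. By local cartesianity of $L_{mot}$, $\eta_{mot}^*L_{mot}A\to L_{mot}A$ is an $L_{mot}$-equivalence. Two-out-of-three then makes $\eta_{mot}^*L_{mot}A\to\eta_{mot}^*L_{mot}B$ a birational equivalence. If $\esc{X}$ is only a presheaf, first replace it by $L_{nis}\esc{X}$; this is harmless because $L_{nis}$ is left exact.

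(4) Over a field, $\mathbb{A}^1$-connectivity yields a rational point, so the pointing is free. Over a general qcqs $S$, your claim that ``the statement is local'' needs an argument. For example, you could show that restriction along an \'etale map $S'\to S$ commutes with $L_{bir}$, and that such restrictions over a Nisnevich cover are jointly conservative. The paper's route avoids this issue.
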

\begin{proof}
Recall that $L_{bir}$ is locally cartesian on motivically connected objects [\cite{0bat}, Corollary 3.2.16]. Moreover, since $\esc{X}$ is an $\mathbb{A}^1$-connected space, [\cite{0bat}, Corollary 3.5.2] implies that $L_{bir}\esc{X}$ is connected. Therefore, the localization map $\esc{X}\to L_{bir}\esc{X}$ is an effective epimorphism in $\mathcal{P}_{nis}(S)$ [\cite{lurie2009higher}, Proposition 7.2.1.14]. With these observations, the claim follows from \Cref{L covering is covering of L}.
\end{proof}

 \begin{thm}\label{universal birational covering}
 Let $\esc{X}\in \mathcal{P}(S)$ be an $\mathbb{A}^1$ connected space. Then 
\begin{enumerate}
    \item   $\esc{X}$ has a birationally simply connected birational cover $\widetilde{\esc{X}}^b$.
    \item Choice of a base point makes this birationally simply connected cover into the initial object of $Cov_b\esc{X}_*$.
    \item The canonical map $\pi_1^b\esc{X}\to Aut_{\esc{X}_*}(\widetilde{\esc{X}}^b)$ is an isomorphism.

\end{enumerate}    
 \end{thm}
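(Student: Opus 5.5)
The plan is to transport everything along the equivalence of \Cref{Lbir covering is covering of Lbir}, mirroring the proof of \Cref{a1 simply connected covering space} in the motivic case. Since $\esc{X}$ is $\mathbb{A}^1$-connected, that corollary identifies $Cov_b(\esc{X})$ with $Cov_{\mathcal{H}^{bir}}(L_{bir}\esc{X})$ via $\eta_{bir}^*$, with inverse $L_{bir}$. On the other side, \Cref{initial object of bir cov} applies to the birational motivic space $L_{bir}\esc{X}$, with no connectivity needed. It produces the birational universal cover $\widetilde{L_{bir}\esc{X}}^{bir}\simeq \tau_{>1}L_{bir}\esc{X}\to L_{bir}\esc{X}$. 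I would then define $\widetilde{\esc{X}}^b:=\eta_{bir}^*\widetilde{L_{bir}\esc{X}}^{bir}=\widetilde{L_{bir}\esc{X}}^{bir}\times_{L_{bir}\esc{X}}\esc{X}$.

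The steps, in order, are as follows. First, $\widetilde{\esc{X}}^b\to\esc{X}$ is a birational covering, because $\eta^*_{bir}$ lands in $Cov_b(\esc{X})$ by \Cref{lemma for L covering}(1). Second, for birational simple connectivity, I would use the computation in the proof of \Cref{L covering is covering of L}: an $L$-covering $Y\to X$ fits into a pullback square with $LY\to LX$. Thus $L_{bir}\widetilde{\esc{X}}^b\simeq \widetilde{L_{bir}\esc{X}}^{bir}$, and $\tau^{bir}_{\le 1}$ of the latter is $*$ by \Cref{initial object of bir cov}(2). This gives (1). Third, for (2), I would fix a base point $x$ of $\esc{X}$. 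Then $L_{bir}x$ points $L_{bir}\esc{X}$, and pointed objects correspond under the equivalence because $\eta^*_{bir}$ is a pullback, so the fiber over $x$ is the fiber over $L_{bir}x$. The equivalence therefore restricts to $Cov_b(\esc{X})_*\simeq Cov_{\mathcal{H}^{bir}}(L_{bir}\esc{X})_*$, and initial objects are preserved, which gives (2). Uniqueness of the simply connected cover also transports, since $L_{bir}$ is the inverse and \Cref{initial object of bir cov}(2) gives uniqueness downstairs.

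Fourth, for (3), I would write $\pi_1^b\esc{X}=\pi_1 L_{bir}\esc{X}=\pi_1^{bir}L_{bir}\esc{X}$ using \Cref{birational truncation}. By \Cref{initial object of bir cov}(3) this is $\underline{Aut}_{L_{bir}\esc{X}}(\widetilde{L_{bir}\esc{X}}^{bir})$. Full faithfulness of $\eta_{bir}^*$, applied internally by evaluating mapping objects over each $U\in Sm_S$, identifies this with $Aut_{\esc{X}_*}(\widetilde{\esc{X}}^b)$, and the canonical map is compatible with this identification.

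I expect step four to be the main obstacle. The equivalence of \Cref{Lbir covering is covering of Lbir} is an equivalence of categories of global objects, but the $\underline{Aut}$ in (3) is internal. To upgrade it, I would apply the equivalence after base change to each $U$, using \Cref{lemma for L covering}(1) for stability under pullback. The difficulty is that $\esc{X}\times U$ need not remain $\mathbb{A}^1$-connected. Alternatively, I would try to interpret (3) only at the level of global automorphisms, for which the categorical equivalence suffices directly. A related check is that $\eta^*_{bir}$ preserves base points, which holds by construction.
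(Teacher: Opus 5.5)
Your proposal is correct and follows the paper's own proof. The paper also replaces $\esc{X}$ by $L_{bir}\esc{X}$ via \Cref{Lbir covering is covering of Lbir} and then applies \Cref{Universal cover for fundamental localization} and \Cref{aut=pi_1}, which are exactly the ingredients packaged in \Cref{initial object of bir cov}; it gives less detail than you do on base points and on transporting the cover back along $\eta_{bir}$. The paper also passes over the internal-versus-global issue you raise in step four. For the internal version you need no connectivity of $\esc{X}\times U$: since $L_{bir}$ preserves finite products, both $\widetilde{L_{bir}\esc{X}}^{bir}\times U\to L_{bir}\esc{X}\times U$ and $\widetilde{\esc{X}}^b\times U\to \esc{X}\times U$ are pullbacks of their common localization $\widetilde{L_{bir}\esc{X}}^{bir}\times L_{bir}U\to L_{bir}\esc{X}\times L_{bir}U$, and this identifies their automorphism sets naturally in $U$.
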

\begin{proof}
Since $\esc{X}$ is $\mathbb{A}^1$-connected, so is $L_{bir}\esc{X}$ by [\cite{0bat}, Corollary 3.5.2]. Therefore, by the above corollary, replacing $\esc{X}$ with $L_{bir}\esc{X}$ we may assume that $\esc{X}$ is birational local. By [\cite{morel2012a1}, Corollary 6.2], it follows that $\Pi^{b}\esc{X}$ is a birational motivic space. So \Cref{Universal cover for fundamental localization} applies, and statement (1) follows. 

Statement (2) is a formal consequence of the fact that, from the referred proposition, one has $\tilde{\esc{X}}\simeq \tau_{>1}\esc{X}$, which is the unique simply connected (simplicial) cover of $\esc{X}$. 

The third statement follows from \Cref{aut=pi_1}.
\end{proof}

\begin{rem}
     The main hurdle when $\esc{X}$ is not motivically connected, but only birationally connected, is that $$\esc{Z}\simeq \esc{X}\times _{L_{bir}\esc{X}}\esc{Y}\to \esc{Y}$$ need not be a birational weak equivalence, since our category is not locally cartesian. Thus, the above theorem is weaker than its $\mathbb{A}^1$-counterpart [\cite{morel2012a1}, Theorem 7.8].
\end{rem}

\begin{rem}
When the base space is actually Nisnevich connected, the result is a little stronger. In that case, the simply connected cover maps to the birational simply connected cover under $L_{bir}$ (see [\cite{0bat}, Corollary 3.6.4]).\end{rem}
\begin{prop}
Suppose $\esc{Y}\in \mathcal{P}(S)$ and $y:*\to \esc{Y}$ is a base point. Then pullback along $y$ induces a functor $$y^*:Cov_{b}(\esc{{Y}})\to Cov_{b}(*)=Sh^{b}_k.$$ This functor factors through the category of birational sheaves (of sets) with an action by $\pi_1^{b}(\esc{Y},y)$ (via the canonical functor forgetting the action):  $$\Gamma_y^{b}:Cov_{b}(\esc{{Y}})\longrightarrow \pi_1^{b}(Y,y)\mbox{-}Sh^{b}_k$$ which is an equivalence of categories when $\esc{Y}$ is birationally connected.
\end{prop}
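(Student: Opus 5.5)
The plan follows the motivic counterpart stated just before \Cref{motivically simply connected covering space}, in three steps: build $y^*$ and its action, reduce to a birational local base, then apply \Cref{L galois theory}.

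First, the functor $y^*$ exists because coverings are stable under pullback. A birational covering of $\esc{Y}$ is an $L_{bir}$-fibration that is $0$-truncated, and by \Cref{lemma for L covering}(1) its pullback along $y:*\to\esc{Y}$ is a birational covering of $*$. Since $*$ is birational local, \Cref{L covering of L local object} identifies $Cov_b(*)$ with $Cov_{\mathcal{H}^{bir}}(*)=\mathrm{Disc}(\mathcal{H}^{bir}(S))=Sh^b_S$.

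Second, the functor factors through $\pi_1^b(\esc{Y},y)$-sets by \Cref{action of the fundamerntal groupoid}. This requires $L_{bir}$ to be fundamental on $\esc{Y}$, i.e. $\Pi^b\esc{Y}=\tau_{\leq 1}L_{bir}\esc{Y}$ must be birational local. This holds for every object: \Cref{birational truncation} shows that $\tau_{\leq 1}$ preserves $\mathcal{H}^{bir}(S)$, and hence $\tau^{bir}_{\leq 1}=\tau_{\leq 1}$ on birational spaces. This produces $\Gamma^b_y$. No connectivity is needed for this step, in contrast with the motivic case.

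Third, for the equivalence I would reduce from $\esc{Y}$ to $L_{bir}\esc{Y}$ and then apply \Cref{L galois theory} to the localization $L_{bir}$ of $\mathcal{P}(S)$. That result needs two inputs.
\begin{enumerate}
\item \emph{Closure under extensions.} $\mathcal{H}^{bir}$ must be closed under extensions over $L_{bir}\esc{Y}$. Given a fibre sequence $F\to Z\to B$ with $F$ and $B$ birational local and $B$ connected, the long exact sequence of presheaf homotopy groups together with \Cref{checking bir locality on pi} should show that $Z$ is birational. The subtle point is that birational presheaves of sets are not obviously closed under the relevant quotients and extensions. I would instead use that $Z\to B$ is classified by a map $B\to \mathrm{B}\,\underline{Aut}(F)$ and that $\mathrm{B}$ preserves birational objects (\Cref{B_bir is B}), so that $Z$ is a limit of birational objects. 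Alternatively, one can invoke \Cref{L covering iff fiber is L discrete} directly.
\item \emph{Connectivity.} $L_{bir}\esc{Y}$ must be connected. Here $\esc{Y}$ birationally connected means $\tau_{\leq 0}L_{bir}\esc{Y}=*$, and this agrees with internal connectedness by path injectivity (\Cref{path injectivity of Lbir}).
\end{enumerate}
Given these, \Cref{L galois theory} yields $Cov_{\mathcal{H}^{bir}}(L_{bir}\esc{Y})\simeq \pi_1^b(\esc{Y},y)\mbox{-}Sh^b_S$.

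The main obstacle is passing from $Cov_b(\esc{Y})$ to $Cov_{\mathcal{H}^{bir}}(L_{bir}\esc{Y})$. \Cref{Lbir covering is covering of Lbir} gives this comparison only for $\mathbb{A}^1$-connected $\esc{Y}$, because $L_{bir}$ is neither locally cartesian nor effective in general. I would first try to avoid this comparison altogether. The inverse functor $F\mapsto \mathrm{B}(F,\Omega\esc{Y},*)$ already lands in $\mathcal{P}(S)_{/\esc{Y}}$. I would check directly that this is an $L_{bir}$-fibration with discrete fibre, using that $F$ and $\pi_1^b$ are birational. If that fails, I would weaken the claim to $\mathbb{A}^1$-connected $\esc{Y}$, where \Cref{Lbir covering is covering of Lbir} applies.
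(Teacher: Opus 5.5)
Your first two steps and the final appeal to \Cref{L galois theory} are exactly the paper's argument. That includes fundamentality of $L_{bir}$ via \Cref{birational truncation} and closure of $\mathcal{H}^{bir}$ under extensions over connected objects, which the paper imports from [\cite{0bat}, Lemma 3.3.1].

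The obstacle you isolate is genuine, and the paper's one-line proof does not address it either. \Cref{L galois theory}, and the $\pi_1^L$-clause of \Cref{action of the fundamerntal groupoid}, are statements about $L\mathcal{C}$-coverings of $LY$. Applying them to $Cov_b(\esc{Y})$ therefore tacitly uses $Cov_b(\esc{Y})\simeq Cov_{\mathcal{H}^{bir}}(L_{bir}\esc{Y})$, i.e.\ that every birational covering of $\esc{Y}$ is pulled back from $L_{bir}\esc{Y}$. The paper proves this only for $\mathbb{A}^1$-connected $\esc{Y}$ (\Cref{Lbir covering is covering of Lbir}). The remark following \Cref{universal birational covering} names the failure of local cartesianness as precisely the obstruction for merely birationally connected spaces.

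This also affects your second step. For non-local $\esc{Y}$, the fibre of a birational covering a priori carries only an action of the presheaf $\pi_1(\esc{Y},y)$, and this need not surject onto $\pi_1^b(\esc{Y},y)$. So ``no connectivity is needed'' is right only for $Cov_{\mathcal{H}^{bir}}(L_{bir}\esc{Y})$.

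Your plan A cannot close the gap. The object $\mathrm{B}(F,\Omega\esc{Y},*)$ lives over $\mathrm{B}\Omega\esc{Y}$, the presheaf component of $y$, and this is all of $\esc{Y}$ only when $\esc{Y}$ is connected in $\mathcal{P}(S)$. Take $\esc{Y}=\mathbb{G}_m$ and $y=1$, which is birationally contractible because $\mathbb{G}_m\subset\mathbb{A}^1$ is dense, and take $F=*$. Your construction returns $y\colon *\to\mathbb{G}_m$. This is not a birational fibration: the morphism from it to the terminal object $\mathrm{id}_{\mathbb{G}_m}$ of the slice has underlying map $y$, which is a birational equivalence since $L_{bir}\mathbb{G}_m\simeq *$. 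A birational equivalence between local objects of the slice would have to be invertible, and $y$ is not.

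The only sensible candidate inverse is $F\mapsto \esc{Y}\times_{L_{bir}\esc{Y}}\mathrm{B}(F,\Omega L_{bir}\esc{Y},*)$. Showing that every birational covering has this form is the very comparison you wanted to avoid. So, with the results available in the paper, your fallback is what can actually be proved: either the equivalence for $\mathbb{A}^1$-connected $\esc{Y}$, or, for merely birationally connected $\esc{Y}$, the equivalence $Cov_{\mathcal{H}^{bir}}(L_{bir}\esc{Y})\simeq \pi_1^b(\esc{Y},y)\mbox{-}Sh^b_k$.

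A smaller point concerns closure under extensions. \Cref{L covering iff fiber is L discrete} assumes it, so it cannot be used to prove it. It is, however, immediate from \Cref{Hbir is Hb}. Take a fibre sequence $F\to Z\to B$ with $F,B$ birational and $B$ connected, and a dense open $U\subset X$. Then $Z(X)\to Z(U)$ is a map over the equivalence $B(X)\simeq B(U)$ of connected spaces, and it is an equivalence on fibres, hence an equivalence. The same argument gives the $\Sigma$-condition.
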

\begin{proof}
The existence of the pullback functor follows from the pullback stability of (birational) coverings, \Cref{covering lemma} (3). That it factors through $\pi_1^{b}(\esc{Y},y)\mbox{-}Sh^{b}_k$ is a consequence of \Cref{action of the fundamerntal groupoid}, in view of the fact that $$\Pi^{b}\esc{Y}\simeq \Pi^{bir}L_{bir}\esc{Y}$$ (in other words, $\Pi^{b}\esc{Y}$ is birational local) (see \Cref{birational truncation}).

Since $\mathcal{H}^{b}$ is closed under extensions over connected objects [\cite{0bat}, Lemma 3.3.1], the claim follows immediately from \Cref{L galois theory}.
\end{proof}

\subsection{On the birational motivic homotopy category being an \texorpdfstring{$\infty$}{}-topos}

At the beginning of \S4, we explained how the transition from ordinary motivic homotopy to birational motivic homotopy suggests a near `classical topological' behavior, allowing us to brute-force various topos-theoretic properties. In fact, through \S4.1, \S4.2, and \S4.3, we have proven many of those topos-theoretic properties. However, the birational localization functor $L_{bir}: \mathcal{P}_{nis}(S)\to \mathcal{P}_{nis}(S)$ is not even locally cartesian [\cite{0bat}, Counterexample 3.2.13], let alone left exact. At this juncture, we are thus unable to answer the following question over an arbitrary scheme: 
\begin{Q}\label[Q]{bir topos question}
    For a Qcqs scheme $S$, is the presentable $\infty$-category $$\mathcal{H}^{bir}(S):=L_{bir}\mathcal{H}^{mot}(S)\simeq L_{dense} \mathcal{P}_{\Sigma}(S)$$ (where the last equivalence is due to \textup{[\cite{0bat}, Corollary 2.2.12(2)]}) an $\infty$-topos?
\end{Q}

In fact, we have only been able to show the existence of a subobject classifier for geometrically unibranch Qcqs schemes with finitely many irreducible components; see \Cref{sub obj in bir}. In the result below, we will show that, under this condition on the base scheme, \Cref{bir topos question} actually has a positive answer:
\begin{thm}\label{Hbir a topos}
    Suppose $S$ is a Qcqs, geometrically unibranch scheme with finitely many generic points (for example, the spectrum of a field). Then $\mathcal{H}^{bir}(S)$ is an $\infty$-topos.
\end{thm}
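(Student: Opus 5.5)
The plan is to exhibit $\mathcal{H}^{bir}(S)$ as a presheaf $\infty$-topos on a small category of ``generic points''. By \Cref{Hbir is Hb}, $\mathcal{H}^{bir}(S)=L_{dense}\mathcal{P}_\Sigma(S)$. A birational $\Sigma$-presheaf $\esc{F}$ turns dense open immersions into equivalences. Hence, for a smooth $X/S$ with generic points $\eta_1,\dots,\eta_r$, one expects
$$\esc{F}(X)\simeq \prod_j \colim_{U\ni \eta_j}\esc{F}(U)=:\prod_j \esc{F}(\eta_j).$$
By \Cref{unibranch lemma}, $X$ has finitely many generic points and its irreducible components are disjoint, hence clopen. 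So $X=\bigsqcup X_j$ with each $X_j$ irreducible, and the $\Sigma$-condition gives $\esc{F}(X)\simeq\prod_j\esc{F}(X_j)$. For irreducible $X_j$, every nonempty open $U\subset X_j$ is dense, so $\esc{F}(X_j)\simeq\esc{F}(U)$. Therefore $\esc{F}(X_j)\simeq\esc{F}(\eta_j)$, where the filtered colimit is constant.

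Next I define $\mathcal{E}$ to be the category whose objects are irreducible smooth $S$-schemes and whose morphisms are the $S$-morphisms localized at the dense open immersions. Equivalently, $\mathcal{E}$ is the category of ``smooth generic points'' (essentially smooth function-field points $\mathrm{Spec}\,k(X)\to S$) with dominant rational maps, or more generally all rational maps defined at the generic point. The key claim is
$$\mathcal{H}^{bir}(S)\simeq \mathcal{P}(\mathcal{E}),$$
via restriction to irreducible objects. The steps are as follows.

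First, restricting along $Sm_S^{irr}\subset Sm_S$ identifies $\mathcal{P}_\Sigma(S)$ with $\mathcal{P}(Sm_S^{irr})$. This uses the clopen decomposition of \Cref{unibranch lemma}: every smooth $X$ is canonically a finite coproduct of irreducibles, and a map from an irreducible scheme to $\bigsqcup X_j$ lands in a single $X_j$.

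Second, under this identification, $L_{dense}$-local objects correspond to presheaves on $Sm^{irr}_S$ that invert dense open immersions, which are exactly the presheaves on the $\infty$-categorical localization $\mathcal{E}:=Sm_S^{irr}[\text{dense opens}^{-1}]$.

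Third, a presheaf category on a small $\infty$-category is an $\infty$-topos, so the theorem follows.

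The main obstacle is the second step, specifically the identification of the localization $\mathcal{E}$. The $\infty$-categorical localization of $Sm_S^{irr}$ at dense open immersions might not be a $1$-category, and it is not obvious that it is small and well behaved. I would avoid computing it explicitly. By definition, $\mathcal{P}(\mathcal{C}[W^{-1}])$ is the full subcategory of $\mathcal{P}(\mathcal{C})$ on $W$-local presheaves; this holds for any small $\mathcal{C}$ and set $W$, and presheaf $\infty$-categories on small $\infty$-categories are topoi. Here $Sm_S^{irr}$ is essentially small, and its dense open immersions form a set.

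What remains to check carefully is that $L_{dense}$-locality in $\mathcal{P}_\Sigma(S)$ coincides with locality with respect to dense opens between irreducibles. A dense open $U\subset X$ with $X=\bigsqcup X_j$ is $\bigsqcup (U\cap X_j)$ with each $U\cap X_j$ dense in $X_j$, and the $\Sigma$-condition reduces this to the irreducible case. This is exactly where both hypotheses of the theorem are used. Finitely many generic points make the product over components finite, so the $\Sigma$-condition applies. The geometrically unibranch condition makes components clopen; without it, overlapping components would break the coproduct decomposition.
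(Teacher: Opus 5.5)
Your proof is correct, but it takes a different route from the paper. The paper never identifies $\mathcal{H}^{bir}(S)$ with a presheaf category. Instead, \Cref{closed under coproducts} shows that $\mathcal{H}^{bir}(S)$ is closed under all colimits in $\mathcal{P}_\Sigma(S)$ and in $\mathcal{P}_{nis}(S)$. There, a result of \cite{0bat} reduces this to coproducts, which are then handled by the clopen component decomposition of \Cref{unibranch lemma}. The paper then applies the abstract \Cref{coreflexive sub topos}: an accessible localization of an $\infty$-topos that is closed under both limits and colimits inherits Giraud's axioms.

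You use the same geometric inputs (\Cref{Hbir is Hb} and \Cref{unibranch lemma}) but push them further. The clopen decomposition makes $Sm_S$ the free finite-coproduct completion of $Sm_S^{irr}$, so $\mathcal{P}_\Sigma(S)\simeq\mathcal{P}(Sm_S^{irr})$. Dense-open locality then reduces to dense opens between irreducibles, which gives $\mathcal{H}^{bir}(S)\simeq\mathcal{P}(Sm_S^{irr}[W^{-1}])$.

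What your route buys is a stronger and more concrete conclusion. You get a presheaf $\infty$-topos on an explicit small $\infty$-category, which automatically has enough points and is Postnikov complete. It also makes the paper's colimit-closure statement immediate: the inclusion is restriction along $Sm_S^{irr}\to\mathcal{E}$, which preserves pointwise colimits, so the external citation is not needed. The paper's route, by contrast, needs no description of the localized category at all, and its lemma applies to any colimit-closed localization.

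One caveat: drop the aside that $\mathcal{E}$ is ``equivalently'' the category of function-field points with (dominant) rational maps. Dense open immersions do not admit a calculus of fractions in $Sm_S^{irr}$, since pulling back a dense open along a non-dominant morphism can give the empty scheme. So that description is unjustified, but your argument never uses it.
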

\begin{proof}
    The proof follows from the abstract lemma below (\Cref{coreflexive sub topos}) and the fact that, under the conditions of the theorem, $\mathcal{H}^{bir}$ is closed under colimits in $\mathcal{P}_{nis}(S)$ (as well as in $\mathcal{P}_\Sigma(S)$; see \Cref{closed under coproducts} below).
\end{proof}
\begin{lem}\label[lem]{coreflexive sub topos}
    Suppose $\mathcal{C}$ is an $\infty$-topos and $L\mathcal{C} \subset \mathcal{C}$ is an accessible localization which is closed under all colimits. Then $L\mathcal{C}$ is an $\infty$-topos.
\end{lem}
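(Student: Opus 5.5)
The plan is to show that the inclusion $i:L\mathcal{C}\hookrightarrow\mathcal{C}$, which already has the left adjoint $L$, also has a right adjoint, and that $L$ is left exact. Then $L$ is a left exact accessible localization of an $\infty$-topos, and $L\mathcal{C}$ is an $\infty$-topos by [\cite{lurie2009higher}, Proposition 6.3.5.14] (left exact accessible localizations of $\infty$-toposes are $\infty$-toposes).

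\emph{Step 1: a right adjoint.} Since $L\mathcal{C}$ is closed under all colimits in $\mathcal{C}$, the inclusion $i$ preserves small colimits. Both categories are presentable, so the adjoint functor theorem [\cite{lurie2009higher}, Corollary 5.5.2.9] gives a right adjoint $R:\mathcal{C}\to L\mathcal{C}$. Thus $L\mathcal{C}$ is simultaneously reflective and coreflective, i.e.\ we have $L\dashv i\dashv R$.

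\emph{Step 2: left exactness of $L$.} This is the main obstacle. The adjunctions alone do not make $L$ preserve finite limits, so I would use the topos structure of $\mathcal{C}$.

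\textbf{(a) Terminal object.} $L$ preserves the terminal object because $*\in L\mathcal{C}$.

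\textbf{(b) Local equivalences are stable under base change.} Let $f:A\to B$ be an $L$-equivalence and $g:B'\to B$ any map. Since $L\mathcal{C}$ is closed under colimits and contains $LB$, the subcategory of $L$-local objects is closed under colimits. Write $B'\simeq\colim_\alpha B'_\alpha$ and use universality of colimits in $\mathcal{C}$: $A\times_BB'\simeq\colim_\alpha A\times_BB'_\alpha$. This reduces the claim to the case where $B'$ ranges over a set of generators.

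\textbf{(c) A cleaner route via colimit-preserving localizations.} Because $i$ preserves colimits, the composite $iL$ is colimit-preserving and idempotent. For $X\in\mathcal{C}$, I would show that $X\times LY\in L\mathcal{C}$ whenever $Y\in L\mathcal{C}$. Writing $X\simeq\colim X_\alpha$ reduces this to a statement about the generators, and it is here that the hypothesis must bite.

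\textbf{(d) Fallback.} If (c) is awkward, I would instead verify Giraud's axioms directly for $L\mathcal{C}$. Colimits are computed in $\mathcal{C}$ and limits are also computed in $\mathcal{C}$ (a localization is closed under limits). Hence universality of colimits, disjoint coproducts and effectivity of groupoid objects all hold in $L\mathcal{C}$, being inherited from $\mathcal{C}$: every such diagram in $L\mathcal{C}$ is one in $\mathcal{C}$ with the same limits and colimits. Presentability of $L\mathcal{C}$ holds since $L$ is accessible. Lurie's characterization [\cite{lurie2009higher}, Theorem 6.1.0.6] then applies.

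I expect to use route (d) as the actual proof, since it is essentially formal once limits and colimits in $L\mathcal{C}$ are both computed in $\mathcal{C}$. The only point needing care is the pullback stability of colimits: for a colimit $\colim U_\alpha\to X$ in $L\mathcal{C}$ and a map $Y\to X$ in $L\mathcal{C}$, the pullback $Y\times_X\colim U_\alpha$ is computed in $\mathcal{C}$. It equals $\colim Y\times_XU_\alpha$ in $\mathcal{C}$, and this colimit lies in $L\mathcal{C}$ by colimit-closure, so it is also the colimit in $L\mathcal{C}$.
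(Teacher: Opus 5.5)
Your route (d) is the paper's proof. $L\mathcal{C}$ is presentable, and Giraud's axioms (ii)--(iv) of [\cite{lurie2009higher}, Theorem 6.1.0.6] pass from $\mathcal{C}$ to $L\mathcal{C}$ because limits and colimits in $L\mathcal{C}$ are both computed in $\mathcal{C}$; your remark on pullbacks of colimits is the one point worth spelling out. Drop the opening left-exactness plan and steps (b)--(c), though: they are false in general. The constant diagrams in $\mathrm{Fun}(\Lambda^2_0,Spc)$ form an accessible localization closed under all limits and colimits, with reflector $L=\colim$ the pushout. For $F=(*\leftarrow\emptyset\to *)$ one has $L(F\times F)\simeq LF$ (two points), while $LF\times LF$ has four points. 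So $L$ is not left exact and $L$-equivalences are not stable under base change, and (c) with $Y=*$ would force every object to be local.
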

\begin{proof}
    Since $L\mathcal{C}$ is a presentable $\infty$-category, it suffices to check conditions (3) (ii), (iii), and (iv) from [\cite{lurie2009higher}, Theorem 6.1.0.6 (3)]. Each of these conditions concerns interactions between certain limits and colimits. Since these hold in $\mathcal{C}$ and $L\mathcal{C}$ is closed under both limits and colimits in $\mathcal{C}$, the result follows.
\end{proof}
\begin{prop}\label[prop]{closed under coproducts}
    Let $S$ be a Qcqs, geometrically unibranch scheme with finitely many generic points. Then $L_{bir}: \mathcal{P}_\Sigma(S)\to \mathcal{P}_{\Sigma}(S)$ and $L_{bir}: \mathcal{P}_{nis}(S)\to \mathcal{P}_{nis}(S)$ preserve all colimits. In other words, both $\mathcal{H}^b(S)\subset \mathcal{P}_\Sigma(S)$ $\mathcal{H}^b(S)\subset \mathcal{P}_{nis}(S)$ are closed under all colimits.
\end{prop}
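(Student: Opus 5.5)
The plan is to reduce everything to a sectionwise statement by rewriting $\mathcal{P}_\Sigma(S)$ as an ordinary presheaf category on connected smooth schemes. First I would prove the colimit-closure statement for $\mathcal{H}^b(S)\subset \mathcal{P}_\Sigma(S)$. Then I would deduce the Nisnevich statement from it using \Cref{Hbir is Hb} and the fact that birational local objects are already Nisnevich (and $\mathbb{A}^1$-) local [\cite{0bat}, Theorem 2.1.1].

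\emph{Step 1 (reduction to connected schemes).} By \Cref{unibranch lemma}, every $X\in Sm_S$ has finitely many generic points and pairwise disjoint irreducible components. Each component is therefore clopen, so $X\simeq \bigsqcup_{j=1}^r X_j$ is a finite disjoint union of irreducible (equivalently, connected) smooth $S$-schemes. Let $Sm_S^{conn}\subset Sm_S$ be the full subcategory of connected objects. I claim that restriction gives an equivalence
\[
\mathcal{P}_\Sigma(Sm_S)\xrightarrow{\ \simeq\ } \mathcal{P}(Sm_S^{conn}),
\]
with inverse $G\mapsto \big(X\mapsto \prod_j G(X_j)\big)$. Two points need checking. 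Any map from a connected scheme into $\bigsqcup_j X_j$ factors through a unique $X_j$. And the decomposition into components is functorial, because morphisms send connected components into connected components. Granting this, colimits in $\mathcal{P}_\Sigma(S)$ are computed sectionwise on connected schemes. Equivalently, sifted colimits are sectionwise everywhere, and finite coproducts are computed componentwise.

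\emph{Step 2 (the birational condition becomes sectionwise).} An object $F\in\mathcal{P}_\Sigma(S)$ lies in $\mathcal{H}^b(S)=L_{dense}\mathcal{P}_\Sigma(S)$ iff $F(X)\to F(U)$ is an equivalence for every dense open immersion $U\subset X$. If $U\subset X=\bigsqcup_j X_j$ is dense, then each $U\cap X_j$ is a nonempty open, hence dense and irreducible, subscheme of the irreducible $X_j$. Since $F(X)\simeq\prod_j F(X_j)$ and $F(U)\simeq \prod_j F(U\cap X_j)$, the condition is equivalent to the following: $F(X)\to F(U)$ is an equivalence for all connected $X$ and all nonempty open $U\subset X$, where such $U$ are automatically connected. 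In the model $\mathcal{P}(Sm_S^{conn})$ this is a condition on individual sections and restriction maps. Colimits there are sectionwise, and a colimit of a diagram of equivalences of spaces is an equivalence. Hence $\mathcal{H}^b(S)$ is closed under all small colimits in $\mathcal{P}_\Sigma(S)$, so $L_{bir}\colon\mathcal{P}_\Sigma(S)\to\mathcal{P}_\Sigma(S)$ preserves colimits.

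\emph{Step 3 (the Nisnevich case).} Let $D$ be a diagram in $\mathcal{H}^b(S)\subset\mathcal{P}_{nis}(S)$. Its colimit in $\mathcal{P}_{nis}(S)$ is $L_{nis}$ applied to its colimit $C$ in $\mathcal{P}_\Sigma(S)$. By Step 2, $C$ is birational local, and birational objects are Nisnevich local [\cite{0bat}, Theorem 2.1.1], so $L_{nis}C\simeq C\in\mathcal{H}^b(S)$.

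The main obstacle I expect is Step 1. One must verify that the component decomposition is genuinely finite and functorial, so that the $\Sigma$-condition identifies with presheaves on connected schemes. The finiteness comes from finitely many generic points, and the disjointness of components comes from the unibranch hypothesis; without either, the coproduct in $\mathcal{P}_\Sigma$ is no longer sectionwise on "connected" pieces. A secondary check is that nonempty opens of irreducible schemes are dense and irreducible, which is what makes the dense-open condition purely sectionwise.
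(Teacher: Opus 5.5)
Your argument is correct. It uses the same geometric input as the paper: by \Cref{unibranch lemma}, every $X\in Sm_S$ is a finite disjoint union of irreducible clopen pieces, so connected schemes act as points for the $\sqcup$-topology. The difference is in how the argument is organized.

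The paper first cites Corollary 3.1.6 of \cite{0bat} to reduce to showing that $L_{bir}$ preserves coproducts. It then checks coproducts by splitting a dense open immersion into dense open immersions of connected schemes. You instead identify $\mathcal{P}_\Sigma(Sm_S)\simeq\mathcal{P}(Sm_S^{conn})$ once and for all. Since nonempty (quasi-compact, under the paper's conventions) opens of an irreducible scheme are again irreducible, the dense-open locality condition becomes sectionwise in this model. This handles sifted colimits and coproducts at once, so the external citation is not needed.

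For the Nisnevich statement, the paper appeals to $\mathcal{P}_{nis}(S)$ being closed under coproducts in $\mathcal{P}_\Sigma(S)$. You use a general fact about nested reflective subcategories $\mathcal{H}^b(S)\subset\mathcal{P}_{nis}(S)\subset\mathcal{P}_\Sigma(S)$: if the smallest is closed under colimits in the largest, it is closed under colimits in the middle one. This needs only that birational-local objects are Nisnevich local (\Cref{Hbir is Hb}), and is the more transparent justification.

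One small correction to your closing remark. Step 1 needs only finitely many generic points, since a qcqs scheme with finitely many irreducible components is always a finite disjoint union of clopen connected components. The unibranch hypothesis is actually used in Step 2. There you need connected smooth schemes to be irreducible, so that dense opens of connected schemes stay connected and the locality condition involves no products.
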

\begin{proof}
    Using  [\cite{0bat}, Corollary 3.1.6], it suffices to show that $L_{bir}$ preserves coproducts. Suppose $j:U\hookrightarrow X$ is a dense open immersion in $Sm_S$. Since $X$ is a finite disjoint union decomposition of its irreducible connected components (\Cref{unibranch lemma}), we can assume that $j$ is a finite disjoint union of dense open immersions of connected schemes. Since connected schemes are points in the $\sqcup$-topology, it follows that coproducts (computed in $\mathcal{P}_\Sigma(S)$) of birational local objects are birational local.

    Since $\mathcal{P}_{nis}(S)$ is closed under coproducts in $\mathcal{P}_\Sigma(S)$, the claims about the Nisnevich topology follow from the one proven above.
\end{proof}

\begin{cor}
    Under the above condition on $S$, there are essential geometric morphisms of $\infty$-toposes $$ \mathcal{P}_{nis}(S), \mathcal{P}_{\Sigma}(S)\to \mathcal{H}^{bir}(S).$$
\end{cor}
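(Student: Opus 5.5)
We need to show that under the conditions on $S$ there are essential geometric morphisms $\mathcal{P}_{nis}(S)\to \mathcal{H}^{bir}(S)$ and $\mathcal{P}_\Sigma(S)\to \mathcal{H}^{bir}(S)$. The plan is to produce, for the inclusion $\iota:\mathcal{H}^{bir}(S)\hookrightarrow \mathcal{C}$ with $\mathcal{C}$ either $\mathcal{P}_{nis}(S)$ or $\mathcal{P}_\Sigma(S)$, a triple of adjoints
$$L_{bir}\dashv \iota \dashv R.$$
In this triple, $\iota^*:=\iota$ plays the role of the left exact inverse image. It should be read as a geometric morphism $\mathcal{H}^{bir}(S)\to\mathcal{C}$ in the direction of inverse image, or as $\mathcal{C}\to\mathcal{H}^{bir}(S)$ in the paper's convention, where the direct image is $\iota$ and the inverse image is $L_{bir}$. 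I would first fix the convention by matching the arrow direction in the statement.

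\textbf{Step 1: the inclusion is a left adjoint.} By \Cref{Hbir a topos} and \Cref{closed under coproducts}, $\mathcal{H}^{bir}(S)$ is an $\infty$-topos, hence presentable. The inclusion $\iota$ preserves all small colimits by \Cref{closed under coproducts}, and $\mathcal{C}$ is presentable. The adjoint functor theorem [\cite{lurie2009higher}, Corollary 5.5.2.9] then gives a right adjoint $R$ to $\iota$.

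\textbf{Step 2: left exactness.} The inclusion $\iota$ preserves all small limits, since a localization is closed under limits in $\mathcal{C}$. Hence $\iota$ is left exact and colimit preserving. Therefore $(\iota\dashv R)$ is a geometric morphism $\mathcal{C}\to \mathcal{H}^{bir}(S)$ with inverse image $\iota$ and direct image $R$. It is essential because $\iota$ itself has the further left adjoint $L_{bir}$.

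\textbf{Step 3: the other convention.} If instead one wants $L_{bir}$ to be the inverse image, one would need $L_{bir}$ to be left exact. Since $L_{bir}$ is not locally cartesian in general, this is not available, so I would not pursue it. Step 2 is the right reading.

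\textbf{Main obstacle.} The only nontrivial input is colimit closure, which is \Cref{closed under coproducts}. The care lies in ensuring that colimits in $\mathcal{H}^{bir}(S)$ agree with those in $\mathcal{C}$, so that $\iota$ genuinely preserves colimits and is not merely closed under them abstractly. This follows because a full subcategory that is closed under colimits computes its colimits in the ambient category.
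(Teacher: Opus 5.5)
Your proposal is correct and is essentially the paper's argument. The colimit-closure of \Cref{closed under coproducts} and the adjoint functor theorem give $L_{bir}\dashv\iota\dashv R$, and the resulting geometric morphism $\mathcal{C}\to\mathcal{H}^{bir}(S)$ has inverse image $\iota$ (left exact as a right adjoint), direct image $R$, and $L_{bir}$ as the extra left adjoint making it essential. One correction: drop the phrase in your opening paragraph assigning direct image $\iota$ and inverse image $L_{bir}$, since that would require $L_{bir}$ to be left exact (as you note in Step 3), whereas Step 2 has the right reading.
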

\begin{proof}
Owing to \Cref{closed under coproducts}, there are right adjoints to the right adjoints $\mathcal{H}^{bir}(S)\subset \mathcal{P}_\Sigma(S)$ and $\mathcal{H}^{bir}(S)\subset \mathcal{P}_{nis}(S)$ by the adjoint functor theorem.
\end{proof}
Actually, using the main result of [\cite{sfbat}], we can easily get rid of the geometrically unibranch condition:

\begin{thm}\label{bir topos on fin gen pt}
    Let $S$ be a Qcqs scheme with finitely many generic points. Then $\mathcal{H}^{bir}(S)$ is an $\infty$-topos.
\end{thm}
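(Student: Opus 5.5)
The plan is to reduce to \Cref{Hbir a topos} by passing to the normalization of $S$, or more simply by invoking the main result of [\cite{sfbat}], which I expect says that $\mathcal{H}^{bir}(S)$ depends only on a suitable birational or "generic" datum of $S$. Concretely, I would first show that the pattern of \Cref{coreflexive sub topos} applies directly. Since $\mathcal{H}^{bir}(S)\subset\mathcal{P}_\Sigma(S)$ is an accessible localization of an $\infty$-topos, it suffices to prove that $\mathcal{H}^{bir}(S)$ is closed under all colimits in $\mathcal{P}_\Sigma(S)$.

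By [\cite{0bat}, Corollary 3.1.6], as used in the proof of \Cref{closed under coproducts}, this further reduces to closure under coproducts. So the only place where geometric unibranchness entered earlier was \Cref{unibranch lemma}(1): it lets us write each $X\in Sm_S$ as a finite disjoint union of irreducible clopen pieces, so that a dense open immersion $U\hookrightarrow X$ decomposes into dense open immersions of connected schemes.

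Without unibranchness, I would replace this decomposition by the following observation. By \Cref{unibranch lemma}(2), $X$ has finitely many generic points $\eta_1,\dots,\eta_r$. I would then use the statement I expect [\cite{sfbat}] to supply, namely that a birational space is determined by its values on the essentially smooth schemes $\bigsqcup_i \mathrm{Spec}\,\mathcal{O}_{X,\eta_i}$; that is, $\esc{F}(X)\simeq\esc{F}(X^{gen})$ with $X^{gen}$ a finite disjoint union of local schemes of dimension $0$. Granting this, for a coproduct $\coprod_\alpha\esc{F}_\alpha$ computed in $\mathcal{P}_\Sigma(S)$ one evaluates on $X$ and on a dense open $U$. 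Both have the same finitely many generic points and the same semilocal ring at them. Since each $\mathrm{Spec}\,\mathcal{O}_{X,\eta_i}$ is connected, hence a point for the $\sqcup$-topology, the sections of the coproduct there are the coproduct of sections. The restriction $X\to U$ is therefore an equivalence after passing to a cofinal system of dense opens, using finiteness of the generic points to commute the filtered colimit with the finite product over $i$.

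The main obstacle is the non-unibranch case, where $X$ is connected but reducible. There a section of $\coprod_\alpha\esc{F}_\alpha$ over $X$ need not come from a single summand, whereas over a dense open $U$ separating the components it may. I would try to handle this by showing that for birational $\esc{F}$ one has $\esc{F}(X)\simeq\prod_i\esc{F}(X_i^{\circ})$, where the $X_i^{\circ}$ are disjoint dense opens of the irreducible components. This holds because $\bigsqcup_i X_i^\circ\hookrightarrow X$ is itself a dense open immersion, which is exactly the trick in the flasqueness remark. It replaces $X$ by a disjoint union of irreducible schemes functorially, which is precisely what \Cref{unibranch lemma}(1) provided. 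After this replacement the argument of \Cref{closed under coproducts} goes through verbatim, and \Cref{coreflexive sub topos} yields that $\mathcal{H}^{bir}(S)$ is an $\infty$-topos.
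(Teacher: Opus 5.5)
Your reduction aims at a statement that is false without unibranchness. In general $\mathcal{H}^{bir}(S)$ is \emph{not} closed under coproducts in $\mathcal{P}_\Sigma(S)$, so \Cref{coreflexive sub topos} cannot be applied with that ambient topos.

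Here is a counterexample. Take $S=\mathrm{Spec}\,k[x,y]/(xy)$, which has two generic points and is not unibranch at the origin. Let $X=S\in Sm_S$, and let $U=X\setminus\{0\}$, a dense open that is a disjoint union of two punctured lines. Every object of $Sm_S$ is a finite disjoint union of connected schemes, and a map out of a connected scheme lands in one component. So coproducts in $\mathcal{P}_\Sigma(S)$ are computed sectionwise on connected schemes. Hence $*\sqcup *$ formed in $\mathcal{P}_\Sigma(S)$ has two sections over the connected scheme $X$ but four over $U$. It is therefore not birational local, although $*$ is. The same happens in $\mathcal{P}_{nis}(S)$, where $*\sqcup *$ is the sheaf of locally constant $\{1,2\}$-valued functions. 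Coproducts in $\mathcal{H}^{bir}(S)$ genuinely differ from those of the ambient toposes, so no argument can establish the closure you need.

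Your proposed fix does not address this. The identity $\mathcal{F}(X)\simeq\prod_i\mathcal{F}(X_i^\circ)$ is correct for birational $\mathcal{F}$, hence for each summand $\mathcal{F}_\alpha$. But the object whose birationality is at stake is the $\mathcal{P}_\Sigma$-coproduct, which over $X$ sees connected components rather than irreducible ones. Applying the identity to it, or computing its value at $X$ through $X^{gen}$ as in your middle paragraph, presupposes what is to be shown. Concretely, over a connected reducible $X$ the coproduct has sections $\coprod_\alpha\prod_i\mathcal{F}_\alpha(X_i^\circ)$, while over $\bigsqcup_i X_i^\circ$ it has $\prod_i\coprod_\alpha\mathcal{F}_\alpha(X_i^\circ)$, and these do not agree. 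Finiteness of the generic points lets you commute filtered colimits past $\prod_i$, but not coproducts. Your description of the obstacle is also reversed: over connected $X$ a section always comes from a single summand, and the mixed sections live over $U$.

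The missing idea is to split the category rather than test closure. The paper invokes the equivalence $\mathcal{H}^{bir}(S)\simeq\prod_{\eta\in S^{(0)}}\mathcal{H}^{bir}(k(\eta))$ of [\cite{sfbat}, Theorem 4.3.4]. This is the ``generic datum'' result you anticipated in your first sentence, but used as a decomposition of $\infty$-categories rather than as a pointwise value formula. The paper then applies \Cref{Hbir a topos} to each field $k(\eta)$, and concludes because a finite product of $\infty$-toposes is an $\infty$-topos [\cite{lurie2009higher}, Proposition 6.3.2.1].
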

\begin{proof}
    By [\cite{sfbat}, Theorem 4.3.4] we know that there is an equivalence of $\infty$-categories:
    $$\mathcal{H}^{bir}(S)\to\underset{\eta\in S^{(0)}}{\prod}\mathcal{H}^{bir}(k(\eta))$$
    where the product on the right-hand side is computed in $Cat_\infty$. By \Cref{Hbir a topos}, we know that each category $\mathcal{H}^{bir}(k(\eta))$ in the product above is an $\infty$-topos. So it suffices to know that the Cartesian product of $\infty$-toposes is an $\infty$-topos. This is [\cite{lurie2009higher}, proposition 6.3.2.1].
\end{proof}
To compare with \Cref{robalo}, we summarize the above result in the following manner:
\begin{cor}\label[cor]{bir yoneda}
  Let $S$ be a Qcqs scheme with finitely many generic points. Then $$h^b_S(-): Sm_S\to \mathcal{H}^{bir}(S)$$ contracts the affine line, sends the Nisnevich and cdh squares to pushout squares, and has an $\infty$-topos as its target that is Postnikov complete and of cohomological dimension $0$.
\end{cor}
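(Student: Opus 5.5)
The corollary bundles four claims about $h^b_S=L_{bir}h_S$, and I would prove them one at a time, assembling results already in the paper.

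\emph{Contraction of $\mathbb{A}^1$ and Nisnevich squares.} Since $L_{bir}\simeq L_{bir}L_{mot}$, the functor $h^b_S$ factors as $L_{bir}\circ h^{mot}_S$. By \Cref{robalo}, $h^{mot}_S$ contracts $\mathbb{A}^1_S$ and sends Nisnevich squares to pushouts. The functor $L_{bir}\colon\mathcal{H}^{mot}(S)\to\mathcal{H}^{bir}(S)$ is a left adjoint, so it preserves pushouts and the terminal object, and both properties pass to $h^b_S$.

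\emph{cdh squares.} I would reduce to the case of a closed square, since an abstract blow-up square in $Sm_S$ with $i\colon Z\hookrightarrow X$ closed, $p\colon Y\to X$ proper and an isomorphism over $U=X\setminus Z$ must be shown to become a pushout. The plan is to exploit birational invariance. Take a dense open $U\subset X$ avoiding $Z$ on each component, where we may assume $Z$ is nowhere dense. The map $p$ restricts to an isomorphism over $U$, and $p^{-1}(U)\subset Y$ is dense. Then $h^b(U)\to h^b(X)$ and $h^b(p^{-1}U)\to h^b(Y)$ are equivalences, so $h^b(p)$ is an equivalence. A square whose opposite sides $h^b(E)\to h^b(Z)$ and $h^b(Y)\to h^b(X)$ are both equivalences is automatically a pushout. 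The delicate points are the following. First, $Y$ or $E$ may have components not dominating $X$, which must be handled using the flasque/component description from \Cref{unibranch lemma}. Second, the cdh squares must lie in $Sm_S$. Alternatively, one could cite \cite{0bat} or \cite{sfbat} for cdh-excision of $L_{bir}$ if it is available there. \textbf{I expect this step to be the main obstacle.}

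\emph{Topos property.} Whenever $S$ has finitely many generic points, $\mathcal{H}^{bir}(S)$ is an $\infty$-topos by \Cref{bir topos on fin gen pt}.

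\emph{Postnikov completeness.} This holds by \Cref{post comp Hbir}. Here $\mathcal{H}^{bir}(S)$ is closed under limits and truncations in $\mathcal{P}(S)$ by \Cref{birational truncation}, and $\mathcal{P}(S)$ is Postnikov complete. Once we know $\mathcal{H}^{bir}(S)$ is a topos, its internal truncations are precisely these, so the topos-theoretic notion of Postnikov completeness agrees with the one proved earlier.

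\emph{Cohomological dimension $0$.} This is \Cref{coh dim of bir}. The relevant input is that for an abelian $A\in Ab^b_S$ the Eilenberg–MacLane object in the topos is $\mathrm{K}(A,n)$, by \Cref{bir em}. Its cohomology vanishes on the generators $h^b(X)$ because birational sheaves are flasque, and this vanishing extends to all objects by passing to colimits.
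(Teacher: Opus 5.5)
Your decomposition is the paper's. The $\mathbb{A}^1$/Nisnevich clause follows from $L_{bir}\simeq L_{bir}L_{mot}$, which is what the paper calls ``from the definition''. The topos, Postnikov-completeness and cohomological-dimension clauses are obtained by citing \Cref{bir topos on fin gen pt}, \Cref{post comp Hbir} and \Cref{coh dim of bir}. For the cdh clause the paper gives no argument and simply cites [\cite{0bat}, Remark 2.1.5], which is your fallback.

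\textbf{The cdh step has a genuine gap.} Your direct cdh sketch does not close as written.
\begin{enumerate}
\item The criterion you invoke needs $h^b(E)\to h^b(Z)$ to be an equivalence, but you only argue this for $h^b(Y)\to h^b(X)$.
\item Birational invariance cannot give it: $E\to Z$ is proper and surjective with positive-dimensional fibres, not birational.
\item For a blow-up along a smooth centre you can supply it by hand. There $E\cong\mathbb{P}(N_{Z/X})$ is trivial over a dense open $V\subset Z$, so $h^b(E)\simeq h^b(V\times\mathbb{P}^r)\simeq h^b(V\times\mathbb{A}^r)\simeq h^b(V)\simeq h^b(Z)$.
\item For an arbitrary abstract blow-up square with smooth corners you have no description of the fibres of $E\to Z$, so some genuinely new input is required.
\item The criterion is only sufficient. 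If $Y$ has components lying over $Z$ (allowed in a cdh square), in general neither side is an equivalence, yet the square is still a pushout. Such components occur identically in $Y$ and $E$ and cancel in the pushout. They, and any components of $X$ contained in $Z$, must be split off before your argument applies.
\end{enumerate}
As it stands, the cdh clause rests on the citation, exactly as in the paper.

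\textbf{A smaller point on cohomological dimension.} For the topos to have cohomological dimension $0$, only the terminal object $h^b(S)$ matters. By \Cref{bir em} (see \Cref{EM}) the internal Eilenberg--MacLane object is the sectionwise one, so $\pi_0\mathrm{Map}(h^b(S),\mathrm{K}(A,n))=\pi_0\mathrm{K}(A(S),n)=0$ for $n>0$. Drop the claim that the vanishing ``extends to all objects by passing to colimits''. The object $\mathrm{Map}(\colim_i X_i,\mathrm{K}(A,n))$ is a limit and picks up $\lim^s$ contributions. Already in a presheaf topos, representables have no higher cohomology while $\mathrm{B}G$ generally does.
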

\begin{proof}
    The $\mathbb{A}^1$ and Nisnevich parts follow from the definition: $\mathcal{H}^{bir}(S):=L_{bir}L_{\mathbb{A}^1}L_{nis}\mathcal{P}(S)$. The cdh descent case follows from [\cite{0bat}, Remark 2.1.5]. That the target is an $\infty$-topos is the theorem above. Its Postnikov completeness is proven in \Cref{post comp Hbir}, and $0$-cohomological dimension in \Cref{coh dim of bir}.
\end{proof}

\textbf{Conclusion and Open question}

We conclude this paper with a set of open questions that the author plans to return to later:

(1). Although $\mathcal{H}^{bir}(S)$ exhibits several topos-theoretic properties over an arbitrary base scheme $S$ (as shown in \S4.1–4.3), its status as an $\infty$-topos in full generality (for schemes with infinitely many generic points) remains open.

(2). It also remains an open question whether the Birational Yoneda functor of \Cref{bir yoneda} has a universal property in $\mathcal{T}op_\infty$ (similar to the one in \Cref{robalo}).

(3). Finally, since in $pro\mbox{-}\mathcal{H}^{bir}$ every scheme is identified with its discrete set of generic points (which has Nisnevich homotopy dimension $0$), it is reasonable to hope that the birational $\infty$-topos is locally of homotopy dimension $0$.

\phantomsection
\bibliographystyle{amsalpha}	
\renewcommand\refname{Bibliography}
\bibliography{references}

@misc{stacks-project,
  author       = {The {Stacks project}},
  title        = {The Stacks project},
  howpublished = {\url{https://stacks.math.columbia.edu}},
  year         = {2026},
}

@article{choudhury2022characterisation,
    AUTHOR = {Choudhury, Utsav and Roy, Biman},
     TITLE = {{$\Bbb A^1$}-connected components and characterisation of
              {$\Bbb A^2$}},
   JOURNAL = {J. Reine Angew. Math.},
  FJOURNAL = {Journal f\"ur die Reine und Angewandte Mathematik. [Crelle's
              Journal]},
    VOLUME = {807},
      YEAR = {2024},
     PAGES = {55--80},
      ISSN = {0075-4102,1435-5345},
   MRCLASS = {14F42},
  MRNUMBER = {4698492},
MRREVIEWER = {Daiki\ Kawabe},
       DOI = {10.1515/crelle-2023-0084},
       URL = {https://doi.org/10.1515/crelle-2023-0084},
}

@article{hoyois2017six,
    AUTHOR = {Hoyois, Marc},
     TITLE = {The six operations in equivariant motivic homotopy theory},
   JOURNAL = {Adv. Math.},
  FJOURNAL = {Advances in Mathematics},
    VOLUME = {305},
      YEAR = {2017},
     PAGES = {197--279},
      ISSN = {0001-8708,1090-2082},
   MRCLASS = {14F42 (55P91)},
  MRNUMBER = {3570135},
MRREVIEWER = {Jeremiah\ Ben\ Heller},
       DOI = {10.1016/j.aim.2016.09.031},
       URL = {https://doi.org/10.1016/j.aim.2016.09.031},
}

@misc{lurie2017higher,
  title={Higher algebra},
  author={Lurie, Jacob},
  year={2017}
}

@article{robalo2012noncommutative,
  title={Noncommutative motives I: A universal characterization of the motivic stable homotopy theory of schemes},
  author={Robalo, Marco},
  journal={arXiv preprint arXiv:1206.3645},
  year={2012}
}

@article{bachmann2017norms,
    AUTHOR = {Bachmann, Tom and Hoyois, Marc},
     TITLE = {Norms in motivic homotopy theory},
   JOURNAL = {Ast\'erisque},
  FJOURNAL = {Ast\'erisque},
    NUMBER = {425},
      YEAR = {2021},
     PAGES = {ix+207},
      ISSN = {0303-1179,2492-5926},
      ISBN = {978-2-85629-939-5},
   MRCLASS = {14F42 (19E15)},
  MRNUMBER = {4288071},
MRREVIEWER = {Jon\ Eivind\ Vatne},
       DOI = {10.24033/ast},
       URL = {https://doi.org/10.24033/ast},
}

@book{morel2012a1,
    AUTHOR = {Morel, Fabien},
     TITLE = {{$\Bbb A^1$}-algebraic topology over a field},
    SERIES = {Lecture Notes in Mathematics},
    VOLUME = {2052},
 PUBLISHER = {Springer, Heidelberg},
      YEAR = {2012},
     PAGES = {x+259},
      ISBN = {978-3-642-29513-3},
   MRCLASS = {14F35 (14F05)},
  MRNUMBER = {2934577},
MRREVIEWER = {Matthias\ Wendt},
       DOI = {10.1007/978-3-642-29514-0},
       URL = {https://doi.org/10.1007/978-3-642-29514-0},
}

@article{riou2002theorie,
  title={Th{\'e}orie homotopique des S-sch{\'e}mas},
  author={Riou, Jo{\"e}l},
  journal={Memoire de DEA realise sous la direction de Bruno Kahn},
  year={2002}
}

@article{morel19991,
    AUTHOR = {Morel, Fabien and Voevodsky, Vladimir},
     TITLE = {{${\mathbb A}^1$}-homotopy theory of schemes},
   JOURNAL = {Inst. Hautes \'Etudes Sci. Publ. Math.},
  FJOURNAL = {Institut des Hautes \'Etudes Scientifiques. Publications
              Math\'ematiques},
    NUMBER = {90},
      YEAR = {1999},
     PAGES = {45--143},
      ISSN = {0073-8301,1618-1913},
   MRCLASS = {14F35 (19E08)},
  MRNUMBER = {1813224},
MRREVIEWER = {Marc\ Levine},
       URL = {http://www.numdam.org/item?id=PMIHES_1999__90__45_0},
}

@book{lurie2009higher,
    AUTHOR = {Lurie, Jacob},
     TITLE = {Higher topos theory},
    SERIES = {Annals of Mathematics Studies},
    VOLUME = {170},
 PUBLISHER = {Princeton University Press, Princeton, NJ},
      YEAR = {2009},
     PAGES = {xviii+925},
      ISBN = {978-0-691-14049-0; 0-691-14049-9},
   MRCLASS = {18-02 (18B25 18E35 18G30 18G55 55U40)},
  MRNUMBER = {2522659},
MRREVIEWER = {Mark\ Hovey},
       DOI = {10.1515/9781400830558},
       URL = {https://doi.org/10.1515/9781400830558},
}

@article{bachmann2019voevodsky,
    AUTHOR = {Bachmann, Tom and Elmanto, Elden},
     TITLE = {Voevodsky's slice conjectures via {H}ilbert schemes},
   JOURNAL = {Algebr. Geom.},
  FJOURNAL = {Algebraic Geometry},
    VOLUME = {8},
      YEAR = {2021},
    NUMBER = {5},
     PAGES = {626--636},
      ISSN = {2313-1691,2214-2584},
   MRCLASS = {19E15 (14C35 14F42)},
  MRNUMBER = {4371542},
MRREVIEWER = {Satoshi\ Mochizuki},
       DOI = {10.14231/ag-2021-019},
       URL = {https://doi.org/10.14231/ag-2021-019},
}

@article{spitzweck2012motivic,
  title={Motivic twisted K--theory},
  author={Spitzweck, Markus and {\O}stv{\ae}r, Paul Arne},
  journal={Algebraic \& Geometric Topology},
  volume={12},
  number={1},
  pages={565--599},
  year={2012},
  publisher={Mathematical Sciences Publishers}
}

@misc{ayoubcounterexamples,
  title={COUNTEREXAMPLES TO F. MOREL’S CONJECTURE ON {$\pi^{\mathbb{A}^1}_0$}},
  journal={Online notes},
  author={Ayoub, Joseph}
}

@article{amit,
    AUTHOR = {Balwe, Chetan and Hogadi, Amit and Sawant, Anand},
     TITLE = {{$\Bbb{A}^1$}-connected components of schemes},
   JOURNAL = {Adv. Math.},
  FJOURNAL = {Advances in Mathematics},
    VOLUME = {282},
      YEAR = {2015},
     PAGES = {335--361},
      ISSN = {0001-8708,1090-2082},
   MRCLASS = {14F05 (14F42)},
  MRNUMBER = {3374529},
MRREVIEWER = {Matthias\ Wendt},
       DOI = {10.1016/j.aim.2015.07.003},
       URL = {https://doi.org/10.1016/j.aim.2015.07.003},
}

@article{bachmann2024strongly,
  title={Strongly {${\mathbb{A}^1}$}-invariant sheaves (after F. Morel)},
  author={Bachmann, Tom},
  journal={arXiv preprint arXiv:2406.11526},
  year={2024}
}

@article{elmanto2021motivic,
    AUTHOR = {Elmanto, Elden and Hoyois, Marc and Khan, Adeel A. and
              Sosnilo, Vladimir and Yakerson, Maria},
     TITLE = {Motivic infinite loop spaces},
   JOURNAL = {Camb. J. Math.},
  FJOURNAL = {Cambridge Journal of Mathematics},
    VOLUME = {9},
      YEAR = {2021},
    NUMBER = {2},
     PAGES = {431--549},
      ISSN = {2168-0930,2168-0949},
   MRCLASS = {14F42 (14C05 19E15 55P47)},
  MRNUMBER = {4325285},
MRREVIEWER = {Ferdinando\ Zanchetta},
       DOI = {10.4310/CJM.2021.v9.n2.a3},
       URL = {https://doi.org/10.4310/CJM.2021.v9.n2.a3},
}

@article{10.2140/akt.2022.7.385,
author = {Chetan Balwe and Bandna Rani and Anand Sawant},
title = {{Remarks on iterations of the {$\mathbb A^1$}-chain connected components construction}},
volume = {7},
journal = {Annals of K-Theory},
number = {2},
publisher = {MSP},
pages = {385 -- 394},
year = {2022},
doi = {10.2140/akt.2022.7.385},
URL = {https://doi.org/10.2140/akt.2022.7.385}
}

@article{choudhury2014connectivity,
    AUTHOR = {Choudhury, Utsav},
     TITLE = {Connectivity of motivic {$H$}-spaces},
   JOURNAL = {Algebr. Geom. Topol.},
  FJOURNAL = {Algebraic \& Geometric Topology},
    VOLUME = {14},
      YEAR = {2014},
    NUMBER = {1},
     PAGES = {37--55},
      ISSN = {1472-2747,1472-2739},
   MRCLASS = {14F42 (18E35)},
  MRNUMBER = {3158752},
MRREVIEWER = {Andrei\ D.\ Halanay},
       DOI = {10.2140/agt.2014.14.37},
       URL = {https://doi.org/10.2140/agt.2014.14.37},
}

@article{pelaez2014unstable,
    AUTHOR = {Pelaez, Pablo},
     TITLE = {The unstable slice filtration},
   JOURNAL = {Trans. Amer. Math. Soc.},
  FJOURNAL = {Transactions of the American Mathematical Society},
    VOLUME = {366},
      YEAR = {2014},
    NUMBER = {11},
     PAGES = {5991--6025},
      ISSN = {0002-9947,1088-6850},
   MRCLASS = {14F42 (55P60)},
  MRNUMBER = {3256191},
MRREVIEWER = {Oliver\ R\"ondigs},
       DOI = {10.1090/S0002-9947-2014-06116-3},
       URL = {https://doi.org/10.1090/S0002-9947-2014-06116-3},
}

@article{dugger1999sheaves,
  title={Sheaves and homotopy theory},
  author={Dugger, Daniel},
  journal={preprint},
  year={1999}
}

@article {MR3423073,
    AUTHOR = {Nikolaus, Thomas and Schreiber, Urs and Stevenson, Danny},
     TITLE = {Principal {$\infty$}-bundles: general theory},
   JOURNAL = {J. Homotopy Relat. Struct.},
  FJOURNAL = {Journal of Homotopy and Related Structures},
    VOLUME = {10},
      YEAR = {2015},
    NUMBER = {4},
     PAGES = {749--801},
      ISSN = {2193-8407,1512-2891},
   MRCLASS = {55R99 (18G60 55U35)},
  MRNUMBER = {3423073},
MRREVIEWER = {Timothy\ Porter},
       DOI = {10.1007/s40062-014-0083-6},
       URL = {https://doi.org/10.1007/s40062-014-0083-6},
}

@misc{nlab:effective_epimorphism_in_ainfinity1category,
  author = {{nLab authors}},
  title = {Effective epimorphism in an {$(\infty,1)$}-category},
  note = {\href{https://ncatlab.org/nlab/revision/effective+epimorphism+in+an+%28infinity%2C1%29-category/29}{effective epimorphism: [Revision 29]}},
  month = jun,
  year = 2026
}

@article {MR4296353,
    AUTHOR = {Clausen, Dustin and Mathew, Akhil},
     TITLE = {Hyperdescent and \'etale {$K$}-theory},
   JOURNAL = {Invent. Math.},
  FJOURNAL = {Inventiones Mathematicae},
    VOLUME = {225},
      YEAR = {2021},
    NUMBER = {3},
     PAGES = {981--1076},
      ISSN = {0020-9910,1432-1297},
   MRCLASS = {18F25 (14F20 55N15)},
  MRNUMBER = {4296353},
MRREVIEWER = {Barry\ H.\ Dayton},
       DOI = {10.1007/s00222-021-01043-3},
       URL = {https://doi.org/10.1007/s00222-021-01043-3},
}

@misc{bas,
  author = {Maity, Dipankar},
  title = {Zero slices through unstable birationality},
  year = {2026},
  note = {In preparation}
}

@misc{sfbat,
  author = {Maity, Dipankar},
  title = {Schematic Functorialities of Birational Motivic Homotopy Categories},
  year = {2026},
  note = {Preprint, \href{https://arxiv.org/abs/2608.04793}{arXiv:2608.04793 [math.AG]}}
}

@misc{p1algtop,
  author = {Maity, Dipankar},
  title = {$\mathbb{P}^1$ Homotopy theory},
  year = {2026},
  note = {In preparation}
}

@misc{0bat,
  author= {Maity, Dipankar},
  title= {Birational Algebraic Topology},
  year= {2026},
  note= {Preprint, \href{https://arxiv.org/abs/2606.22887}{arXiv:2606.22887 [math.AG]}}
}
	
\end{document}